\documentclass{article}

\title{Lie rings in finite-dimensional theories}
\author{Moreno Invitti }
\date{}
\usepackage{graphicx} 
\usepackage{amsfonts}
\usepackage[utf8]{inputenc}
\usepackage{xcolor}

\usepackage{amssymb}
\usepackage{tikz-cd}
 \usepackage{amscd,amsmath}
\usepackage{amssymb}
\usepackage{amsthm}
\def\Ind#1#2{#1\setbox0=\hbox{$#1x$}\kern\wd0\hbox to 0pt{\hss$#1\mid$\hss}
\lower.9\ht0\hbox to 0pt{\hss$#1\smile$\hss}\kern\wd0}

\def\notind#1#2{#1\setbox0=\hbox{$#1x$}\kern\wd0
\hbox to 0pt{\mathchardef\nn=12854\hss$#1\nn$\kern1.4\wd0\hss}
\hbox to 0pt{\hss$#1\mid$\hss}\lower.9\ht0 \hbox to 0pt{\hss$#1\smile$\hss}\kern\wd0}

\usepackage{amsmath} 
\usepackage{amssymb}
\usepackage{wasysym}
\usepackage{tikz-cd}
\usepackage{amsmath}
\usepackage{subfigure}
\newtheorem{theorem}{Theorem}[section]
\newtheorem{corollary}[theorem]{Corollary}

\newtheorem*{claim}{Claim}
\newtheorem*{conjecture}{Conjecture}
\newtheorem*{observation}{Observation}
\newtheorem{lemma}[theorem]{Lemma}
\newtheorem{fact}[theorem]{Fact}

\newtheorem*{defn}{Definition}

\newtheorem*{Char0}{Theorem}

\newtheorem*{thmB,2}{Theorem B, Second version}
\usepackage{amssymb}
\usepackage{tikz-cd}
\usepackage{mathptmx} 
\usepackage{graphicx} 
\usepackage{refcheck}
\usepackage{hyperref}
 \newenvironment{claimproof}[1]{\par\noindent\textit{Proof:}\space#1}{\hfill $\blacksquare$}

\DeclareMathOperator{\ad}{ad}

\usepackage{amssymb}
\usepackage{tikz-cd}
\usepackage{mathptmx} 
\usepackage{graphicx} 
\usepackage{refcheck}
\usepackage{hyperref}
\title{Lie rings in finite-dimensional theories}
\author{Moreno Invitti}
\begin{document}

\maketitle
\begin{abstract}
    We study Lie rings definable in a finite-dimensional theory, extending the results for the finite Morley rank case. In particular, we prove a classification of Lie rings of dimension up to four in the NIP or connected case. In characteristic $0$, we verify a version of the Cherlin-Zilber Conjecture. Moreover, we characterize the actions of some classes, namely abelian, nilpotent and soluble, of Lie rings of finite dimension. Finally, we show the existence of definable envelopes for nilpotent and soluble Lie rings. These results are used to verify that the Fitting and the Radical ideal of a Lie ring of finite dimension are both definable and respectively nilpotent and soluble.
\end{abstract}
\section*{Introduction}
Lie algebras are a fundamental field of study in algebra. Lots of research has been focused on Lie algebras over algebraically closed fields in characteristic $0$, in particular over $\mathbb{C}$. The main result in this direction is a complete classification of simple Lie algebras of finite dimension over $\mathbb{C}$, and, in general, over algebraically closed fields of characteristic $0$ (see \cite{serre1966algebres}). Another very important result derived from the previous one has been obtained by Cartan, which classifies the simple Lie algebras of finite dimension over $\mathbb{R}$ (see \cite{helgason1979differential} for the classification and \cite{knapp1996quick} for a short proof).\\
We have to wait until 2006 until Block, Strade, Premet, and Wilson (see \cite{block1988classification} and \cite{premet2006classification})  complete the classification of Lie algebras over an algebraically closed field of characteristic $p>3$, with a way more complicated proof.\\
A natural extension of Lie algebras are Lie rings. These are, intuitively, Lie algebras without any underlying vector space. Their role in group theory is fundamental. One of the main result is the famous Lazard correspondence between groups with an $N$-series and Lie rings \cite{lazard1954groupes}. This correspondence was used by Zelmanov in the solution of the restricted Burnside's problem (see \cite{zel1991solution} and \cite{zel1992solution}). Other applications of the Lazard correspondence can be found in \cite{shumyatsky2019applications}.
A infinite Lie rings are unclassifiable, unless we add some notion of model theoretic "tameness". The case in which "tame" is substituted by finite Morley rank has been studied by different authors, as Rosengarten  in \cite{rosengarten1991aleph}, Deloro and Ntisiri in \cite{deloro2023simple} and \cite{deloro2025soluble}. The line of research in this context is the study of a version of Cherlin-Zilber conjecture for Lie rings.
\begin{conjecture}
    Let $\mathfrak{g}$ be a definable simple Lie ring of finite Morley rank. Then, if the characteristic of $\mathfrak{g}$ is enough large, $\mathfrak{g}$ is a Lie algebra of finite dimension over an algebraically closed field.
\end{conjecture}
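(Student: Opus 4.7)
Since the displayed statement is a conjecture, what follows is a strategy rather than a proof; the plan is to imitate the classical attack on the Cherlin--Zilber conjecture for simple groups of finite Morley rank, adapted to the Lie ring setting, and to exploit the ``large characteristic'' hypothesis in order to reduce to the classification of Block--Premet--Strade--Wilson of simple Lie algebras over algebraically closed fields of characteristic $p>3$.

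First I would extract a definable Cartan-like subring $\mathfrak{h} \le \mathfrak{g}$ by applying the definable envelope theorem for nilpotent Lie rings mentioned in the abstract. Since $\mathfrak{g}$ has finite Morley rank, the announced classification of actions of abelian and nilpotent Lie rings of finite dimension should split $\mathfrak{g}$ as $\mathfrak{h} \oplus \bigoplus_{\alpha} \mathfrak{g}_\alpha$ into finitely many definable weight spaces of $\operatorname{ad} \mathfrak{h}$.

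Next I would invoke Zilber's field theorem to interpret an infinite field $K$ out of the abelian action of $\mathfrak{h}$ on a non-trivial weight space; by Macintyre's theorem $K$ is algebraically closed, and its characteristic coincides with that of $\mathfrak{g}$, hence is large by hypothesis. Simplicity of $\mathfrak{g}$ together with the Jacobi identity should then force the $K$-actions on different weight spaces to glue into a single $K$-module structure on $\mathfrak{g}$ for which the bracket is $K$-bilinear. Once this is in place, $\mathfrak{g}$ becomes a simple Lie algebra over the algebraically closed field $K$, and finite Morley rank bounds its $K$-dimension; the classification of simple Lie algebras in characteristic $p \gg 0$ then concludes, as the exotic Cartan-type and Melikian examples are ruled out precisely by the large-characteristic assumption.

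The hardest step will be the interpretation and propagation of $K$: this is essentially the same obstruction that makes the original Cherlin--Zilber conjecture for groups still open. Even granting an action of $K$ on a single weight space, extending the scalar multiplication consistently through the bracket without either trivialising the action or producing a proper definable ideal (which simplicity forbids) is delicate. A reasonable intermediate target, paralleling the role of linearity assumptions in the group-theoretic setting, would be to postulate the existence of a definable infinite field inside $\mathfrak{g}$ from the outset and then to derive the classification; this would isolate the classification-theoretic content of the problem from the deeper model-theoretic obstruction of producing the field.
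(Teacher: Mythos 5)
The statement you were asked about is stated in the paper as a \emph{conjecture}, not a theorem: the paper offers no proof of it, records that it has been verified only in characteristic $0$ (by Deloro--Ntsiri), and explicitly says that in characteristic $p\neq 0$ ``we are way far from proving (or confuting) this conjecture.'' So there is no proof in the paper to compare against, and your text, which you correctly present as a strategy rather than a proof, cannot be assessed as a completed argument.

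As a strategy it contains two genuine gaps worth naming. First, the weight-space decomposition $\mathfrak{g}=\mathfrak{h}\oplus\bigoplus_\alpha\mathfrak{g}_\alpha$ presupposes exactly the structure you are trying to build: without a field $K$ acting on $\mathfrak{g}$ and without eigenvalues for $\operatorname{ad}\mathfrak{h}$ in such a field, there is no notion of weight space, so this step is circular. Second, the field interpretation via Zilber's theorem requires a definable soluble non-nilpotent configuration; in the ``bad'' case, where every Borel is nilpotent, no such configuration exists, and this is precisely the case the paper identifies as the open obstruction (already for rank/dimension $3$ and $4$ in positive characteristic). Your closing remark that the interpretation and propagation of $K$ is the hard step is accurate, but it understates the situation: that step is not merely hard, it is the entire open content of the conjecture, and the remaining steps (Macintyre, bounding the $K$-dimension by the rank, and invoking Block--Premet--Strade--Wilson) are the comparatively routine part. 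A correct account should therefore present this as an open problem whose known partial results are the characteristic-$0$ case and the classification of connected simple Lie rings of Morley rank at most $4$.
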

This conjecture has been verified in characteristic $0$ (see \cite[Theorem 3.0]{deloro2023simple}). In characteristic $p\not=0$, we are way far from proving (or confuting) this conjecture. A partial answer is given by the classification on connected Lie rings of Morley rank up to $4$, given by Rosengarten in \cite{rosengarten1991aleph} for the cases of Morley rank up to $3$ and Deloro, Ntisiri in the case of Morley rank $4$.
\begin{theorem}
  Let $\mathfrak{g}$ be a connected definable Lie ring of finite Morley rank. Then, 
  \begin{itemize}
      \item If $\operatorname{RM}(\mathfrak{g})=1$, $\mathfrak{g}$ is abelian \cite[Corollary 4.1.1]{rosengarten1991aleph};
      \item If $\operatorname{RM}(\mathfrak{g})=2$, then $\mathfrak{g}$ is soluble. Moreover, if it is not nilpotent, $\mathfrak{g}/Z(\mathfrak{g})\simeq \mathfrak{g}\mathfrak{a}_1(K)$ for a algebraically closed field $K$ \cite[Theorem 4.2.1]{rosengarten1991aleph};
      \item If $\operatorname{RM}(\mathfrak{g})=3$ and $\mathfrak{g}$ is not soluble, then $\mathfrak{g}\simeq \operatorname{sl}_2(K)$ for an algebraically closed field $K$ \cite[Theorem 4.4.1]{rosengarten1991aleph};
      \item If $\operatorname{RM}(\mathfrak{g})=4$, then $\mathfrak{g}$ cannot be simple \cite[Theorem 4]{deloro2023simple}.
  \end{itemize}
\end{theorem}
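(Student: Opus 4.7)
The plan is to proceed case by case in order of increasing rank, using the standard toolkit for structures of finite Morley rank (descending chain condition on definable subgroups, existence of generic elements, and Zilber's field interpretation theorem) together with the Jacobi identity and the anti-symmetry of the bracket. For rank $1$, a connected group of Morley rank $1$ admits no infinite proper definable subgroup. For any $x \in \mathfrak{g}$ the centralizer $C_\mathfrak{g}(x)$ is a definable additive subgroup containing $x$, so it is either all of $\mathfrak{g}$ or finite. If some $x$ has finite centralizer, $\ad_x$ has finite kernel and cofinite, hence full, image in $\mathfrak{g}$; combining this with the same for an independent generic $y$ via the Jacobi identity and the identity $\ad_y(x) = -\ad_x(y)$ should produce a contradiction, forcing $Z(\mathfrak{g}) = \mathfrak{g}$.

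For rank $2$, I would split according to $Z(\mathfrak{g})$: if it is infinite, then by the rank assumption it has rank $1$ and $\mathfrak{g}/Z(\mathfrak{g})$ has rank at most $1$, so by the previous case is abelian and $\mathfrak{g}$ is nilpotent of class at most $2$. If $Z(\mathfrak{g})$ is finite, then $\mathfrak{g}$ acts (almost) faithfully on itself by $\ad$; for a generic $x$ one obtains a rank-$1$ centralizer acting on a complementary rank-$1$ ideal, and Zilber's field interpretation theorem applied to this action yields a definable algebraically closed field $K$ over which $\mathfrak{g}/Z(\mathfrak{g})$ is isomorphic to $\mathfrak{g}\mathfrak{a}_1(K)$.

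For rank $3$ in the non-soluble case, the rank-$2$ result forces every proper definable ideal of $\mathfrak{g}$ to be soluble, and one shows $\mathfrak{g}$ is centreless and perfect. A generic element should then generate a rank-$1$ Cartan-like subring $\mathfrak{h}$ whose adjoint action on $\mathfrak{g}$ diagonalizes into a weight-space decomposition $\mathfrak{g} = \mathfrak{h} \oplus \mathfrak{e} \oplus \mathfrak{f}$ of the expected $\operatorname{sl}_2$ shape, after which field interpretation on the action of $\mathfrak{h}$ recovers $K$ and the isomorphism $\mathfrak{g} \cong \operatorname{sl}_2(K)$.

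For rank $4$, the main obstacle is to rule out simplicity. Assuming $\mathfrak{g}$ were simple of rank $4$, the only plausible Borel-like subring would have rank $3$; by the rank-$3$ classification, any such non-soluble candidate would have to be isomorphic to $\operatorname{sl}_2$, which is itself simple and therefore cannot sit inside $\mathfrak{g}$ in the Borel-like way required. One would then perform a careful analysis of Cartan subrings, root spaces, and the rank arithmetic of the adjoint action to extract a contradiction. I expect this last step to be the hardest by some margin, since it demands a delicate interplay between the combinatorics of low-rank root systems and the model-theoretic constraints imposed by finite Morley rank, in the spirit of the Deloro--Ntisiri treatment.
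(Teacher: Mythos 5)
This theorem is not actually proved in the paper: it is a survey statement quoting Rosengarten's thesis and Deloro--Ntsiri, given with citations only, and the paper instead proves finite-dimensional analogues later (Theorems \ref{Dim1}, \ref{Dim2}, and the dimension $3$ and $4$ lemmas). Measured against the arguments those analogues adapt, your sketch has genuine gaps. In rank $1$, the step ``combining this with the same for an independent generic $y$ via the Jacobi identity \dots should produce a contradiction'' does not produce one: nothing a priori prevents every nonzero element from having finite centraliser, and no contradiction falls out of $\ad_y(x)=-\ad_x(y)$ alone. The actual argument (Rosengarten's, reproduced in the paper's Theorem \ref{Dim1}) first rules out characteristic $0$ via $\langle g\rangle\leq C_{\mathfrak{g}}(g)$, then builds a strictly increasing chain of \emph{finite} subrings $C^{p^j}_{\mathfrak{g}}(g)$ --- closed under the bracket because of the binomial expansion of $\ad_g^{p^j}[h,k]$ --- and contradicts the quantitative bound of Lemma \ref{FinSubAlg} on finite Lie algebras over $\mathbb{F}_p$ with bounded centralisers. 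That combinatorial bound is the essential ingredient your sketch is missing.

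In rank $3$ you only address the ``good'' case. The hard half of Rosengarten's Theorem 4.4.1 is excluding a simple rank-$3$ Lie ring all of whose Borels are nilpotent (the ``bad'' case); this uses Macintyre's theorem that infinite fields of finite Morley rank are algebraically closed, and it is exactly the step that fails in the finite-dimensional generalisation, which is why the paper's own dimension-$3$ classification retains the bad alternative. In rank $4$ your argument is a non sequitur: Borels are by definition soluble, so a non-soluble rank-$3$ subring was never a candidate Borel, and the rank-$3$ classification does not rule out simplicity this way; Deloro--Ntsiri's proof is instead a lengthy case analysis on the ranks of Borels, their intersections, and the associated weight-space arithmetic, again split into bad and non-bad cases. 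As it stands the proposal is a reasonable table of contents for rank $2$ but not a proof for ranks $1$, $3$ or $4$.
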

Another family of results are the linearisation theorems for Lie rings of finite Morley rank. In particular, Deloro and Ntisiri have verified the following result (\cite[Theorem A]{deloro2025soluble}) that they used to proved a version of the "Lie-Malcev" theorem.
\begin{theorem}
  Let $\mathfrak{g}$ be a connected definable Lie ring of finite Morley rank and $V$ a minimal connected $\mathfrak{g}$-module. Assume that $\mathfrak{g}$ has a definable abelian ideal $\mathfrak{a}$ that does not act trivially on $V$. Then, $\mathfrak{a}\leq Z(\mathfrak{g})$ and there exists an algebraically closed field $K$ such that $V$ is a $K$-vector space of finite dimension, $\mathfrak{a}$ acts $K$-scalarly and $\mathfrak{g}$ acts $K$-linearly on $V$.
\end{theorem}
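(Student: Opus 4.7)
The plan is a Lie-ring adaptation of Zilber's Field Theorem. Write $\rho:\mathfrak{g}\to\operatorname{End}(V)$ for the action, and let $R\subseteq\operatorname{End}(V)$ be the subring generated by $\rho(\mathfrak{a})$; since $\mathfrak{a}$ is abelian, $R$ is commutative. The kernel $\mathfrak{a}_0:=\mathfrak{a}\cap\ker\rho$ is a $\mathfrak{g}$-ideal of $\mathfrak{a}$, because for $x\in\mathfrak{g}$ and $a\in\mathfrak{a}_0$ one has $\rho([x,a])=[\rho(x),\rho(a)]=0$; so modding out by $\mathfrak{a}_0$ reduces to the case where $\rho|_\mathfrak{a}$ is faithful.

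To produce the field, use DCC on connected definable subgroups to find a minimal nonzero connected definable $\mathfrak{a}$-invariant subgroup $V_0\leq V$; it is nonzero since $[\mathfrak{a},V]\neq 0$. On $V_0$ the kernel of any nonzero $\rho(a)$ is $\mathfrak{a}$-invariant and connected (up to finite index), hence trivial, so $\rho(\mathfrak{a})|_{V_0}\setminus\{0\}$ consists of (near-)automorphisms. Zilber's theorem, applied to this commutative family of definable near-automorphisms of the $\mathfrak{a}$-minimal module $V_0$, yields an algebraically closed field $K$, interpretable in the ambient theory, with $V_0$ a finite-dimensional $K$-vector space. To promote this to all of $V$, observe that the $\mathfrak{g}$-submodule generated by $V_0$ is $\mathfrak{g}$-invariant, definable, and nonzero, hence equals $V$ by $\mathfrak{g}$-minimality. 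Writing $V$ as a finite sum of translates $\rho(x_i)(V_0)$, the commutator identity $[\rho(x),\rho(a)]=\rho([x,a])\in\rho(\mathfrak{a})\subseteq R$, which uses crucially that $\mathfrak{a}$ is an ideal, is exactly what forces the $K$-structures on these translates to glue into a $K$-vector space structure on all of $V$ of finite $K$-dimension.

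For centrality, fix $x\in\mathfrak{g}$ and define $\delta_x:R\to R$ by $\delta_x(r)=[\rho(x),r]$. One checks $\delta_x$ sends $\rho(\mathfrak{a})$ into $\rho(\mathfrak{a})$, and the identity $[\rho(x),r_1 r_2]=[\rho(x),r_1]r_2+r_1[\rho(x),r_2]$ shows it is a definable derivation of $K$. But an algebraically closed field of finite Morley rank admits no nonzero definable derivation: in characteristic zero the definable additive endomorphisms of $K$ are multiplication by a scalar, and Leibniz forces the scalar to vanish; a parallel argument handles positive characteristic via Frobenius. Hence $\delta_x=0$, so $\rho([x,a])=0$ for all $a\in\mathfrak{a}$, and by faithfulness $[x,a]=0$, giving $\mathfrak{a}\leq Z(\mathfrak{g})$. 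Simultaneously $\rho(\mathfrak{g})$ centralises $\rho(\mathfrak{a})$ and therefore all of $K$, so $\mathfrak{g}$ acts $K$-linearly while $\mathfrak{a}$ acts through elements of $K$, i.e.\ scalarly.

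The principal obstacle is the Lie-ring version of Zilber's field theorem, specifically the gluing of the $K$-structure from $V_0$ to $V$. For group actions, $A$-invariant subgroups are permuted by $G$ through automorphisms and this globalisation is transparent; here $\mathfrak{a}$-invariant subgroups are not literally permuted by $\mathfrak{g}$, so one must leverage the ideal condition $[\mathfrak{g},\mathfrak{a}]\leq\mathfrak{a}$ and the commutator identity to control $\rho(x)(V_0)$ and verify compatibility. A secondary point to watch is the initial reduction to $\mathfrak{a}_0=0$: the quotient argument on its own yields $[\mathfrak{g},\mathfrak{a}]\subseteq\mathfrak{a}_0$, and one still needs a supplementary argument (typically from $\mathfrak{g}$-minimality of $V$) to conclude that $\mathfrak{a}_0$ itself lies in $Z(\mathfrak{g})$.
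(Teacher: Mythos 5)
Your outline inverts the order of the argument the paper actually runs, and this inversion is precisely what creates the gaps you flag at the end. The paper's proof (see the proof of Theorem~\ref{LieLieRin}, which is the finite-dimensional generalisation of this statement) establishes \emph{centrality first} and only then constructs the global field: once $\mathfrak{a}\leq Z(\mathfrak{g})$, every $a\in\mathfrak{a}$ is a $\mathfrak{g}$-endomorphism of $V$, so its kernel and image are $\mathfrak{g}$-submodules and hence, by $\mathfrak{g}$-minimality, either trivial or everything; the ring generated by $\mathfrak{a}$ is therefore a domain of definable endomorphisms acting by near-automorphisms, and passing to its (definable) field of fractions gives the $K$-structure on all of $V$ with no gluing at all. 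By trying to build $K$ on $V$ \emph{before} knowing centrality, you have made the globalisation from $V_0$ to $V$ load-bearing, and you correctly identify that you cannot carry it out -- but that is not a ``secondary obstacle,'' it is the entire proof, and the paper's route exists to avoid it.

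The centrality argument, as you have set it up, also does not close. You define $\delta_x(r)=[\rho(x),r]$ on the ring $R$ generated by $\rho(\mathfrak{a})$ inside $\operatorname{End}(V)$, and declare it a derivation of the field $K$ supplied by Zilber's theorem on $V_0$. But $K$ lives in $\operatorname{End}(V_0)$, and $\rho(x)$ need not preserve $V_0$, so $\delta_x(r)$ has no reason to restrict to an endomorphism of $V_0$; the identification $R\hookrightarrow K$ on which your derivation lives presupposes exactly the global $K$-structure you have not built. The paper's device for making a derivation argument legitimate at this stage -- before any global field exists -- is the chain $S_i=\sum_{j\leq i}\binom{i}{j}h^jW$ and the quotient $X=S_k/S_{k-1}$: by construction $h$ induces a genuine definable endomorphism $\eta$ of $X$, the field $K=C_{\operatorname{DefEnd}(X)}(\mathfrak{a})$ is defined \emph{on $X$}, and $\delta(\lambda)=[\eta,\lambda]$ is then a bona fide definable derivation of $K$, killed by Lemma~\ref{DerTri}. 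That quotient construction is the missing idea. Finally, the opening reduction also leaves a residue you acknowledge but do not repair: modding out $\mathfrak{a}_0=\mathfrak{a}\cap\ker\rho$ yields centrality of $\mathfrak{a}/\mathfrak{a}_0$ in $\mathfrak{g}/\mathfrak{a}_0$, i.e.\ $[\mathfrak{g},\mathfrak{a}]\subseteq\mathfrak{a}_0$, which is strictly weaker than $\mathfrak{a}\leq Z(\mathfrak{g})$. The paper handles this by quotienting instead by $\widetilde{C}_{\mathfrak{a}}(V)$ and $\widetilde{C}_V(\mathfrak{a})$ after first showing that $\widetilde{C}_{\mathfrak{a}}(V)$ already acts trivially on the reduced module, which is the step needed to convert the quotient statement back into the statement about $\mathfrak{g}$ itself; your write-up leaves this dangling.
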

The first result in a broader context has been obtained by Zamour in \cite{zamour2025quelques}, where he proves that for stable Lie rings the Engel's theorem is true (this has been proved with the $\mathfrak{M}_c$-assumption, which generalises stability) and the existence of nilpotent and soluble envelopes.\\
The article's aim is to generalize these results in the context of finite-dimensional Lie rings. The notion of dimensional theory has been introduced by Frank Wagner in \cite{wagner2020dimensional}. We are interested in a particular subclass of dimensional theories, called finite-dimensional theories.
\begin{defn}
    A theory $T$ is \emph{finite-dimensional} if there exists a function $\operatorname{dim}$ from the class of all interpretable subsets in any model $\mathcal{M}$ of $T$ into $\omega\cup\{-\infty\}$ such that for any $\phi(x,y)$ formula, $X, Y$ interpretable sets in $T$ and $f$ an interpretable function from $X$ to $Y$, then:
\begin{itemize}
    \item If $a,a'$ have same type over $\emptyset$, $\operatorname{dim}(\phi(x,a))=\operatorname{dim}(\phi(x,a'))$ 
    \item $\operatorname{dim}(\emptyset)=-\infty$ and $\operatorname{dim}(X)=0$ if and only if $X$ is finite;
    \item $\operatorname{dim}(X\cup Y)=\max\{\operatorname{dim}(X),\operatorname{dim}(Y)\}$;
    \item If $\operatorname{dim}(f^{-1}(y))\geq k$ if for any $y\in Y$, then $\operatorname{dim}(X)\geq \operatorname{dim}(Y)+k$;
    \item If $\operatorname{dim}(f^{-1}(y))\leq k$ for any $y\in Y$, then $\operatorname{dim}(X)\leq \operatorname{dim}(Y)+k$.
\end{itemize}
\end{defn}
Examples of finite-dimensional theories are $o$-minimal, superstable, and supersimple theories of finite $(S)U$-rank. Since theories of finite Morley rank are in particular superstable of finite $SU$-rank, any Lie ring of finite Morley rank is finite-dimensional (even if the Morley rank is not a dimension in the previous definition \cite{lachlan1980singular}). On the other hand, the class of Lie rings that are finite-dimensional is way bigger than finite Morley rank, since it contains all Lie rings definable in an $o$-minimal theory. In particular, Lie algebras over $\mathbb{R}$ or Lie algebras over pseudofinite fields are examples of finite-dimensional Lie ring that are not of finite Morley rank. We can still hope to obtain a generalisation of the "log-CZ" conjecture. The notion of absolute simplicity will be defined in section $4$.
\begin{conjecture}
    Let $\mathfrak{g}$ be an absolutely simple definable Lie ring of finite dimension and enough large characteristic. Then, there is a definable Lie subring $\mathfrak{h}$ of finite index in $\mathfrak{g}$ and a finite Lie subring $\mathfrak{h}_0$ such that $\mathfrak{h}/\mathfrak{h}_0$ is a Lie algebra of finite dimension over a definable perfect field $K$. 
\end{conjecture}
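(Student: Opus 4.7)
The plan is to transpose the strategy from the finite Morley rank case, verified in characteristic $0$ in \cite{deloro2023simple}, to the finite-dimensional setting. Since $\mathfrak{g}$ is absolutely simple, it admits no infinite proper definable ideal, so the adjoint representation $\ad:\mathfrak{g}\to\operatorname{End}(\mathfrak{g})$ is, modulo a finite centre, faithful. My first step would be to isolate a minimal definable connected $\mathfrak{g}$-module $V$ (for instance a minimal subquotient of $\mathfrak{g}$ under the adjoint action) and, inside a suitable definable section of $\mathfrak{g}$, a definable abelian Lie subring acting non-trivially on $V$. This sets up the hypotheses of a linearisation theorem.

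Second, I would invoke a finite-dimensional counterpart of the Deloro--Ntisiri linearisation theorem quoted in the excerpt, built from this paper's characterisation of abelian actions. This produces an interpretable field $K$ making $V$ into a $K$-vector space on which $\mathfrak{g}$ acts $K$-linearly. Transferring this structure back to $\mathfrak{g}$, the $K$-scalars commute with $\ad(\mathfrak{g})$ and, via a Schur-type argument, embed into the centroid of $\mathfrak{g}$; by absolute simplicity, this centroid is a field $K'\supseteq K$, and $\mathfrak{g}$ becomes, modulo arithmetic noise, a $K'$-Lie algebra. The slack in the conjecture, namely passage to a definable subring $\mathfrak{h}$ of finite index and quotient by a finite subring $\mathfrak{h}_0$, is exactly what absorbs the failure of divisibility, the torsion, and the discrepancy between the $\mathbb{Z}$-form and the $K'$-form; its finite-dimensionality as a Lie algebra then follows from $\dim\mathfrak{g}<\omega$ together with scalar restriction along $K'/\mathbb{F}_p$ (or $K'/\mathbb{Q}$).

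The main obstacle will be the characteristic-$p$ step, which is already open in finite Morley rank: one must rule out exotic absolutely simple Lie rings in small ``bad'' characteristics, for which no classification is available. Moving from finite Morley rank to finite-dimensional theories aggravates this, since connected components, definable closures and Zilber-style field configurations behave less robustly; the inductive argument on $\dim\mathfrak{g}$ will therefore have to lean on the low-dimensional classification (dimensions up to four) announced in the abstract, together with the definability and nilpotency/solubility of the Fitting and radical ideals, to secure the base cases. Ensuring that the field $K'$ produced is genuinely definable and \emph{perfect}, rather than merely interpretable, should follow from the descending chain condition on definable additive subgroups in a finite-dimensional theory, but this is where I expect most of the technical work to concentrate.
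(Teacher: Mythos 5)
The statement you are trying to prove is stated in the paper as a \emph{conjecture}, and the paper does not prove it: it is explicitly left open in positive characteristic, and only the characteristic-$0$ case is established (the theorem labelled ``Theorem'' in Section 5). Your proposal is therefore not, and cannot be, a complete proof. Your own third paragraph concedes the decisive point: the entire content of the conjecture in characteristic $p$ is precisely the step you describe as ``the main obstacle''. The paper notes that bad Lie rings of dimension $3$ and $4$ (connected simple Lie rings with a nilpotent Borel) are not excluded in the finite-dimensional setting, since Macintyre's theorem on fields of finite Morley rank is unavailable; whether such a bad Lie ring is a Lie algebra over a definable field is stated as a ``deep open question''. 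No amount of linearisation machinery assembled in steps one and two closes this, because the linearisation theorems in the paper all require a non-centrally-acting \emph{abelian} ideal or an almost abelian section acting non-almost-trivially, and an absolutely simple bad Lie ring need not supply one acting on a suitable minimal module.

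For the characteristic-$0$ case, which is the only part the paper actually proves, your outline is close in spirit but skips the two points where the paper does real work. First, the reduction to a connected, torsion-free situation is not automatic: the paper uses the finiteness of $\mathfrak{g}[p]$ and the almost centre to reach characteristic $0$, then Corollary \ref{LinChar0} to get virtual connectedness of the module, and Corollary \ref{DCCK^n} to get the DCC on $\operatorname{gl}_n(K)$ --- none of which you address, and which is where ``connected components behave less robustly'' actually bites. Second, your ``Schur-type argument'' producing the centroid field $K'$ is replaced in the paper by the explicit definable subfield $R=\{k\in K:\ k\cdot\mathfrak{g}^0\subseteq \mathfrak{g}^0\}$, whose field structure depends on the connectedness of $\mathfrak{g}^0$ and a dimension count; a bare Schur argument gives only a division ring of definable endomorphisms and does not by itself show that the scalars preserve a definable finite-index subring. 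Perfection of the field, which you flag as delicate, is in fact the easy part (Lemma \ref{perfection}). In summary: restrict your claim to characteristic $0$ and fill in the reduction and the construction of $R$, or acknowledge that the general statement remains a conjecture.
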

The assumption about the perfection of the field follows easily from the fact that a field of finite dimension is always perfect.\\
We prove that the conjecture holds when the characteristic is $0$. 
\begin{Char0}\label{Char0}
  Let $(\mathfrak{g},V)$ be definable module of finite dimension with $char(\mathfrak{g})=0$. Assume that $V$ is minimal and that the action is not almost trivial. Then, $\mathfrak{g}/\widetilde{C}_{\mathfrak{g}}(V)$ embeds in $\operatorname{gl}_n(K)$ for a certain $n\in \omega$ and a definable field $K$ of characteristic $0$. Moreover, it is virtually connected and $(\mathfrak{g}/\widetilde{C}_{\mathfrak{g}}(V))^0$ is a Lie algebra over a definable finite-dimensional field of characteristic $0$.\\
  In particular, if $\mathfrak{g}$ is an absolutely simple Lie ring, then there exists a definable Lie subring of finite index such that $\mathfrak{h}$ is a simple Lie algebra of finite dimension over a definable field of characteristic $0$.
\end{Char0}
In finite characteristic (even greater than $3$) the conjecture is open also in dimension $3$, since we have the so-called bad Lie rings.\\
This is not the only problem: also the structure of Lie rings of dimension 1 is not clear, and the technique used for finite Morley rank cannot be applied. On the other hand, assuming the additional hypothesis of connectedness or of NIP, the instrument used in \cite{deloro2023simple} lead to similar results. In particular, the second one is interesting since no result (without assuming the f.s.g. condition, see \cite{ealy2008superrosy}) has been proved for NIP groups of finite dimension.\\
In the first section, we introduce some preliminary notions necessary for this article. In the following section, we verify that a simple non abelian finite-dimensional connected Lie ring has DCC. More precisely, we will prove the following theorem.
\begin{theorem}
   Let $\mathfrak{g}$ a connected Lie ring definable in a finite-dimensional theory. Then, there exists a series
    $$\mathfrak{g}=\mathfrak{h}_0\geq\mathfrak{h}_1\geq...\geq \mathfrak{h}_n=\{0\}$$
    of definable ideals such that $\mathfrak{h_i}/\mathfrak{h_{i+1}}$ is either abelian or it is connected, $\mathfrak{g}$-minimal and it has the DCC.
\end{theorem}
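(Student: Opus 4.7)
The plan is to prove, by induction on $\dim(\mathfrak{h})$, the following strengthened statement: for every definable $\mathfrak{g}$-ideal $\mathfrak{h}$ of the connected Lie ring $\mathfrak{g}$, there exists a finite descending series $\mathfrak{h}=\mathfrak{h}_0\supseteq\mathfrak{h}_1\supseteq\dots\supseteq\mathfrak{h}_n=\{0\}$ of definable $\mathfrak{g}$-ideals with each factor abelian, or connected, $\mathfrak{g}$-minimal and with DCC. The theorem is the special case $\mathfrak{h}=\mathfrak{g}$.

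The base case $\dim(\mathfrak{h})=0$ gives $\mathfrak{h}$ finite. For each $h\in\mathfrak{h}$ the centralizer $C_{\mathfrak{g}}(h)=\{x\in\mathfrak{g}:[x,h]=0\}$ is the kernel of the additive definable map $x\mapsto[x,h]$ whose image lies in the finite set $\mathfrak{h}$, hence $C_{\mathfrak{g}}(h)$ has finite index in $\mathfrak{g}$, hence equals $\mathfrak{g}$ by connectedness. Thus $[\mathfrak{g},\mathfrak{h}]=0$ and in particular $[\mathfrak{h},\mathfrak{h}]=0$, so $\mathfrak{h}$ is abelian and I take $\mathfrak{h}_1=0$.

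For the inductive step, I would first reduce to the case where $\mathfrak{h}$ is connected. If not, the connected component $\mathfrak{h}^0$ is a definable $\mathfrak{g}$-ideal (for any $x\in\mathfrak{g}$ and any definable subgroup $H\leq\mathfrak{h}$ of finite index, the preimage $\operatorname{ad}(x)^{-1}(H)$ has finite index in $\mathfrak{g}$ and so contains $\mathfrak{g}^0=\mathfrak{g}$ restricted to $\mathfrak{h}$, giving $\operatorname{ad}(x)(\mathfrak{h}^0)\subseteq\mathfrak{h}^0$), and $\mathfrak{h}/\mathfrak{h}^0$ is finite and abelian by the base-case argument, so this is a valid abelian first factor and I continue by induction on $\mathfrak{h}^0$ of strictly smaller dimension. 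Assume now $\mathfrak{h}$ is connected and non-abelian. I choose $\mathfrak{h}_1$, a proper definable $\mathfrak{g}$-ideal of $\mathfrak{h}$, so that $\mathfrak{h}/\mathfrak{h}_1$ is $\mathfrak{g}$-minimal: start with one of maximum dimension; if the quotient is not yet $\mathfrak{g}$-minimal, any witnessing strictly larger proper $\mathfrak{g}$-ideal has the same maximum dimension, making $\mathfrak{h}_1$ sit with finite index inside it, and the absence of infinite ascending chains of definable subgroups with finite consecutive quotients in a finite-dimensional theory lets me upgrade $\mathfrak{h}_1$ to a genuinely maximal such ideal. The connected component of $\mathfrak{h}/\mathfrak{h}_1$ is $\mathfrak{g}$-invariant and definable, so $\mathfrak{g}$-minimality forces it to be either trivial (making $\mathfrak{h}/\mathfrak{h}_1$ finite and hence abelian) or the whole quotient (making $\mathfrak{h}/\mathfrak{h}_1$ connected). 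Applying the induction hypothesis to $\mathfrak{h}_1$ (whose dimension is strictly smaller since $\mathfrak{h}$ is connected and $\mathfrak{h}_1\subsetneq\mathfrak{h}$) and concatenating yields the series.

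The main obstacle is showing DCC for the connected $\mathfrak{g}$-minimal factor, since $\mathfrak{g}$-minimality by itself only controls $\mathfrak{g}$-invariant subgroups. My plan here is a Baldwin--Saxl-style packaging argument: in a finite-dimensional theory, any uniformly definable family of subgroups of the quotient has, up to finitely many members, a canonical intersection which is $\mathfrak{g}$-invariant, and by $\mathfrak{g}$-minimality this intersection is either $0$ or the whole quotient. Converting this rigidity into DCC on arbitrary definable subgroups is where the finite-dimensional hypothesis is genuinely used beyond bookkeeping on $\dim$, and this is the step most likely to require technical care.
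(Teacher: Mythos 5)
Your skeleton (induction, peel off a $\mathfrak{g}$-minimal factor, decide finite-vs-connected, recurse) matches the high-level shape of the argument, but the load-bearing step—establishing DCC for the connected $\mathfrak{g}$-minimal factor—is exactly where the gap is, and the route you sketch for it does not work. The Baldwin--Saxl condition (every definable intersection of a uniformly definable family reduces to a finite subintersection) is a \emph{NIP} property; it is not available in an arbitrary finite-dimensional theory, and the paper only establishes that sort of ``icc'' much later, in the subsection devoted to NIP Lie rings (Lemmas~\ref{BSC} and \ref{icc}). So the very mechanism you propose for converting $\mathfrak{g}$-minimality into DCC is unavailable in the generality required here. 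There is a second, related circularity: you pass to ``the connected component of $\mathfrak{h}/\mathfrak{h}_1$'' to decide whether the factor is finite or connected, but in a finite-dimensional theory a group need not be virtually connected, so the connected component need not exist \emph{before} DCC is established; existence of $(\cdot)^0$ is a consequence of what you are trying to prove, not an input to it.

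What the paper actually does is prove the DCC claim separately in Lemma~\ref{DecGen} (feeding into Corollary~\ref{DCC}), and the argument there is of a different flavour from Baldwin--Saxl. Starting from an infinite definable subgroup $A_1$ of minimal dimension in the $G$-minimal module $A$, one forms a sum $B$ of $G$-translates of $A_1$ of maximal dimension; minimality of $\dim A_1$ makes this sum almost direct, and maximality of $\dim B$ forces $gB\apprle B$ for all $g$. In the automorphism case one then applies Schlichting's theorem to the uniformly commensurable family $\{gB\}$ to produce a $G$-invariant commensurable subgroup, which by $G$-minimality has finite index, so $A$ is (up to finite index) an almost direct sum of definably minimal connected pieces; in the ``connected $G$ acting by endomorphisms'' case the same conclusion is reached by a dimension--fibration argument showing $C_B(G/B)$ has finite index. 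Finally one verifies directly (the Claim inside Lemma~\ref{DecGen}) that an almost direct sum of definably minimal connected subgroups has DCC. It is precisely this Schlichting/almost-direct-sum machinery, not a chain condition on uniform families, that does the work. Once Corollary~\ref{DCC} is in hand, the theorem itself has a short proof: if $Z(\mathfrak{g})$ is infinite, quotient by it and induct; if $Z(\mathfrak{g})=0$, a definably minimal ideal $\mathfrak{h}$ satisfies $[\mathfrak{g},\mathfrak{h}]\neq 0$, hence is virtually connected with DCC by Corollary~\ref{DCC}, and $\mathfrak{h}^0$ (which now exists) is the desired $\mathfrak{g}$-minimal connected factor. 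You would need to either reprove Lemma~\ref{DecGen} or cite something equivalent; the Baldwin--Saxl packaging is not a substitute.
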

This result is used in the third section to derive the following characterization of connected Lie rings of dimension $\leq 4$ and characteristic $>3$.
\begin{theorem}
  Let $\mathfrak{g}$ be a connected definable Lie ring of finite dimension and of characteristic $>3$. Then,
  \begin{itemize}
      \item if $\dim(\mathfrak{g})=1$, $\mathfrak{g}$ is abelian;
      \item if $\dim(\mathfrak{g})=2$, $\mathfrak{g}$ is soluble. If it is not nilpotent, $\mathfrak{g}/Z(\mathfrak{g})\simeq \mathfrak{g}\mathfrak{a}_1(K)$ for a perfect definable field $K$;
      \item if $\dim(\mathfrak{g})=3$ and $\mathfrak{g}$ is not soluble, $\mathfrak{g}$ is bad;
      \item if $\dim(\mathfrak{g})=4$ and $\mathfrak{g}$ is simple, $\mathfrak{g}$ is bad.
   \end{itemize}
\end{theorem}
In the fourth section, we introduce some notation and easy results on "almost Lie ring theory", defining concepts as the almost centraliser and the almost center of a Lie ring. Moreover, we study the properties of absolutely simple Lie rings. Finally, we use a result of \cite{InvittiDim} to derive a linearization theorem for abelian Lie rings of finite dimension.\\
These results are used prove the "log-CZ"-conjecture in characteristic $0$ in the fifth section.\\
Another consequence of the linearization is the classification of NIP Lie rings of small dimension and characteristic $>3$. 
\begin{theorem}
  Let $\mathfrak{g}$ be a NIP definable Lie ring of finite dimension. Then,
  \begin{itemize}
      \item If $\dim(\mathfrak{g})=1$, $\mathfrak{g}$ is virtually abelian;
      \item if $\dim(\mathfrak{g})=2$, $\mathfrak{g}$ is virtually soluble. If it is not virtually nilpotent, $\mathfrak{g}$ is virtually connected;
      \item if $\dim(\mathfrak{g})=3$ and $\mathfrak{g}$ is not virtually soluble, it is virtually connected;
      \item if $\dim(\mathfrak{g}=4$ and $\mathfrak{g}$ is absolutely simple, $\mathfrak{g}$ is virtually connected.
  \end{itemize}
\end{theorem}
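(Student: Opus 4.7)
The plan is to reduce each item to the connected classification from the preceding theorem by showing that, in the NIP finite-dimensional setting, the hypothesis of each case forces the existence of a definable connected subring of finite index in $\mathfrak{g}$.

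The key mechanism is that under NIP, bounded-index phenomena collapse to finite-index ones. Baldwin--Saxl for NIP ensures that any uniformly definable family of subrings of finite index has its full intersection realised by a finite subintersection, while finite-dimensionality forbids infinite descending chains of definable ideals of finite index. Combined with the almost centraliser and almost ideal machinery developed in Section 4, this lets one convert a definable ``almost $X$'' substructure of $\mathfrak{g}$ into a genuinely definable $X$-subring of finite index.

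With this reduction, I would proceed case by case. For $\dim(\mathfrak{g})=1$, a finite intersection of almost centralisers of generic elements gives a finite-index abelian subring, so $\mathfrak{g}$ is virtually abelian. For $\dim(\mathfrak{g})=2$ the dimension-$2$ analysis of the connected classification is carried out with almost versions: if $\mathfrak{g}$ is not virtually nilpotent, one produces an abelian almost ideal on which the almost adjoint action is non-trivial and invokes the linearisation theorem from Section 4 to obtain, in characteristic $>3$, the structure $\mathfrak{g}\mathfrak{a}_1(K)$ modulo the almost centre; this is then transported back to a definable finite-index subring, yielding virtual connectedness. For $\dim(\mathfrak{g})=3$ non-virtually-soluble and $\dim(\mathfrak{g})=4$ absolutely simple, the hypothesis forbids the relevant definable proper ideals of infinite index, so $\mathfrak{g}^{0}$ itself has finite index in $\mathfrak{g}$ and $\mathfrak{g}$ is virtually connected.

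The hard part will be this passage from ``almost'' to ``definable of finite index'' in the NIP context: unlike the finite Morley rank case, definable envelopes and finite-index connected components are not automatic here, and one must rely on NIP-specific Baldwin--Saxl arguments together with the Section 4 results on absolutely simple Lie rings. The dimension-$2$ non-virtually-nilpotent case and the dimension-$4$ absolutely simple case are where this interaction is most delicate, since the linearisation must be lifted back to $\mathfrak{g}$ and the absolutely simple hypothesis must be shown to preclude $\mathfrak{g}^{0}$ from having infinite index.
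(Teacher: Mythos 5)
Your overall reduction strategy — shrink each case to the connected classification by extracting a definable finite-index connected (or at least abelian/soluble) subring — is the right one and matches the paper's plan. But two of your four cases have a genuine gap.

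For $\dim(\mathfrak{g})=1$, your sketch handles only the case $\widetilde{Z}(\mathfrak{g})$ infinite (where the centraliser of the almost centre is of finite index by icc, Lemma \ref{C(h)}). When $\widetilde{Z}(\mathfrak{g})$ is finite the centraliser of any non-almost-central element is finite, so a finite intersection of centralisers of ``generic'' elements is finite, not of finite index, and yields nothing. The paper's proof at this point is entirely different: it passes to the type-definable component $\mathfrak{g}^{00}$, which exists in NIP and is an ideal (Lemma \ref{g^00}), and uses definable minimality to show that $\ad_g$ maps $\mathfrak{g}$ onto a finite-index subgroup contained in $\mathfrak{g}^{00}$, forcing $\mathfrak{g}^{00}$ to be definable of finite index. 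You need that $\mathfrak{g}^{00}$ argument; almost-centraliser intersections alone do not close this case.

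For $\dim(\mathfrak{g})=3$ non-virtually-soluble and $\dim(\mathfrak{g})=4$ absolutely simple, the step ``the hypothesis forbids the relevant definable proper ideals of infinite index, so $\mathfrak{g}^{0}$ itself has finite index'' is circular: the existence of a definable finite-index connected component $\mathfrak{g}^{0}$ is precisely what virtual connectedness means, and absolute simplicity does not produce it for free. The paper's actual route is indirect and much heavier. First one needs Lemma \ref{ConNip}: an absolutely simple Lie ring with \emph{some} infinite definable connected subgroup is virtually connected. Producing such a subgroup is the whole difficulty, and the paper does it by the bad/not-bad dichotomy on Borels: in the not-bad case it induces on dimension via Lemma \ref{VirSolImpVirCon} (a virtually soluble non-virtually-nilpotent NIP Lie ring contains an infinite connected subgroup, proved through the linearisation Lemma \ref{LieNip}); in the bad case it runs a delicate argument on a nilpotent Borel $\mathfrak{b}$, its action on $\mathfrak{g}/\mathfrak{b}$, icc, and the almost-centralisers $Z_i(\mathfrak{b})$, terminating in a contradiction with nilpotency. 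The dimension-$3$ and $4$ statements also rely on Lemma \ref{ConnFinAlmCen} for the soluble quotients/ideals. None of this structure is present in your sketch, so as written those two cases are unproven.

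Your $\dim(\mathfrak{g})=2$ outline is essentially correct and is close to the paper's argument (linearise the non-almost-trivial action of a one-dimensional abelian subring via Lemma \ref{LieNip} in the non-virtually-nilpotent case; use icc in the almost-trivial case).
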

Finally, we apply this result and the proof of \cite{deloro2023simple} to prove a complete classification of stable Lie algebras of finite dimension.\\
In the sixth section, we prove a linearization theorem for Lie rings in finite-dimensional theories. 
\begin{theorem}\label{Lie}
    Let $\mathfrak{g}$ be a Lie ring acting on $V$ a definable absolutely minimal $\mathfrak{g}$-module. Suppose that $\mathfrak{g}$ has an abelian ideal $\mathfrak{a}$ such that the action of $\mathfrak{a}$ is not almost trivial. Then, there exists $\mathfrak{g}_1$ a definable ideal in $\mathfrak{g}$ of finite index in $\mathfrak{a}$ and $V_1$ a finite $\mathfrak{g}$-module in $V$ such that $\mathfrak{a}/\mathfrak{g}_1\leq Z(\mathfrak{g}/\mathfrak{g}_1)$ and there exists a definable field $K$ such that $\mathfrak{a}/\mathfrak{g}_1$ embeds in $K\cdot Id$, $V/V_1$ is a $K$-vector field of finite dimension and $\mathfrak{g}/\mathfrak{g}_1$ acts $K$-linearly. 
\end{theorem}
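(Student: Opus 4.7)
The plan is to imitate the linearization theorem of Deloro--Ntisiri (stated earlier in the finite Morley rank context) while working systematically modulo finite quotients, using the ``almost'' centralizer and ``almost'' center introduced in section~4. Each place where the classical proof uses minimality, centrality, or triviality of an action, I replace it by its ``almost'' counterpart, and then pay a finite-index cost at the end by choosing the appropriate definable ideal $\mathfrak{g}_1$ of finite index in $\mathfrak{a}$ and the finite submodule $V_1 \leq V$.

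First I would apply the linearization theorem for abelian Lie rings of finite dimension (section~4, based on \cite{InvittiDim}). Since $\mathfrak{a}$ is abelian and does not act almost trivially on $V$, this produces a definable field $K$, a finite submodule $V_1^{(0)} \leq V$, and a definable subideal $\mathfrak{g}_1^{(0)} \leq \mathfrak{a}$ of finite index, such that $\mathfrak{a}/\mathfrak{g}_1^{(0)}$ embeds into $K\cdot\mathrm{Id}$ acting on $V/V_1^{(0)}$, and the latter is a $K$-vector space of finite dimension. Absolute minimality of $V$ is what allows us to conclude that $V/V_1^{(0)}$ is genuinely irreducible in the appropriate sense, so that a Schur-type argument produces the field.

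Next, $\mathfrak{g}_1^{(0)}$ is only an ideal of $\mathfrak{a}$ a priori. Because $\mathfrak{g}$ normalizes $\mathfrak{a}$ via $\mathrm{ad}$ and $\mathfrak{a}$ is abelian, $\mathfrak{g}$ acts on the finite set $\mathfrak{a}/\mathfrak{g}_1^{(0)}$; intersecting the finitely many $\mathrm{ad}(g)$-translates of $\mathfrak{g}_1^{(0)}$ (the process stabilizes by the descending chain condition on definable ideals of finite index available in a finite-dimensional theory) yields a definable ideal $\mathfrak{g}_1 \trianglelefteq \mathfrak{g}$ of finite index in $\mathfrak{a}$. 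Similarly one enlarges $V_1^{(0)}$ to a finite $\mathfrak{g}$-submodule $V_1$. For the centrality claim, I would use a Schur-type mechanism: for $g\in\mathfrak{g}$ and $a\in\mathfrak{a}$, the bracket $[g,a]$ lies in $\mathfrak{a}$, and its action on $V/V_1$ is obtained by differentiating the $\mathrm{ad}(g)$-action on the scalar action of $a$. Since $K$ is the essentially unique definable field acting irreducibly on $V/V_1$, $\mathrm{ad}(g)$ must fix the image of $a$ in $K\cdot\mathrm{Id}$, forcing $[g,a]$ to act trivially on $V/V_1$; hence $[g,a]\in\mathfrak{g}_1$ and $\mathfrak{a}/\mathfrak{g}_1 \leq Z(\mathfrak{g}/\mathfrak{g}_1)$.

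Finally, since $K$ is generated inside $\mathrm{End}(V/V_1)$ by the image of $\mathfrak{a}$ and this image is central in the image of $\mathfrak{g}$, every element of $\mathfrak{g}/\mathfrak{g}_1$ commutes with $K$, and therefore acts $K$-linearly on $V/V_1$. The main obstacle I anticipate is coordinating the finite-index bookkeeping across steps: each adjustment of $\mathfrak{g}_1$ or $V_1$ might in principle force one to redo the previous step. Termination of this iterative process will rest on the DCC-type behaviour of definable ideals of finite index provided by the finite-dimensional hypothesis, and on the absolute minimality of $V$, which prevents the successive finite submodules $V_1^{(k)}$ from growing without bound.
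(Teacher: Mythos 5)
There is a genuine gap, and it sits exactly at the heart of the theorem. Your plan first linearises the action of $\mathfrak{a}$ on $V$ and then deduces centrality of $\mathfrak{a}$ ``by a Schur-type mechanism''; the paper's proof must run in the opposite order, and for a reason. First, you cannot apply the abelian linearisation (Lemma \ref{MinLinAbe} / Corollary \ref{LinNVC}) to the pair $(\mathfrak{a},V)$ directly: $V$ is only absolutely $\mathfrak{g}$-minimal, and it may well contain infinite $\mathfrak{a}$-invariant subgroups of infinite index. The paper instead descends to a minimal $\mathfrak{a}$-invariant subgroup $W\leq V$, linearises there, and transports the information back to $V$ via the subgroups $S_i=\sum_{j\leq i}\binom{i}{j}h^jW$.

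Second, and more seriously, your centrality step is an assertion, not an argument. If $a\in\mathfrak{a}$ acts as a scalar $\lambda\in K$ on the relevant quotient, then $[g,a]$ acts as the commutator $[\eta,\lambda]$ in the endomorphism ring, where $\eta$ is induced by $g$; there is no a priori reason for this to vanish, because $\eta$ need not commute with $K$ --- it may induce a non-trivial additive self-map of $K$. Saying that ``$K$ is the essentially unique definable field acting irreducibly'' gives no control over this map. The actual content of the proof is that $\lambda\mapsto[\eta,\lambda]$ is a \emph{definable derivation} of the definable field $K=C_{\operatorname{DefEnd}(X)}(\mathfrak{a})$ built on the top quotient $X=S_k/S_{k-1}$, and that a finite-dimensional field admits no non-trivial definable derivation (Lemma \ref{DerTri}, which rests on perfection in characteristic $p$ and on the non-existence of an infinite definable algebraically closed subfield in characteristic $0$). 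Only after $\mathfrak{a}$ is known to be central does one get the field on all of $V$, as the fraction field of $\langle\mathfrak{a}\rangle$ (each non-zero $a\in\mathfrak{a}$ then has $\mathfrak{g}$-invariant kernel and image, hence acts as a monomorphism with image of finite index by absolute $\mathfrak{g}$-minimality). Your final paragraph and the finite-index bookkeeping are fine in spirit, but without the derivation argument the proof does not close.
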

Moreover, we also characterize the action of nilpotent Lie rings.\\
In the seventh section, we prove the existence of definable envelopes for soluble and nilpotent Lie sub rings in $\widetilde{\mathfrak{M}}_c$-Lie rings. This class of Lie rings, that has the same role of $\widetilde{\mathfrak{M}}_c$-groups, contains, for example, all Lie rings definable in a finite-dimensional theory or in a simple theory. This result extend the existence of definable envelopes for stable Lie rings proved in \cite{zamour2025quelques}.
\begin{theorem}
  Let $\mathfrak{g}$ be an hereditarily $\widetilde{\mathfrak{M}}_c$-Lie ring and $\mathfrak{h}$ a Lie subring. Then,
  \begin{itemize}
      \item if $\mathfrak{h}$ is soluble, it is contained in a definable soluble Lie subring $\mathfrak{h}_1$ such that $N_{\mathfrak{g}}(\mathfrak{h})\leq N_{\mathfrak{g}}(\mathfrak{h}_1)$;
      \item if $\mathfrak{h}$ is nilpotent, it is contained in a definable nilpotent Lie subring $\mathfrak{h}_1$ such that $N_{\mathfrak{g}}(\mathfrak{h})\leq N_{\mathfrak{g}}(\mathfrak{h}_1)$.
  \end{itemize}
\end{theorem}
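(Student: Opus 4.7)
The plan is to prove the two bullet points by parallel inductions, with a common abelian base case: induction on nilpotency class for the nilpotent statement, and induction on derived length for the soluble one. The hereditary nature of the $\widetilde{\mathfrak{M}}_c$ assumption is crucial, since it lets every step of the induction pass to definable quotients (which again will be $\widetilde{\mathfrak{M}}_c$), so the inductive hypothesis can be applied in smaller ambient Lie rings where the class or derived length has dropped.

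For the abelian base case, I would imitate the classical group-theoretic envelope construction using almost centralizers. Starting with abelian $\mathfrak{h}\leq\mathfrak{g}$, the almost centralizer $\widetilde{C}_{\mathfrak{g}}(\mathfrak{h})$ is a definable Lie subring by the $\widetilde{\mathfrak{M}}_c$ hypothesis, and it is invariant under $N_{\mathfrak{g}}(\mathfrak{h})$ because conjugation by a normalizing element permutes $\mathfrak{h}$. Since $\mathfrak{h}$ is abelian, it is almost centralized by itself, so $\mathfrak{h}$ sits in the almost center $\widetilde{Z}(\widetilde{C}_{\mathfrak{g}}(\mathfrak{h}))$, which is a definable $N_{\mathfrak{g}}(\mathfrak{h})$-invariant almost abelian subring. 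Using the section~4 machinery relating $\widetilde{C}$ and $\widetilde{Z}$ to actual centralizers and centers, one extracts an honest definable abelian Lie subring $\mathfrak{h}_1\supseteq \mathfrak{h}$ that is still normalized by $N_{\mathfrak{g}}(\mathfrak{h})$, possibly by intersecting with suitable $N_{\mathfrak{g}}(\mathfrak{h})$-translates to make the construction canonical.

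For the nilpotent inductive step at class $c\geq 2$, the center $Z(\mathfrak{h})$ is abelian, so the base case supplies a definable abelian envelope $\mathfrak{z}_1\supseteq Z(\mathfrak{h})$ with $N_{\mathfrak{g}}(\mathfrak{h})\leq N_{\mathfrak{g}}(Z(\mathfrak{h}))\leq N_{\mathfrak{g}}(\mathfrak{z}_1)$. Replacing $\mathfrak{z}_1$ by the definable ideal it generates inside $N_{\mathfrak{g}}(\mathfrak{h})$, I pass to the hereditarily $\widetilde{\mathfrak{M}}_c$-Lie ring $N_{\mathfrak{g}}(\mathfrak{h})/\mathfrak{z}_1$, in which the image of $\mathfrak{h}$ is nilpotent of class $<c$; by induction, it has a definable nilpotent envelope there, and its preimage in $\mathfrak{g}$ is the desired $\mathfrak{h}_1$ (nilpotent because it is a central extension of a nilpotent Lie ring by an abelian one). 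The soluble case is entirely analogous, using the derived ideal $[\mathfrak{h},\mathfrak{h}]$ in place of $Z(\mathfrak{h})$.

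The main obstacle is twofold and concentrated in the base case: first, passing from an almost abelian definable envelope to an honest abelian one, which forces a careful use of the section~4 results identifying the error as a bounded (finite) defect; second, ensuring that the normalizer condition $N_{\mathfrak{g}}(\mathfrak{h})\leq N_{\mathfrak{g}}(\mathfrak{h}_1)$ is preserved at every step of the induction. The latter is guaranteed by making every closure operation canonical over $N_{\mathfrak{g}}(\mathfrak{h})$, for instance by replacing any constructed subring with the intersection of its $N_{\mathfrak{g}}(\mathfrak{h})$-translates before feeding it into the next inductive step.
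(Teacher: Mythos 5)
Your overall architecture (induction on class, abelian base case via almost centralisers, quotient and re-induct) is the same as the paper's, but there are two genuine gaps, both concentrated where you suspected. First, the base case as you state it is unachievable: an abelian Lie subring of a hereditarily $\widetilde{\mathfrak{M}}_c$-Lie ring is in general \emph{not} contained in a definable abelian Lie subring. What the machinery yields is a definable \emph{almost abelian} envelope (built from $\widetilde{Z}(\widetilde{C}_{\mathfrak{g}}(\widetilde{C}_{\mathfrak{g}}(C)))$, where $C$ is a finite intersection of centralisers of elements of $\mathfrak{h}$ of minimal index up to commensurability --- note that your $\widetilde{C}_{\mathfrak{g}}(\mathfrak{h})$ for non-definable $\mathfrak{h}$ is not known to be definable, which is why the paper works with such a $C$). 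An almost abelian definable subring only has \emph{finite} derived subring, so the honest envelope one can extract is nilpotent of class $2$, not abelian. This is why the paper carries ``almost nilpotent'' and ``almost soluble'' envelopes through the entire induction and only converts to honest nilpotent/soluble ones at the very end (Lemmas \ref{NilFromAlmNil} and \ref{SolFromAlmSol}); your plan to keep everything honest at each stage cannot be executed.

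Second, and more seriously, your nilpotent inductive step asserts that the preimage of a nilpotent envelope under the quotient by the abelian ideal $\mathfrak{z}_1$ is ``nilpotent because it is a central extension.'' Centrality is exactly what is in question: an extension of a nilpotent Lie ring by an abelian ideal is only soluble in general (the Borel of $\operatorname{sl}_2$ is abelian-by-abelian and not nilpotent), and nothing in your construction forces $\mathfrak{z}_1$ to be (almost) central in the lifted envelope. The paper engineers this by setting $\mathfrak{b}_1=\widetilde{C}_{\mathfrak{g}}(\widetilde{C}_{\mathfrak{g}}(C))$, $\mathfrak{b}_2=\widetilde{Z}(\mathfrak{b}_1)$, and then \emph{intersecting} the envelope $\mathfrak{b}_3$ obtained in $N_{\mathfrak{g}}(\mathfrak{b}_2)/\mathfrak{b}_2$ with $\mathfrak{b}_1$, so that $\mathfrak{b}_2\leq\widetilde{Z}(\mathfrak{b}_3\cap\mathfrak{b}_1)$ by construction; without this step the class does not close up. (Your soluble induction via $[\mathfrak{h},\mathfrak{h}]$ is structurally sound, since soluble-by-soluble is soluble, modulo the base-case issue above and the fact that $N_{\mathfrak{g}}(\mathfrak{h})$ is not definable, so quotients should be taken by the definable normaliser $N_{\mathfrak{g}}(\mathfrak{b}_1)$ of the definable envelope rather than inside $N_{\mathfrak{g}}(\mathfrak{h})$.)
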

A consequence of this result is the definability of the Radical and of the Fitting ideal, that are respectively soluble and nilpotent, for a finite-dimensional Lie ring.
\section{Preliminaries}
In this section, we introduce the basic notion necessary for this article.
\subsection{Finite-dimensional groups}
We start defining some chain conditions on definable subgroups.
\begin{defn}
\begin{itemize}
    \item A definable group $G$ is said to have the \emph{DCC (descending chain condition on definable subgroups)} if it does not admit a strictly descending infinite chain of definable subgroups.
    \item A definable group $G$ is said to have the \emph{ucc (uniform descending chain condition)} if it does not admit an infinite strictly descending chain of uniformly definable subgroups.
    \item  A definable group $G$ is said to have the \emph{$\omega$-DCC (infinite descending chain condition)} if does not admit an infinite strictly descending chain of definable subgroups $\{G_i\}_{i<\omega}$ such that $|G_i: G_{i+1}|\geq \omega$.
\end{itemize}
\end{defn}
Groups definable in a finite-dimensional theory respect the $\omega$-DCC (Corollary 2.4 of \cite{wagner2020dimensional}) and ucc. The latter is a consequence of the following Lemma.
\begin{lemma}\label{boundedind}
    Let $G$ be a definable group of finite dimension and $\{H_i\}_{i\in I}$ a family of uniformly definable subgroups. Then, there exists $n,d<\omega$ such that there is no $J=\{j_1,...,j_n\}\subseteq I$ of cardinality $n$ such that $|\bigcap_{i=1}^kH_{j_i}:\bigcap_{i=1}^{k+1} H_{j_i}|$ has index greater than $d$ for any $k\leq n-1$.
\end{lemma}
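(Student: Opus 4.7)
The plan is a compactness argument combined with the $\omega$-DCC (Corollary 2.4 of \cite{wagner2020dimensional}). I would proceed by contradiction: assume that no pair $(n,d)\in\omega\times\omega$ satisfies the conclusion. Fix a formula $\phi(x,\bar y)$ and parameters $(\bar a_i)_{i\in I}$ with $H_i=\phi(G,\bar a_i)$. Under the negation of the conclusion, for every $n,d<\omega$ one can find tuples $\bar b_1,\dots,\bar b_n$ among the $\bar a_i$ such that each $\phi(G,\bar b_k)$ is a subgroup and each successive intersection $\bigcap_{i\leq k+1}\phi(G,\bar b_i)$ has index greater than $d$ in $\bigcap_{i\leq k}\phi(G,\bar b_i)$.

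The key observation is that the assertion $[H:K]>d$ is first-order and uniform in the parameters defining $H$ and $K$: it is witnessed by the existence of $d+1$ elements of $H$ pairwise inequivalent modulo $K$. Consequently, the partial type in countably many fresh tuples $(\bar y_k)_{k<\omega}$ stating that each $\phi(x,\bar y_k)$ defines a subgroup and that, for every $k<\omega$ and every $d<\omega$,
\[
\bigl[{\textstyle\bigcap_{i\leq k}}\phi(G,\bar y_i):{\textstyle\bigcap_{i\leq k+1}}\phi(G,\bar y_i)\bigr]>d,
\]
is finitely satisfiable by the previous paragraph, hence consistent. Realising it in a sufficiently saturated elementary extension produces an infinite strictly descending chain of definable subgroups in which every successive index exceeds every natural number, hence is infinite. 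This contradicts the $\omega$-DCC, which holds in every model of $T$ since finite-dimensionality is a property of the theory.

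I do not foresee any serious obstacle here: this is a standard Baldwin--Saxl style compactness play. The only point that must be checked with a little care is the first-order expressibility, uniform in the parameters, of the condition ``index greater than $d$'', which is immediate from the explicit witness above. Everything else reduces to invoking the $\omega$-DCC in the saturated extension.
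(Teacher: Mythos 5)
Your argument is correct and is essentially the paper's: the same compactness step converts ``index unbounded over $d$'' into ``index infinite'', and the contradiction comes from the dimension. The only cosmetic difference is that the paper truncates the chain at length $\dim(G)+2$ and reads off the contradiction directly from additivity of dimension, whereas you build an infinite chain and quote the $\omega$-DCC; the two are interchangeable, since the $\omega$-DCC is itself proved by exactly that dimension count.
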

\begin{proof}
Let $n=\dim(G)$. We may work in a sufficiently saturated structure $\mathfrak{M}$ in which $G$ is definable and assume that $\phi(x,y)$ is the formula defining the family. Assume, for a contradiction, that the conclusion is false. Then, for $N=n+2$ and for any $k\in \omega$, there exist $g_1,...,g_N$ such that $|\bigcap_{j\leq k-1} \phi(\mathfrak{M},g_j)/\bigcap_{j\leq k} \phi(\mathfrak{M},g_j)|\geq k$. Therefore, the partial type given by the formulas 
$$\exists a^1_1,...,a^1_k,...,a^N_1,...,a_k^N: a^i_j\in \bigcap_{j=1}^i \phi(\mathfrak{M},x_j)\wedge a^i_j{a^i_k}^{-1}\not\in \phi(\mathfrak{M},x_j)\wedge \forall_{i\leq N}\ \phi(\mathfrak{M},x_i)\leq G(\mathfrak{M})$$
for all $k<\omega$, is finitely satisfable. By compactness and saturation, there exist subgroups $\{H_i\}_{i\leq N}$ such that $|\bigcap_{i<j}H_i/\bigcap_{i\leq j}H_i|$ is infinite for every $i$. This would imply that the dimension of $G$ is strictly greater than $n$, a contradiction.
\end{proof}

If we apply these results to the centralisers of elements in $G$, we obtain that a finite-dimensional definable group has the hereditarily $\widetilde{\mathfrak{M}}_c$-property.
\begin{defn}
    A group $G$ is \emph{hereditarily-$\widetilde{\mathfrak{M}}_c$} if for any definable subgroups $H,N$, such that $N$ is normalised by $H$, there exist natural numbers $n_{HN}$ and $d_{HN}$ such that any sequence of centralisers 
    $$C_H(a_0/N)\geq C_H(a_0,a_1/N)\geq...\geq C_H(a_0,a_1,...,a_n/N)\geq...$$
    with each centraliser of index at least $d_{HN}$ in the previous one, has length at most $n_{HN}$.
\end{defn}
The notions of almost containment and commensurability are fundamental in the study of groups definable in finite-dimensional theories.
\begin{defn}
\begin{itemize}
    \item  Let $G$ be a group and $A, B\leq G$ subgroups. $A$ \emph{almost contains} $B$, denoted $A\apprge B$, if $|B:B\cap A|$ is finite.
    \item  If $A\apprle B$ and $B\apprle A$, $A$ and $B$ are \emph{commensurable} and we denote this with $A\sim B$. $\sim$ is clearly an equivalence relation on the class of definable subgroups of $G$.
    \item Given a family of subgroups $\{G_i\}_{i\in I}$ in $G$, we say that the family $\{G_i\}_{i\in I}$ is \emph{uniformly commensurable} if there exists $n<\omega$ such that $|G_i/G_i\cap G_j|\leq n$ for every $i,j\in I$.
\end{itemize}
  \end{defn}
 An important theorem about uniformly commensurable groups is Schlichting's theorem \cite[Theorem 4.2.4]{wagner2000simple}.
\begin{theorem}
    Let $G$ be a group and $\{G_i\}_{i\in I}$ a uniformly commensurable family of subgroups in $G$. Then, there exists a subgroup $H$ in $G$ commensurable with every $G_i$ and invariant under all the automorphisms that fix set-wise the family $\{G_i\}_{i\in I}$. \\
    Moreover, if $G$ is definable and $\{G_i\}_{i\in I}$ is an uniformly commensurable family of definable subgroup in $G$, also $H$ is definable.
\end{theorem}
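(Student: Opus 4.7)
The plan is to apply the classical ``Schlichting trick'', producing $H$ as an explicit finite intersection inside a single $G_{i_0}$ which then turns out to lie inside every member of the family.

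Fix any $i_0\in I$ and, for each finite $F\subseteq I$, set $K(F):=G_{i_0}\cap\bigcap_{i\in F}G_i$. Let $n$ be the uniform commensurability bound, so $|G_{i_0}:G_{i_0}\cap G_i|\leq n$ for every $i$. Using the standard inequality $|A:A\cap B\cap C|\leq |A:A\cap B|\cdot|A:A\cap C|$ inductively, one sees that $|G_{i_0}:K(F)|\leq n^{|F|}<\infty$. Hence the set $\{|G_{i_0}:K(F)|:F\subseteq I\text{ finite}\}$ is a nonempty set of positive integers and admits a minimum $N$; pick $F^*$ realising it and set $H:=K(F^*)$.

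The key claim is that $H\leq G_j$ for every $j\in I$. Indeed, for any $j$ the subgroup $K(F^*\cup\{j\})=H\cap G_j$ is again of the required form, so $|G_{i_0}:H\cap G_j|\geq N$ by minimality of $N$; combined with $H\cap G_j\leq H$ and $|G_{i_0}:H|=N$, this forces $H\cap G_j=H$, that is $H\leq G_j$. It then follows that $|G_j:H|\leq |G_j:G_j\cap G_{i_0}|\cdot|G_{i_0}:H|\leq n\cdot N$, so $H$ is (uniformly) commensurable with every $G_j$.

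For the invariance statement, any automorphism $\sigma$ stabilising $\{G_i\}_{i\in I}$ setwise permutes the family and thus preserves $\bigcap_{i\in I}G_i$; but the previous claim shows that $H=\bigcap_{i\in I}G_i$, so $H$ is $\sigma$-invariant. For the definability statement, $H=K(F^*)$ is a \emph{finite} intersection of definable subgroups and hence definable — the identification $H=\bigcap_{i\in I}G_i$ is used only to extract the invariance. The only real subtlety is recognising that the minimum $N$ is attained, which is automatic since it is a minimum over positive integers, and that this attainment is exactly what forces the ``swallowing'' property $H\leq G_j$; everything else is bookkeeping, and this constructiveness is precisely what lets the theorem preserve definability.
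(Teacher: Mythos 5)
The paper does not prove this statement; it is quoted as Schlichting's theorem from Wagner's \emph{Simple theories} (Theorem 4.2.4 there), so your proof has to stand on its own. It does not: the central ``swallowing'' claim is false, and the optimisation that is supposed to produce it is set up backwards. You take $N$ to be the \emph{minimum} of $|G_{i_0}:K(F)|$ over finite $F$; since enlarging $F$ can only increase this index, the minimum is attained at $F=\emptyset$, giving $H=G_{i_0}$ and $N=|G_{i_0}:G_{i_0}|=1$. Your deduction then reads: minimality gives $|G_{i_0}:H\cap G_j|\geq N$, and $H\cap G_j\leq H$ also gives $|G_{i_0}:H\cap G_j|\geq |G_{i_0}:H|=N$ --- two inequalities in the \emph{same} direction, from which $H\cap G_j=H$ does not follow. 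What you actually need is the reverse bound $|G_{i_0}:H\cap G_j|\leq N$, i.e.\ you would have to take a \emph{maximum}.

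But the maximum is not attained in general, and this is exactly where the real content of Schlichting's theorem lies. Take $G=(\mathbb{Z}/2)^{(\omega)}$ and let $\{G_i\}$ be the family of all hyperplanes: this is uniformly commensurable with bound $n=2$, yet $|G_{i_0}:K(F)|$ grows like $2^{|F|}$ for independent hyperplanes, and $\bigcap_{i\in I}G_i$ has \emph{infinite} index. So no finite intersection $K(F^*)$ can equal the full intersection, the identity $H=\bigcap_{i\in I}G_i$ on which your invariance argument rests is false, and more fundamentally no subgroup contained in all the $G_i$ can be commensurable with them. The invariant commensurable subgroup promised by the theorem (here, $G$ itself works) need not sit inside the family members, which is why the genuine proof cannot be a pure intersection argument: Wagner's proof works with a carefully chosen invariant family of finite intersections and takes the subgroup they \emph{generate} (equivalently, optimises a two-sided commensurability bound rather than a one-sided index), and that extra step is precisely what your argument is missing.
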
 
  The notion of almost direct sum is central in the study of connected Lie rings.
 \begin{defn}
   Let $G$ be a group and $A,B$ two subgroups such that $A\leq N_G(B)$. The sum $A+B$ is \emph{almost direct} if $A\cap B$ is finite. We denote $A\widetilde{\oplus}B$ the almost direct sum of $A$ and $B$.
  \end{defn}
  We define the notion of isogenicity.
  \begin{defn}
    Given two groups $G,H$, they are \emph{isogenous} if there exists an isomorphism
    $$\phi:A/B\mapsto A_1/B_1$$
    with $A,A_1$ subgroups of finite index in $G,H$ respectively and $B,B_1$ finite subgroups in $G,H$ respectively. An \emph{isogeny} $\phi$ from $G$ to $H$ is an homomorphism with finite kernel and image of finite index. 
\end{defn}
An easy result is the following, which we will use without reference.
\begin{lemma}
    Let $G,H$ be two definable isogenous groups. Then, $G$ is virtually connected iff $H$ is virtually connected.
\end{lemma}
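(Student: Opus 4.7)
The plan is to factor the isogeny between $G$ and $H$ into a short chain of elementary operations, and show that virtual connectedness is preserved by each of them. By the definition of isogeny, we have definable subgroups $A\leq G$ and $A_1\leq H$ of finite index, finite normal subgroups $B\trianglelefteq A$ and $B_1\trianglelefteq A_1$, and a definable isomorphism $A/B\cong A_1/B_1$. So it suffices to check that virtual connectedness is preserved under (i) passing between $G$ and a definable subgroup of finite index, and (ii) passing between $A$ and $A/B$ when $B$ is a finite normal subgroup. Transferring across the isomorphism $A/B\cong A_1/B_1$ is immediate since connectedness is preserved under definable isomorphisms.

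For (i), if $A\leq G$ has finite index, then $G^0\cap A$ is a definable subgroup of finite index in $G^0$. By definition of the connected component, $G^0$ admits no proper definable subgroup of finite index, so $G^0\cap A=G^0$, i.e. $G^0\leq A$. Hence $G^0$ is also the connected component of $A$, and it has finite index in $G$ if and only if it has finite index in $A$. For (ii), let $\pi:A\twoheadrightarrow A/B$. If $A$ is virtually connected, then $\pi(A^0)=A^0B/B$ has finite index in $A/B$; moreover it is connected, since any definable finite-index subgroup of $\pi(A^0)$ would pull back to a definable finite-index subgroup of $A^0$. Conversely, suppose $A/B$ is virtually connected and let $C/B$ be a connected definable subgroup of finite index. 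Then $C$ has finite index in $A$. Since $C^0B/B$ is a definable finite-index subgroup of the connected group $C/B$, we get $C^0B=C$, whence $C/C^0\cong B/(B\cap C^0)$ is finite, so $C^0$ has finite index in $C$ and thus in $A$.

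Chaining these two preservation results through the sequence $G\supseteq A\twoheadrightarrow A/B\cong A_1/B_1\twoheadleftarrow A_1\subseteq H$ yields the lemma. The only mildly delicate point is verifying that quotients of connected groups by finite normal subgroups remain connected; this relies on the pullback argument and the fact that in our setting the connected component is itself definable (an intersection stabilising after finitely many steps), which holds in any finite-dimensional theory by the $\omega$-DCC and the uniform chain condition established via Lemma~\ref{boundedind}. Everything else is bookkeeping.
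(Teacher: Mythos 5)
Your decomposition of the isogeny into the two elementary steps (finite-index subgroup, quotient by a finite normal subgroup) is the right strategy and matches the spirit of the paper's proof, but the key step --- passing from ``$A/B$ virtually connected'' back to ``$A$ virtually connected'' --- is circular as written. You form $C^0B/B$ and conclude $C^0B=C$; this presupposes that $C^0$ exists as a definable subgroup of finite index in $C$, which is exactly what you are trying to prove. The patch you offer in the last paragraph, namely that ``the connected component is itself definable\dots in any finite-dimensional theory by the $\omega$-DCC and the uniform chain condition,'' is false: the $\omega$-DCC only forbids descending chains in which the successive indices are \emph{infinite}, and the ucc only applies to uniformly definable families, so neither rules out an infinite strictly descending chain of finite-index definable subgroups (e.g.\ the chain $n!\,\mathbb{Z}$ in $(\mathbb{Z},+)$, a group of finite $U$-rank). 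If connected components always existed, every finite-dimensional group would be virtually connected and the lemma, together with the paper's repeated case distinction between virtually connected and non-virtually-connected Lie rings, would be vacuous.

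The step can be repaired without invoking $C^0$: let $C/B$ be connected definable of finite index in $A/B$ and set $N=|B|$. For any definable subgroup $D$ of finite index in $C$, the image $DB/B$ has finite index in the connected group $C/B$, hence $DB=C$ and $|C:D|=|B:B\cap D|\leq N$. So the finite indices of definable subgroups of $C$ are uniformly bounded by $N$; a definable subgroup $D\leq C$ realising the maximal such index is then connected (any proper definable finite-index subgroup of $D$ would have strictly larger index in $C$), and $C$, hence $A$ and $G$, is virtually connected. This index-bounding argument is precisely what the paper's proof runs, in contrapositive form: it picks a definable subgroup $G_1$ of index $M>|B|$ in a non-virtually-connected $G$ and derives a contradiction with the connectedness of $A/B\simeq H/B_1$.
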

\begin{proof}
    Assume, for a contradiction, that $G$ is non virtually connected, while $H$ is. We can assume $H$ is connected. Let $B_1$ the finite subgroup in $H$, $A$ and $B$ definable subgroups in $G$ respectively of finite index and finite such that
    $$\phi:A/B\mapsto H/B_1$$
    is an isomorphism and let $N=|B|$. Since $G$ is not virtually connected, there exists a definable subgroup $G_1$ of finite index $M>N$. Then $|A:G_1\cap A|=M$ and so $G_1\cap A/B$ is a proper definable subgroup of finite index. Since $A/B$ is isomorphic to $H/B_1$, which is connected, also $A/B$ is connected, a contradiction.
\end{proof}
Finally, we proves that a field definable in a finite-dimensional theory is perfect.
\begin{lemma}\label{perfection}
   Let $K$ be a field definable in a finite-dimensional theory. Then, $K$ is perfect.
\end{lemma}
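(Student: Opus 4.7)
The plan is to argue by contradiction: suppose $K$ is a definable field of finite dimension that is not perfect. Finite fields and fields of characteristic $0$ are automatically perfect, so we may assume $K$ is infinite, $\operatorname{char}(K) = p > 0$, and in particular $\dim(K) \geq 1$. The strategy is to produce a definable subfield whose dimension is forced to be $p \cdot \dim(K)$, contradicting its being contained in $K$.

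First, I would note that the Frobenius endomorphism $\phi : K \to K$, $x \mapsto x^p$, is a definable injective ring homomorphism, so its image $K^p$ is a definable subfield of $K$ and $\phi$ is a definable bijection from $K$ onto $K^p$. Applying the last two dimension axioms to $\phi$ (all fibres are singletons, hence of dimension $0$) yields $\dim(K) = \dim(K^p)$.

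Next, since $K$ is not perfect, I can pick $\alpha \in K \setminus K^p$. As $\alpha^p \in K^p$, the element $\alpha$ is a root of $x^p - \alpha^p \in K^p[x]$, and its minimal polynomial over $K^p$ has degree strictly greater than $1$ yet divides $p$; hence it has degree exactly $p$. In particular $\{1,\alpha,\ldots,\alpha^{p-1}\}$ is a $K^p$-basis of the subfield $K^p(\alpha) \subseteq K$, and the definable map
\begin{equation*}
f : (K^p)^p \longrightarrow K, \qquad (a_0,\ldots,a_{p-1}) \longmapsto \sum_{i=0}^{p-1} a_i \alpha^i,
\end{equation*}
is a bijection onto the definable set $K^p(\alpha)$. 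Iterating the product formula (projection onto a factor has constant fibres of dimension $\dim(K^p)$) gives $\dim((K^p)^p) = p \cdot \dim(K^p) = p \cdot \dim(K)$, and since $f$ is a definable bijection this equals $\dim(K^p(\alpha))$. But $K^p(\alpha) \subseteq K$ forces $p \cdot \dim(K) \leq \dim(K)$, which is absurd because $p \geq 2$ and $\dim(K) \geq 1$.

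The argument is essentially self-contained; the only real point of attention is to verify that every set involved — in particular $K^p$ and $K^p(\alpha)$ — is genuinely definable, and that the dimension axioms are applied to maps with constant-dimensional fibres so that equality, rather than a mere inequality, propagates through the computation.
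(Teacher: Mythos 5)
Your proof is correct and is built on the same core observation as the paper's: the Frobenius $x\mapsto x^p$ is a definable bijection onto $K^p$ with finite (singleton) fibres, so the two fibration axioms force $\dim(K)=\dim(K^p)$. Where you diverge is in how the contradiction from $K^p\subsetneq K$ is extracted. The paper invokes Wagner's Proposition~3.3 as a black box, which gives the multiplicativity $\dim(K)=\dim(K^p)\cdot\operatorname{lin.dim}_{K^p}(K)$ for the full extension and then concludes the linear dimension is $1$. You instead stay entirely inside the axioms: you pick $\alpha\in K\setminus K^p$, observe that $[K^p(\alpha):K^p]=p$ (here it is worth noting explicitly that $x^p-\alpha^p=(x-\alpha)^p$ in $\overline{K}$ and that any proper monic factor would force $\alpha\in K^p$), exhibit the definable bijection $(K^p)^p\to K^p(\alpha)$, and compute $\dim(K^p(\alpha))=p\,\dim(K)$ by iterated fibration, contradicting monotonicity of dimension under inclusion. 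Your route is more elementary and self-contained (no external citation needed, and you never need to know that $[K:K^p]$ is finite a priori — you only touch a degree-$p$ sub-extension), at the cost of a slightly longer computation; the paper's version is shorter but leans on a nontrivial auxiliary result.
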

\begin{proof}
   In characteristic $0$, we have nothing to prove. Assume $\operatorname{char}(K)=p$. Then, $K^p$ is a definable subfield of $K$. By \cite[Proposition 3.3]{wagner2020dimensional}, $\dim(K)=\dim(K^p)\operatorname{lin.dim}_{K^p}(K)$. The homomorphism $^p:K\mapsto K$ that sends $k$ in $k^p$ has finite kernel and the image is $K^p$. By fibration, $\dim(K)=\dim(K^p)$. This implies that $\operatorname{lin.dim}_{K^p}(K)=1$ \hbox{i.e.} $K=K^p$.
\end{proof}
\subsection{Lie rings}
We define Lie rings and introduce their basic properties.
\begin{defn}
    A \emph{Lie ring} $(\mathfrak{g},+,[,])$ is an abelian group $(\mathfrak{g},+)$ with a map 
    $$[\_,\_]:\mathfrak{g}\times \mathfrak{g}\mapsto \mathfrak{g}$$
    such that $[\_,\_]$ is bilinear, antisymmetric and $[g_1,[g_2,g_3]]+[g_3,[g_1,g_2]]+[g_2,[g_3,g_1]]=0$ for all $g_1,g_2,g_3\in \mathfrak{g}$. $[\_,\_]$ is called the \emph{(Lie) bracket} on $\mathfrak{g}$.\\
    A subgroup $\mathfrak{c}$ is a \emph{Lie subring}, denoted $\mathfrak{c}\leq \mathfrak{g}$, if $\mathfrak{c}$ is closed for $[\_,\_]$. A Lie subring $\mathfrak{c}$ is an \emph{ideal} in $\mathfrak{g}$ if, for all $g\in \mathfrak{g}$, $[g,\mathfrak{c}]$ is contained in $\mathfrak{c}$.\\
    A Lie ring $\mathfrak{g}$ is \emph{simple} if $\mathfrak{g}$ has no non trivial ideal (\hbox{i.e.} different from $\{0\}$ and $\mathfrak{g}$). A definable Lie ring $\mathfrak{g}$ is \emph{definably simple} if it has no definable proper ideals.
\end{defn}
It is easy to verify that the set $g[p]=\{g\in \mathfrak{g}:\ g^p=0\}$ for $p$ a prime is an ideal. Therefore, a simple Lie ring has elements all of the same order (equal to a prime or infinite). This is called the characteristic of $\mathfrak{g}$, denoted by $\operatorname{char}(\mathfrak{g})$.
\begin{defn}
    Let $\mathfrak{g}$ be a Lie ring. If $\mathfrak{g}=\mathfrak{g}[p]$, we say that $\mathfrak{g}$ has \emph{characteristic} $p$. If $(\mathfrak{g},+)$ is torsion-free, $\mathfrak{g}$ is said to have \emph{characteristic} $0$.
\end{defn}
The following lemma is fundamental in the study of Lie rings of dimension $1$ (and more in general of definably minimal Lie rings) and the proof can be found in \cite{rosengarten1991aleph}.
\begin{lemma}\label{FinSubAlg}
    Let $\mathfrak{g}$ be a Lie algebra over $\mathbb{F}_p$ of finite dimension with $p>3$ a prime. Assume that the $\mathbb{F}_p$-dimensions of centralisers of an element in $\mathfrak{g}$ are bounded by $d$. Then, $\operatorname{dim}_{\mathbb{F}_p}(\mathfrak{g})<p^{d+1}d+d$ 
\end{lemma}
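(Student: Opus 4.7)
The plan is to extract structure from a \emph{regular} element $x\in\mathfrak{g}$, namely one minimizing $\dim_{\mathbb{F}_p} C_{\mathfrak{g}}(x)$ (so that $\dim C_{\mathfrak{g}}(x)\le d$), and to control both the nilpotent and the semisimple parts of the action of $\ad(x)$ on $\mathfrak{g}$. Concretely, I would work with the Fitting decomposition $\mathfrak{g}=\mathfrak{n}(x)\oplus\mathfrak{r}(x)$, where $\mathfrak{n}(x)=\bigcup_{k}\ker(\ad(x)^{k})$ is the generalized $0$-eigenspace of $\ad(x)$ (a Lie subalgebra containing $x$) and $\mathfrak{r}(x)$ is the complementary $\ad(x)$-invariant subspace on which $\ad(x)$ acts invertibly.

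For $\mathfrak{n}(x)$ I would exploit the ascending kernel filtration $\ker(\ad(x))\subseteq\ker(\ad(x)^{2})\subseteq\dots\subseteq\mathfrak{n}(x)$: since $\ad(x)$ induces an injection of each successive quotient into the preceding one, every quotient has $\mathbb{F}_{p}$-dimension at most $\dim C_{\mathfrak{g}}(x)\le d$. The centralizer hypothesis applied inside the Lie subalgebra $\mathfrak{n}(x)$ (which inherits the bound on centralizers of its own elements) then limits the length of this filtration, yielding an explicit bound on $\dim\mathfrak{n}(x)$.

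For $\mathfrak{r}(x)$ I would extend scalars to $\overline{\mathbb{F}_{p}}$ and decompose $\mathfrak{r}(x)\otimes\overline{\mathbb{F}_{p}}=\bigoplus_{\lambda\neq 0}V_{\lambda}$ into generalized $\lambda$-eigenspaces. The Frobenius of $\overline{\mathbb{F}_{p}}/\mathbb{F}_{p}$ permutes the $V_{\lambda}$, grouping eigenvalues into Galois orbits contained in some finite $\mathbb{F}_{p^{k}}$. The bracket grading $[V_{\alpha},V_{\beta}]\subseteq V_{\alpha+\beta}$ realizes each $V_{\lambda}$ as a $V_{0}$-module; applying the centralizer bound to an eigenvector in $V_{\lambda}$ restricts each weight space to dimension at most $d$. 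Viewing the weights as $\overline{\mathbb{F}_{p}}$-valued functionals on the almost-Cartan $C_{\mathfrak{g}}(x)$ of dimension at most $d$ bounds the number of nonzero weights by $p^{d+1}-1$, where the extra factor of $p$ accounts for the Galois spread needed to pass to the algebraic closure. Combining yields $\dim\mathfrak{g}\le d+(p^{d+1}-1)d<p^{d+1}d+d$.

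The principal obstacle lies in the third step: simultaneously controlling the individual weight-space dimensions and the total number of weights. The former requires tracking how $V_{0}$ acts on each $V_{\lambda}$ using the Lie structure together with the centralizer bound applied to carefully chosen eigenvectors, while the latter requires a precise accounting of the Galois-theoretic spread of eigenvalues. This is where the hypothesis $p>3$ enters in an essential way, excluding the sporadic small-characteristic modules that would otherwise violate the uniform bounds, in the same spirit as the bad-characteristic exceptions that appear elsewhere in the paper.
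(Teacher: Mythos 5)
The paper gives no proof of this lemma; it cites Rosengarten's thesis, so there is no in-text argument to compare against. Your sketch has the right opening moves (extremal element, Fitting decomposition of $\ad(x)$, weight grading), but it is incomplete precisely at the three places where the estimates have to come from.

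The most serious problem is the bound on $\mathfrak{n}(x)$. Your final count $d+(p^{d+1}-1)d$ tacitly uses $\dim\mathfrak{n}(x)\le d$, and the justification offered---that the centralizer bound ``limits the length of this filtration''---does not yield this and in fact cannot, because the claim fails for an element that merely minimizes $\dim C_{\mathfrak{g}}(x)$. In $\mathfrak{g}=\operatorname{sl}_2(\mathbb{F}_p)$ with $p>3$ every nonzero element has one-dimensional centralizer, so $d=1$; yet for the regular nilpotent $e$ one has $\mathfrak{n}(e)=\mathfrak{g}$ of dimension $3>d$, even though $\dim C_{\mathfrak{g}}(e)=1$ is minimal. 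What you actually want is an $x$ for which $\mathfrak{n}(x)$ is nilpotent (then a central element of $\mathfrak{n}(x)$ immediately gives $\dim\mathfrak{n}(x)\le d$), but over a finite base field the choice of such an $x$ and the nilpotency of its Fitting null component are nontrivial and you do not address them. The two remaining estimates are asserted rather than argued: applying the hypothesis to an eigenvector $v\in V_{\lambda}$ bounds $\dim C_{\mathfrak{g}}(v)$, not $\dim V_{\lambda}$, and no bridge between the two is supplied; and the $\lambda$ are scalars in $\overline{\mathbb{F}_p}$, not linear functionals on $C_{\mathfrak{g}}(x)$, so counting functionals on a $d$-dimensional space does not count the weights, and the Galois bookkeeping promised in the phrase ``extra factor of $p$'' is left entirely to the reader. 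Finally, a tool the paper itself uses shortly after this lemma (in the dimension-one analysis of Section 3.1), and which is the natural lever for the missing length bound, goes unmentioned: each $\ker(\ad(x)^{p^j})$ is a Lie subalgebra, since $\binom{p^j}{i}\equiv 0\pmod p$ for $0<i<p^j$, and the resulting nested chain of Lie subalgebras, to which the centralizer hypothesis applies internally, is almost certainly what Rosengarten's argument turns on.
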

\section{Connected simple Lie rings}
In this section, we prove that, given $\mathfrak{g}$ a simple definable non abelian connected Lie ring, then $(\mathfrak{g},+)$ has the DCC.\\
We need some preliminary lemmas.
\begin{lemma}\label{virtconn}
    Let $G$ be a connected definable abelian group, $\{H_i\}_{i\leq n}$ definable subgroups such that $G=\widetilde{\oplus}_{i=1}^n H_i$. Then, $H_i$ is virtually connected for every $i\leq n$.
\end{lemma}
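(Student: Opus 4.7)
My plan is to realise $G$ as an isogenous image of the direct product $H_1\times\cdots\times H_n$ and then transfer virtual connectedness first to the product and then to each factor.

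First, I would consider the definable homomorphism
\[
  \pi\colon H_1\times\cdots\times H_n\to G,\qquad (h_1,\ldots,h_n)\longmapsto h_1+\cdots+h_n.
\]
This is surjective because $G=\sum_i H_i$. Its kernel is finite: if $(h_1,\ldots,h_n)\in\ker\pi$, then $h_i=-\sum_{j\neq i}h_j$ lies in $H_i\cap\sum_{j\neq i}H_j$, and this intersection is finite by unpacking $G=\widetilde{\oplus}_{i=1}^n H_i$ inductively from the two-factor almost direct sum in the Preliminaries. Hence $\pi$ is an isogeny between $H_1\times\cdots\times H_n$ and $G$.

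Since $G$ is connected it is in particular virtually connected, so by the lemma on isogenous groups recalled in the Preliminaries the product $H_1\times\cdots\times H_n$ is virtually connected as well. Pick then a definable connected subgroup $K$ of finite index in $H_1\times\cdots\times H_n$. For each $i\leq n$, consider the projection $p_i\colon\prod_j H_j\to H_i$. The image $p_i(K)$ is a definable subgroup of $H_i$; it is connected, since the image of a connected definable subgroup under a definable surjective homomorphism is connected, and it has finite index in $H_i$, since via $H_i\cong\prod_j H_j/\ker p_i$ the quotient $H_i/p_i(K)$ is a quotient of $\prod_j H_j/K$, which is finite. Therefore each $H_i$ is virtually connected.

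The only non-routine step is the identification of $\ker\pi$ as finite, and this is immediate from the definition of almost direct sum. The remaining two transfer steps, along an isogeny and along a projection onto a direct factor, are essentially formal and rely only on general facts already recorded in the Preliminaries, so I do not anticipate any serious obstacle.
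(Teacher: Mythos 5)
Your proof is correct, but it follows a genuinely different route from the paper's. The paper reduces to $n=2$ by induction and argues directly by contradiction: if $H_1$ were not virtually connected, it would contain a definable subgroup $H$ of index $M>|H_1\cap H_2|$, and an explicit computation with coset representatives shows that $H+H_2$ would then be a proper definable subgroup of finite index in $G$, contradicting connectedness. You instead factor the statement through the isogeny $\pi\colon H_1\times\cdots\times H_n\to G$, transfer virtual connectedness across $\pi$ using the isogeny-invariance lemma from the Preliminaries, and then push a connected finite-index subgroup of the product onto each factor via the coordinate projections. All three of your steps check out: the finiteness of $\ker\pi$ does follow by an easy induction from the iterated two-factor definition of $\widetilde{\oplus}$ (only the finiteness of $(H_1+\cdots+H_{k-1})\cap H_k$ is needed), images of connected definable groups under definable homomorphisms are connected, and $p_i(K)$ has finite index since $H_i/p_i(K)$ is a quotient of the finite group $\prod_j H_j/K$. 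What your approach buys is a cleaner separation of concerns — two formal transfer steps replace the explicit coset count — at the cost of invoking the isogeny lemma, whose own proof in the paper contains essentially the same counting argument that the paper's direct proof of this lemma performs; so the two arguments are close in substance but organized quite differently.
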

\begin{proof}
    By induction on $n$, it is sufficient to verify the statement for $n=2$. Therefore, let $H_1,H_2$ be definable subgroup in $G$ such that $G=H_1\widetilde{\oplus} H_2$.\\
    Suppose, for a contradiction, that $H_1$ is not virtually connected and let $m=|H_1\cap H_2|$. By non virtual connectivity of $H_1$, there exists a definable subgroup $H$ in $H_1$ of index $M>m$. We verify that $H+H_2$ is strictly contained in $G$. This contradicts the connectivity of $G$. Let $\{a_i\}_{i=1}^M$ be a system of representatives of $H$ in $H_1$ then $a_i-a_j\in H_1-H$. Since $G=H+H_2$ then $a_i=h_i+k_i$ with $h_i\in H_2$ and $k_i\in H$. Let $i\not=j\leq M$, then $a_i-a_j=(h_i-h_j)+k_i-k_j$. Therefore, $a_i-a_j-(k_i-k_j)=h_i-h_j\in H_2\cap H_1$. So $a_i-a_j\in H+H_1\cap H_2$ for $i,j\leq M$. This is a contradiction since it implies that $M=|H_1/H|=|H_1\cap H_2/H\cap H_2|<m<M$.
\end{proof}
The non connected version of the following Lemma (under the stronger assumption that both the Lie subrings are definable) will be proved in the fourth chapter.
\begin{lemma}\label{Definabilityder}
    Let $\mathfrak{g}$ be a definable Lie ring in a finite-dimensional theory and $\mathfrak{h},\mathfrak{k}$ two Lie subrings. Assume that they normalise each other and that $\mathfrak{h}$ is connected and definable. Then, $[\mathfrak{k},\mathfrak{h}]$ (the subgroup generated by $[h,k]$ for $h\in\mathfrak{h}$ and $k\in \mathfrak{k}$) is a definable, connected Lie subring normalised by $\mathfrak{h}$ and $\mathfrak{k}$.
\end{lemma}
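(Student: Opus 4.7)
My plan is to realise $[\mathfrak{k},\mathfrak{h}]$ as the subgroup generated by a uniformly definable family of connected definable subgroups of $(\mathfrak{g},+)$, and then invoke a Zilber-style generation lemma to conclude definability and connectedness. The Lie subring and normalisation properties will fall out easily from the mutual normalisation hypothesis.

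First I would fix $k\in\mathfrak{k}$ and consider the map $\operatorname{ad}_k\colon\mathfrak{h}\to\mathfrak{g}$, $h\mapsto[k,h]$. Since the Lie bracket is bilinear, this is a definable homomorphism of abelian groups, so its image $H_k:=[k,\mathfrak{h}]$ is a definable subgroup of $(\mathfrak{g},+)$. In a finite-dimensional theory the image of a connected definable group under a definable homomorphism is again connected, so $H_k$ is connected. The family $\{H_k\}_{k\in\mathfrak{k}}$ is uniformly definable with parameter $k$.

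By definition $[\mathfrak{k},\mathfrak{h}]=\sum_{k\in\mathfrak{k}}H_k$. I would then apply the standard generation lemma in the finite-dimensional setting: any sum of connected definable subgroups of an abelian definable group is already realised by finitely many of them, and is itself connected and definable. The proof is a dimension argument — if no finite subsum sufficed, one could strictly increase the dimension of a partial sum infinitely often, contradicting $\dim(\mathfrak{g})<\omega$ — together with the observation that $A+B$ is connected whenever $A$ and $B$ are, since it is the image of the connected group $A\times B$ under the definable addition map. Thus $[\mathfrak{k},\mathfrak{h}]=H_{k_1}+\cdots+H_{k_n}$ is a definable connected subgroup of $(\mathfrak{g},+)$.

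Finally, to verify that $[\mathfrak{k},\mathfrak{h}]$ is a Lie subring normalised by $\mathfrak{h}$ and $\mathfrak{k}$, I would exploit mutual normalisation, which gives $[\mathfrak{k},\mathfrak{h}]\subseteq\mathfrak{h}\cap\mathfrak{k}$. For closure under the bracket: given $x,y\in[\mathfrak{k},\mathfrak{h}]$, viewing $x\in\mathfrak{h}$ and $y\in\mathfrak{k}$ yields $[x,y]\in[\mathfrak{h},\mathfrak{k}]=[\mathfrak{k},\mathfrak{h}]$. For normalisation by $\mathfrak{h}$: given $h\in\mathfrak{h}$ and $x\in[\mathfrak{k},\mathfrak{h}]\subseteq\mathfrak{k}$, one has $[h,x]\in[\mathfrak{h},\mathfrak{k}]=[\mathfrak{k},\mathfrak{h}]$; normalisation by $\mathfrak{k}$ is symmetric. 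The main obstacle is the formalisation of the Zilber-style generation lemma in the finite-dimensional (rather than finite Morley rank) setting: the dimension bound and the fibration axioms give the finite-sum reduction, but one must check carefully that connectedness is preserved by definable images and finite sums of subgroups, which is the genuinely non-trivial input.
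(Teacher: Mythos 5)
Your proposal is correct and the central definability/connectedness argument coincides with the paper's: take a finite sum $L$ of the connected definable subgroups $[k,\mathfrak{h}]$ of maximal dimension and argue, via the dimension equality $\dim(L+[k,\mathfrak{h}])=\dim(L)$ combined with connectedness of each $[k,\mathfrak{h}]$, that $L$ already exhausts $[\mathfrak{k},\mathfrak{h}]$; the paper carries this out inline rather than invoking a named generation lemma, but it is the same mechanism. One small caution about your sketch of the generation step: the bound $\dim(\mathfrak{g})<\omega$ does not by itself let you ``strictly increase the dimension of a partial sum infinitely often'' --- once $L$ has maximal dimension, adjoining a further $[k,\mathfrak{h}]$ cannot raise it, and the actual reason $[k,\mathfrak{h}]\leq L$ is that $[k,\mathfrak{h}]\cap L$ then has finite index in the connected group $[k,\mathfrak{h}]$; so connectedness of the summands (which you did establish, as the image of connected $\mathfrak{h}$ under $\operatorname{ad}_k$) is the essential input, not the dimension bound alone. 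Where you genuinely simplify is the verification that $[\mathfrak{k},\mathfrak{h}]$ is a Lie subring normalised by $\mathfrak{h}$ and $\mathfrak{k}$: the paper expands the double bracket $[[h_1,k_1],[h_2,k_2]]$ via the Jacobi identity, whereas you observe that mutual normalisation forces $[\mathfrak{k},\mathfrak{h}]\subseteq\mathfrak{h}\cap\mathfrak{k}$, from which closure under the bracket and both normalisations drop out in one stroke with no Jacobi computation. That is a cleaner route to the algebraic part of the statement.
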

\begin{proof}
    We verify that $[\mathfrak{h},\mathfrak{k}]$ is closed by $[\_,\_]$. By bilinearity, it is sufficient to verify that 
    $[[h_1,k_1],[h_2,k_2]]\in [\mathfrak{h},\mathfrak{k}]$. By the Jacobi identity, 
    $$[[h_1,k_1],[h_2,k_2]]=-[k_2,[[h_1,k_1],h_2]]-[h_2,[k_2,[h_1,k_1]]].$$
    $[k_2,[h_1,k_1]]\in \mathfrak{k}$ and $[h_2,[h_1,k_1]]\in \mathfrak{h}$ since $\mathfrak{h}$ and $\mathfrak{k}$ are Lie subrings that normalise each other. This proves that $[\mathfrak{g},\mathfrak{k}]$ is a Lie subring.\\
    We take a sum $L=\sum_{i=1}^n [\mathfrak{h},k_i]$ of maximal dimension between the finite sums of $[\mathfrak{h},k_i]$ with $k_i\in \mathfrak{k}$. This is clearly a definable subgroup, and it is connected since it is the sum of finitely many connected subgroups.
    It is obviously contained in $[\mathfrak{h},\mathfrak{k}]$. We verify the inverse inclusion. Being a subgroup, by maximality of the dimension of $L$, $L+[\mathfrak{h},g]$ has dimension equal to $\operatorname{dim}(L)$. Therefore, $\operatorname{dim}(L)\cap \operatorname{dim}([\mathfrak{h},g])$
    is equal to $\operatorname{dim}([\mathfrak{h},k])$. By connectivity, $[\mathfrak{h},k]\leq L$ and so $L$ coincides with $[\mathfrak{h},\mathfrak{k}]$. Since $L$ is connected and definable, the same holds for $[\mathfrak{h},\mathfrak{k}]$. The last point follows easily.
\end{proof}
A corollary of Lemma \ref{Definabilityder} is that in a connected Lie ring of finite dimension, both the central descending series and the abelian descending series are definable and connected. We derive another easy consequence.
\begin{lemma}\label{DefSimImpSim}
    Let $\mathfrak{g}$ be a connected non abelian definably simple Lie ring. Then, $\mathfrak{g}$ is simple.
\end{lemma}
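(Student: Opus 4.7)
The plan is to assume, toward contradiction, that $\mathfrak{h}$ is a proper non-trivial (but not necessarily definable) ideal of $\mathfrak{g}$, and then to extract from $\mathfrak{h}$ a definable ideal on which we can use the definable simplicity hypothesis. The natural candidate is $[\mathfrak{g},\mathfrak{h}]$: since $\mathfrak{g}$ is definable and connected, and $\mathfrak{g},\mathfrak{h}$ normalise each other ($\mathfrak{h}$ is an ideal of $\mathfrak{g}$), Lemma \ref{Definabilityder} applies and gives that $[\mathfrak{g},\mathfrak{h}]$ is a definable connected Lie subring of $\mathfrak{g}$ normalised by $\mathfrak{g}$. By the Jacobi identity this makes it a (definable) ideal.

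Definable simplicity of $\mathfrak{g}$ then forces $[\mathfrak{g},\mathfrak{h}]=\{0\}$ or $[\mathfrak{g},\mathfrak{h}]=\mathfrak{g}$. In the second case $\mathfrak{g}=[\mathfrak{g},\mathfrak{h}]\subseteq \mathfrak{h}$, contradicting the assumption that $\mathfrak{h}$ is proper. In the first case $\mathfrak{h}\subseteq Z(\mathfrak{g})$. Here one uses that the center $Z(\mathfrak{g})$ is defined by a first-order formula, so it is a definable ideal; invoking definable simplicity once more, either $Z(\mathfrak{g})=\mathfrak{g}$, which would mean $\mathfrak{g}$ is abelian and contradicts our hypothesis, or $Z(\mathfrak{g})=\{0\}$, which forces $\mathfrak{h}=\{0\}$ and contradicts non-triviality.

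There is no real obstacle: the only substantive ingredient is Lemma \ref{Definabilityder}, which gives definability of $[\mathfrak{g},\mathfrak{h}]$ without assuming $\mathfrak{h}$ itself is definable (it uses only connectedness and definability of one factor). Once that definable ideal is in hand, definable simplicity together with the non-abelianness hypothesis collapses every case, and the proof is very short.
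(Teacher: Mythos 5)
Your proof is correct and follows essentially the same route as the paper's: both take a nontrivial proper ideal, apply Lemma \ref{Definabilityder} to obtain the definable connected ideal $[\mathfrak{g},\mathfrak{h}]$, use definable simplicity to conclude it is trivial, and then derive a contradiction from $\mathfrak{h}\subseteq Z(\mathfrak{g})$ together with non-abelianness. The minor case-splits you spell out (on whether $[\mathfrak{g},\mathfrak{h}]$ or $Z(\mathfrak{g})$ equals $\mathfrak{g}$ or $0$) are compressed in the paper but carry identical content.
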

\begin{proof}
    Suppose, by way of contradiction, that $\mathfrak{g}$ is not simple and let $J$ be a non trivial ideal in $\mathfrak{g}$. By Lemma \ref{Definabilityder}, $[J,\mathfrak{g}]$ is a definable connected ideal. By definable simplicity, this ideal must be $0$, since it is contained in $J$, which is a proper ideal in $\mathfrak{g}$. Therefore, $J\subseteq Z(\mathfrak{g})$ that consequently is non trivial. Since $Z(\mathfrak{g})$ is a definable non trivial ideal, it coincides with $\mathfrak{g}$ \hbox{i.e.} $\mathfrak{g}$ is abelian, a contradiction.
\end{proof}
The following results is fundamental in the analysis of connected Lie rings of finite dimension.
\begin{lemma}\label{DecGen}
    Let $G$ be a group and $A$ an abelian connected group, both definable in a finite-dimensional theory. Then, in the following two cases, $A$ has the DCC:
    \begin{itemize}
        \item $G$ is connected, $G$ embeds in $\operatorname{DefEnd}(A)$ and $A$ is $G$-minimal;
        \item $G$ embeds in $\operatorname{DefAut}(A)$ and $A$ is $G$-minimal.
    \end{itemize}
\end{lemma}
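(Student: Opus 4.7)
My plan is to argue by contradiction: assume $A$ admits an infinite strictly descending chain $H_0 \supsetneq H_1 \supsetneq \cdots$ of definable subgroups and derive a contradiction from $G$-minimality of $A$.

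The first step is the standard dimensional reduction. Since $\dim H_i$ is a non-increasing sequence of non-negative integers, it stabilizes; after passing to a tail I may assume $\dim H_i = d$ is constant, which forces every index $|H_i : H_{i+1}|$ to be finite. If $d = \dim A$ then $H_1$ would be a proper definable subgroup of $A$ of finite index, contradicting connectedness of $A$; so $d < \dim A$.

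The heart of the proof is to produce, for each $H_i$, a definable connected subgroup $H_i^0 \leq H_i$ of finite index in $H_i$, and this is where the $G$-minimality hypothesis is used. In Case 2 the action by automorphisms makes the orbit $\{g(H_i) : g \in G\}$ a uniformly commensurable family of definable subgroups of the same dimension $d$, so Schlichting's theorem produces a definable $G$-invariant subgroup commensurable with $H_i$. In Case 1 I form the sum $\sum_{g \in G} g(H_i)$ realised by a finite sub-sum of maximum dimension, exactly as in the proof of Lemma \ref{Definabilityder}, and use the connectedness of $G$ to see that this sum is definable and $G$-invariant. In either case, $G$-minimality applied to the connected part of the $G$-invariant subgroup obtained forces it to be either $0$ or $A$; since $d \geq 1$ the subgroup $H_i$ is infinite, so the construction yields the value $A$, and a standard pullback argument gives the desired definable finite-index connected subgroup $H_i^0 \leq H_i$.

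Once $H_i^0$ is available I finish as follows. Since $H_i^0$ has finite index in $H_i \supseteq H_{i+1}^0$, the intersection $H_{i+1}^0 \cap H_i^0$ has finite index in the connected group $H_{i+1}^0$, hence equals it, giving $H_{i+1}^0 \subseteq H_i^0$. Both subgroups have dimension $d$, so $|H_i^0 : H_{i+1}^0|$ is finite, and by the connectedness of $H_i^0$ this forces $H_{i+1}^0 = H_i^0$. Thus $H_i^0 = H_0^0 =: H^0$ is constant along the chain, and every $H_i$ contains $H^0$ as a finite-index subgroup. The chain therefore descends inside the finite group $H_0/H^0$ and must stabilise, contradicting strict descent. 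The main obstacle is the extraction step: in a general finite-dimensional theory the connected component of a definable subgroup is only type-definable, and the $G$-action combined with $G$-minimality is precisely what promotes it to a genuinely definable subgroup via the Schlichting or maximum-dimension construction.
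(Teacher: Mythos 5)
Your overall strategy --- pass to a tail of the chain where the dimension stabilizes, then argue that every $H_i$ is virtually connected and conclude that the chain descends in a finite quotient --- is reasonable in outline, but there is a genuine gap in the step that is supposed to produce the connected components $H_i^0$.

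The central problem is the assertion, in Case 2, that the orbit $\{g(H_i):g\in G\}$ is a uniformly commensurable family. This is false for an arbitrary definable subgroup $H_i$: each $g(H_i)$ has the same dimension as $H_i$ because $g$ is a bijection, but $H_i\cap g(H_i)$ can be of strictly smaller dimension (e.g.\ $A=K^+\times K^+$, $H_i=K^+\times\{0\}$, $g$ the coordinate swap). Schlichting's theorem therefore does not apply to the orbit of $H_i$. In Case 1 there is a parallel difficulty: forming $B=\sum_j g_j(H_i)$ of maximal dimension gives $g\cdot B\apprle B$, but the individual translates $g(H_i)$ are not connected, so --- unlike in the proof of Lemma~\ref{Definabilityder}, where the summands $[\mathfrak{h},k]$ are connected homomorphic images of the connected $\mathfrak{h}$ --- you cannot upgrade ``$g(H_i)\cap B$ has finite index'' to $g(H_i)\leq B$, and $G$-minimality applied to $B$ only constrains the sum of translates, not $H_i$ itself. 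The ``standard pullback argument'' you invoke is not standard from this position and would need to be spelled out; I do not see how to make it work.

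The paper avoids both problems by \emph{not} starting from an arbitrary term of the chain. It picks a definable subgroup $A_1$ of \emph{minimal} dimension and forms the sum $B$ of translates of $A_1$ of maximal dimension. Minimality is what makes the sum almost direct (any infinite pairwise intersection is already of finite index and can be discarded), and maximality of dimension is what makes all translates $g\cdot B$ almost contained in $B$; in the automorphism case one then applies $g$ to $g^{-1}\cdot B\apprle B$ to get $B\apprle g\cdot B$, which is why the orbit of $B$ --- not of $H_i$ --- is a uniformly commensurable family and Schlichting applies. After $G$-minimality forces $B=A$, one has $A=\widetilde{\oplus}_i A_i$ with each $A_i$ definably minimal and, by Lemma~\ref{virtconn}, virtually connected. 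Only at that point does the paper prove the statement you want --- every definable subgroup of $A$ is virtually connected --- using the almost-direct decomposition essentially: pick a subsum $D$ with $D\cap H$ finite and of maximal dimension, show $A=H+D$, and invoke Lemma~\ref{virtconn} again. Your finishing step (all $H_i^0$ coincide, so the chain stabilizes) is a clean alternative conclusion, but the decomposition that supplies the $H_i^0$ is missing from your argument.
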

\begin{proof}
    We take $A_1$ an infinite definable subgroup of minimal dimension in $A$. Observe that $A_1$ need not be virtually connected.\\
    Take $B=\sum_{i=1}^n (g_{j_1}\cdot\cdot \cdot g_{j_i}A_1:=A_i)$ of maximal dimension between the subgroups of this form.\\
    Clearly, $B$ is definable. Every $A_i$ has dimension equal to the dimension of $A$ or to $0$, by the minimality of the dimension. The finite $A_i$ can be excluded from this sum without changing the dimension, and so we may assume that each $A_i$ has dimension equal to $\dim(A_1)$. Moreover, this sum is almost direct. Assume not, then $A_i\cap \sum_{j\not=i} A_j$ is an infinite definable subgroup of $A_i$. By minimality, it is of finite index, and $A_i$ can be excluded from the sum without changing the dimension.\\
    By maximality of the dimension, $g\cdot B$ is almost contained in $B$ for every $g\in G$. Indeed, $gB+B$ is a sum of the previous form and so it must have the same dimension as $B$.\\
    If $G$ embeds in $\operatorname{Aut}(A)$, then $\{g\cdot B\}_{g\in G}$ is a uniformly commensurable family of definable subgroups. Indeed, it is a uniformly definable family of commensurable subgroups and the uniform commensurability follows by compactness. Therefore, by Schilisting's theorem, there exists a definable $G$-invariant subgroup $B_1\sim B$. Since $A$ is $G$-minimal and $B_1$ is infinite, $\dim(B)=\dim(B_1)=\dim(A)$. Then, $A$ is connected and equal to the almost direct sum of the subgroups $A_i$ for $i\leq n$. By Lemma \ref{virtconn}, each $A_i$ is virtually connected. Up to take the connected component, we may assume that any $A_i$ is connected. Then, the DCC follows by the following claim.
    \begin{claim}
    Let $A$ be a finite-dimensional group. Assume that $A$ is equal to the almost direct sum $\widetilde{\oplus}_{i=1}^n A_i$ for $A_i$ definably minimal connected subgroups. Then, $A$ has the DDC.
\end{claim}
\begin{proof}
    It is sufficient to prove that every infinite definable subgroup is virtually connected. The conclusion follows by $\omega$-DCC. Let $B$ be a definable subgroup of $A$ and take $D=\sum_{i\in J} A_i$ such that $D$ is a finite sum of $A_i$ of maximal dimension between the sums of this form with finite intersection with $B$. We prove that $B+D$ contains every $A_i$ and, therefore, it is equal to $A$. This implies that $B$ is virtually connected by Lemma \ref{virtconn}.\\
    Let $A_i$ such that $A_i\cap (B+D)$ is not equal to $A_i$. By minimality of $A_i$, this implies that the intersection is finite. It follows that $A_i+D$ has finite intersection with $B$ and it has dimension strictly greater than $D$, a contradiction. This implies, by connectivity, that $A=B+D$. 
\end{proof}
We verify the Lemma in the case that $G$ is connected and embeds in $\operatorname{DefEnd}(A)$. By the claim, it is sufficient to prove that there exists an almost direct sum of the form $\widetilde{\oplus}_{i=1}^n A_i$ with $A_i$ definably minimal subgroups.\\
    Let 
    $$X=\{(g,b)\in G\times B:\ g\cdot b\in B\}.$$
    This is a definable subset of $G\times B$. Take the first projection: its image is equal to $B$ and every fiber is $\{b\in B:g\cdot b\in B\}$. Since $g\cdot B\apprle B$, any fiber has dimension equal to $\operatorname{dim}(B)$. By fibration, $\operatorname{dim}(X)=\operatorname{dim}(G)+\operatorname{dim}(B)$.\\
    The image of the projection to the second component is $B$ and the fibers are $C_{G}(b/B)=\{g\in \mathfrak{g}:g\cdot b\in B\}$. Since $G$ is connected, the subgroup $C_{G}(b/B)$ is of finite index iff it coincides with $G$. Let $B_1$ be the definable subgroup $C_{B}(G/B)=\{b\in B:G\cdot b\leq B\}$. Define the definable subgroup $Y$ as
    $$Y=\{(g,h):\ g\in G,h\in B_1\}.$$
   By union, $\operatorname{dim}(X)=\max\{\operatorname{dim}(Y),\operatorname{dim}(Y^c)\}$ and by fibration $\operatorname{dim}(Y)=\operatorname{dim}(G)+\operatorname{dim}(B_1)$. The fiber of an element $h\in B_1^c$ has dimension strictly less than $\operatorname{dim}(G)$. Consequently, $\operatorname{dim}(Y^c)\leq \operatorname{dim}(B)+\operatorname{dim}(G)-1$ by lower fibration. This implies that $B_1$ is a definable subgroup of $B$ of finite index.\\
    Moreover, $B_1$ is $G$-invariant: given $b\in B_1$, $G\cdot b\leq B$ and so $G\cdot b$ is almost contained in $B_1$. Since $G\cdot b$ is connected, $G\cdot b\leq B_1$.\\
    By $G$-minimality of $A$, $B$ is equal to $A$. Then, the conclusion follows by the claim.
\end{proof}
From Lemma \ref{DecGen}, we can derive the following corollary.
\begin{corollary}\label{DCC}
    Let $\mathfrak{g}$ be a definable connected Lie ring of finite dimension and $\mathfrak{h}$ a definably minimal ideal. Then, either $\mathfrak{h}$ has the DCC and $Z(\mathfrak{h})=Z(\mathfrak{g})\cap \mathfrak{h}$ is finite or $\mathfrak{h}\apprle Z(\mathfrak{g})$.
\end{corollary}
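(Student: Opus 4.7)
The plan is to feed the adjoint action of $\mathfrak{g}$ on the additive group $(\mathfrak{h},+)$ into Lemma~\ref{DecGen}: by bilinearity of the bracket, $\operatorname{ad}$ is an additive map, so $G:=\mathfrak{g}/C_{\mathfrak{g}}(\mathfrak{h})$ embeds as a connected definable subgroup of $\operatorname{DefEnd}(\mathfrak{h})$, which is exactly the setup of the first alternative of that lemma.

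I would first isolate the almost-central case. The subgroup $Z(\mathfrak{g})\cap\mathfrak{h}$ is the set of $\mathfrak{g}$-fixed points of $\mathfrak{h}$ under the adjoint action, hence a definable $\mathfrak{g}$-invariant subgroup; by the definable minimality of $\mathfrak{h}$ it is either finite or of finite index. In the latter case $\mathfrak{h}\apprle Z(\mathfrak{g})$ and we are done, so I would henceforth assume $Z(\mathfrak{g})\cap\mathfrak{h}$ is finite, whence $C_{\mathfrak{g}}(\mathfrak{h})\neq\mathfrak{g}$ and $G$ is nontrivial.

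I then pass to the connected component $\mathfrak{h}^{0}$, which is $\mathfrak{g}$-invariant (definable images of connected subgroups are connected) and of finite index in $\mathfrak{h}$ by minimality. Any $G$-invariant definable subgroup of $\mathfrak{h}^{0}$ is $\mathfrak{g}$-invariant in $\mathfrak{h}$, so $\mathfrak{h}^{0}$ inherits $G$-minimality; the first case of Lemma~\ref{DecGen} now yields DCC on $\mathfrak{h}^{0}$, and hence on $\mathfrak{h}$.

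For the centre equality, a short Jacobi computation (if $z\in Z(\mathfrak{h})$, $g\in\mathfrak{g}$, $h\in\mathfrak{h}$, then $[[g,z],h]=-[[h,g],z]-[[z,h],g]=0$ since $[h,g]\in\mathfrak{h}$) shows that $Z(\mathfrak{h})$ is $\mathfrak{g}$-stable, hence a definable ideal of $\mathfrak{g}$ inside $\mathfrak{h}$; minimality then makes it finite or of finite index. The finite-index alternative, together with the connectedness of $\mathfrak{h}^{0}$, would force $\mathfrak{h}^{0}\leq Z(\mathfrak{h})$ and so $\mathfrak{h}^{0}$ abelian; I would dispose of this by applying Lemma~\ref{Definabilityder} to $[\mathfrak{g},\mathfrak{h}^{0}]$, a connected definable ideal inside $\mathfrak{h}^{0}$, which by minimality and connectedness must be either $0$ (returning us to $\mathfrak{h}\apprle Z(\mathfrak{g})$, already handled) or all of $\mathfrak{h}^{0}$, a possibility ruled out by the non-abelian context in which the corollary is invoked. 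Once $Z(\mathfrak{h})$ is finite, the adjoint map $\mathfrak{g}\to Z(\mathfrak{h})$ is an additive homomorphism from the connected group $\mathfrak{g}$ to a finite abelian group, hence identically zero, giving $Z(\mathfrak{h})\subseteq Z(\mathfrak{g})\cap\mathfrak{h}$ and so the desired equality. The main obstacle is precisely this last step: interlocking Lemma~\ref{Definabilityder}, the definable minimality of $\mathfrak{h}$, and the connectedness of $\mathfrak{g}$ to rule out the finite-index alternative for $Z(\mathfrak{h})$ in the abelian subcase.
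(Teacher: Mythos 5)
Your overall strategy is the paper's: feed the adjoint action into Lemma~\ref{DecGen} and use definable minimality of $\mathfrak{h}$ to split into the almost-central case and the DCC case. But there is one concrete gap in the reduction, and one flawed step at the end.

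The gap: you ``pass to the connected component $\mathfrak{h}^{0}$'' without establishing that it exists. In a finite-dimensional theory a definable group only satisfies the $\omega$-DCC and the ucc; it need not be virtually connected, so a definably minimal ideal has no connected component a priori (definable minimality only controls $\mathfrak{g}$-invariant definable subgroups, not arbitrary finite-index ones). Lemma~\ref{DecGen} requires the module to be connected, so this existence is exactly the point that must be proved. The paper's proof supplies it by applying Lemma~\ref{Definabilityder} \emph{first}: $[\mathfrak{g},\mathfrak{h}]$ is a definable \emph{connected} ideal contained in $\mathfrak{h}$ (connectedness coming from that of $\mathfrak{g}$), so by minimality it is either trivial --- giving $\mathfrak{h}\apprle Z(\mathfrak{g})$ --- or of finite index, which is what makes $\mathfrak{h}$ virtually connected and licenses the passage to $\mathfrak{h}^{0}$. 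You do invoke Lemma~\ref{Definabilityder} later, but only inside the centre discussion; it needs to come at the start, before $\mathfrak{h}^{0}$ is ever mentioned. With that repair the DCC part of your argument matches the paper's.

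On the centre equality: your opening reduction (that $Z(\mathfrak{g})\cap\mathfrak{h}=C_{\mathfrak{h}}(\mathfrak{g})$ is finite or of finite index, and in the latter case $\mathfrak{h}\apprle Z(\mathfrak{g})$) is correct and is the robust content here. However, your disposal of the finite-index alternative for $Z(\mathfrak{h})$ by appeal to ``the non-abelian context'' does not work: $[\mathfrak{g},\mathfrak{h}^{0}]=\mathfrak{h}^{0}$ is perfectly compatible with $\mathfrak{h}^{0}$ abelian (take $\mathfrak{g}=\mathfrak{g}\mathfrak{a}_{1}(K)$ and $\mathfrak{h}=\mathfrak{g}'\simeq K^{+}$, a definably minimal abelian ideal with $Z(\mathfrak{g})\cap\mathfrak{h}$ finite). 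In that example $Z(\mathfrak{h})=\mathfrak{h}$ is infinite while $Z(\mathfrak{g})\cap\mathfrak{h}$ is finite, so the literal identity $Z(\mathfrak{h})=Z(\mathfrak{g})\cap\mathfrak{h}$ cannot be derived for abelian minimal ideals; the paper's own proof silently omits this part (``the conclusion follows''). You should either restrict that clause to the case where $\mathfrak{h}$ is non-abelian or replace $Z(\mathfrak{h})$ by $C_{\mathfrak{h}}(\mathfrak{g})$, which is what your first paragraph actually controls.
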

\begin{proof}
We may assume $\mathfrak{h}$ connected. Indeed, $[\mathfrak{g},\mathfrak{h}]$ is a definable connected ideal by Lemma \ref{Definabilityder}. Then, either it is $\{0\}$ or $[\mathfrak{g},\mathfrak{h}]$ is a connected infinite ideal contained in $\mathfrak{h}$. In the first case, $\mathfrak{h}\cap Z(\mathfrak{g})$ is infinite and, by minimality, $\mathfrak{h}\apprle Z(\mathfrak{g})$. Consequently, we may assume that $[\mathfrak{g},\mathfrak{h}]$ is of finite index in $\mathfrak{h}$, by minimality. Being $[\mathfrak{g},\mathfrak{h}]$ connected, $\mathfrak{h}$ is virtually connected. We may assume $\mathfrak{h}$ connected.
The map
    $$\operatorname{ad}:\mathfrak{g}\to \operatorname{DefEnd}(\mathfrak{h},+)$$
    that sends $g$ in $\operatorname{ad}(g)=[g,\_]$ is an homorphism of groups. Moreover, $\mathfrak{h}$ is $\mathfrak{g}$-minimal, being a minimal ideal. By Lemma \ref{DecGen}, $\mathfrak{h}$ has the DCC and the conclusion follows. 
\end{proof}
A consequence of Corollary \ref{DCC} is the possibility of constructing a series of ideal $\{\mathfrak{h}_i\}_{i\leq n}$ of a finite-dimensional connected Lie ring $\mathfrak{h}$ such that, for any $i\leq n$, either $\mathfrak{h}_i/\mathfrak{h}_{i+1}\leq Z(\mathfrak{g}/\mathfrak{h}_{i+1})$ or $\mathfrak{h}_i/\mathfrak{h}_{i+1}$ is $\mathfrak{g}$-minimal, connected and has the DCC.
\begin{theorem}\label{DCCseries}
    Let $\mathfrak{g}$ be a connected Lie ring definable in a finite-dimensional theory. Then, there exists a series
    $$\mathfrak{g}=\mathfrak{h}_0\geq\mathfrak{h}_1\geq...\geq \mathfrak{h}_n=\{0\}$$
    of definable ideals such that either $\mathfrak{h_i}/\mathfrak{h_{i+1}}\leq Z(\mathfrak{g}/\mathfrak{h}_i)$ or $\mathfrak{h_i}/\mathfrak{h_{i+1}}$ is connected, $\mathfrak{g}$-minimal and it has the DCC.
\end{theorem}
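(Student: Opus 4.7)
My plan is to proceed by induction on $\dim(\mathfrak{g})$, peeling off one ideal from the bottom at each step by a direct application of Corollary \ref{DCC}. The base case $\dim(\mathfrak{g}) = 0$ is trivial: a connected definable group of dimension $0$ is trivial, so the series $\mathfrak{g} = \{0\}$ works. So assume $\dim(\mathfrak{g}) > 0$.

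For the inductive step, the first thing I would do is extract a definably minimal infinite ideal. By the $\omega$-DCC for definable subgroups in a finite-dimensional theory, together with Lemma \ref{Definabilityder} (which ensures in particular that $[\mathfrak{g},\mathfrak{k}]$ is a definable ideal whenever $\mathfrak{k}$ is one), I can choose a definable ideal $\mathfrak{m}$ of minimal dimension among infinite definable ideals of $\mathfrak{g}$. Now apply Corollary \ref{DCC} to $\mathfrak{m}$. In the case where $\mathfrak{m} \apprle Z(\mathfrak{g})$, I take $\mathfrak{h}_{n-1} := \mathfrak{m} \cap Z(\mathfrak{g})$, which is a definable ideal (the center is definable by a first-order formula, and is itself an ideal), of finite index in $\mathfrak{m}$ and hence infinite, and by construction $\mathfrak{h}_{n-1} \leq Z(\mathfrak{g})$. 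In the other case where $\mathfrak{m}$ has the DCC, I take $\mathfrak{h}_{n-1} := \mathfrak{m}^0$: the connected component is a definable ideal (any definable endomorphism sends a connected definable subgroup into another such, hence into $\mathfrak{m}^0$, so $\operatorname{ad}(g)$ stabilises $\mathfrak{m}^0$), inherits the DCC from $\mathfrak{m}$, and is connected. Its $\mathfrak{g}$-minimality is then automatic: a proper infinite definable $\mathfrak{g}$-invariant subgroup would be a definable ideal of $\mathfrak{g}$ of the same dimension as $\mathfrak{m}^0$, hence of finite index, contradicting connectedness.

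Having produced $\mathfrak{h}_{n-1}$ with the required properties, I apply the inductive hypothesis to $\mathfrak{g}/\mathfrak{h}_{n-1}$, which is connected and definable of strictly smaller dimension (since $\dim(\mathfrak{h}_{n-1}) > 0$). This yields a series
$$\mathfrak{g}/\mathfrak{h}_{n-1} = \overline{\mathfrak{h}}_0 \geq \overline{\mathfrak{h}}_1 \geq \ldots \geq \overline{\mathfrak{h}}_{n-1} = \{0\}$$
of definable ideals with the desired property. Pulling back along the projection $\pi\colon \mathfrak{g} \to \mathfrak{g}/\mathfrak{h}_{n-1}$ produces the series $\mathfrak{g} = \mathfrak{h}_0 \geq \mathfrak{h}_1 \geq \ldots \geq \mathfrak{h}_{n-1} \geq \mathfrak{h}_n = \{0\}$. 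For $i < n-1$, the factor $\mathfrak{h}_i/\mathfrak{h}_{i+1}$ identifies canonically with $\overline{\mathfrak{h}}_i/\overline{\mathfrak{h}}_{i+1}$ inside $\mathfrak{g}/\mathfrak{h}_{i+1}$, and the identification $Z((\mathfrak{g}/\mathfrak{h}_{n-1})/\overline{\mathfrak{h}}_{i+1}) = Z(\mathfrak{g}/\mathfrak{h}_{i+1})$ transfers the property from the quotient to $\mathfrak{g}$; for $i = n-1$ the property was established in the previous paragraph.

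The only mild subtlety, which I would expect to be the only real verification required, is the promotion of the two alternatives of Corollary \ref{DCC} to the exact form demanded by the theorem, namely strict containment in the centre in the first case and $\mathfrak{g}$-minimal connectedness in the second. Both are handled above by intersecting with $Z(\mathfrak{g})$ and by passing to the connected component, respectively. The induction terminates because $\dim$ drops strictly at each step, so $n \leq \dim(\mathfrak{g})$.
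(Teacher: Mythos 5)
Your proof is correct and follows essentially the same route as the paper: induct on dimension, extract a definably minimal infinite ideal, use Corollary \ref{DCC} to split into the almost-central and the DCC cases, produce a suitable definable ideal at the bottom (either by intersecting with the centre or by passing to the connected component), and quotient. The paper disposes of the infinite-centre case up front by quotienting by $Z(\mathfrak{g})$ rather than by intersecting $\mathfrak{m}$ with it, but the two variants are interchangeable; your write-up is simply more explicit about why $\mathfrak{m}^0$ is a $\mathfrak{g}$-minimal ideal and about how the properties pull back along $\pi$.
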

\begin{proof}
    We prove the Theorem by induction on $\dim(\mathfrak{g})$. If the center of $\mathfrak{g}$ is infinite, $\mathfrak{g}/Z(\mathfrak{g})$ has dimension strictly smaller than $\dim(\mathfrak{g})$, and the Theorem follows by induction. So we may assume $Z(\mathfrak{g})=\{0\}$.\\
    Let $\mathfrak{h}$ be a definably minimal ideal of $\mathfrak{g}$. Then, $[\mathfrak{g},\mathfrak{h}]$ is a definable connected ideal contained in $\mathfrak{h}$ by Lemma \ref{Definabilityder}. If it is $\{0\}$, then $\mathfrak{h}\leq Z(\mathfrak{g})=\{0\}$, a contradiction. By Corollary \ref{DCC}, $\mathfrak{h}$ has the DCC. Taking the connected component of $\mathfrak{h}$, the conclusion follows.
\end{proof}
A second corollary of Lemma \ref{DecGen} is that, if the additive group of a field of finite dimension is connected, then it has the DCC.
\begin{corollary}\label{DCCfield}
    Let $K$ be a finite-dimensional definable field with $K^+$ connected. Then $K^+$ has the DCC. In particular, for a finite-dimensional field $K$ of characteristic $0$, $K^+$ has the DCC.
\end{corollary}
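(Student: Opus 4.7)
The plan is to apply the second case of Lemma \ref{DecGen}, with $A = K^+$ and $G = K^\times$. The multiplicative group $K^\times$ acts on $K^+$ by multiplication, which is definable and gives an embedding $K^\times \hookrightarrow \operatorname{DefAut}(K^+)$ (each nonzero $k$ acts as the additive automorphism with inverse multiplication by $k^{-1}$). Since $K^+$ is assumed connected and is abelian, the only remaining hypothesis to check is that $K^+$ is $K^\times$-minimal, which I would establish by a direct algebraic argument: any nonzero $K^\times$-invariant additive subgroup $H$ contains some $x \neq 0$, and then $H \supseteq K^\times \cdot x = K \setminus \{0\}$, because the map $k \mapsto kx$ is surjective onto $K\setminus\{0\}$ as soon as $x\neq 0$; hence $H = K^+$. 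In particular, $K^+$ admits no proper infinite $K^\times$-invariant definable subgroup, which is exactly $K^\times$-minimality as used in Lemma \ref{DecGen}. Invoking the lemma then yields the DCC on $K^+$.

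For the characteristic $0$ addendum, it suffices to show that the connectivity hypothesis is automatic. In characteristic $0$, $K$ contains $\mathbb{Q}$, so $K^+$ is a $\mathbb{Q}$-vector space and in particular divisible. A divisible abelian group has no proper subgroups of finite index: if $H\leq K^+$ has finite index $n$, then $K^+ = n\cdot K^+ \leq H$, forcing $H = K^+$. Hence the intersection of all definable subgroups of finite index is $K^+$ itself, i.e.\ $K^+$ is connected, and the first part of the corollary applies.

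There is no real obstacle here beyond verifying minimality; the only point needing minor care is to ensure that $K^\times$-invariance of a definable additive subgroup really forces it to be an ideal (a one-line check in a field), and that the action gives a genuine embedding into $\operatorname{DefAut}(K^+)$ rather than just $\operatorname{DefEnd}(K^+)$, so that we land in the second clause of Lemma \ref{DecGen} and need not assume connectedness of $K^\times$.
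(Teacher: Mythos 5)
Your proposal is correct and follows essentially the same route as the paper: take $A=K^+$, $G=K^\times$ acting by multiplication (hence by definable automorphisms), note that $K^+$ is $K^\times$-minimal because a field has no proper nonzero invariant additive subgroups, and invoke the second clause of Lemma \ref{DecGen}. Your divisibility argument for the characteristic~$0$ addendum correctly supplies the connectivity of $K^+$, which the paper asserts without spelling out.
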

\begin{proof}
    Denote $A=(K,+)$ and $G=(K,\cdot)$. Then $G$ acts on $A$ by automorphisms, and $A$ is $G$-minimal since a field has no proper ideals. From Lemma \ref{DecGen}, $(K,+)$ has DCC.
\end{proof}
We extend the previous result to direct products of $K^+$. We recall the Goursat's Lemma.
\begin{lemma}\label{Goursat}
   Let $G=H_1\times H_2$ for $H,K$ two groups. Then, there is a bijective correspondence between subgroups $K$ of $G$ and quintuples $(K_1,\overline{K}_1,K_2,\overline{K}_2,\theta)$ where $K_1\unlhd \overline{K}_1\leq H_1$ and $K_2\unlhd \overline{K}_2\leq H_2$ and $\theta$ an isomorphism of groups from $\overline{K}_1/K_1$ and $\overline{K}_2/K_2$. In particular, $\overline{K}_1=\pi_1(K)$, $K_1=\{h\in H_1:\ (h,1)\in K\}$ and similarly for $\overline{K}_2$ and $K_2$. Finally $\theta$ is such that $\theta([h_1])=[h_2]$ iff $(h_1,h_2)\in K$.
\end{lemma}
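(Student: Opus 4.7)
The plan is to establish the correspondence by constructing maps in both directions and verifying they are mutual inverses. In the forward direction, starting from a subgroup $K \leq G$, I would set $\overline{K}_i := \pi_i(K)$, $K_1 := \{h \in H_1 : (h,1) \in K\}$ and $K_2 := \{h \in H_2 : (1,h) \in K\}$. Normality of $K_1$ in $\overline{K}_1$ follows from a direct computation: given $h_1 \in \overline{K}_1$ with lift $(h_1,h_2) \in K$ and $k_1 \in K_1$, the product $(h_1,h_2)(k_1,1)(h_1,h_2)^{-1}=(h_1 k_1 h_1^{-1},1)$ lies in $K$, so $h_1 k_1 h_1^{-1} \in K_1$; the case of $K_2 \unlhd \overline{K}_2$ is symmetric.

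The key step is defining $\theta : \overline{K}_1/K_1 \to \overline{K}_2/K_2$ by $\theta([h_1]) := [h_2]$ whenever $(h_1,h_2) \in K$. Independence of the chosen lift $h_2$ is immediate: if $(h_1,h_2),(h_1,h_2') \in K$ then $(1, h_2(h_2')^{-1}) \in K$, so $h_2(h_2')^{-1} \in K_2$. Independence of the coset representative $h_1$ uses that if $h_1(h_1')^{-1} \in K_1$ then $(h_1(h_1')^{-1},1) \in K$, and multiplying this into $(h_1,h_2)(h_1',h_2')^{-1}$ yields $(1, h_2(h_2')^{-1}) \in K$. The map $\theta$ is a homomorphism by componentwise multiplication, surjective by definition of $\overline{K}_2 = \pi_2(K)$, and injective because $\theta([h_1])=[1]$ gives some $(1,h_2) \in K$ with $(h_1,h_2) \in K$, whence $(h_1,1) \in K$ and $h_1 \in K_1$.

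For the reverse direction, given a quintuple $(K_1, \overline{K}_1, K_2, \overline{K}_2, \theta)$, I would define
$$K := \{(h_1,h_2) \in \overline{K}_1 \times \overline{K}_2 : \theta([h_1]) = [h_2]\},$$
which is a subgroup because $\theta$ is a homomorphism. Running the forward construction on this $K$ recovers the original quintuple: projections give $\overline{K}_1, \overline{K}_2$ because $\theta$ is surjective; the intersections with $H_i \times \{1\}$ and $\{1\} \times H_i$ give back $K_1$ and $K_2$ because $\theta$ is injective; and the induced map on quotients is $\theta$ itself. Conversely, starting from a subgroup $K$ and running forward then backward returns $K$, since an element $(h_1,h_2) \in \overline{K}_1 \times \overline{K}_2$ satisfies $\theta([h_1]) = [h_2]$ exactly when it lies in $K$, by the very definition of $\theta$.

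I don't expect a genuine obstacle here; the statement is essentially bookkeeping, and the only spot requiring a little care is the well-definedness of $\theta$ on cosets, where one must use both $(h_1(h_1')^{-1},1) \in K$ and $(h_1(h_1')^{-1}, h_2(h_2')^{-1}) \in K$ simultaneously to extract $(1,h_2(h_2')^{-1}) \in K$. Everything else reduces to standard subgroup and homomorphism verifications in a direct product.
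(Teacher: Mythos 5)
Your proof is correct and complete; the well-definedness of $\theta$, its bijectivity, and the mutual inverseness of the two constructions are all verified properly. The paper itself gives no proof of this lemma — it is simply recalled as the classical Goursat's Lemma — and your argument is the standard one, so there is nothing to compare against.
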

\begin{lemma}\label{DCCDirPro}
   Let $H_1,H_2$ be two definable finite-dimensional groups with the DCC. Then, $G=H_1\times H_2$ has the DCC.
\end{lemma}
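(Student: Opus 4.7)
The plan is to reduce the DCC for $G = H_1 \times H_2$ to the DCC in each factor via Goursat's Lemma. Given a strictly descending chain of definable subgroups $K^{(0)} \supsetneq K^{(1)} \supsetneq \cdots$ in $G$, I would associate to each $K^{(n)}$ the Goursat quintuple $(K_1^{(n)}, \overline{K}_1^{(n)}, K_2^{(n)}, \overline{K}_2^{(n)}, \theta^{(n)})$. Explicitly $\overline{K}_1^{(n)} = \pi_1(K^{(n)})$ and $K_1^{(n)} = \{h \in H_1 : (h,1) \in K^{(n)}\}$ (and symmetrically for the second coordinate), so both are definable subgroups of $H_1$, and both sequences descend with $n$ since $K^{(n)}$ does. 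Similarly on the $H_2$ side.

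Since $H_1$ and $H_2$ both have the DCC, the four sequences $\overline{K}_1^{(n)}$, $K_1^{(n)}$, $\overline{K}_2^{(n)}$, $K_2^{(n)}$ all stabilise past some index $N$. The key observation is then that once these four subgroups are fixed, the isomorphism $\theta^{(n)} : \overline{K}_1/K_1 \to \overline{K}_2/K_2$ is forced to stabilise as well: the inclusion $K^{(n+1)} \subseteq K^{(n)}$ translates, via the characterisation $\theta^{(n)}([h_1]) = [h_2] \Leftrightarrow (h_1,h_2) \in K^{(n)}$, into the inclusion of graphs $\operatorname{graph}(\theta^{(n+1)}) \subseteq \operatorname{graph}(\theta^{(n)})$. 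But two isomorphisms with the same (fixed) domain whose graphs are nested must coincide, since for any $[x] \in \overline{K}_1/K_1$ the pair $([x], \theta^{(n+1)}([x]))$ lies in $\operatorname{graph}(\theta^{(n)})$, forcing $\theta^{(n)}([x]) = \theta^{(n+1)}([x])$.

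Therefore $\theta^{(n)} = \theta^{(N)}$ for all $n \geq N$, and by the bijective correspondence of Goursat's Lemma the subgroup $K^{(n)}$ is determined by its quintuple, so $K^{(n)} = K^{(N)}$ for all $n \geq N$, contradicting strict descent. I do not anticipate a significant obstacle here: the only point that requires a moment's care is the reduction from nested graphs of $\theta$'s to equality, but this is immediate once both have the same domain and codomain (the stabilised $\overline{K}_i/K_i$). Definability of the components of the Goursat quintuple is equally routine, as they are projections and fibres of definable sets.
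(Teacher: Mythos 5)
Your proof is correct, and it rests on exactly the same mechanism as the paper's: Goursat's Lemma, stabilisation of the four definable component subgroups, and the observation that nested graphs of two isomorphisms with the same stabilised domain and codomain force $\theta^{(n)}=\theta^{(n+1)}$, whence $K^{(n)}=K^{(n+1)}$ by bijectivity of the correspondence. The only structural difference is that you run the argument directly on an arbitrary strictly descending chain, using the DCC of $H_1$ and $H_2$ to stabilise all four component sequences at once, whereas the paper first reduces to showing that every definable subgroup of $G$ is virtually connected (applying the Goursat stabilisation only to finite-index chains, with a dimension computation) and then concludes via the $\omega$-DCC available in finite-dimensional theories. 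Your route is slightly more economical and in fact more general: it never uses finite-dimensionality of the ambient theory, only the definability of the Goursat data (which is automatic) and the DCC hypothesis on the two factors, so it proves the statement for definable groups with the DCC in any theory.
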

\begin{proof}
   It is sufficient to prove that any definable subgroup has a connected component. Let $K$ be a definable subgroup of $G$. Then, let $(K_1,\overline{K}_1,K_2,\overline{K}_2,\theta)$ as in Lemma \ref{Goursat}. Clearly, all the subgroups are definable, as the isomorphism $\theta$. Moreover, by fibration, $\dim(K)=\dim(K_1)+\dim(\overline{K}_2)=\dim(K_2)+\dim(\overline{K}_1)$. Given an infinite strictly descending family $\{K_i\}_{i<\omega}$ of definable subgroups of finite index in $K$, we obtain a descending family of subgroups $(K^i_1,\overline{K}^i_1,K^i_2,\overline{K}^i_2,\theta^i)$ all of finite index (by dimensionality). Since $K_1,\overline{K}_1,K_2,\overline{K}_2$ are virtually connected, all these stabilize after finitely many steps, say $N$. Then, $K_1^N=K_1^{N+1}$ and similarly for all the other subgroups. Consequently, $\theta^{N+1},\theta^N$ must be isomorphisms between $\overline{K}_1^N/K_1^N$ and $\overline{K}^N_2/K^N_2$ such that, if $\theta^{N+1}([a])=[b]$, then $\theta^{N}([a])=[b]$. Since they are isomorphisms, we conclude that they are the same. Since the correspondence is bijective, we conclude that $K_N=K_{N+1}$, a contradiction. Therefore, $K$ has a connected component, and the proof follows.
\end{proof}
In particular, we can apply the Lemma \ref{DCCDirPro} to the direct product of connected additive group of a field.
\begin{corollary}\label{DCCK^n}
    Let $K$ be a definable finite-dimensional field with $K^+$ connected. Then, for any $n\in \omega$, $K^n$ has the DCC.
\end{corollary}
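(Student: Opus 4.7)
The plan is to prove the corollary by a straightforward induction on $n$, using the previous two results as black boxes. The base case $n=1$ is exactly Corollary \ref{DCCfield}, which gives that $K^+$ has the DCC under the connectedness assumption. For the inductive step, I would write $K^{n+1}=K^n\times K$ and apply Lemma \ref{DCCDirPro} to the two factors: the inductive hypothesis provides that $K^n$ is a definable finite-dimensional group with DCC, and Corollary \ref{DCCfield} again gives that $K$ has the DCC. Since both hypotheses of Lemma \ref{DCCDirPro} are met, $K^{n+1}$ inherits the DCC.

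The only points to verify carefully are that both factors genuinely satisfy the standing finite-dimensionality hypothesis of Lemma \ref{DCCDirPro}. Finite dimensionality of $K^n$ and of $K$ as definable groups is immediate from the ambient finite-dimensional theory and the product/fibration axioms for $\dim$; there is no subtlety here, and no further appeal to the field structure is needed beyond the base case.

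I do not expect any genuine obstacle: the work has already been done in Corollaries \ref{DCCfield} and Lemma \ref{DCCDirPro}, and this corollary is essentially packaging. The slightly delicate aspect, were one to worry, is that DCC is not obviously preserved under arbitrary products in a general group-theoretic sense, which is precisely why one needs the Goursat analysis of Lemma \ref{DCCDirPro} rather than a naive coordinatewise argument; but since that lemma is already in place, the induction goes through cleanly in two or three lines.
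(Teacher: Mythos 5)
Your proof is correct and is exactly the argument the paper intends: the corollary is stated as an immediate consequence of Corollary \ref{DCCfield} (base case) and Lemma \ref{DCCDirPro} (inductive step), and the paper leaves precisely this induction implicit.
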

Finally, we prove some useful lemmas for Lie rings with the DCC. These are the same results obtained by Deloro and Ntsiri in \cite{deloro2023simple}.
\begin{lemma}
    Let $\mathfrak{g}$ be a simple definable Lie ring with DCC and $\mathfrak{h}$ a definable connected proper subring. Then $I=C^{\circ}_{\mathfrak{h}}(\mathfrak{g}/\mathfrak{h})$ is nilpotent.
\end{lemma}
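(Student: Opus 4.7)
The strategy is to analyse the lower central series $\gamma_n(I)$, use DCC to stabilise it at some perfect Lie subring $K = [K,I]$, and then derive a contradiction from $K \neq 0$ by showing that $K$ would be a proper non-trivial ideal of $\mathfrak{g}$, contradicting simplicity.

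I would begin with two preliminary observations. A direct Jacobi calculation shows that $J := C_{\mathfrak{h}}(\mathfrak{g}/\mathfrak{h}) = \{h \in \mathfrak{h} : [h,\mathfrak{g}] \subseteq \mathfrak{h}\}$ is an ideal of $\mathfrak{h}$: for $h \in \mathfrak{h}$, $j \in J$ and $g \in \mathfrak{g}$, Jacobi gives $[[h,j],g] = [h,[j,g]] + [j,[h,g]]$, and both summands lie in $\mathfrak{h}$ using $[j,g] \in \mathfrak{h}$ and $[j,[h,g]] \in [J,\mathfrak{g}] \subseteq \mathfrak{h}$. Passing to connected components, $I = J^\circ$ is also an ideal of $\mathfrak{h}$: for every $h \in \mathfrak{h}$ the definable additive endomorphism $\mathrm{ad}(h)$ of $(\mathfrak{h},+)$ sends $J$ into $J$, hence sends the connected subgroup $I$ into a connected subgroup of $J$, which is forced to lie in $J^\circ = I$.

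Now set $\gamma_1(I) = I$ and $\gamma_{n+1}(I) = [\gamma_n(I), I]$. The usual Jacobi induction makes each $\gamma_n(I)$ an ideal of $I$, so Lemma \ref{Definabilityder} applies (with $I$ playing the role of the connected definable factor) and every $\gamma_n(I)$ is a definable connected Lie subring. Since $I$ is an ideal of $\mathfrak{h}$, every $h \in \mathfrak{h}$ acts on $I$ as a derivation $\mathrm{ad}(h)$; because derivations preserve lower-central terms, each $\gamma_n(I)$ is in fact an ideal of $\mathfrak{h}$. The DCC then forces the chain to stabilise at some $K = \gamma_{n_0}(I) = [K,I]$, which is an ideal of $\mathfrak{h}$ and satisfies $[\mathfrak{g},K] \subseteq [\mathfrak{g},I] \subseteq \mathfrak{h}$.

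The heart of the argument is to sharpen this last containment to $[\mathfrak{g},K] \subseteq K$, upgrading $K$ to an ideal of $\mathfrak{g}$. I claim, by induction on $n \geq 1$, that $[\mathfrak{g},K] \subseteq \gamma_n(I)$. Since $K = [K,I]$, any $k \in K$ is a sum of brackets $[k',i']$ with $k' \in K$, $i' \in I$, and the derivation form of Jacobi gives
\[
[g,[k',i']] = [[g,k'],i'] + [k',[g,i']].
\]
The second summand lies in $[K,\mathfrak{h}] \subseteq K \subseteq \gamma_n(I)$, using $[g,i'] \in \mathfrak{h}$ and that $K$ is an ideal of $\mathfrak{h}$. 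For the first summand, the base case $n=1$ uses $[g,k'] \in [\mathfrak{g},K] \subseteq \mathfrak{h}$ together with the fact that $I$ is an ideal of $\mathfrak{h}$ to get $[[g,k'],i'] \in [\mathfrak{h},I] \subseteq I$; the inductive step uses $[g,k'] \in \gamma_{n-1}(I)$ to get $[[g,k'],i'] \in [\gamma_{n-1}(I), I] = \gamma_n(I)$. Taking $n = n_0$ yields $[\mathfrak{g},K] \subseteq K$, so $K$ is an ideal of $\mathfrak{g}$; by simplicity and $K \subseteq \mathfrak{h} \lneq \mathfrak{g}$ we conclude $K = 0$, i.e.\ $I$ is nilpotent. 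The main obstacle is precisely this Jacobi bootstrap, which walks the a priori weak containment $[\mathfrak{g},K] \subseteq \mathfrak{h}$ step by step down the lower-central filtration of $I$ all the way to $[\mathfrak{g},K] \subseteq K$.
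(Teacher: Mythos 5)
Your proposal is correct and follows essentially the same route as the paper: form the lower central series of $I$, use DCC to stabilise it at a perfect term $K=[K,I]$, and run the Jacobi identity down the filtration to show $[\mathfrak{g},K]\subseteq K$, so that $K$ is a proper ideal of the simple ring $\mathfrak{g}$ and hence trivial. Your write-up merely fills in details the paper leaves implicit (that $I$ is an ideal of $\mathfrak{h}$ and that each term of the series is definable and $\mathfrak{h}$-invariant), and phrases the Jacobi step as a bootstrap $[\mathfrak{g},K]\subseteq\gamma_n(I)$ rather than the paper's uniform claim $[I^{[n+1]},\mathfrak{g}]\leq I^{[n]}$; the two are interchangeable.
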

\begin{proof}
    $I$ exists by DCC and it is an ideal of $\mathfrak{h}$, and so also every $I^{[n]}$ is an ideal of $\mathfrak{h}$. We claim that, for $n\geq 0$, $[I^{[n+1]},\mathfrak{g}]\leq I^{[n]}$. For $n=0$ we have that $[I^{[1]},\mathfrak{g}]\leq [[I,I],\mathfrak{g}]\leq [\mathfrak{h},I]\leq I$ and same for $n+1$. Therefore, the chain of $I^{[n]}$ stabilises in an ideal of $\mathfrak{g}$ that, by simplicity, can only be $\mathfrak{g}$ or $0$. It cannot be $\mathfrak{g}$ since $I$ is contained in $\mathfrak{h}<\mathfrak{g}$ and so it is $0$.
\end{proof}
We obtain an important self-normalising result.
\begin{lemma}
    Let $\mathfrak{g}$ be a definable, connected Lie ring and $\mathfrak{h}\subseteq \mathfrak{g}$ be a definable connected subring of codimension $1$. Then either $\mathfrak{h}$ is an ideal of $\mathfrak{g}$ or $N_{\mathfrak{g}}(\mathfrak{h})=\mathfrak{h}$.
\end{lemma}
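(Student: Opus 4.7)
The plan is to analyze the normalizer $N := N_{\mathfrak{g}}(\mathfrak{h}) = \{g \in \mathfrak{g} : [g,\mathfrak{h}] \subseteq \mathfrak{h}\}$, which is manifestly a definable additive subgroup of $\mathfrak{g}$ containing $\mathfrak{h}$. The first step is to check that $N$ is in fact a Lie subring of $\mathfrak{g}$ with $\mathfrak{h}$ as an ideal, via a routine Jacobi computation in the spirit of Lemma \ref{Definabilityder}: for $n_1,n_2 \in N$ and $h \in \mathfrak{h}$,
$$[[n_1,n_2],h] = [n_1,[n_2,h]] - [n_2,[n_1,h]] \in [N,\mathfrak{h}] \subseteq \mathfrak{h},$$
so $[n_1,n_2] \in N$.

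The codimension-one hypothesis is then the key lever. From $\mathfrak{h} \leq N \leq \mathfrak{g}$ with $\dim\mathfrak{g} = \dim\mathfrak{h}+1$, monotonicity of dimension forces $\dim N \in \{\dim\mathfrak{h},\dim\mathfrak{g}\}$, and I would split on this. If $\dim N = \dim\mathfrak{g}$, then $N$ has finite index in $\mathfrak{g}$, and the connectedness of $\mathfrak{g}$ upgrades this to $N = \mathfrak{g}$, placing us in the first alternative: $\mathfrak{h}$ is an ideal.

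The remaining case is $\dim N = \dim\mathfrak{h}$, so that $[N:\mathfrak{h}]<\infty$, and the goal is to conclude $N = \mathfrak{h}$. My first move would be to pass to the definable connected component $N^\circ$ (which exists by the $\omega$-DCC enjoyed by finite-dimensional groups) and observe that $\mathfrak{h} \subseteq N^\circ$: the intersection $\mathfrak{h}\cap N^\circ$ has finite index in the connected group $\mathfrak{h}$ and hence equals $\mathfrak{h}$. Comparing dimensions shows that $\mathfrak{h}$ has finite index in $N^\circ$, and the connectedness of $N^\circ$ then forces $\mathfrak{h} = N^\circ$.

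The main obstacle is the last step, namely promoting $\mathfrak{h} = N^\circ$ to the honest equality $\mathfrak{h} = N$, i.e.\ ruling out a non-trivial finite quotient $N/\mathfrak{h}$. My strategy would be a contradiction argument exploiting the rigidity of codimension one: any putative $g \in N\setminus\mathfrak{h}$ satisfies $mg \in \mathfrak{h}$ for some $m\geq 1$, and $\mathfrak{h} + \langle g\rangle$ is a definable Lie subring strictly containing $\mathfrak{h}$ but of the same dimension. Combining the Lie-ring structure induced on $N/\mathfrak{h}$ with the chain-condition machinery of Lemma \ref{boundedind} (or an application of Schlichting's theorem to a translate family) should then produce the required contradiction. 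This last step is where the most delicate bookkeeping is required.
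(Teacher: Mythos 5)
There is a genuine gap in the final step, which is precisely where the content of the lemma lies. Your dimension split is fine: $N:=N_{\mathfrak{g}}(\mathfrak{h})$ is a definable subgroup containing $\mathfrak{h}$, and if $\dim N=\dim\mathfrak{g}$ then $N$ has finite index, whence $N=\mathfrak{g}$ by connectedness and $\mathfrak{h}$ is an ideal. But in the remaining case you must show that $N/\mathfrak{h}$, a finite group, is actually trivial, and the tools you invoke cannot do this. Lemma \ref{boundedind} and Schlichting's theorem concern chains and uniformly commensurable families of subgroups; neither says anything about whether a single connected subgroup can admit a proper finite extension inside its normaliser. Nothing in "rigidity of codimension one" rules out a finite nontrivial quotient $N/\mathfrak{h}$ by bookkeeping alone. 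The paper's argument is structurally different: it looks at the action of $\mathfrak{h}$ on the one-dimensional quotient module $V=\mathfrak{g}/\mathfrak{h}$ and observes that $N/\mathfrak{h}=C_V(\mathfrak{h})$ is exactly the fixed-point set of that action. Either the action is trivial, in which case $\mathfrak{h}$ is an ideal, or it is nontrivial, in which case (by the connected linearisation in dimension $1$, $V\simeq K^+$ with $\mathfrak{h}$ acting through multiplication by elements of $K$) the action is free on $V\setminus\{0\}$, so $C_V(\mathfrak{h})=0$ and $N=\mathfrak{h}$. Some form of this module-theoretic input is indispensable; your proposal never engages with the action on $\mathfrak{g}/\mathfrak{h}$.

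A secondary issue: you justify the existence of $N^{\circ}$ by the $\omega$-DCC, but $\omega$-DCC only forbids infinite descending chains with infinite indices and does not yield a connected component; finite-dimensional groups need not be virtually connected (the paper stresses this repeatedly). This detour is avoidable, since the real question is only whether $N=\mathfrak{h}$, but as written that step is also unjustified.
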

\begin{proof}
    Let $I=C_{\mathfrak{g}}(\mathfrak{g}/\mathfrak{h})$. If $I=\mathfrak{h}$, the proof is completed. Therefore, suppose the action is not trivial, then it is free. This implies that, given $g\in \mathfrak{g},h\in \mathfrak{h}$, $gh\in \mathfrak{h}$ iff $g\mathfrak{h}$ or $h=0$. Since $\mathfrak{h}\not=0$ then it implies that $N_{\mathfrak{g}}(\mathfrak{h})=\mathfrak{h}$.
\end{proof}
We finish by proving the triviality of definable derivations on finite-dimensional fields.
\begin{defn}
    Let $K$ be a field. An additive homomorphism $\sigma$ of $K$ is a \emph{derivation} of $K$ if, for any $k,k'\in K$,
    $$\sigma(kk')=k\sigma(k')+k'\sigma(k).$$
\end{defn}
\begin{lemma}\label{DerTri}
    A finite-dimensional field $K$ has no non trivial definable derivations.
\end{lemma}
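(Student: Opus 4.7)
My plan is to split on the characteristic. Let $\sigma \colon K \to K$ be a definable derivation, and set $F := \ker \sigma$. An easy check using the Leibniz rule (and, for inverses, differentiating the identity $a \cdot a^{-1} = 1$, which gives $a \sigma(a^{-1}) = -a^{-1}\sigma(a)$) shows that $F$ is a definable subfield of $K$. The goal is to prove $F = K$.

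In positive characteristic $p$ the calculation is immediate. A straightforward induction on $n$ yields $\sigma(x^n) = n x^{n-1} \sigma(x)$, so $\sigma(x^p) = 0$ for every $x \in K$; hence $K^p \subseteq F$. Since Lemma~\ref{perfection} gives $K = K^p$, this forces $F = K$, so $\sigma \equiv 0$.

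In characteristic $0$, I would first note that $\sigma(1) = \sigma(1 \cdot 1) = 2 \sigma(1)$ gives $\sigma(1) = 0$; by additivity, and the same trick applied to $m \cdot (n/m) = n$, one obtains $\mathbb{Q} \subseteq F$, so $F$ is infinite and $\operatorname{dim}(F) \geq 1$. Since $K$ is finite-dimensional, any strictly ascending chain of definable $F$-subspaces of $K$ is finite (each strict inclusion increases the dimension by a positive multiple of $\operatorname{dim}(F)$, by the standard fibration argument used in Lemma~\ref{perfection}). Thus $K$ is a finite-dimensional $F$-vector space; in particular $[K:F] < \infty$, so $K/F$ is an algebraic extension and, in characteristic $0$, also separable. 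For any $k \in K$ with minimal polynomial $p(x) = \sum_i a_i x^i \in F[x]$, applying $\sigma$ to $p(k) = 0$ and combining $\sigma|_F = 0$ with $\sigma(k^i) = i k^{i-1} \sigma(k)$ yields $p'(k) \sigma(k) = 0$. Separability forces $p'(k) \neq 0$, hence $\sigma(k) = 0$ and $k \in F$; therefore $F = K$.

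The main obstacle is the characteristic $0$ step: constructing the definable subfield $F$ is effortless, but upgrading \emph{``$F$ is an infinite definable subfield''} to \emph{``$K$ is a finite-dimensional $F$-vector space''} really exploits the dimension machinery available in finite-dimensional theories. Once that linear-algebraic finiteness is secured, the conclusion reduces to the classical observation that a derivation with field of constants $F$ must vanish on every separable algebraic element over $F$.
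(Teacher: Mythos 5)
Your proof is correct and uses the same two ingredients as the paper: perfection of $K$ (Lemma~\ref{perfection}) in positive characteristic, and in characteristic $0$ the combination of the multiplicativity of dimension ($\dim(K)=\dim(F)\cdot\operatorname{lin.dim}_F(K)$, i.e.\ Wagner's Proposition 3.3, which your chain argument reproves) with the derivative-of-the-minimal-polynomial computation. The only difference is the order of the two steps: the paper first shows the constant field is relatively algebraically closed and then invokes the dimension formula to rule out a proper infinite relatively algebraically closed subfield, whereas you first deduce $[K:F]<\infty$ and then apply the separability argument elementwise --- a purely cosmetic reversal.
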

\begin{proof}
Let $\delta$ a definable derivation of $K$.\\
    In characteristic $p$, given $x\in K$, there exists $y\in K$ such that $y^p=x$ since $K$ is perfect by Lemma \ref{perfection}. Therefore, $\delta(x)=\delta(y^p)=py^{p-1}\delta(y)=0$. This implies that a dimensional field of prime characteristic has no non-trivial derivation.\\
    In characteristic $0$, we verify that the field of constant $C$ of a derivation $\delta$ is definable and relatively algebraically closed. This would imply that it is an infinite algebraically closed subfield of a finite-dimensional field. This is a contradiction to \cite[Proposition 3.3]{wagner2020dimensional}.\\
    It is sufficient to verify that, for any $y$ algebraic element over $C$, then $y\in C$.\\
    Let $f(x)$ be the minimal polynomial of $y$ over $C$. Then, $f(y)=\sum_{i=0}^n a_iy^i=0$ with $a_i\in C$. This implies that $0=\delta(0)=\delta(f(y))=(\sum_{i=1}^n a_i\cdot i\cdot y^{i-1})\delta(y)$. Called $g(x)=\sum_{i=1}^n a_i\cdot i\cdot x^{i-1}$, it is a polynomial of degree strictly less than $\mathrm{deg}(f(x))$ and so $f(x)$ cannot be $0$. This implies that $\delta(x)=0$ and so $C$ is relatively algebraically closed.
\end{proof}
\section{Characterization of Lie rings of small dimension}
The Corollary \ref{DCC} is used to classify connected definable Lie rings of finite dimension. We recall the Theorem we will prove.
\begin{theorem}
  Let $\mathfrak{g}$ be a connected definable Lie ring of finite dimension and of characteristic $>3$. Then,
  \begin{itemize}
      \item if $\dim(\mathfrak{g})=1$, $\mathfrak{g}$ is abelian;
      \item if $\dim(\mathfrak{g})=2$, $\mathfrak{g}$ is soluble. If it is not nilpotent, $\mathfrak{g}/Z(\mathfrak{g})\simeq \mathfrak{g}\mathfrak{a}_1(K)$ for a perfect definable field $K$;
      \item if $\dim(\mathfrak{g})=3$ and $\mathfrak{g}$ is not soluble, $\mathfrak{g}$ is bad;
      \item if $\dim(\mathfrak{g})=4$ and $\mathfrak{g}$ is simple, $\mathfrak{g}$ is bad.
   \end{itemize}
\end{theorem}
In this case, the definitions of bad and Borel are the corresponding notions, in finite-dimensional theories, of the bad Lie rings and Borel of finite Morley rank (see \cite{deloro2023simple}).
\begin{defn}
    Let $\mathfrak{g}$ be a simple connected Lie ring of finite dimension. A definable connected proper soluble Lie subring $\mathfrak{h}$ of $\mathfrak{g}$ is a \emph{Borel} of $\mathfrak{g}$ if it is not contained in any proper connected definable soluble Lie subring of $\mathfrak{g}$. $\mathfrak{g}$ is \emph{bad} if it has a nilpotent Borel.
\end{defn}
In the NIP context, the two notions will be modified, since we do not always have connected components.\\
The non-existence of bad Lie rings in the finite Morley rank context follows from Macintyre's theorem \cite[Theorem 3.1]{poizat2001stable}, which clearly does not hold in the finite-dimensional context. In characteristic $0$, these surely exist, for example, the Lie algebra $\mathfrak{so}_3(R)$. In characteristic $p$, it is still unclear whether they exist. Moreover, they do not necessarily lead to a confutation of the finite-dimensional "log-CZ"-conjecture. A deep open question is whether bad Lie rings of dimension $3$ and $4$ and characteristic $\geq 5$ are Lie algebras over a definable perfect field. The only partial answer is that the additive group has to be a vector space over a perfect field of dimension $1$. 
\subsection{Dimension 1}
We start from the case $n=1$. We prove that a definably minimal connected Lie ring of finite dimension is abelian. The proof is very similar to the proof of Rosengarten \cite{rosengarten1991aleph}.
\begin{defn}
    A Lie ring $\mathfrak{g}$ is \emph{definably minimal} if it has no infinite definable Lie subring of infinite index.
\end{defn}
\begin{theorem}\label{Dim1}
    Let $\mathfrak{g}$ be a definably minimal connected Lie ring with characteristic different from $2,3$. Then, $\mathfrak{g}$ is abelian.
\end{theorem}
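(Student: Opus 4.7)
The plan is to split by characteristic, since in characteristic $0$ the torsion-freeness of $(\mathfrak{g},+)$ almost immediately gives the result, whereas in characteristic $p>3$ one needs the full strength of the finite-Lie-algebra bound in Lemma \ref{FinSubAlg}.

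In characteristic $0$ the group $(\mathfrak{g},+)$ is torsion-free, so for any $g\neq 0$ the cyclic subgroup $\mathbb{Z}g$ is infinite and sits inside the definable Lie subring $C_{\mathfrak{g}}(g)$. Hence $C_{\mathfrak{g}}(g)$ is an infinite definable Lie subring of $\mathfrak{g}$, so by definable minimality it has finite index, and by connectedness it equals $\mathfrak{g}$. Thus every $g\in\mathfrak{g}$ is central, and $\mathfrak{g}$ is abelian.

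In characteristic $p>3$ I would argue by contradiction, assuming $\mathfrak{g}$ is non-abelian. First, for any $g\notin Z(\mathfrak{g})$ the centralizer $C_{\mathfrak{g}}(g)$ is a proper definable Lie subring; if infinite, definable minimality plus connectedness would again force $C_{\mathfrak{g}}(g)=\mathfrak{g}$, so it must be finite. The same reasoning shows $Z(\mathfrak{g})$ is finite (else equal to $\mathfrak{g}$). Next, any finite definable ideal $\mathfrak{h}$ must be central: the action of $\mathfrak{g}$ on $\mathfrak{h}$ by inner derivations lands in the finite group $\mathrm{End}(\mathfrak{h},+)$, so its kernel $C_{\mathfrak{g}}(\mathfrak{h})$ has finite index and hence equals $\mathfrak{g}$ by connectedness. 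Combining this with Theorem \ref{DCCseries} and Lemma \ref{DefSimImpSim}, after replacing $\mathfrak{g}$ by $\mathfrak{g}/Z(\mathfrak{g})$ I may additionally assume that $\mathfrak{g}$ is centreless, simple, and has the DCC, with every non-zero element having a finite centralizer.

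To reach a contradiction the goal is to apply Lemma \ref{FinSubAlg}. The family $\{C_{\mathfrak{g}}(g):g\in\mathfrak{g}\}$ is uniformly definable, so Lemma \ref{boundedind} provides uniform bounds on descending chains of finite intersections of such centralizers. Using these, I would extract a finite definable $\mathbb{F}_p$-Lie subalgebra of $\mathfrak{g}$ whose internal centralizer $\mathbb{F}_p$-dimensions are uniformly bounded by some $d$; Lemma \ref{FinSubAlg} then caps its $\mathbb{F}_p$-dimension by $p^{d+1}d+d$. Iterating the construction by passing to normalizers of centralizers of successive elements and exploiting simplicity (so that no non-trivial finite subring is an ideal, forcing each normalizer step to stay inside a finite definable object), one should be able to force arbitrarily large such subalgebras inside $\mathfrak{g}$, contradicting the uniform bound from Lemma \ref{FinSubAlg}.

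The main obstacle I expect is precisely this last step. Translating the model-theoretic finiteness of $C_{\mathfrak{g}}(g)$ into a uniform bound on its $\mathbb{F}_p$-dimension is delicate, because in an arbitrary finite-dimensional theory, unlike in the finite Morley rank case, uniformly definable finite sets need not be uniformly finite. Finding the right definable finite $\mathbb{F}_p$-subalgebra, and using the hereditarily $\widetilde{\mathfrak{M}}_c$-property together with the DCC established above to keep the centralizer $\mathbb{F}_p$-dimensions uniformly controlled, is the technical heart of the proof and the place where Rosengarten's original argument requires the most care to transport from finite Morley rank to the finite-dimensional setting.
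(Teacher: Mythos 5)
Your characteristic-$0$ argument and your reduction to a centreless, definably minimal, connected Lie ring in which every non-zero element has finite centraliser are both correct and match the paper's setup. The bound on the cardinality of centralisers is then obtained by a single compactness argument (the family $\{C_{\mathfrak{g}}(g)\}_{g\neq 0}$ is uniformly definable and each member is finite), which already disposes of the ``uniformly definable finite sets need not be uniformly finite'' worry you raise; the hereditarily $\widetilde{\mathfrak{M}}_c$ machinery and the DCC are not needed here.

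However, the heart of the characteristic-$p$ case --- producing a strictly ascending chain of finite Lie subrings so as to contradict the bound from Lemma \ref{FinSubAlg} --- is exactly the step you leave as a sketch, and your sketch points in the wrong direction. ``Passing to normalizers of centralizers of successive elements'' gives no reason why the objects produced are Lie subrings, nor why they grow. The device the paper uses is the chain of iterated centralisers $C^{i}_{\mathfrak{g}}(g)=\ker(\operatorname{ad}_g^{i})$ for a fixed $g\neq 0$: each is finite by induction, since $\operatorname{ad}_g$ maps $C^{i+1}_{\mathfrak{g}}(g)/C^{i}_{\mathfrak{g}}(g)$ injectively into the finite group $C^{i}_{\mathfrak{g}}(g)$; the chain is \emph{strictly} ascending because connectedness forces $[g,\mathfrak{g}]=\operatorname{im}(\operatorname{ad}_g)$, a subgroup of finite index (finite kernel), to equal $\mathfrak{g}$, so $\operatorname{ad}_g$ is surjective and some preimage of a non-zero element of $C^{i}_{\mathfrak{g}}(g)$ lies in $C^{i+1}_{\mathfrak{g}}(g)\setminus C^{i}_{\mathfrak{g}}(g)$; and, crucially, these kernels are closed under the bracket only at the $p$-power stages, via the Leibniz expansion $\operatorname{ad}_g^{p^i}[h,k]=\sum_{j}\binom{p^i}{j}[\operatorname{ad}_g^{j}(h),\operatorname{ad}_g^{p^i-j}(k)]$, in which all binomial coefficients with $0<j<p^i$ vanish modulo $p$. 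Without this (or an equivalent construction) your proof does not close, so as it stands there is a genuine gap at the decisive step.
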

\begin{proof}
    The center of $\mathfrak{g}$ is a definable ideal of $\mathfrak{g}$. By minimality, it can only be finite or equal to $\mathfrak{g}$. In the latter case, the conclusion is achieved. Assume the first case. Since $\mathfrak{g}$ is connected, $\mathfrak{g}/Z(\mathfrak{g})$ is a centerless connected definably minimal Lie ring of finite dimension (the proof is the same as \cite[Lemma 6.1]{borovik1994groups}). We verify that this structure cannot exist. Since it is connected and centerless, the centraliser of every element different from $0$ is finite. By compactness, there is a bound $d$ on the cardinality of the centralisers. This implies that the characteristic cannot be $0$ since, for any $g\in \mathfrak{g}$, $\langle g\rangle\leq C_{\mathfrak{g}}(g)$.\\
    By Lemma \ref{FinSubAlg}, there is a bound also on the cardinality of every finite subring. Therefore, to find a contradiction, it is sufficient to construct a strictly ascending sequence of finite Lie subrings of $\mathfrak{g}$.\\
    Take $g\in \mathfrak{g}-{0}$. By assumptions, $C_{\mathfrak{g}}(g)$ is a finite proper Lie subring of $\mathfrak{g}$. Since $\mathfrak{g}$ is connected, the subgroup of finite index $[g,\mathfrak{g}]$ is equal to $\mathfrak{g}$ and so $C_{\mathfrak{g}}^2(g)>C_{\mathfrak{g}}(g)$. Iterating the process, we obtain that $C^i_{\mathfrak{g}}(g)<C^{i+1}_{\mathfrak{g}}(g)$. Consequently, it is sufficient to verify that $C^{p^j}_{\mathfrak{g}}(g)$ is a Lie subring for every $j<\omega$.\\
   Let $h,k\in C^{p^{i}}_{\mathfrak{g}}(g)$. Then, 
    $$ad_g^{p^{i}}[h,k]=\sum_{j=0}^{i} \binom{p^{i}}{p^j} [ad_g^{p^j}(h),ad_g^{p^{i-j}}(k)]=0.$$
    This concludes the proof.
\end{proof}
\subsection{Dimension 2}
The proof in the case $\operatorname{dim}(\mathfrak{g})=2$ uses a linearization theorem of the action of a Lie ring $\mathfrak{g}$ on a $\mathfrak{g}$-module, both of finite dimension. Since, for the moment, all the Lie rings are connected, it is sufficient to recall the Fact at the end of section 2.2 of \cite{deloro2023simple}. 
\begin{fact}\label{LinDimCon}
    Let $(\mathfrak{g},V)$ be a definable, faithful, irreducible $\mathfrak{g}$-module with $V$ connected. Assume that both $\mathfrak{g}$ and $C_{\operatorname{End}(V)}(\mathfrak{g})$ are infinite. Then, there exists a definable field $K$ such that $A$ is a finite-dimensional $K$-vector space and $\mathfrak{g}$ acts $K$-linearly on $V$.
\end{fact}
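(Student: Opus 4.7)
The plan is to produce the field $K$ from the centraliser ring $K_0 := C_{\operatorname{End}(V)}(\mathfrak{g})$, viewed as an infinite definable associative ring of (definable) endomorphisms of $V$ commuting with the $\mathfrak{g}$-action. The strategy has three stages: promote $K_0$ to a division ring by a Schur argument, then to a commutative field using the finite-dimensionality of the ambient theory, and finally observe that $V$ is a finite-dimensional $K$-vector space on which $\mathfrak{g}$ acts linearly.

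The first stage is a Schur-type step. For any nonzero $c \in K_0$, both $\ker(c)$ and $\operatorname{Im}(c)$ are definable $\mathfrak{g}$-submodules of $V$ because $c$ commutes with every element of $\mathfrak{g}$. Irreducibility of $V$ as a $\mathfrak{g}$-module together with the connectedness hypothesis on $V$ forces $\ker(c)$ to be trivial and $\operatorname{Im}(c) = V$: a finite $\mathfrak{g}$-invariant subgroup would contradict connectedness combined with minimality, and an infinite proper one would contradict irreducibility. Hence $c$ is a definable bijective endomorphism, its set-theoretic inverse is again a definable endomorphism, and it still centralises $\mathfrak{g}$. Therefore $K_0$ is a definable division ring.

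The second and most delicate stage is passing from the division ring $K_0$ to a commutative field. Here I would invoke the general principle that an infinite division ring definable in a finite-dimensional theory is commutative, an analogue in this broader setting of Macintyre's theorem from the finite-Morley-rank context and of the o-minimal result of Otero--Peterzil--Pillay. One concrete route uses the hereditarily $\widetilde{\mathfrak{M}}_c$-property noted in Section~1: centralisers inside $K_0$ satisfy a chain condition, so every maximal definable commutative subring of $K_0$ is a definable subfield whose centraliser in $K_0$ is itself, and a standard counting argument in dimension then forces this subfield to exhaust $K_0$. Let $K$ denote the resulting definable field.

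Once $K$ is in hand, the rest is bookkeeping. The group $V$ becomes a $K$-module by restricting the action of $K_0$, and $\mathfrak{g}$ acts $K$-linearly by construction because $\mathfrak{g} \subseteq C_{\operatorname{End}(V)}(K_0) \subseteq C_{\operatorname{End}(V)}(K)$. Finite-dimensionality of $V$ over $K$ follows from the fibration axiom applied to the map $K \times V \to V$ sending $(k,v)$ to $kv$: each $K$-orbit is definable of additive dimension $\dim K \geq 1$, so a greedy choice of $K$-independent vectors must terminate after at most $\dim V / \dim K$ steps. The main obstacle is the commutativity of $K_0$; the Schur step and the final packaging into a linear structure are essentially formal, and it is precisely the passage from division ring to field that draws on the finite-dimensional hypothesis and the chain conditions of Section~1.
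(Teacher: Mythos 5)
Your skeleton --- Schur's lemma on the centraliser, extraction of a definable field, and a dimension count for $\dim_K V$ --- is the right one and is essentially the argument behind the cited Fact. The Schur step and the final packaging are fine (modulo the small point that the definability of $C_{\operatorname{End}(V)}(\mathfrak{g})$ as a single set is not automatic; one must recover definability for the subring one actually generates, via the chain conditions or \cite[Proposition 3.6]{wagner2020dimensional}).

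The genuine gap is the commutativity step. The ``general principle that an infinite division ring definable in a finite-dimensional theory is commutative'' is false: the Hamilton quaternions over a real closed field form an infinite definable non-commutative division ring in an $o$-minimal, hence finite-dimensional, theory. There is no Macintyre/Cherlin--Shelah-type theorem available here --- this is precisely why the paper never invokes one and instead, in Theorem \ref{End} and in the proof of Theorem \ref{LinAlmAbe}, extracts commutativity from the way the skew-field is \emph{generated} (by a commuting family coming from an abelian or almost abelian piece) together with the multiplicativity $\dim(K)=\operatorname{lindim}_{K'}(K)\cdot\dim(K')$. Your fallback, that a maximal definable commutative subfield must exhaust $K_0$ by dimension counting, fails for the same reason: $\mathbb{C}\subset\mathbb{H}$ is a maximal subfield with $\dim(\mathbb{H})=\operatorname{lindim}_{\mathbb{C}}(\mathbb{H})\cdot\dim(\mathbb{C})=2\cdot 2$, and no contradiction arises. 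The statement itself survives because one does not need $K$ to be all of $C_{\operatorname{End}(V)}(\mathfrak{g})$: an infinite definable commutative subfield $K$ of the centraliser suffices, since $\mathfrak{g}$ automatically acts $K$-linearly and $V$ is finite-dimensional over $K$ by your fibration argument. So the correct repair is to produce such a subfield directly (a definable division subring generated by a commuting family, made definable by the chain conditions) rather than to claim commutativity of the whole centraliser. A minor additional remark: your justification that $\ker(c)$ is trivial because ``a finite $\mathfrak{g}$-invariant subgroup would contradict connectedness'' is not a valid inference (connected groups routinely have finite invariant subgroups --- witness the finite kernels $A_0$ throughout the paper); triviality of the kernel must come from the irreducibility hypothesis being read as the absence of \emph{any} proper nonzero definable submodule.
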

On the other hand, when we will analyze NIP finite-dimensional Lie rings, we will not assume connectivity. Therefore, it will be necessary to introduce the non-connected version of Fact \ref{LinDimCon} (this will be done in section 5).\\
We will refer to the following Lemma, whose proof can be found in \cite{deloro2023simple}, with "Connected linearization in dimension $1$". 
\begin{lemma}
    Let $\mathfrak{g}$ be a definable Lie ring acting faithfully on $V$, an irreducible connected $\mathfrak{g}$-module of dimension $1$. Then, $\operatorname{dim}(\mathfrak{g})$ is $1$, $\mathfrak{g}\simeq K^+$ and $V\simeq K^{+}$ for a definable field $K$ of dimension $1$.
\end{lemma}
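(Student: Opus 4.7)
The plan is to exploit the rigidity of $V$: since $V$ is connected of dimension $1$, every proper definable subgroup of $V$ is finite. Hence for any non-zero $g\in\mathfrak{g}$ the definable endomorphism $\rho(g):V\to V$ has kernel finite (since being $V$ would contradict faithfulness) and image equal to $V$ (otherwise the kernel would have finite index, hence coincide with $V$ by connectedness). So every non-zero element of $\mathfrak{g}$ acts on $V$ as a surjective endomorphism with finite kernel.

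I would next build a field out of the centraliser of the action. Let $R:=C_{\operatorname{DefEnd}(V)}(\rho(\mathfrak{g}))$. For $0\neq\alpha\in R$, both $\ker\alpha$ and $\operatorname{Im}\alpha$ are definable $\mathfrak{g}$-submodules of $V$, and irreducibility forces $\alpha$ to be a bijection; its inverse is then a definable endomorphism commuting with $\rho(\mathfrak{g})$, so belongs to $R$, making $R$ a division ring. Fix a non-zero $v_0\in V$: the evaluation $r\mapsto r(v_0)$ is an injective additive homomorphism $R\to V$ whose image, being a non-zero $\mathfrak{g}$-submodule, must coincide with $V$. Transporting the multiplication of $R$ along this bijection endows $V$ with a definable division-ring structure $K$ whose additive group has dimension $1$; the standard finite-dimensional analogue of Cherlin's theorem (a definable division ring of finite dimension is a commutative field) then promotes $K$ to a definable field of dimension $1$, so that $V\simeq K^{+}$.

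Finally, I would reinterpret the action of $\mathfrak{g}$ on $V=K$. With $R=K$ acting on $V$ by left multiplication, each $\rho(g)$ commutes with left multiplication by $K$, so must be right multiplication by a unique element of $K$; this produces an injective group homomorphism $\phi:\mathfrak{g}\to K^{+}$. Its image is a definable subgroup of the connected dimension-$1$ group $K^{+}$, hence is finite or equal to $K^{+}$. If it were finite, the $\mathfrak{g}$-submodule of $V=K$ generated by $1\in K$ would equal the finitely generated subring $\mathbb{Z}\langle\phi(\mathfrak{g})\rangle\subseteq K$, and irreducibility would force this subring to be all of $K$; but a field that is finitely generated as a ring is necessarily finite (by Zariski's lemma), contradicting that $V$ is infinite. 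Hence $\phi$ is onto, $\mathfrak{g}\simeq K^{+}$, and $\dim\mathfrak{g}=1$.

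The main obstacle lies in the two technical points underlying the middle paragraph: treating $R$ as a genuinely definable structure (the class of all definable endomorphisms is not itself definable, so one must transport everything along the bijection $R\simeq V$ to obtain a definable ring on $V$), and the upgrade from a definable division ring of finite dimension to a commutative field, which depends on a finite-dimensional analogue of Cherlin's theorem rather than any tool specific to Lie rings.
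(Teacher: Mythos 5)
Your outline follows the Schur--Zilber template (centraliser is a division ring, evaluate at a point, transport the ring structure to $V$), which is indeed the right circle of ideas, but there are two genuine gaps that the argument as written does not survive.

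The first and central one is the claim that the image of the evaluation map $r\mapsto r(v_0)$ is a $\mathfrak{g}$-submodule of $V$. It is not, in general. For $r\in R=C_{\operatorname{DefEnd}(V)}(\rho(\mathfrak{g}))$ and $g\in\mathfrak{g}$ you have $g\cdot(r v_0)=r(g\cdot v_0)$, so $R v_0$ is an $R$-submodule, but it is stable under $\rho(\mathfrak{g})$ only if $g\cdot v_0\in R v_0$ for every $g$, which amounts to $\rho(\mathfrak{g})\subseteq R$, i.e.\ to $\mathfrak{g}$ being abelian. Abelianity of $\mathfrak{g}$ is part of what the Lemma concludes (via $\mathfrak{g}\simeq K^+$), not something you may assume; as written the proof is circular at exactly the step that is supposed to pin down $V$. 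You could try to salvage the step differently --- $R v_0$ is a subgroup of the connected dimension-$1$ group $V$, so if it is definable and infinite it must be all of $V$ by the $\omega$-DCC --- but this leads straight into the second gap.

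The second gap is that nothing in the argument shows $R$ is infinite. In characteristic $0$ one has $\mathbb{Z}\cdot\operatorname{id}\subseteq R$ and is done, but in characteristic $p$ the only a priori elements of $R$ are $\mathbb{F}_p\cdot\operatorname{id}$, and without abelianity of $\mathfrak{g}$ you cannot appeal to $\rho(\mathfrak{g})\subseteq R$. This is precisely why Fact~\ref{LinDimCon}, which the paper extracts from the same source, carries the explicit hypothesis that $C_{\operatorname{End}(V)}(\mathfrak{g})$ is infinite: establishing this (or circumventing it) in the $\dim V=1$ case is the content of the Lemma, and it is missing here. The definability of $R$, which you flagged yourself, is a third, more technical obstacle of the same flavour; all three must be resolved, typically by first showing a definable commutative subring of $\operatorname{DefEnd}(V)$ built from the action is infinite and then identifying $V$ with its field of fractions, rather than by working with the full abstract centraliser from the start.
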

We classify connected Lie rings of dimension $2$.
\begin{theorem}\label{Dim2}
    Let $\mathfrak{g}$ a definable connected Lie ring of dimension $2$. Then, $\mathfrak{g}$ is soluble of class $2$. Moreover, if $\mathfrak{g}$ is a soluble non-nilpotent Lie ring, then:
    \begin{itemize}
        \item There is a Cartan subring $\mathfrak{c}\leq \mathfrak{g}$ with $\mathfrak{c}\oplus \mathfrak{g}'=\mathfrak{g}$;
        \item $Z(\mathfrak{g})$ is finite and $\mathfrak{g}/Z(\mathfrak{g})\simeq \mathfrak{g}\mathfrak{a_1}(K)$ for a definable field $K$;
        \item There is $h\in \mathfrak{c}$ such that $ad(h)_{|\mathfrak{g}'}=Id_{|\mathfrak{g}'}$.
    \end{itemize}
\end{theorem}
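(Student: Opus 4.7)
The plan is first to establish solubility of class $\leq 2$ using Theorem \ref{DCCseries}, then to use the connected linearization Lemma in dimension $1$ to pin down the structure when $\mathfrak{g}$ is non-nilpotent. Apply Theorem \ref{DCCseries} to obtain a series $\mathfrak{g}=\mathfrak{h}_0\geq\mathfrak{h}_1\geq\cdots\geq 0$ whose factors are either central or $\mathfrak{g}$-minimal, connected, with DCC. Since $\dim(\mathfrak{g})=2$, the series has length at most $2$ and each nontrivial factor has dimension $1$ or $2$. A dimension-$1$ connected factor is definably minimal and hence abelian by Theorem \ref{Dim1}, while central factors are trivially abelian; so in a length-$2$ series $\mathfrak{g}''=0$. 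In the length-$1$ case $\mathfrak{g}$ is itself $\mathfrak{g}$-minimal and connected, and $\mathfrak{g}'=[\mathfrak{g},\mathfrak{g}]$ (definable and connected by Lemma \ref{Definabilityder}) is $0$ or $\mathfrak{g}$; the perfect case I rule out by invoking Corollary \ref{DCC} (to get the DCC and finite center) and applying the dimension-$1$ linearization to the adjoint action, which would force $\dim(\mathfrak{g})=1$, a contradiction.

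For the non-nilpotent case, I first check that $\dim\mathfrak{g}'=1$: $\mathfrak{g}'$ is a definable connected ideal of dimension at most $1$ (dimension $2$ would contradict solubility by connectedness) and non-zero (else $\mathfrak{g}$ abelian), so of dimension exactly $1$, and by Theorem \ref{Dim1} abelian. The centralizer $C_\mathfrak{g}(\mathfrak{g}')$ is a definable ideal containing $\mathfrak{g}'$ and properly contained in $\mathfrak{g}$ by non-nilpotency, hence of dimension $1$ with connected component $\mathfrak{g}'$. The induced action of $\mathfrak{g}/C_\mathfrak{g}(\mathfrak{g}')$ on $\mathfrak{g}'$ is then faithful, and $\mathfrak{g}'$ is an irreducible connected module of dimension $1$ since its connectedness excludes proper infinite submodules. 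The Connected linearization in dimension $1$ Lemma then produces a definable field $K$ such that $\mathfrak{g}/C_\mathfrak{g}(\mathfrak{g}')\simeq K^+$ and $\mathfrak{g}'\simeq K^+$, with the action being multiplication in $K$.

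Lift $1\in K$ to $h\in\mathfrak{g}$: then $ad(h)|_{\mathfrak{g}'}=Id$. Set $\mathfrak{c}=C_\mathfrak{g}(h)$. From $[h,g']=g'$ we get $\mathfrak{c}\cap\mathfrak{g}'=0$, and applying the fibration axiom to $ad(h):\mathfrak{g}\to\mathfrak{g}$ (whose image contains $\mathfrak{g}'$) gives $\dim\mathfrak{c}=1$, so $\mathfrak{c}+\mathfrak{g}'=\mathfrak{g}$ by connectedness of $\mathfrak{g}$. Then $[\mathfrak{c},\mathfrak{c}]\subseteq \mathfrak{c}\cap\mathfrak{g}'=0$, so $\mathfrak{c}$ is abelian, and a short decomposition along $\mathfrak{c}\oplus\mathfrak{g}'$ shows $N_\mathfrak{g}(\mathfrak{c})=\mathfrak{c}$, making $\mathfrak{c}$ a Cartan subring. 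The center $Z(\mathfrak{g})$ lies in $C_\mathfrak{g}(\mathfrak{g}')$ and intersects $\mathfrak{g}'$ only finitely (else $\mathfrak{g}$ would be nilpotent), hence is finite; moreover $\mathfrak{c}\cap C_\mathfrak{g}(\mathfrak{g}')=Z(\mathfrak{g})$, giving $\mathfrak{c}/Z(\mathfrak{g})\simeq K^+$ and thus $\mathfrak{g}/Z(\mathfrak{g})\simeq\mathfrak{g}\mathfrak{a}_1(K)$ by matching brackets with the multiplicative action of $K^+$ on $K^+$. The main obstacle I anticipate is ruling out the perfect $\mathfrak{g}$-minimal dimension-$2$ case in the solubility step, and ensuring the irreducibility hypothesis of the linearization Lemma is satisfied cleanly without first quotienting by a non-trivial finite invariant submodule, which would complicate obtaining the genuine (not merely almost) direct sum $\mathfrak{c}\oplus\mathfrak{g}'$.
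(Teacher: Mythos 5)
Your overall strategy for solubility is a genuine variant of the paper's: you decompose via Theorem \ref{DCCseries}, whereas the paper argues directly by contradiction, reducing to the case where $\mathfrak{g}/Z(\mathfrak{g})$ is definably simple with DCC and then invoking the Morley-rank argument of Deloro--Ntsiri. Both routes bottleneck at the same hard case: a connected $\mathfrak{g}$-minimal (equivalently definably simple) Lie ring of dimension $2$ with DCC that is its own derived subring. Your dispatch of this case does not work. You propose to apply ``the dimension-$1$ linearization to the adjoint action,'' but the adjoint module $(\mathfrak{g},+)$ here has dimension $2$, not $1$, so the hypothesis of the connected linearization Lemma in dimension $1$ (which explicitly requires an irreducible connected module of dimension $1$) is simply not met; it cannot ``force $\dim(\mathfrak{g})=1$.'' The general linearization Fact \ref{LinDimCon} is also not available without first establishing that $C_{\operatorname{End}(\mathfrak{g})}(\mathfrak{g})$ is infinite (a Schur-type condition you have not verified), and even if it applied it would not immediately yield a contradiction. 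The paper handles this case by explicitly deferring to the proof in \cite{deloro2023simple}, where one takes a Borel, linearizes the action of the Borel on $\mathfrak{g}/\mathfrak{b}$, and carries out a genuine further analysis -- none of which is contained in your one-line remark. This is the missing step; until it is filled, solubility is not proved. A related smaller gap: you silently assume that in a length-$2$ series both nontrivial factors have dimension $1$, but the series could have a finite (central) term $\mathfrak{h}_1$ with $\mathfrak{g}/\mathfrak{h}_1$ a dimension-$2$ $\mathfrak{g}$-minimal factor, which is again the same hard case.

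Your treatment of the non-nilpotent structure is, by contrast, more explicit than the paper's terse ``apply the connected linearization in dimension $1$,'' and the skeleton is sound: $\dim\mathfrak{g}'=1$, $\mathfrak{g}'$ abelian by Theorem \ref{Dim1}, linearize the action of $\mathfrak{g}/C_\mathfrak{g}(\mathfrak{g}')$ on $\mathfrak{g}'$, lift $1\in K$ to $h$, set $\mathfrak{c}=C_\mathfrak{g}(h)$, and use the fibration axiom with $\operatorname{im}(\operatorname{ad}(h))\subseteq\mathfrak{g}'$ to get $\dim\mathfrak{c}=1$ and the direct sum. Two points deserve care: the finiteness of $Z(\mathfrak{g})$ needs a cleaner accounting (the short route is that $C_\mathfrak{g}(\mathfrak{g}')=Z(\mathfrak{g})\oplus\mathfrak{g}'$ has dimension $1$, with $\mathfrak{g}'$ connected of dimension $1$, forcing $\dim Z(\mathfrak{g})=0$ -- your phrase ``hence is finite'' skips that $\mathfrak{g}'$ need a priori not have finite index in $C_\mathfrak{g}(\mathfrak{g}')$ without this computation), and the irreducibility caveat you already flag is genuine: if $\mathfrak{g}'$ has a nontrivial finite $\mathfrak{g}$-submodule, the lemma linearizes $\mathfrak{g}'$ only modulo it, and the direct sum $\mathfrak{c}\oplus\mathfrak{g}'$ may have to be replaced by an almost direct sum. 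The paper does not address this either, so you are not worse off, but it should not be waved away.
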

\begin{proof}
We verify that $\mathfrak{g}$ is soluble.\\
Assume, for a contradiction, that it is not soluble. Then, the center must be finite and $\mathfrak{g}/Z(\mathfrak{g})$ must be definably simple. Assume not and let $\mathfrak{h}$ be a definable ideal of dimension $1$. Then, either $[\mathfrak{h},\mathfrak{g}]$ has finite index in $\mathfrak{h}$ or it is finite. Since $[\mathfrak{h},\mathfrak{g}]$ is connected by Lemma \ref{Definabilityder}, either $\mathfrak{h}$ is virtually connected or $[\mathfrak{h},\mathfrak{g}]=0$ \hbox{i.e.} $\mathfrak{h}\leq Z(\mathfrak{g})$. In the second case, $\mathfrak{g}$ is nilpotent since $\mathfrak{g}/Z(\mathfrak{g})$ has dimension $1$ and it is abelian by Lemma \ref{Dim1}. In the first case, $\mathfrak{g}$ has the DCC (since any definable proper infinite subgroup either contains $[\mathfrak{h},\mathfrak{g}]$ or it is isogenic to $\mathfrak{g}/[\mathfrak{g},\mathfrak{h}]$ that is connected). In this case, the solubility follows as in \cite{deloro2023simple}.\\
    We may assume that $\mathfrak{g}/Z(\mathfrak{g})$ is definably simple. By Theorem \ref{DCC}, $\mathfrak{g}$ is simple with $DCC$. Then the proof follows, again, as in \cite{deloro2023simple}.\\
For the second part, let $\mathfrak{h}$ be a definable ideal of dimension $1$ of $\mathfrak{g}$. Then, by Lemma \ref{Definabilityder}, $[\mathfrak{g},\mathfrak{h}]$ is a definable connected ideal in $\mathfrak{g}$ contained in $\mathfrak{h}$. Either it is $0$, and so $\mathfrak{h}$ is contained in $Z(\mathfrak{g})$, or $\mathfrak{h}$ is connected. In the first case, $\mathfrak{g}/\mathfrak{h}$ is abelian by Lemma \ref{Dim1} and so $\mathfrak{g}$ is nilpotent. In the latter, we apply the connected linearization in dimension 1 to derive the results.
\end{proof}
 \subsection{Dimension 3}
 In this section, we analyze connected Lie rings of dimension $3$, always under the hypothesis that the characteristic is greater than or equal to $5$. In particular, following the steps of the proof in \cite{deloro2023simple}, we prove the following.
 \begin{theorem}
   Let $\mathfrak{g}$ be a connected Lie ring of dimension $3$. If $\mathfrak{g}$ is not soluble, then:
   \begin{itemize}
       \item either $\mathfrak{g}\simeq \operatorname{sl}_2(K)$;
       \item or $\mathfrak{g}$ is a Borel of dimension $1$ and $(\mathfrak{g},+)\simeq K^3$ for a field $K$ of dimension $1$.
   \end{itemize}
 \end{theorem}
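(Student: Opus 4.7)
The plan is to mirror the argument of Deloro--Ntsiri for the finite Morley rank case \cite{deloro2023simple}, replacing uses of Morley rank by finite dimension and relying on Theorems \ref{Dim1}, \ref{Dim2}, and Corollary \ref{DCC}. First I would reduce to the simple case. If $\mathfrak{h}$ is a proper nonzero definable ideal of $\mathfrak{g}$, then its connected component $\mathfrak{h}^0$ is also an ideal of $\mathfrak{g}$, with $1 \leq \dim(\mathfrak{h}^0) \leq 2$, so both $\mathfrak{h}^0$ and $\mathfrak{g}/\mathfrak{h}^0$ are connected Lie rings of dimension at most $2$, hence soluble by Theorems \ref{Dim1} and \ref{Dim2}. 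This forces $\mathfrak{g}$ to be soluble, contradicting the hypothesis. Thus $\mathfrak{g}$ is definably simple, so by Lemma \ref{DefSimImpSim} simple with trivial centre; applying Corollary \ref{DCC} with $\mathfrak{h} = \mathfrak{g}$ then yields the DCC on $(\mathfrak{g},+)$. With DCC in hand, a Borel $\mathfrak{b}$ exists as a proper definable connected soluble Lie subring maximal for inclusion, and $\dim(\mathfrak{b}) \in \{1,2\}$.

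Suppose first $\dim(\mathfrak{b}) = 2$ and $\mathfrak{b}$ is not nilpotent. Theorem \ref{Dim2} supplies a definable field $K$, an isomorphism $\mathfrak{b}/Z(\mathfrak{b}) \simeq \mathfrak{g}\mathfrak{a}_1(K)$, and an element $h \in \mathfrak{b}$ acting as the identity on $\mathfrak{b}'$. I would then decompose $\mathfrak{g}$ into $\operatorname{ad}(h)$-weight spaces: simplicity of $\mathfrak{g}$, the relation $[\mathfrak{g},\mathfrak{g}] = \mathfrak{g}$, and $\dim(\mathfrak{g}) = 3$ together force weight spaces of weights $+1$, $0$, $-1$ each of $K$-dimension $1$, whose Jacobi relations realize $\operatorname{sl}_2(K)$. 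If $\dim(\mathfrak{b}) = 2$ but $\mathfrak{b}$ is nilpotent, then by Theorem \ref{Dim2} it is in fact abelian, $\mathfrak{g}$ is already bad, and replacing $\mathfrak{b}$ by a one-dimensional definable connected Lie subring contained in it produces a Borel of dimension $1$, reducing to the remaining case.

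Suppose finally $\dim(\mathfrak{b}) = 1$. Then $\mathfrak{b}$ is abelian by Theorem \ref{Dim1}, hence nilpotent, so $\mathfrak{g}$ is bad. To identify $(\mathfrak{g},+) \simeq K^{3}$, I would take a minimal $\mathfrak{b}$-invariant definable connected subgroup $V \leq (\mathfrak{g},+)$ under the adjoint action and apply the Connected linearization in dimension $1$ lemma to $V$, producing a definable one-dimensional field $K$ with $V \simeq K^{+}$ on which $\mathfrak{b}$ acts by $K$-scalars. The main obstacle is then propagating this $K$-structure to all of $(\mathfrak{g},+)$: using the centraliser of $\operatorname{ad}(\mathfrak{b})$ in the definable endomorphism ring, together with the DCC and minimality choices, one should decompose $(\mathfrak{g},+)$ as an almost direct sum of $K$-lines and conclude $(\mathfrak{g},+) \simeq K^{3}$. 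This last propagation step, rather than the weight-space calculus leading to $\operatorname{sl}_2(K)$, is where I expect the genuine technical difficulty to lie.
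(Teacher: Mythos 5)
Your overall architecture (reduce to the simple case with DCC, split on the dimension of a Borel, defer the $\operatorname{sl}_2(K)$ identification to the Deloro--Ntsiri weight-space calculus) matches the paper's, but two steps in the bad case would fail as written. First, if $\mathfrak{b}$ is a nilpotent Borel of dimension $2$, you cannot ``replace $\mathfrak{b}$ by a one-dimensional definable connected Lie subring contained in it'': by the maximality in the definition of a Borel, a subring of a proper connected soluble subring is never itself a Borel. The correct argument, which the paper imports from \cite{deloro2023simple}, is that a nilpotent connected Lie ring centralises every $1$-dimensional subquotient module; a $2$-dimensional nilpotent Borel would therefore act trivially on the $1$-dimensional module $(\mathfrak{g}/\mathfrak{b},+)$ and hence be an ideal of $\mathfrak{g}$, contradicting simplicity. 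So $2$-dimensional nilpotent Borels are simply ruled out. Second, and more seriously, the same centralising fact destroys your plan for the $1$-dimensional Borel: every $1$-dimensional $\mathfrak{b}$-invariant definable connected subgroup of $(\mathfrak{g},+)$ is centralised by the nilpotent $\mathfrak{b}$, so ``connected linearization in dimension $1$'' never applies with a nontrivial action. The minimal $\mathfrak{b}$-module carrying a nontrivial action is $B_b=[b,\mathfrak{g}]$, which has dimension $2$, and linearization there produces a definable field $R$ of dimension $2$, not $1$. The paper then obtains the $1$-dimensional field $K$ as a proper subfield of $R$, namely the normaliser in $R$ of an iterated bracket subgroup $A=[\mathfrak{b},[\dots,[\mathfrak{b},x]\dots]]$ of maximal dimension; this subfield extraction is the real content of the bad case and is entirely absent from your outline (your ``propagation'' step presupposes $K$-lines that you have not yet produced).

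A smaller but still genuine issue is your reduction to the simple case via $\mathfrak{h}^0$: in a finite-dimensional theory a definable subgroup need not have a connected component, so you cannot invoke $\mathfrak{h}^0$ for an arbitrary proper definable ideal $\mathfrak{h}$. The paper avoids this by replacing $\mathfrak{h}$ with $[\mathfrak{g},\mathfrak{h}]$, which is definable \emph{and} connected by Lemma \ref{Definabilityder}; if that bracket is trivial one falls into the central (hence soluble-quotient) case instead. This is easily repaired, but as written your first paragraph uses an object that may not exist.
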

 
 We start verifying that we can reduce to the study of simple connected Lie rings of dimension $3$. Indeed, either $\mathfrak{g}$ is soluble or $\mathfrak{g}/Z(\mathfrak{g})$ is simple of dimension $3$.
 \begin{lemma}\label{gDim3Con}
     Let $\mathfrak{g}$ be a definable connected Lie ring of dimension $3$. Then, either $\mathfrak{g}$ is soluble or $Z(\mathfrak{g})$ is finite and $\mathfrak{g}/Z(\mathfrak{g})$ is simple with the DCC.
\end{lemma}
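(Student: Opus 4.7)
The plan is to assume $\mathfrak{g}$ is not soluble and establish, successively, that $Z(\mathfrak{g})$ is finite, that $\overline{\mathfrak{g}} := \mathfrak{g}/Z(\mathfrak{g})$ has trivial centre, that it is definably simple, and finally that it has the DCC. Two ingredients will be used throughout. First, by Theorem~\ref{Dim1} and Theorem~\ref{Dim2}, every connected Lie ring of dimension at most $2$ is soluble, and an extension of a soluble Lie ring by a soluble one is soluble. Second, any finite definable ideal $\mathfrak{a}$ of a connected definable Lie ring $\mathfrak{h}$ is central, because the adjoint map $\operatorname{ad}:\mathfrak{h}\to \operatorname{End}(\mathfrak{a})$ has finite image, so its kernel $C_{\mathfrak{h}}(\mathfrak{a})$ has finite index and therefore equals $\mathfrak{h}$ by connectedness.

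I would first show $Z(\mathfrak{g})$ is finite by contradiction: if it were infinite, $\mathfrak{g}/Z(\mathfrak{g})$ would be connected of dimension at most $2$, hence soluble, forcing $\mathfrak{g}$ itself to be soluble. The same dimension-counting argument applied to $Z_2 := \pi^{-1}(Z(\overline{\mathfrak{g}}))$, where $\pi:\mathfrak{g}\to\overline{\mathfrak{g}}$ is the quotient map, forces $Z_2$ to be finite: if infinite, $\mathfrak{g}/Z_2$ would be soluble and $Z_2$, being nilpotent of class at most $2$, is soluble, so $\mathfrak{g}$ would be soluble. The centrality remark then gives $Z_2 \leq Z(\mathfrak{g})$, whence $Z_2 = Z(\mathfrak{g})$ and $Z(\overline{\mathfrak{g}}) = 0$.

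The third step is to show $\overline{\mathfrak{g}}$ is definably simple. Suppose for contradiction $\overline{\mathfrak{h}}$ is a nontrivial proper definable ideal. Since $Z(\overline{\mathfrak{g}})=0$ and $\overline{\mathfrak{g}}$ is connected, the centrality remark rules out finite $\overline{\mathfrak{h}}$, so $\dim(\overline{\mathfrak{h}}) \in \{1,2\}$. The connected component $\overline{\mathfrak{h}}^{\circ}$ is a definable ideal of $\overline{\mathfrak{g}}$ (being characteristic inside an ideal) of the same dimension, and soluble by Theorem~\ref{Dim1} or Theorem~\ref{Dim2}; extending by the finite quotient $\overline{\mathfrak{h}}/\overline{\mathfrak{h}}^{\circ}$ preserves solubility. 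On the other hand, $\overline{\mathfrak{g}}/\overline{\mathfrak{h}}$ is connected of dimension at most $2$, hence soluble as well. Combining the two makes $\overline{\mathfrak{g}}$, and so $\mathfrak{g}$, soluble, a contradiction. I expect the main delicate point here to be the passage from a possibly non-connected ideal $\overline{\mathfrak{h}}$ to $\overline{\mathfrak{h}}^{\circ}$, required in order to invoke Theorems~\ref{Dim1} and \ref{Dim2}, which only cover the connected case.

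Finally, because $\overline{\mathfrak{g}}$ is connected, non-abelian (otherwise $\mathfrak{g}$ would be nilpotent), and definably simple, Lemma~\ref{DefSimImpSim} promotes it to simple. For the DCC, I would apply Corollary~\ref{DCC} to $\overline{\mathfrak{g}}$ viewed as a definably minimal ideal in itself: either $\overline{\mathfrak{g}}$ has the DCC, or $\overline{\mathfrak{g}}\apprle Z(\overline{\mathfrak{g}})=0$, which is impossible since $\dim(\overline{\mathfrak{g}})=3$. Hence $\overline{\mathfrak{g}}$ has the DCC and the lemma follows.
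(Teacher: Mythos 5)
Your overall architecture matches the paper's (pass to $\overline{\mathfrak{g}}=\mathfrak{g}/Z(\mathfrak{g})$, show it is definably simple, upgrade to simple via Lemma \ref{DefSimImpSim}, and get the DCC from Corollary \ref{DCC}), and your steps establishing that $Z(\mathfrak{g})$ is finite and that $Z(\overline{\mathfrak{g}})=0$ are fine. But the step you yourself flag as ``delicate'' is a genuine gap, not merely a delicate point: in a finite-dimensional theory a definable group need not have a connected component. Only the $\omega$-DCC and the ucc are available, so an infinite strictly descending chain of definable subgroups of finite index is entirely possible, and $\overline{\mathfrak{h}}^{\circ}$ may simply fail to exist (the paper stresses exactly this, e.g.\ in the proof of Lemma \ref{DecGen}: ``Observe that $A_1$ need not be virtually connected''). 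Without $\overline{\mathfrak{h}}^{\circ}$ you cannot invoke Theorems \ref{Dim1} and \ref{Dim2}, and the definable-simplicity step collapses. The paper's way around this is to replace the (possibly non-virtually-connected) proper ideal $\mathfrak{h}$ by $[\mathfrak{g},\mathfrak{h}]$, which Lemma \ref{Definabilityder} guarantees is a definable \emph{connected} ideal contained in $\mathfrak{h}$: if it is infinite, it is connected of dimension $1$ or $2$, hence soluble, and $\mathfrak{g}/[\mathfrak{g},\mathfrak{h}]$ is connected of dimension at most $2$, hence also soluble, so $\mathfrak{g}$ is soluble; if it is $\{0\}$, then $\mathfrak{h}\leq Z(\mathfrak{g})$ and you are back in the infinite-centre case. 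Making this substitution repairs your argument.

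A secondary error: ``extending by the finite quotient $\overline{\mathfrak{h}}/\overline{\mathfrak{h}}^{\circ}$ preserves solubility'' is false for Lie rings, since there exist finite simple non-abelian Lie rings (e.g.\ $\operatorname{sl}_2(\mathbb{F}_p)$ for $p>3$), so a soluble-by-finite Lie ring need not be soluble. This particular slip is harmless, because the connected piece you produce (after the fix, $[\overline{\mathfrak{g}},\overline{\mathfrak{h}}]$) is itself an ideal of $\overline{\mathfrak{g}}$, so you should quotient $\overline{\mathfrak{g}}$ by it directly rather than by $\overline{\mathfrak{h}}$, and never need the full ideal $\overline{\mathfrak{h}}$ to be soluble.
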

\begin{proof}
    Let $\mathfrak{g}$ be as in the hypothesis and assume, for a contradiction, that $\mathfrak{g}$ has an infinite proper definable ideal $\mathfrak{h}$. Then, $[\mathfrak{g},\mathfrak{h}]$ is a connected definable ideal. If it is infinite, $\mathfrak{g}/[\mathfrak{g},\mathfrak{h}]$ and $[\mathfrak{g},\mathfrak{h}]$ are soluble by Lemma \ref{Dim2} and Lemma \ref{Dim1}. If it is $\{0\}$, then $Z(\mathfrak{g})$ is infinite and again $\mathfrak{g}/Z(\mathfrak{g})$ is soluble. Therefore, in both cases, $\mathfrak{g}$ is soluble. Consequently, we may assume that $\mathfrak{g}$ has no infinite proper definable ideals. Then, $Z(\mathfrak{g})$ is finite and contains all the proper definable ideals. This implies that $\mathfrak{g}/Z(\mathfrak{g})$ is definably simple and so simple by Lemma \ref{DefSimImpSim}. Finally, it has the DCC by Corollary \ref{DCC}. 
\end{proof}
The proof for the not bad case is the same as in \cite{deloro2023simple}. Therefore, we analyze only the bad case.
\begin{lemma}
    Given $\mathfrak{g}$ a bad Lie ring of dimension $3$ and $\mathfrak{b}$ a nilpotent Borel. Then, $\dim(\mathfrak{b})$ is $1$ and $(\mathfrak{g},+)\simeq K^3$ for a definable field $K$ of dimension $1$.
\end{lemma}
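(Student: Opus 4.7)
The plan is to first reduce to the simple case via Lemma \ref{gDim3Con}, then to bound $\dim(\mathfrak{b})$ using the self-normalization lemma and the connected linearization in dimension $1$, and finally to recover the field structure from the adjoint action of $\mathfrak{b}$ on $\mathfrak{g}$.

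Since $\mathfrak{g}$ is bad (hence non-soluble), Lemma \ref{gDim3Con} yields that $Z(\mathfrak{g})$ is finite and $\mathfrak{g}/Z(\mathfrak{g})$ is simple with DCC. The image of $\mathfrak{b}$ in $\mathfrak{g}/Z(\mathfrak{g})$ remains a connected definable nilpotent Borel, so up to replacing $\mathfrak{g}$ by $\mathfrak{g}/Z(\mathfrak{g})$ we may assume $\mathfrak{g}$ is simple and centerless with DCC, retaining the additive-isomorphism conclusion by lifting fields through the finite quotient.

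To prove $\dim(\mathfrak{b})=1$, suppose for contradiction $\dim(\mathfrak{b})=2$. Then $\mathfrak{b}$ is a connected proper subring of codimension $1$; by simplicity it is not an ideal, so the self-normalization lemma of Section $2$ gives $N_{\mathfrak{g}}(\mathfrak{b})=\mathfrak{b}$ and the action of $\mathfrak{b}$ on $V=\mathfrak{g}/\mathfrak{b}$ is non-trivial. Since $\mathfrak{b}$ is nilpotent, $Z(\mathfrak{b})$ is infinite; for $0\neq z\in Z(\mathfrak{b})$ the centralizer $C_{\mathfrak{g}}(z)$ contains $\mathfrak{b}$, and because $\mathfrak{g}$ is centerless we cannot have $C_{\mathfrak{g}}(z)=\mathfrak{g}$, so by dimension $C_{\mathfrak{g}}(z)^{\circ}=\mathfrak{b}$. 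Applying the connected linearization in dimension $1$ to the faithful action of $\mathfrak{b}/C_{\mathfrak{b}}(V)$ on $V$ would force $\mathfrak{b}/C_{\mathfrak{b}}(V)\simeq K^{+}$ for a definable field $K$ of dimension $1$; I would then contradict the nilpotency of $\mathfrak{b}$ via the induced $\operatorname{ad}(z)$ action by showing that the connected component of $C_{\mathfrak{b}}(V)$ is forced to lie in $Z(\mathfrak{g})=0$, which is impossible.

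With $\dim(\mathfrak{b})=1$, Theorem \ref{Dim1} yields $\mathfrak{b}$ abelian. The adjoint map $\operatorname{ad}:\mathfrak{b}\to\operatorname{DefEnd}(\mathfrak{g},+)$ makes $(\mathfrak{g},+)$ a connected definable $\mathfrak{b}$-module of dimension $3$; applying Lemma \ref{DecGen} successively to $\mathfrak{b}$-minimal subquotients produces an almost-direct sum decomposition of $(\mathfrak{g},+)$ into at most three connected $\mathfrak{b}$-invariant pieces of dimension $1$. On each non-central piece the action of $\mathfrak{b}$ is non-trivial, and the connected linearization in dimension $1$ then provides a definable field $K$ of dimension $1$ with both $\mathfrak{b}\simeq K^{+}$ and the piece isomorphic to $K^{+}$; identifying the resulting fields canonically yields $(\mathfrak{g},+)\simeq K^{3}$. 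The main obstacle will be the intermediate step of ruling out $\dim(\mathfrak{b})=2$, which requires delicate Lie-theoretic bookkeeping of centralizers and weight spaces under a codimension-$1$ nilpotent subring; a secondary difficulty in the final step is the coherent identification of the fields produced on distinct $\mathfrak{b}$-weight pieces into a single definable $K$.
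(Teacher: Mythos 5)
The proposal has genuine gaps at both of its critical steps; the sketch sounds plausible at the level of slogans but does not survive contact with the actual module structure.

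First, for ruling out $\dim(\mathfrak{b})=2$, the paper invokes a specific black-box fact from \cite{deloro2023simple}: a connected nilpotent Borel centralises every one-dimensional subquotient $\mathfrak{b}$-module. Applied to $\mathfrak{g}/\mathfrak{b}$ (which has dimension $1$ when $\dim(\mathfrak{b})=2$), this forces $\mathfrak{b}$ to be an ideal, contradicting simplicity. Your route is to linearise the action on $V=\mathfrak{g}/\mathfrak{b}$ and then claim that $C_{\mathfrak{b}}(V)^{0}$ is forced into $Z(\mathfrak{g})=0$. But that inclusion is exactly the content of the cited fact, and you give no argument for it; $C_{\mathfrak{b}}(V)^{0}$ is a one-dimensional nilpotent ideal of $\mathfrak{b}$, and nothing in the surrounding lemmas yields $[C_{\mathfrak{b}}(V)^{0},\mathfrak{g}]=0$. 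You acknowledge this is "the main obstacle," but the obstacle is not bookkeeping --- it is the theorem itself, which you would need to reprove from scratch.

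Second, and more seriously, the final step is structurally wrong. You propose an almost-direct-sum decomposition of $(\mathfrak{g},+)$ into \emph{three one-dimensional $\mathfrak{b}$-invariant} pieces and then linearise each piece to get a field $K$ of dimension $1$. No such decomposition exists. As the paper's proof shows, for $b\in\mathfrak{b}$ the submodule $B_b=[b,\mathfrak{g}]$ has dimension $2$, meets $\mathfrak{b}$ finitely, and is $\mathfrak{b}$-minimal; any $\mathfrak{b}$-invariant subgroup of $B_b$ of dimension $1$ would lie in $C_{\mathfrak{g}}(\mathfrak{b})$, which has dimension $1$, and cannot sit inside $B_b$. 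So the actual decomposition is $(\mathfrak{g},+)=\mathfrak{b}\,\widetilde{\oplus}\,B_b$ with one piece of dimension $1$ (on which $\mathfrak{b}$ acts trivially) and one $\mathfrak{b}$-minimal piece of dimension $2$. Linearising the two-dimensional piece via the abelian $\mathfrak{b}$-action (Lemma~\ref{MinLinAbe} / Fact~\ref{LinDimCon}) gives $B_b\simeq R^{+}$ with $\dim(R)=2$, not $1$. The content of the paper's proof is precisely the extra work of extracting a one-dimensional subfield $K\le R$ --- via the subgroup $A=[\mathfrak{b},[\ldots,[\mathfrak{b},x]\ldots]]$ of maximal dimension, its normaliser $N_R(A)$, and the observation that $\dim(N_R(A))=2$ would force $B_b$ to be a Lie subring, which is impossible. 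None of that appears in your sketch, and the field-identification step you flag as a "secondary difficulty" never materialises because there is no collection of dimension-$1$ fields to identify. Also note Lemma~\ref{DecGen} decomposes into definably minimal (not $\mathfrak{b}$-invariant) summands, so it would not hand you invariant pieces even in more favourable circumstances. Finally, as a small point, the reduction via Lemma~\ref{gDim3Con} at the start is harmless but redundant: the definition of bad in Section~3 already presupposes that $\mathfrak{g}$ is simple.
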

\begin{proof}
    Let $\mathfrak{b}$ be a nilpotent connected Lie subring of dimension $2$.\\
    By \cite{deloro2023simple}, $\mathfrak{b}$ centralises every $1$-dimensional subquotient $\mathfrak{b}$-module $X=Y/Z$. This implies that $\mathfrak{b}$ has dimension $1$. If not, the action of $\mathfrak{b}$ on $(\mathfrak{g}/\mathfrak{b},+)$ would be trivial and so $\mathfrak{b}$ would be an ideal in $\mathfrak{g}$, a contradiction.\\
So, we may assume that $\mathfrak{b}$ is a Borel of dimension $1$. In particular, it is abelian by Lemma \ref{Dim1}. Taken any $b\in \mathfrak{b}$, the centraliser $C_{\mathfrak{g}}(b)$ has dimension $1$ and $[b,\mathfrak{g}]:=B_b$ has dimension $2$. Since $\mathfrak{b}$ is abelian, $\mathfrak{b}$ acts on $(B_b,+)$: given $b'\in \mathfrak{b}$ and $g\in\mathfrak{g}$, $[b',[b,g]]=[b,[b',g]]\in B_b$.\\
$B_b$ is $\mathfrak{b}$-minimal. Assume not and let $D$ be a sub $\mathfrak{b}$-module in $B_b$ of dimension $1$. By previous proof, $D\leq C_{\mathfrak{g}}(\mathfrak{b})$ that has dimension $2$, a contradiction. Therefore, we may assume that $B_b$ is minimal.\\
The action of $\mathfrak{b}$ cannot be trivial, if not there exists $b\in \mathfrak{b}$ such that the centraliser is of dimension $3$, a contradiction to triviality of $Z(G)$.\\
Since the action is not trivial and $B_b$ is minimal, then $B_b\simeq R^+$ for field $R$ of dimension $2$ and $\mathfrak{b}/\widetilde{C}_{\mathfrak{b}}(B_b)$ embeds in $R^+$. Take $A=[\mathfrak{b},[\mathfrak{b},[...,[\mathfrak{b},x]...]]$ of maximal dimension. Then, for every $b\in \mathfrak{b}$, $[b,A]$ has same dimension as $A$ and $[b',A]=[\mathfrak{b},A]=[b,A]$ for any $b,b'\in \mathfrak{b}$. Let $\phi:\mathfrak{b}\to R$ sending $b$ in $\phi(b)$ such that $[b,x]=\phi(b)\cdot x$. Then, for any $b\in \mathfrak{b}$, $\phi(b)\phi(b')^{-1}(A)$ is equal to $A$. This implies that $N_{R}(A)$ is an infinite subring. If its dimension is $2$, then $A$ is an ideal and so, being infinite, equal to $B_b$. By Claim 4.2.1 of \cite{rosengarten1991aleph}, we may conclude that $B_b$ is a Lie subring, a contradiction.\\
Therefore, $N_R(A)$ is a Lie subring and both $A$ and $K$ have dimension $1$. Moreover, for any $k\in N_R(A)$, $k^{-1}\cdot A\leq A$ iff $A\leq k\cdot A$, by connectivity. Therefore, $K$ is a definable subfield of $R$. Since $\mathfrak{b}\cap B_b$ is $\{0\}$, we may conclude that $(\mathfrak{g},+)\simeq K^3$ for a definable field $K$ of dimension $1$.
\end{proof} 
\subsection{Dimension 4}
We analyse the structure of a connected simple definable Lie ring $\mathfrak{g}$ of dimension $4$. Again, the proof follows \cite{deloro2023simple}, which verifies that there are no simple connected Lie rings of Morley rank $4$. As before, we distinguish two cases: the bad Lie rings and the not bad ones. By Lemma \ref{DCC}, $\mathfrak{g}$ has the DCC and the non-existence of not bad Lie rings of dimension four follows easily by the proof in \cite{deloro2023simple}.\\
For the bad case, Deloro and Ntsiri use, again, the Cherlin-Macintyre property of fields of finite Morley rank that does not hold for fields of finite dimension. On the other hand, we do not have an example of simple Lie ring of dimension $4$ (it has been proved that there are no simple Lie algebras of dimension $4$ over $\mathbb{R}$ \cite{popovych2003realizations}). Therefore, it is not clear if the result for Lie rings of finite Morley rank can be extended or not to the finite-dimensional context. Nevertheless, we reach the following conclusion.
\begin{lemma}\label{Dim4}
    Let $\mathfrak{g}$ be a simple connected definable bad Lie ring of dimension $4$. Then, any nilpotent Borel has dimension $1$ and $(\mathfrak{g},+)$ is isomorphic to $K^4$.
\end{lemma}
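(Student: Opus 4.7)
The plan is to follow the blueprint of the dimension $3$ bad case treated in the preceding lemma, exploiting the Deloro--Ntsiri fact that a nilpotent Borel $\mathfrak{b}$ centralizes every $1$-dimensional $\mathfrak{b}$-subquotient of any $\mathfrak{b}$-module, together with the maximality of $\mathfrak{b}$ as a connected solvable subring.

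First I establish $\dim(\mathfrak{b}) = 1$. Since $\mathfrak{b}$ is proper, $\dim(\mathfrak{b}) \in \{1,2,3\}$. If $\dim(\mathfrak{b}) = 3$, then $\mathfrak{g}/\mathfrak{b}$ is a $1$-dimensional $\mathfrak{b}$-module and hence centralized by $\mathfrak{b}$, making $\mathfrak{b}$ a definable ideal and contradicting simplicity. If $\dim(\mathfrak{b}) = 2$, I analyze the $\mathfrak{b}$-module $\mathfrak{g}/\mathfrak{b}$: if it admits a proper $1$-dimensional $\mathfrak{b}$-invariant subspace $V_1$, then its preimage $\widetilde{V_1}$ in $\mathfrak{g}$ is a definable Lie subring of dimension $3$ containing $\mathfrak{b}$ as an ideal (since $[\mathfrak{b},\widetilde{V_1}]\subseteq \mathfrak{b}$ by the centralization of $V_1$), and with $\widetilde{V_1}/\mathfrak{b}$ of dimension $1$; Theorem \ref{Dim1} applied to its connected component makes the quotient abelian, so $\widetilde{V_1}^0$ is a connected solvable subring strictly containing $\mathfrak{b}$, contradicting maximality; if $\mathfrak{g}/\mathfrak{b}$ is $\mathfrak{b}$-minimal, then Fact \ref{LinDimCon} applied to $\mathfrak{b}/\ker$ acting on $\mathfrak{g}/\mathfrak{b}$ combined with the triviality of $Z(\mathfrak{g})$ and the Deloro--Ntsiri hypothesis forces the action to be almost trivial, which turns the connected $\mathfrak{b}$ into an ideal of $\mathfrak{g}$, again a contradiction.

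With $\dim(\mathfrak{b}) = 1$, Theorem \ref{Dim1} gives that $\mathfrak{b}$ is abelian, and $C_{\mathfrak{g}}(\mathfrak{b})$ must be finite (otherwise its connected component is a strictly larger connected abelian, hence solvable, subring). Picking $b \in \mathfrak{b}\setminus\{0\}$ with $\dim C_{\mathfrak{g}}(b) = 1$, the group $B_b = [b,\mathfrak{g}]$ has dimension $3$. Abelianness of $\mathfrak{b}$ and the Jacobi identity give $[\mathfrak{b}, B_b] \subseteq B_b$, so $\mathfrak{b}$ acts on $B_b$. The module $B_b$ is $\mathfrak{b}$-minimal: any proper infinite $\mathfrak{b}$-submodule together with the centralization of $1$-dimensional subquotients would produce a positive-dimensional $\mathfrak{b}$-fixed subspace of $B_b$, whose $\mathfrak{g}$-centralizer would then have dimension at least $2$, contradicting $Z(\mathfrak{g}) = 0$. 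The action is likewise nontrivial by triviality of the center. Applying Fact \ref{LinDimCon} yields a definable field $R$ of dimension $3$ with $B_b \simeq R^+$, and a definable $\phi\colon \mathfrak{b} \to R$ encoding the $\mathfrak{b}$-action as scalar multiplication.

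I close by mimicking the iterated-bracket argument of the dimension $3$ case. Pick $x \in B_b$ and take $A = [\mathfrak{b},[\mathfrak{b},\ldots,[\mathfrak{b},x]\ldots]]$ of maximal dimension. Maximality forces $N_R(A) = \{r \in R : rA = A\}$ to be an infinite definable subring of $R$, hence a subfield $K$. If $\dim K \geq 2$, then $R$-minimality of $B_b \simeq R^+$ forces $A = B_b$, and Claim 4.2.1 of \cite{rosengarten1991aleph} promotes $B_b$ to a proper definable Lie subring of $\mathfrak{g}$, contradicting simplicity. Hence $\dim K = \dim A = 1$. The almost direct decomposition $\mathfrak{g} = \mathfrak{b} \widetilde{\oplus} B_b$, viewed with its compatible $K$-structure on both summands (on $\mathfrak{b}$ via $\phi$, on $B_b$ as a $K$-vector space), gives $(\mathfrak{g}, +) \simeq K^4$ for a definable perfect field $K$ of dimension $1$. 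The main obstacle will be the elimination of $\dim(\mathfrak{b}) = 2$: unlike the dimension $3$ case where $\mathfrak{g}/\mathfrak{b}$ is automatically one-dimensional, here the $2$-dimensional quotient requires both the normalizer-via-preimage argument and a delicate use of the linearization; a secondary subtlety is ruling out intermediate subfields of dimension $2$ in $R$, which was automatic in dimension $3$ where the ambient $R$ itself had dimension $2$.
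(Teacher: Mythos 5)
Your overall route coincides with the paper's: reduce to a nilpotent Borel of dimension $1$, show $B_b=[b,\mathfrak{g}]$ is a $3$-dimensional irreducible $\mathfrak{b}$-module, linearize via Fact~\ref{LinDimCon} to obtain a field $R$ of dimension $3$ with $\mathfrak{b}$ acting scalarly, and then produce a $1$-dimensional subfield $K$ to conclude $(\mathfrak{g},+)\simeq K^4$. The paper discharges the reduction of $\dim(\mathfrak{b})$ by citing Steps 4.4.1--4.4.2 of \cite{deloro2023simple} and the subfield construction by ``proceeding as in \cite{deloro2023simple}''; you instead supply your own arguments for these steps, and that is where the problems lie.

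Three concrete issues. First, in the $\dim(\mathfrak{b})=2$ case with a $1$-dimensional invariant subspace $V_1$, the preimage $\widetilde{V_1}$ is in general \emph{not} a Lie subring: since $\mathfrak{b}$ is not an ideal, there is no reason for $[\widetilde{V_1},\widetilde{V_1}]\leq \widetilde{V_1}$. What the centralization of $V_1$ actually gives is $[\mathfrak{b},\widetilde{V_1}]\leq\mathfrak{b}$, i.e.\ $\widetilde{V_1}\leq N_{\mathfrak{g}}(\mathfrak{b})$; the argument must be run with $N_{\mathfrak{g}}(\mathfrak{b})^0$, which is a genuine connected subring of dimension $3$ or $4$, soluble because $N_{\mathfrak{g}}(\mathfrak{b})^0/\mathfrak{b}$ is $1$-dimensional connected hence abelian by Theorem~\ref{Dim1} --- this then contradicts either simplicity or the maximality of $\mathfrak{b}$. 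Second, in the subcase where $\mathfrak{g}/\mathfrak{b}$ is an irreducible $2$-dimensional $\mathfrak{b}$-module, your claim that linearization ``forces the action to be almost trivial'' is asserted rather than proved: the Deloro--Ntsiri centralization fact only concerns $1$-dimensional subquotients, which an irreducible $2$-dimensional module has none of, and excluding a nilpotent $2$-dimensional ring acting irreducibly is precisely the content that needs an argument (e.g.\ via the action of $Z(\mathfrak{b})$ and an Engel-type reduction); as written this is a gap. Third, your exclusion of $\dim(K)=2$ for $K=N_R(A)$ via ``$R$-minimality forces $A=B_b$'' does not work, since $A$ is only $K$-invariant, not $R$-invariant; the correct (and simpler) reason is multiplicativity of dimension, $3=\dim(R)=\operatorname{lin.dim}_K(R)\cdot\dim(K)$, which forces $\dim(K)\in\{1,3\}$, and $\dim(K)=3$ gives $K=R$ by connectedness of $R^+$, hence $A$ an ideal of $R$, which is the case you already dispose of via Rosengarten's claim. (A small slip besides: $C_{\mathfrak{g}}(\mathfrak{b})$ cannot be finite, as it contains the infinite abelian $\mathfrak{b}$; what you need is $C_{\mathfrak{g}}(\mathfrak{b})^0=\mathfrak{b}$.)
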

\begin{proof}
    Step 4.4.1 and 4.4.2 of \cite{deloro2023simple} can be applied straightforwardly. Consequently, we may assume that the Borel $\mathfrak{b}$ has dimension $1$. Since $\mathfrak{b}$ is a Borel, $C_{\mathfrak{g}}(b)^0=\mathfrak{b}$ for every $b\in \mathfrak{b}$. This implies that $\dim(B_b=[b,\mathfrak{g}])$ is $3$ for every $b\in \mathfrak{b}$. A consequence is that $B_b=B_h$ for every $b,h\in \mathfrak{b}$. Indeed $\dim(\mathfrak{g}/B_b)=1$ and $\mathfrak{g}/B_b$ is $\mathfrak{b}$ invariant. Then, it acts trivially and so $B_h\leq B_b$. The other inclusion follows by symmetry. $\mathfrak{b}\cap B_b$ is finite. Assume not, then $C^2_{\mathfrak{g}}(b)$ would be a proper definable connected Lie ring properly containing $\mathfrak{b}$, a contradiction. Moreover, $B_b$ is irreducible as $\mathfrak{b}$-module. If it has a module of dimension $1$, this is centralised by $\mathfrak{b}$, implying that $\dim(C_{\mathfrak{g}}(b))=2$, a contradiction. If it has a submodule $V$ of dimension $2$, $\mathfrak{b}$ acts trivially on $B_b/V$ and so $[\mathfrak{b},B_b]\leq V$ contradicting $C^2_{\mathfrak{g}}(b)^0=C^0_{\mathfrak{g}}(b)$ for every $b\in \mathfrak{b}$. Therefore, by Lemma \ref{LinDimCon}, there exists a definable field $R$ of dimension $3$ such that $B_b\simeq R^+$ and $\mathfrak{b}$ embeds in $R^+\cdot Id_R$. Proceeding as in \cite{deloro2023simple}, we may find $K$ a definable field of finite dimension such that $K\leq R$ and $R\not=K$. This implies that $\dim(R)<\dim(K)$ and $K$ is an $R$-vector space of finite dimension. Necessarily $\dim(K)=1$ and, since $\mathfrak{b}$ embeds in $K^3$, we have the conclusion.
\end{proof}
\section{Almost Lie ring theory}
In this section, we introduce a series of important instruments for the study of finite-dimensional non-virtually connected Lie rings. There are analogous notions for groups (for the definitions, see, for example, \cite{hempel}). Some of the results are proved in the setting of hereditarily $\widetilde{\mathfrak{M}}_c$-Lie rings, which includes finite-dimensional and simple (in the model-theoretic sense) Lie rings.
\begin{defn}
    Let $\mathfrak{g}$ a definable Lie ring. Then, $\mathfrak{g}$ is \emph{hereditarily $\widetilde{\mathfrak{M}}_c$} if, for any definable subgroup $N$ in $\mathfrak{g}$, there exist $n,d<\omega$ such that cannot exists $g_1,...,g_n\in \mathfrak{g}$ with $|C_{\mathfrak{g}}(g_1,...,g_i/N):C_{\mathfrak{g}}(g_1,...,g_i,g_{i+1})|\geq d$ for every $i\leq n-1$.
\end{defn}
By Lemma \ref{boundedind}, any finite-dimensional definable Lie ring is hereditarily $\widetilde{\mathfrak{M}}_c$-Lie ring.
\subsection{Almost centraliser}
We define the almost centraliser of the action of a Lie ring on a module.
\begin{defn}
    Given $\mathfrak{g}$ a Lie ring and $V$ a $\mathfrak{g}$-module. We define:
\begin{itemize}
    \item the \emph{almost centraliser of the action in $\mathfrak{g}$}, denoted $\widetilde{C}_{\mathfrak{g}}(V)$, as 
    $$\widetilde{C}_{\mathfrak{g}}(V)=\{g\in \mathfrak{g}:\ |V:C_V(g)|\text{ is finite}\}$$
    where $v\in C_V(g)$ iff $gv=0$;
    \item the \emph{almost centraliser of the action in $V$}, denoted $\widetilde{C}_V(\mathfrak{g})$ as 
    $$\widetilde{C}_V(\mathfrak{g})=\{v\in V:\ |\mathfrak{g}:C_{\mathfrak{g}}(v)|\text{ is finite}\}$$
    where $g\in C_{\mathfrak{g}}(v)$ iff $gv=0$.
\end{itemize}
\end{defn}
If we assume $V=(\mathfrak{g},+)$ with the adjoint action, we can define the almost centraliser of a subgroup.
\begin{defn}
    Let $\mathfrak{g}$ be a Lie ring and $H,K$ two subgroups. We define the \emph{almost centraliser of $K$ in $H$} as 
    $$\widetilde{C}_H(K)=\{h\in H:\ [h,K]\apprle K\}.$$
    This is a subgroup of $H$ for any $H,K$. The almost centraliser of $\mathfrak{g}$ in $\mathfrak{g}$ is called the \emph{almost center} and it is denoted by $\widetilde{Z}(G)$.
\end{defn}
In hereditarily $\widetilde{\mathfrak{M}}_c$-Lie rings, the almost centralisers are definable.
\begin{lemma}\label{DefZ}
    Let $\mathfrak{g}$ be a Lie ring and $V$ a $\mathfrak{g}$-module. Then, $\widetilde{C}_{\mathfrak{g}}(V)$ is an ideal of $\mathfrak{g}$ and $\widetilde{C}_V(\mathfrak{g})$ is a $\mathfrak{g}-$invariant subgroup of $V$. Moreover, in a finite-dimensional theory, if both $\mathfrak{g}$ and $V$ are definable, then the almost centralisers are also definable. The same holds in an hereditarily $\widetilde{\mathfrak{M}}_c$-Lie ring.
\end{lemma}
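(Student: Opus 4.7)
The plan is to handle the three assertions in sequence: the algebraic parts first (being an ideal, being invariant), and then the model-theoretic definability statement.

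For the first assertion, I would verify that $\widetilde{C}_{\mathfrak{g}}(V)$ is an ideal by checking closure under addition and closure under the bracket with an arbitrary $h \in \mathfrak{g}$. Closure under addition uses $C_V(g_1+g_2) \supseteq C_V(g_1)\cap C_V(g_2)$ together with the fact that the intersection of two finite-index subgroups still has finite index. For the ideal condition, given $g \in \widetilde{C}_{\mathfrak{g}}(V)$ and $h \in \mathfrak{g}$, I observe that the preimage $h^{-1}(C_V(g))$ under the additive endomorphism $v \mapsto h\cdot v$ has finite index in $V$, since $V/h^{-1}(C_V(g))$ injects into $V/C_V(g)$; then every $v \in C_V(g)\cap h^{-1}(C_V(g))$ satisfies $[h,g]\cdot v = h(gv) - g(hv) = 0$, so this finite-index subgroup of $V$ is contained in $C_V([h,g])$, yielding $[h,g] \in \widetilde{C}_{\mathfrak{g}}(V)$.

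For the second assertion, closure under addition for $\widetilde{C}_V(\mathfrak{g})$ is entirely analogous. For $\mathfrak{g}$-invariance I would take $v \in \widetilde{C}_V(\mathfrak{g})$ and $g \in \mathfrak{g}$ and show that $C_{\mathfrak{g}}(g\cdot v)$ has finite index in $\mathfrak{g}$. Using the Lie-module identity $h(gv) = g(hv) + [h,g]\cdot v$, any $h \in C_{\mathfrak{g}}(v)\cap \operatorname{ad}(g)^{-1}(C_{\mathfrak{g}}(v))$ forces $h(gv) = 0$. Both components are finite-index subgroups of $\mathfrak{g}$: the first by hypothesis on $v$, the second because $\mathfrak{g}/\operatorname{ad}(g)^{-1}(C_{\mathfrak{g}}(v))$ embeds into $\mathfrak{g}/C_{\mathfrak{g}}(v)$ via the additive map $\operatorname{ad}(g)$.

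The main obstacle is the definability claim, since ``$H$ has finite index in $G$'' is not in general first-order expressible by a single formula. The plan is to invoke the hereditarily $\widetilde{\mathfrak{M}}_c$-hypothesis (which already covers the finite-dimensional case through Lemma \ref{boundedind}) to produce a uniform integer $d$ such that, whenever the relevant centraliser has finite index, that index is at most $d$. Concretely, for $\widetilde{C}_{\mathfrak{g}}(V)$ the family $\{C_V(g):g \in \mathfrak{g}\}$ is uniformly definable in $V$, and a compactness argument parallel to the one in Lemma \ref{boundedind} shows that if no uniform bound existed one could construct chains of uniformly definable subgroups violating the chain condition. This reduces $\widetilde{C}_{\mathfrak{g}}(V)$ to a first-order condition of the form ``$|V:C_V(g)|\leq d$''. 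The argument for $\widetilde{C}_V(\mathfrak{g})$ is symmetric and applies the chain condition directly to the uniformly definable family $\{C_{\mathfrak{g}}(v):v \in V\}$ of subgroups of $\mathfrak{g}$.
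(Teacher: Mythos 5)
Your proof is correct and follows essentially the same route as the paper's. The only cosmetic difference is the framing of the almost-centraliser condition: the paper works with the image (``$g(V)$ is finite'', ``$\mathfrak{g}v$ is finite''), whereas you work with the kernel (``$C_V(g)$ has finite index'', ``$C_{\mathfrak{g}}(v)$ has finite index''); these are equivalent via $V/C_V(g)\cong g(V)$, and in both cases the engine is the Lie-module identity $[h,g]v=h(gv)-g(hv)$ combined with elementary index arithmetic, followed by the same appeal to Lemma \ref{boundedind} (respectively, the $\widetilde{\mathfrak{M}}_c$-definition directly) for first-order definability.
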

\begin{proof}
    Let $g,g'\in \widetilde{C}_{\mathfrak{g}}(V)$. Then, $(g+g')(V)\leq gV+g'V$ is clearly finite. Moreover, taken $g\in \mathfrak{g}, g'\in \widetilde{C}_{\mathfrak{g}}(V)$ then $[g,g'](v)=gg'(v)-g'g(v)$ and so $[g,g'](V)\leq gg'(V)-g'g(V)\leq gg'(V)+g'(V)=(g-Id)(g'(V))$ that is finite. This verifies that $\widetilde{C}_{\mathfrak{g}}(V)$ is an ideal in $\mathfrak{g}$.\\
Clearly $C_V(\mathfrak{g})$ is a subgroup. Taken $gv$ then 
$$\mathfrak{g}gv=-[\mathfrak{g},g](v)+g\mathfrak{g}v$$ that is finite since $\mathfrak{g}v$ is finite. The definability follows by Lemma \ref{boundedind}.\\
Finally, in the $\widetilde{\mathfrak{M}}_c$ case, the definability follows immediately from the definition.
\end{proof}
An important property of $\widetilde{\mathfrak{M}}_c$-Lie rings is the symmetry of the almost centraliser \hbox{i.e.} given $H,K,N$ definable subgroups of an hereditarily $\widetilde{\mathfrak{M}}_c$-Lie ring, then $H\apprle \widetilde{C}_{\mathfrak{g}}(K/N)$ iff $K\apprle \widetilde{C}_{\mathfrak{g}}(H/N)$.
The proof, that is the same as \cite{hempel}, uses the following fact, due to B. Neumann.
\begin{fact}\label{UniTraSub}
    Let $G$ be a group. Then $G$ cannot be covered by finitely many translates of subgroups of infinite index.
\end{fact}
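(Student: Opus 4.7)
The plan is to establish this classical result of B.~Neumann by induction on $n$, the number of cosets appearing in the cover. The base case $n=1$ is immediate, since $G = g_1H_1$ forces $H_1 = G$, of index $1$.

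For the inductive step, assume the statement for covers of size less than $n$, take $G = \bigcup_{i=1}^n g_iH_i$, and argue by contradiction, supposing every $H_i$ has infinite index in $G$. The key geometric observation is that the subgroup $H_1$ has infinitely many cosets in $G$, but only finitely many appear in the cover (namely the $g_jH_1$ with $H_j=H_1$). Hence one can pick a coset $aH_1$ outside this finite list; being disjoint from every $H_1$-coset of the cover, it must lie entirely inside the remaining pieces:
\[
aH_1 \;\subseteq\; \bigcup_{i:\, H_i \neq H_1} g_iH_i.
\]
Left-translating by $a^{-1}$ and intersecting with $H_1$ converts this into a cover of the group $H_1$ itself by at most $n-1$ cosets of subgroups $H_i \cap H_1$, so the inductive hypothesis applies inside $H_1$.

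The main obstacle is passing from a finite-index conclusion inside $H_1$ to a finite-index conclusion inside $G$: a single application of the inductive hypothesis only yields some $H_i \cap H_1$ of finite index in $H_1$, which is compatible with $[G : H_i] = \infty$ whenever $[G:H_1]$ is also infinite. The standard way around this is to sharpen the induction statement to Neumann's refined form, asserting that the cosets in the cover whose subgroup has finite index in $G$ already cover $G$. Under this strengthened hypothesis, the reduction above shows that any coset $g_jH_j$ with $[G:H_j] = \infty$ is redundant; iterating strips the cover down to its finite-index part, and since $G$ is nonempty, at least one $H_i$ must then have finite index. The refined induction requires some combinatorial bookkeeping to transport the ``finite index inside $H_1$'' information back to $G$, but the geometric core is the extra-coset reduction sketched above.
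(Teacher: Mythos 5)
The paper does not prove this Fact; it cites it as a classical result of B.~Neumann and uses it as a black box. So the question is whether your sketch is itself correct, and it has a genuine gap in the critical reduction, which you partially notice but incorrectly characterise as ``combinatorial bookkeeping.''

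Your extra-coset observation --- pick $aH_1$ disjoint from every $H_1$-coset appearing in the cover, so that $aH_1 \subseteq \bigcup_{H_i \ne H_1} g_iH_i$ --- is indeed the geometric core of Neumann's argument. The problem is what you do with it. Intersecting with $H_1$ produces a cover of the group $H_1$ by cosets of the subgroups $H_i \cap H_1$, and the inductive hypothesis (refined or not) then yields $[H_1 : H_i \cap H_1] < \infty$ for some $i$. But if $[G:H_1]=\infty$ --- the only case of interest --- this tells you nothing about $[G:H_i]$: one has $[G:H_i\cap H_1]=[G:H_1][H_1:H_i\cap H_1]=\infty$, and this is perfectly compatible with $[G:H_i]=\infty$. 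The ``finite index inside $H_1$'' conclusion simply cannot be transported back to $G$, and no strengthening of the statement inside $H_1$ fixes this, because any refined form applied to $H_1$ as ambient group also speaks of indices in $H_1$. Your claim that the $H_1$-cosets are ``redundant'' is likewise not what the extra-coset observation gives: translating it yields $g_jH_1 \subseteq \bigcup_{H_i\ne H_1}(g_ja^{-1}g_i)H_i$, which covers $g_jH_1$ by \emph{new} translates of the $H_i\ne H_1$, not by cosets already present in the cover.

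The correct move, and the one that makes Neumann's induction close, is exactly that translated inclusion: one replaces every $H_1$-coset in the cover by a finite union of cosets of the remaining subgroups, obtaining a new cover of all of $G$ (not of $H_1$) in which $H_1$ no longer appears. This may increase the number of cosets, so induction on $n$ breaks; the induction must instead run on the number of \emph{distinct} subgroups occurring, which strictly drops. In the base case the cover uses a single subgroup $H$, forcing $[G:H]\leq n$, and the inductive conclusion about $[G:H_i]$ then refers throughout to index in $G$, closing the argument. You should restructure the proof around this distinct-subgroup induction rather than the coset-count induction with intersection.
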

We prove the symmetry of the almost centraliser.
\begin{lemma}\label{sym}
    Let $H,K,N$ definable subgroups in $\mathfrak{g}$ a definable hereditarily $\widetilde{\mathfrak{M}}_c$-Lie ring. Then $K\apprle \widetilde{C}_{\mathfrak{g}}(H/N)$ iff $H\apprle \widetilde{C}_{\mathfrak{g}}(K/N)$.
\end{lemma}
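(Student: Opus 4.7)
The plan is to establish the forward direction (the converse is symmetric): assume $K \apprle \widetilde{C}_{\mathfrak{g}}(H/N)$, and deduce $H \apprle \widetilde{C}_{\mathfrak{g}}(K/N)$.

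First I would perform a reduction. Set $K' = K \cap \widetilde{C}_{\mathfrak{g}}(H/N)$, which has finite index in $K$. Using only the finiteness of $|K : K'|$, a direct verification shows $\widetilde{C}_{\mathfrak{g}}(K/N) = \widetilde{C}_{\mathfrak{g}}(K'/N)$ (the inclusion in one direction is trivial, and in the other it follows from $|K : C_K(g/N)| \le |K : K'|\cdot|K' : C_{K'}(g/N)|$). Replacing $K$ by $K'$, we may therefore assume $K \subseteq \widetilde{C}_{\mathfrak{g}}(H/N)$: every $k \in K$ satisfies $|H : C_H(k/N)| < \infty$, where $C_H(k/N) := \{h \in H : [h,k] \in N\}$ is a subgroup of $H$ by bilinearity of the bracket.

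Next, I would invoke the hereditarily $\widetilde{\mathfrak{M}}_c$ property applied to the pair $(H, N)$, with associated bounds $n, d$, to extract a finite approximation to the centralizer of $K$ in $H$. Pick $k_1, \ldots, k_r \in K$ with $r \leq n$, chosen maximally so that the centralizer chain
$$H \supseteq C_H(k_1/N) \supseteq C_H(k_1, k_2/N) \supseteq \cdots \supseteq C_H(k_1, \ldots, k_r/N) =: L$$
has each successive index at least $d$. By the reduction each individual $C_H(k_i/N)$ has finite index in $H$, hence so does their intersection $L$. Maximality of $r$ guarantees that for every $k \in K$ the subgroup $L \cap C_H(k/N)$ has index strictly less than $d$ in $L$.

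The heart of the argument is then the transition from these uniform ``horizontal'' bounds to the ``vertical'' conclusion $L \apprle \widetilde{C}_{\mathfrak{g}}(K/N)$. The subgroups $L \cap C_H(k/N)$, for $k$ ranging over $K$, form a uniformly definable family in $L$ of bounded index; bilinearity of the bracket gives the crucial duality $h \in L \cap C_H(k/N) \iff k \in C_K(h/N)$, interchanging the roles of $h$ and $k$. Suppose for contradiction that $L \cap \widetilde{C}_{\mathfrak{g}}(K/N)$ had infinite index in $L$; then one can choose infinitely many $h_\alpha \in L$ in distinct cosets of $L \cap \widetilde{C}_{\mathfrak{g}}(K/N)$, each with $|K : C_K(h_\alpha/N)| = \infty$. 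Following the choreography of \cite{hempel}, the uniform $< d$ bound together with this supply of bad elements lets one exhibit $K$ as a finite union of translates of the infinite-index subgroups $C_K(h_\alpha/N)$, contradicting Fact \ref{UniTraSub}. Therefore $L \cap \widetilde{C}_{\mathfrak{g}}(K/N)$ has finite index in $L$, and thus in $H$, giving $H \apprle \widetilde{C}_{\mathfrak{g}}(K/N)$. The main obstacle is exactly this Neumann-style bookkeeping in the last step: the chain condition, the bilinearity, and the uniform $< d$ bound must be combined precisely as in \cite{hempel} to produce the forbidden finite cover of $K$.
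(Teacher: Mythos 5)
Your proof is correct and follows the argument of \cite{hempel} faithfully, but it takes a structurally different route from the paper's own proof. The paper argues by contradiction directly: it picks an infinite family $\{h_i\}$ witnessing $H\not\apprle\widetilde{C}_{\mathfrak{g}}(K/N)$, invokes Neumann's covering fact to show that a certain partial type (asserting $[h_i-h_j,k_i-k_j]\not\in N$ with $k_i$ in $K$) is finitely satisfiable, realizes it in a saturated extension, and reads off a violation of the $\widetilde{\mathfrak{M}}_c$ bound from the resulting $\{k_i\}$. You instead exhaust the $\widetilde{\mathfrak{M}}_c$ property \emph{first}, producing a definable finite-index subgroup $L=C_H(k_1,\ldots,k_r/N)$ with the uniform bound $|L:L\cap C_H(k/N)|<d$ for every $k\in K$, and only then apply Neumann through a pigeonhole count: taking $d$ representatives $h_1,\ldots,h_d\in L$ of distinct cosets of $L\cap\widetilde{C}_{\mathfrak{g}}(K/N)$, every $k\in K$ satisfies $k\in C_K(h_i-h_j/N)$ for some $i\neq j$, so $K=\bigcup_{i\neq j}C_K(h_i-h_j/N)$, contradicting Neumann since each of these has infinite index. (Note that the subgroups in your forbidden cover should be the $C_K(h_i-h_j/N)$, not the $C_K(h_\alpha/N)$ you wrote — the differences are what lie outside $\widetilde{C}_{\mathfrak{g}}(K/N)$; this is a wording slip, not a gap.) The two approaches use the same two ingredients (Neumann and hereditary $\widetilde{\mathfrak{M}}_c$) but in opposite order: your version stays entirely at the level of the given model and yields an explicit definable approximation $L$ to the almost centralizer, whereas the paper's version is shorter to write but needs compactness and a saturated extension, and leaves the final chain-of-centralizers contradiction more implicit.
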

\begin{proof}
    Assume that $K\apprle \widetilde{C}_{\mathfrak{g}}(H/N)$ and, by contrary, that $H\not\apprle \widetilde{C}_{\mathfrak{g}}(K/N)$. Then, there exists an infinite family $\{h_i\}_{i<\omega}$ such that $h_i-h_j\not\in  \widetilde{C}_{\mathfrak{g}}(K/N)$. By Fact \ref{UniTraSub}, $K$ cannot be covered by a finite set of translates of finite unions of centralisers of $h_i-h_j$. This implies that the partial type 
    $$\{[h_i-h_j,k_i-k_j]\not\in N\}_{i,j<\omega}\cap \{k_i,k_j\in H\}$$
    is finitely satisfable. Up to work in a sufficiently saturated elementary extension, we can find a tuple that realises this type. Thus, $\{k_i\}_{i<\omega}$ is such that $C_{\mathfrak{g}}(k_i-k_j/N)$ does not almost contain $H$, a contradiction. 
\end{proof}
We introduce the notions of almost nilpotent Lie ring.
\begin{defn}
    Let $\mathfrak{g}$ a Lie ring. The \emph{almost central series} of $\mathfrak{g}$ is given by $\{\widetilde{Z}^n(\mathfrak{g})\}_{n<\omega}$ defined by induction as: 
    \begin{align*}
        &\widetilde{Z}^1(\mathfrak{g})=\widetilde{Z}(\mathfrak{g});\\
        &\widetilde{Z}^{n+1}(\mathfrak{g})/\widetilde{Z}^n(\mathfrak{g})=\widetilde{Z}(\mathfrak{g}/\widetilde{Z}^n(\mathfrak{g})).
    \end{align*}
    $\widetilde{Z}^n(\mathfrak{g})$ is called the \emph{$n$-almost center} of $\mathfrak{g}$.\\
    A Lie ring $\mathfrak{g}$ is \emph{almost nilpotent} if there exists $n<\omega$ such that $\mathfrak{g}=\widetilde{Z}^n(\mathfrak{g})$. $\mathfrak{g}$ is \emph{almost abelian} if $\mathfrak{g}=\widetilde{Z}(\mathfrak{g})$.
\end{defn}
We also define almost soluble Lie rings. 
\begin{defn}
    A Lie ring $\mathfrak{g}$ is \emph{almost soluble} if there exists a series 
    $$\mathfrak{g}=\mathfrak{g}_0\geq ...\geq \mathfrak{g}_n=\{0\}$$
    of Lie subrings such that $\mathfrak{g}_i$ is an ideal in $\mathfrak{g}_{i-1}$ and $\mathfrak{g}_{i-1}/\mathfrak{g}_i$ is almost abelian. The series $(\mathfrak{g}_i)_{i\leq n}$ is called an \emph{almost abelian series} of $\mathfrak{g}$. An almost soluble ideal in $\mathfrak{g}$ is said to have \emph{ideal almost abelian series} if it admits an almost abelian series $(\mathfrak{g}_i)_{i\leq n}$ such that any $\mathfrak{g}_i$ is an ideal in $\mathfrak{g}$.
\end{defn}
Applying iteratively Lemma \ref{DefZ}, we obtain the following corollary.
\begin{corollary}
    Given $\mathfrak{g}$ a Lie ring defined in a finite-dimensional theory, then the $n$-almost center of $\mathfrak{g}$ is a definable ideal for any $n<\omega$.
\end{corollary}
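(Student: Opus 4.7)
The plan is a straightforward induction on $n$, the base case coinciding with the content of Lemma \ref{DefZ}. For $n=1$, the almost center $\widetilde{Z}(\mathfrak{g})=\widetilde{C}_{\mathfrak{g}}(\mathfrak{g})$ is the almost centraliser of the adjoint action of $\mathfrak{g}$ on itself; since both $\mathfrak{g}$ and the module $(\mathfrak{g},+)$ are definable in a finite-dimensional theory, Lemma \ref{DefZ} gives that $\widetilde{Z}(\mathfrak{g})$ is a definable ideal.

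For the inductive step, assume $\widetilde{Z}^n(\mathfrak{g})$ is a definable ideal of $\mathfrak{g}$. Then the quotient Lie ring $\mathfrak{g}/\widetilde{Z}^n(\mathfrak{g})$ is interpretable in the ambient finite-dimensional theory, hence itself a Lie ring definable in a finite-dimensional theory. Applying Lemma \ref{DefZ} once more, this time to the adjoint action of $\mathfrak{g}/\widetilde{Z}^n(\mathfrak{g})$ on itself, one obtains that $\widetilde{Z}(\mathfrak{g}/\widetilde{Z}^n(\mathfrak{g}))$ is a definable ideal of $\mathfrak{g}/\widetilde{Z}^n(\mathfrak{g})$. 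By the correspondence theorem for Lie rings its preimage under the canonical projection $\pi:\mathfrak{g}\twoheadrightarrow \mathfrak{g}/\widetilde{Z}^n(\mathfrak{g})$ is a definable ideal of $\mathfrak{g}$, and this preimage is exactly $\widetilde{Z}^{n+1}(\mathfrak{g})$ by the defining recursion of the almost central series.

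There is no real obstacle here: the only point requiring a little attention is that the definability statement in Lemma \ref{DefZ} transfers from $\mathfrak{g}$ to the quotient $\mathfrak{g}/\widetilde{Z}^n(\mathfrak{g})$, which is automatic because the class of theories being finite-dimensional, as well as the notion of definability, is preserved under passage to an interpretable structure (the quotient by a definable ideal). The conclusion follows by induction for every $n<\omega$.
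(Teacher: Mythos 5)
Your proof is correct and takes essentially the same approach as the paper, which simply states that the corollary follows by applying Lemma \ref{DefZ} iteratively; your induction makes that iteration explicit, with the one minor observation about preservation of finite-dimensionality under passage to the interpretable quotient, which is immediate from the definition of a finite-dimensional theory.
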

The following observation is quite easy.
\begin{observation}
    Let $\mathfrak{g}$ be a Lie ring such that $\widetilde{Z}^n(\mathfrak{g})$ is almost contained in $\widetilde{Z}^{n-1}(\mathfrak{g})$. Then, $\widetilde{Z}^n(\mathfrak{g})=\widetilde{Z}^{n+1}(\mathfrak{g})$.
\end{observation}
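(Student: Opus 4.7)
The plan is to prove the nontrivial inclusion $\widetilde{Z}^{n+1}(\mathfrak{g}) \subseteq \widetilde{Z}^n(\mathfrak{g})$, since the reverse inclusion holds by construction. First I would rephrase the hypothesis: because $\widetilde{Z}^{n-1}(\mathfrak{g}) \leq \widetilde{Z}^n(\mathfrak{g})$ always, the statement $\widetilde{Z}^n(\mathfrak{g}) \apprle \widetilde{Z}^{n-1}(\mathfrak{g})$ is just the assertion that the quotient $Q := \widetilde{Z}^n(\mathfrak{g})/\widetilde{Z}^{n-1}(\mathfrak{g})$ is finite.

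Next, fix an arbitrary $g \in \widetilde{Z}^{n+1}(\mathfrak{g})$. By the recursive definition of the almost central series, the coset $g + \widetilde{Z}^n(\mathfrak{g})$ lies in $\widetilde{Z}(\mathfrak{g}/\widetilde{Z}^n(\mathfrak{g}))$, which unwinds to the statement that the subgroup
\[
C := \{\, h \in \mathfrak{g} : [g,h] \in \widetilde{Z}^n(\mathfrak{g})\,\}
\]
has finite index in $\mathfrak{g}$. The key move is then to introduce the map $\phi : C \to Q$ given by $\phi(h) = [g,h] + \widetilde{Z}^{n-1}(\mathfrak{g})$. By bilinearity of the Lie bracket, $\phi$ is an additive group homomorphism; by the rephrased hypothesis, its target $Q$ is finite. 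Hence $\ker \phi = \{\, h \in C : [g,h] \in \widetilde{Z}^{n-1}(\mathfrak{g})\,\}$ has finite index in $C$, and therefore in $\mathfrak{g}$.

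Finally, this is exactly the condition that $g + \widetilde{Z}^{n-1}(\mathfrak{g})$ lies in $\widetilde{Z}(\mathfrak{g}/\widetilde{Z}^{n-1}(\mathfrak{g}))$, i.e. $g \in \widetilde{Z}^n(\mathfrak{g})$. This gives $\widetilde{Z}^{n+1}(\mathfrak{g}) \subseteq \widetilde{Z}^n(\mathfrak{g})$ and closes the argument. I do not anticipate any genuine obstacle here: the whole proof is the standard \textit{finite-index-in-finite-index-is-finite-index} trick, applied to the additive homomorphism $[g,\cdot]$ into a finite quotient. The only thing worth checking carefully is that $\phi$ really is a homomorphism, which is immediate from $[g, h_1 + h_2] = [g,h_1] + [g,h_2]$.
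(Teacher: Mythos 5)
Your proposal is correct and follows essentially the same route as the paper: take $g$ in the $(n+1)$-st almost center, use that $\{h:[g,h]\in\widetilde{Z}^n(\mathfrak{g})\}$ has finite index, and push down to $\widetilde{Z}^{n-1}(\mathfrak{g})$ via the finiteness of $\widetilde{Z}^n(\mathfrak{g})/\widetilde{Z}^{n-1}(\mathfrak{g})$. The paper states this more tersely (asserting directly that $[g,\mathfrak{g}]$ is almost contained in $\widetilde{Z}^{n-1}(\mathfrak{g})$), whereas you make the finite-index-kernel step explicit via the homomorphism $h\mapsto [g,h]+\widetilde{Z}^{n-1}(\mathfrak{g})$; the substance is the same.
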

\begin{proof}
    Given $g\in \widetilde{Z}^{n+1}(\mathfrak{g})$ then $C_{\mathfrak{g}/\widetilde{Z}^n(\mathfrak{g})}(g)$ is of finite index in $\mathfrak{g}/\widetilde{Z}^n(\mathfrak{g})$. Since $\widetilde{Z}^n(\mathfrak{g})$ is almost contained in $\widetilde{Z}^{n-1}(\mathfrak{g})$, $[g,\mathfrak{g}]$ is almost contained in $\widetilde{Z}^{n-1}(\mathfrak{g})$ and so $g\in \widetilde{Z}^n(\mathfrak{g})$.
\end{proof}
In particular, we derive the following corollary.
\begin{corollary}\label{g/Z(g)}
    Let $\mathfrak{g}$ be a Lie ring such that $\widetilde{Z}(\mathfrak{g})$ is finite. Then, $\mathfrak{g}/\widetilde{Z}(\mathfrak{g})$ has no almost center.
\end{corollary}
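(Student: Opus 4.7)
The plan is to show directly that $\widetilde{Z}^2(\mathfrak{g}) = \widetilde{Z}(\mathfrak{g})$; since $\widetilde{Z}(\mathfrak{g}/\widetilde{Z}(\mathfrak{g})) = \widetilde{Z}^2(\mathfrak{g})/\widetilde{Z}(\mathfrak{g})$ by the definition of the almost central series, this yields the corollary. So I would take $g \in \widetilde{Z}^2(\mathfrak{g})$ and aim to prove that $C_{\mathfrak{g}}(g)$ has finite index in $\mathfrak{g}$, i.e.\ that $g \in \widetilde{Z}(\mathfrak{g})$.

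The key step is to upgrade the hypothesis "$[g,\_\,]$ almost lands in $\widetilde{Z}(\mathfrak{g})$" to "$[g,\_\,]$ is almost trivial", using the finiteness of $\widetilde{Z}(\mathfrak{g})$. Concretely, $g \in \widetilde{Z}^2(\mathfrak{g})$ means that
$$H = \{h \in \mathfrak{g} : [g,h] \in \widetilde{Z}(\mathfrak{g})\}$$
has finite index in $\mathfrak{g}$. By bilinearity of the bracket, the map $\phi\colon H \to \widetilde{Z}(\mathfrak{g})$, $h \mapsto [g,h]$, is a homomorphism of abelian groups with kernel $C_{\mathfrak{g}}(g) \cap H$. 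Its image is contained in the finite group $\widetilde{Z}(\mathfrak{g})$, so $\ker\phi$ has finite index in $H$, and hence in $\mathfrak{g}$. A fortiori $C_{\mathfrak{g}}(g)$ has finite index in $\mathfrak{g}$, so $g \in \widetilde{Z}(\mathfrak{g})$, as desired.

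There is no real obstacle here: the argument is just the standard fact that the preimage of a finite set under a homomorphism is a finite union of cosets of the kernel, combined with the fact that finite index is preserved by intersecting with a finite-index subgroup. The statement could also be deduced as a clean consequence of the preceding observation by taking $n = 1$ once one observes that "$\widetilde{Z}(\mathfrak{g})$ finite" forces $\widetilde{Z}^2(\mathfrak{g})$ to be almost contained in $\widetilde{Z}(\mathfrak{g})$ (via exactly the homomorphism argument above), but the direct proof is equally short.
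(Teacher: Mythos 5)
Your proof is correct and is essentially the same argument as the paper's. The paper deduces the corollary from the preceding Observation with $n=1$ (taking $\widetilde{Z}^0(\mathfrak{g})=\{0\}$, so that $\widetilde{Z}(\mathfrak{g})$ finite reads as $\widetilde{Z}^1\apprle \widetilde{Z}^0$), and the Observation's very terse proof is exactly the homomorphism $h\mapsto [g,h]$ landing in a finite quotient that you spell out explicitly; you even note this equivalence yourself at the end.
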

In the last section, we will prove that an almost nilpotent Lie subring of a hereditarily $\widetilde{\mathfrak{M}}_c$-Lie ring is almost contained in a definable nilpotent Lie ring and, similarly, an almost soluble Lie ring is almost contained in a soluble definable Lie subring. These will be the main ingredients for the proof of the solubility of the Radical ideal and the nilpotency of the Fitting ideal.
\subsection{Almost ideals and absolute simplicity}
In the analysis of finite-dimensional Lie rings and hereditarily $\widetilde{\mathfrak{M}}_c$-Lie rings, an important role is played by almost ideals.
\begin{defn}
Let $(\mathfrak{g},V)$ a module and $W\leq V$ a subgroup. The Lie subring
$$\widetilde{Stab}_{\mathfrak{g}}(W)=\{g\in \mathfrak{g}:\ g\cdot W\apprle W\}$$
is called the \emph{almost stabiliser} of $W$. $W$ is \emph{almost invariant} if $\widetilde{Stab}_{\mathfrak{g}}(W)=\mathfrak{g}$. A definable module $(\mathfrak{g},V)$ is \emph{absolutely minimal} if there are no definable infinite almost invariant subgroups of infinite index. \\
    Let $\mathfrak{g}$ be a Lie ring and $H$ be a subgroup of $\mathfrak{g}$. The \emph{almost normaliser} of $H$, denoted by $\widetilde{N}_{\mathfrak{g}}(H)$, is the almost stabiliser of $H$ for the adjoint action of $\mathfrak{g}$ on $(\mathfrak{g},+)$. An \emph{almost ideal} is a subgroup of $\mathfrak{g}$ almost invariant for the adjoint action of $\mathfrak{g}$.
\end{defn}
In the hereditarily $\widetilde{\mathfrak{M}}_c$ case, we have the following equivalent definition of almost ideality.
\begin{lemma}\label{ChaAlmIde}
    Let $\mathfrak{g}$ be a definable hereditarily $\widetilde{\mathfrak{M}}_c$-Lie ring and $H$ a definable subgroup. Then, $H$ is an almost ideal iff there exists $H_1$ definable subgroup in $H$ of finite index such that, for every $h\in H_1$, $[h,\mathfrak{g}]\apprle H$.
\end{lemma}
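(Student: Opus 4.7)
The plan is to use the symmetry of the almost centralizer (Lemma \ref{sym}) together with the definability of almost centralizers in hereditarily $\widetilde{\mathfrak{M}}_c$-Lie rings, the natural analog of Lemma \ref{DefZ}. Translating the definitions, $H$ is an almost ideal iff $\mathfrak{g} = \widetilde{C}_{\mathfrak{g}}(H/H)$, while the condition that $[h,\mathfrak{g}] \apprle H$ for every $h \in H_1$ is exactly $H_1 \subseteq \widetilde{C}_{\mathfrak{g}}(\mathfrak{g}/H)$.

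For the forward direction, suppose $H$ is an almost ideal. Then trivially $\mathfrak{g} \apprle \widetilde{C}_{\mathfrak{g}}(H/H)$. Applying Lemma \ref{sym} with $N=H$, I obtain $H \apprle \widetilde{C}_{\mathfrak{g}}(\mathfrak{g}/H)$. I then take $H_1 := H \cap \widetilde{C}_{\mathfrak{g}}(\mathfrak{g}/H)$: this is a definable (by the $\widetilde{\mathfrak{M}}_c$-hypothesis) subgroup of $H$ of finite index, and by construction every element of $H_1$ lies in $\widetilde{C}_{\mathfrak{g}}(\mathfrak{g}/H)$, so $[h,\mathfrak{g}] \apprle H$ for every $h \in H_1$.

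For the backward direction, assume such an $H_1$ exists, so $H_1 \subseteq \widetilde{C}_{\mathfrak{g}}(\mathfrak{g}/H)$; since $H_1$ has finite index in $H$, it follows that $H \apprle \widetilde{C}_{\mathfrak{g}}(\mathfrak{g}/H)$, and Lemma \ref{sym} yields $\mathfrak{g} \apprle \widetilde{C}_{\mathfrak{g}}(H/H) = \widetilde{N}_{\mathfrak{g}}(H)$. To upgrade this to actual equality, I fix an arbitrary $g \in \mathfrak{g}$; since $H/H_1$ is finite, $[g,H]/[g,H_1]$ is a finite quotient of $H/H_1$ via $\ad_g$, so it suffices to check $[g,H_1] \apprle H$ for each such $g$. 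I would obtain this by applying the symmetry argument pointwise, reproving the compactness argument of Lemma \ref{sym} with the singleton $\{g\}$ in place of the group $K$, and using the uniform bound on descending chains of relative centralizers $C_{\mathfrak{g}}(h_1,\ldots,h_k/H)$ guaranteed by the hereditarily $\widetilde{\mathfrak{M}}_c$-hypothesis.

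The main obstacle is precisely this last upgrade. Lemma \ref{sym} only yields the \emph{almost containment} $\mathfrak{g} \apprle \widetilde{N}_{\mathfrak{g}}(H)$, while the definition of almost ideal requires the stronger equality $\mathfrak{g} = \widetilde{N}_{\mathfrak{g}}(H)$. The gap is that the quotient $\mathfrak{g}/\widetilde{N}_{\mathfrak{g}}(H)$ could a priori have torsion, and an element $g$ satisfying $ng \in \widetilde{N}_{\mathfrak{g}}(H)$ need not itself lie there because $n[g,H] \apprle H$ does not, in general, imply $[g,H] \apprle H$. The resolution must therefore proceed not by invoking Lemma \ref{sym} as a black box, but by a pointwise Neumann-style coset argument leveraging the hereditarily $\widetilde{\mathfrak{M}}_c$-hypothesis to handle each individual $g$ directly.
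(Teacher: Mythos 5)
Your forward direction is correct and is exactly the paper's argument: almost ideality gives $\mathfrak{g}\le\widetilde{C}_{\mathfrak{g}}(H/H)$, Lemma \ref{sym} gives $H\apprle\widetilde{C}_{\mathfrak{g}}(\mathfrak{g}/H)$, and $H_1=H\cap\widetilde{C}_{\mathfrak{g}}(\mathfrak{g}/H)$ has the required properties.

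The backward direction is where the genuine gap lies, and you have located it precisely but not closed it. Lemma \ref{sym} only yields $\mathfrak{g}\apprle\widetilde{C}_{\mathfrak{g}}(H/H)$, i.e.\ that $\widetilde{N}_{\mathfrak{g}}(H)$ has finite index in $\mathfrak{g}$, whereas almost ideality demands $\widetilde{N}_{\mathfrak{g}}(H)=\mathfrak{g}$. Your proposed repair --- rerunning the Neumann/compactness argument with the singleton $\{g\}$ in place of the group $K$ --- cannot succeed: Neumann's fact needs a \emph{group} to be covered by cosets of subgroups of infinite index, and a single element $g$ lying outside every one of the finite-index subgroups $C_{\mathfrak{g}}(h_i-h_j/H)$ is no contradiction whatsoever. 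In fact the converse implication is false as stated. Let $V$ be an infinite $\mathbb{F}_p$-vector space (a totally categorical, finite Morley rank, hence finite-dimensional and hereditarily $\widetilde{\mathfrak{M}}_c$ setting) and put $\mathfrak{g}=\mathbb{F}_p\times V\times V$ with $[(a,v,w),(a',v',w')]=(0,0,av'-a'v)$, a definable two-step nilpotent Lie ring. For $H=H_1=\{0\}\times V\times\{0\}$ and any $h=(0,v,0)\in H_1$ one has $[h,\mathfrak{g}]=\{0\}\times\{0\}\times\mathbb{F}_p v$, which is finite and hence $\apprle H$; yet for $e=(1,0,0)$ the subgroup $[e,H]=\{0\}\times\{0\}\times V$ is infinite and meets $H$ trivially, so $H$ is not an almost ideal (its almost normaliser is $\{0\}\times V\times V$, of index $p$). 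Note that the paper itself is no better here --- its ``the vice-versa follows from a similar proof'' produces only the finite-index statement, and the lemma is in fact only ever invoked in the forward direction later on --- so your instinct that something is wrong was sound; but the correct conclusion of the converse is merely that $\widetilde{N}_{\mathfrak{g}}(H)$ has finite index in $\mathfrak{g}$ (equivalently, that $H$ is commensurable with a definable almost ideal), and either the statement or the definition must be weakened accordingly rather than patched by a pointwise argument.
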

\begin{proof}
    By definition, for every $g\in \mathfrak{g}$, $[g,H]\apprle H$. Therefore, $\mathfrak{g}\leq \widetilde{C}_{\mathfrak{g}}(H/H)$. By Lemma \ref{sym}, $\widetilde{C}_{\mathfrak{g}}(\mathfrak{g}/H)\apprge H$. $\widetilde{C}_{\mathfrak{g}}(\mathfrak{g}/H)$ is a subgroup in $\mathfrak{g}$ and $\widetilde{C}_{\mathfrak{g}}(\mathfrak{g}/H)\cap H$ is an almost invariant subgroup of finite index in $H$ with the desired property. The vice-versa follows from a similar proof.
\end{proof}
The following easy lemma will be very important in the proof of the existence of definable envelopes. 
\begin{lemma}\label{almide1}
    Let $\mathfrak{g}$ be a Lie ring and $K\leq H$ subgroups. Then, $$N_{\mathfrak{g}}(\widetilde{C}_{\mathfrak{g}}(H/K))\geq \widetilde{N}_{\mathfrak{g}}(H)\cap \widetilde{N}_{\mathfrak{g}}(K).$$
\end{lemma}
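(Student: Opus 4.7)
The plan is to verify the inclusion element-wise using the Jacobi identity, which makes $\operatorname{ad}_g$ a derivation: for $h \in H$,
$$[[g,x], h] = [g, [x,h]] - [x, [g,h]].$$
Given $g \in \widetilde{N}_{\mathfrak{g}}(H) \cap \widetilde{N}_{\mathfrak{g}}(K)$ and $x \in \widetilde{C}_{\mathfrak{g}}(H/K)$, I want to produce a finite-index subgroup $H' \leq H$ on which both summands on the right lie in $K$; this will give $[[g,x], H] \apprle K$, i.e.\ $[g,x] \in \widetilde{C}_{\mathfrak{g}}(H/K)$.

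First I would turn each of the three almost-containment hypotheses into an honest finite-index subgroup. Since bracketing is additive in each slot, the maps $\operatorname{ad}_x : H \to \mathfrak{g}/K$, $\operatorname{ad}_g : H \to \mathfrak{g}/H$ and $\operatorname{ad}_g : K \to \mathfrak{g}/K$ are group homomorphisms, and by hypothesis each has finite image. Their kernels
\begin{align*}
H_0 &= \{h \in H : [x,h] \in K\}, \\
H_1 &= \{h \in H : [g,h] \in H\}, \\
K_1 &= \{k \in K : [g,k] \in K\}
\end{align*}
thus have finite index in $H$, $H$, and $K$ respectively.

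Applying the same trick a second time inside $H_0$ and $H_1$ yields the desired $H'$. Setting
$$H' = H_0 \cap H_1 \cap \{h \in H : [x,h] \in K_1\} \cap \{h \in H : [g,h] \in H_0\},$$
the third set has finite index because $[x,\cdot] : H_0 \to K$ is a homomorphism and $K/K_1$ is finite, and the fourth because $[g,\cdot] : H_1 \to H$ is a homomorphism and $H/H_0$ is finite; so $H'$ has finite index in $H$. For $h \in H'$, the Jacobi identity above gives $[x,h] \in K_1$, whence $[g,[x,h]] \in K$, and $[g,h] \in H_0$, whence $[x,[g,h]] \in K$; hence $[[g,x],h] \in K$. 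I do not foresee a real obstacle: the argument is pure bookkeeping of finite-index intersections and in particular does not invoke the $\widetilde{\mathfrak{M}}_c$ hypothesis, matching the fact that the lemma is stated for arbitrary Lie rings.
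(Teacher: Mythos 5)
Your proof is correct and follows essentially the same route as the paper's: expand $[[g,x],h]$ via the Jacobi identity and use the three almost-containment hypotheses to land in $K$. The only difference is presentational — the paper compresses the finite-index bookkeeping into a chain of $\apprle$'s, whereas you make the relevant finite-index subgroups $H_0$, $H_1$, $K_1$ and $H'$ explicit, which is a cleaner justification of the same argument.
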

\begin{proof}
    Let $g\in \widetilde{N}_{\mathfrak{g}}(H)\cap \widetilde{N}_{\mathfrak{g}}(K)$ and $c\in \widetilde{C}_{\mathfrak{g}}(H/K)$. Then 
    $[g,c]\in \widetilde{C}_{\mathfrak{g}}(H/K)$ indeed 
    $$[[g,c],H]\leq [g,[c,H]]+[c,[g,H]]\apprle [g,K]+[c,H]$$
    by assumption. By hypothesis, this is almost contained in $K+K\leq K$. By arbitrariety of $c\in \widetilde{C}_{\mathfrak{g}}(H/K)$, this completes the proof.
\end{proof}
An immediate corollary of Lemma \ref{almide1} is the following.
\begin{corollary}\label{C(H/K)AlmId}
    Let $\mathfrak{g}$ be a Lie ring and $K\leq H$ almost ideals. Then, $\widetilde{C}_{\mathfrak{g}}(H/K)$ is an ideal in $\mathfrak{g}$.
\end{corollary}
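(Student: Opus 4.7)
The plan is to apply Lemma \ref{almide1} essentially verbatim. Since $H$ and $K$ are almost ideals of $\mathfrak{g}$, unpacking the definition of almost ideal gives $\widetilde{N}_{\mathfrak{g}}(H) = \widetilde{N}_{\mathfrak{g}}(K) = \mathfrak{g}$. Substituting this into the inclusion furnished by Lemma \ref{almide1} yields $N_{\mathfrak{g}}(\widetilde{C}_{\mathfrak{g}}(H/K)) \supseteq \mathfrak{g}$, which is precisely the absorption condition $[\mathfrak{g}, \widetilde{C}_{\mathfrak{g}}(H/K)] \subseteq \widetilde{C}_{\mathfrak{g}}(H/K)$ required for an ideal of a Lie ring.

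The only remaining obligation is to verify that $\widetilde{C}_{\mathfrak{g}}(H/K)$ is an additive subgroup of $\mathfrak{g}$; closure under the Lie bracket then follows immediately from the absorption condition, since $\widetilde{C}_{\mathfrak{g}}(H/K) \subseteq \mathfrak{g}$. For the subgroup property, given $c_1, c_2 \in \widetilde{C}_{\mathfrak{g}}(H/K)$ one has $[c_1 - c_2, H] \subseteq [c_1, H] + [c_2, H]$; using the reformulation $A \apprle K \iff (A+K)/K$ is finite, the finiteness of $([c_1,H]+K)/K$ and $([c_2,H]+K)/K$ yields finiteness of their sum, hence $[c_1, H] + [c_2, H] \apprle K$, and this passes to the subgroup $[c_1 - c_2, H]$. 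This is the same bookkeeping used in the proof of Lemma \ref{DefZ}.

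The corollary is, as the paper explicitly signals, an immediate consequence of Lemma \ref{almide1}, so no serious obstacle arises. The only technical ingredient beyond Lemma \ref{almide1} itself is the routine verification that almost containment is preserved under sums and subgroup passage, which is a standard index-counting argument.
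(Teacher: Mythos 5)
Your proposal is correct and follows exactly the route the paper intends: the paper presents this as an immediate consequence of Lemma \ref{almide1}, obtained by substituting $\widetilde{N}_{\mathfrak{g}}(H)=\widetilde{N}_{\mathfrak{g}}(K)=\mathfrak{g}$ into that lemma's conclusion. Your additional verification that $\widetilde{C}_{\mathfrak{g}}(H/K)$ is an additive subgroup (via bilinearity and the stability of almost containment under finite sums) is a detail the paper leaves implicit, and your bookkeeping for it is sound.
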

Another corollary that will be useful in the last section is the almost ideality of the almost center of an almost ideal.
\begin{lemma}
    Let $\mathfrak{g}$ be a Lie ring and $\mathfrak{h}$ an almost ideal in $\mathfrak{g}$. Then, for any $n<\omega$, $\widetilde{Z}_n(\mathfrak{h})$ is an almost ideal in $\mathfrak{g}$.
\end{lemma}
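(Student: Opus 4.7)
The plan is to induct on $n$, with the entire argument powered by Corollary~\ref{C(H/K)AlmId} together with an elementary remark on intersections. No new ideas beyond these seem to be required.

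First I would record the preliminary observation: if $A$ is a genuine ideal of $\mathfrak{g}$ and $B$ is an almost ideal of $\mathfrak{g}$, then $A\cap B$ is an almost ideal of $\mathfrak{g}$. Indeed, for any $g\in\mathfrak{g}$ the subgroup $[g,A\cap B]=\operatorname{ad}(g)(A\cap B)$ is contained in $A$ (since $A$ is an ideal) and also in $[g,B]$, which satisfies $|[g,B]:[g,B]\cap B|<\infty$ by almost ideality of $B$. Consequently
\[
\bigl|[g,A\cap B]:[g,A\cap B]\cap (A\cap B)\bigr|=\bigl|[g,A\cap B]:[g,A\cap B]\cap B\bigr|<\infty,
\]
so $[g,A\cap B]\apprle A\cap B$, as desired.

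For the base case $n=1$ I apply Corollary~\ref{C(H/K)AlmId} to the pair of almost ideals $\{0\}\leq\mathfrak{h}$ to conclude that $\widetilde{C}_{\mathfrak{g}}(\mathfrak{h})$ is a genuine ideal of $\mathfrak{g}$. Since by definition $\widetilde{Z}(\mathfrak{h})=\widetilde{C}_{\mathfrak{h}}(\mathfrak{h})=\mathfrak{h}\cap\widetilde{C}_{\mathfrak{g}}(\mathfrak{h})$, the preliminary observation yields almost ideality of $\widetilde{Z}(\mathfrak{h})$ in $\mathfrak{g}$. The inductive step is the same move one level higher: assuming $\widetilde{Z}^n(\mathfrak{h})$ is an almost ideal, Corollary~\ref{C(H/K)AlmId} applied to $\widetilde{Z}^n(\mathfrak{h})\leq\mathfrak{h}$ gives that $\widetilde{C}_{\mathfrak{g}}\bigl(\mathfrak{h}/\widetilde{Z}^n(\mathfrak{h})\bigr)$ is a genuine ideal of $\mathfrak{g}$, and a routine unfolding shows
\[
\widetilde{Z}^{n+1}(\mathfrak{h})=\mathfrak{h}\cap\widetilde{C}_{\mathfrak{g}}\bigl(\mathfrak{h}/\widetilde{Z}^n(\mathfrak{h})\bigr),
\]
since both sides describe exactly the $h\in\mathfrak{h}$ for which $\{k\in\mathfrak{h}:[h,k]\in\widetilde{Z}^n(\mathfrak{h})\}$ has finite index in $\mathfrak{h}$. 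The preliminary observation then closes the induction.

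I do not foresee any serious obstacle; the structural content is carried entirely by Corollary~\ref{C(H/K)AlmId}. The only point deserving a moment of care is the identification of $\widetilde{Z}^{n+1}(\mathfrak{h})$ with the indicated intersection, i.e.\ the verification that the almost center computed intrinsically in the quotient $\mathfrak{h}/\widetilde{Z}^n(\mathfrak{h})$ as an $\mathfrak{h}$-module coincides, after pullback to $\mathfrak{h}$, with the restriction to $\mathfrak{h}$ of the $\mathfrak{g}$-almost-centraliser of the same quotient; this is an immediate definitional check once the inductive hypothesis has promoted $\widetilde{Z}^n(\mathfrak{h})$ to an almost ideal of $\mathfrak{g}$.
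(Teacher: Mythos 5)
Your proof is correct and takes essentially the same route as the paper: identify $\widetilde{Z}_n(\mathfrak{h})$ as $\mathfrak{h}\cap\widetilde{C}_{\mathfrak{g}}(\mathfrak{h}/\widetilde{Z}_{n-1}(\mathfrak{h}))$, invoke Corollary~\ref{C(H/K)AlmId} to see that the almost centraliser is a genuine ideal, and intersect with the almost ideal $\mathfrak{h}$. The only difference is that you spell out the small intersection fact (ideal $\cap$ almost ideal $=$ almost ideal) which the paper leaves implicit; otherwise the arguments coincide.
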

\begin{proof}
    By induction on $n<\omega$. By definition, $\widetilde{Z}(\mathfrak{h})=\widetilde{C}_{\mathfrak{g}}(\mathfrak{h})\cap \mathfrak{h}$. Since the first is an ideal by Corollary \ref{C(H/K)AlmId} and the second is an almost ideal by assumptions, the conclusion follows.\\
    Similarly $\widetilde{Z}_n(\mathfrak{h})=\widetilde{C}_{\mathfrak{g}}(\mathfrak{h}/\widetilde{Z}_{n-1}(\mathfrak{h}))\cap \mathfrak{h}$ and $\widetilde{C}_{\mathfrak{g}}(\mathfrak{h}/\widetilde{Z}_{n-1}(\mathfrak{h}))$ is an ideal by Corollary \ref{C(H/K)AlmId} and induction hypothesis.
\end{proof}
The following result is the Lie ring version of Proposition 3.27 of \cite{hempel}.
\begin{lemma}\label{fin[]Alm}
    Let $\mathfrak{g}$ be a definable hereditarily $\widetilde{\mathfrak{M}}_c$-Lie ring, $K$ a definable almost ideal and $H$ a definable subgroup of $\mathfrak{g}$. Then,
    $$[\widetilde{C}_{\mathfrak{g}}(H/K),\widetilde{C}_{H}(\widetilde{C}_{\mathfrak{g}}(H/K)/K)]$$
    is almost contained in $K$. Moreover, if $H$ is an almost ideal, it is normalised by $H$.
\end{lemma}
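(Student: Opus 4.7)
The plan is to set $A := \widetilde{C}_{\mathfrak{g}}(H/K)$ and $B := \widetilde{C}_{H}(A/K)$, then prove $[A,B] \apprle K$ by finding finite-index subgroups $A_0 \leq A$, $B_0 \leq B$ on which the commutator pairing is uniformly bounded modulo $K$, and reassembling. First I would observe that $B \subseteq \widetilde{C}_{\mathfrak{g}}(A/K)$ directly from the definition of $B$; Lemma \ref{sym} then reverses this to give $A \apprle \widetilde{C}_{\mathfrak{g}}(B/K)$. Setting $A_1 := A \cap \widetilde{C}_{\mathfrak{g}}(B/K)$, a finite-index subgroup of $A$, this means every $a \in A_1$ has $C_B(a/K) := \{b \in B : [a,b] \in K\}$ of finite index in $B$.

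Next I would apply the hereditarily $\widetilde{\mathfrak{M}}_c$ hypothesis twice. Used on the uniformly definable family $\{C_{\mathfrak{g}}(a/K)\}_{a \in A_1}$ relative to $K$, it produces $a_1,\ldots,a_m \in A_1$ and a uniform bound $d$ such that the finite-index subgroup $B_0 := \bigcap_{i \leq m} C_B(a_i/K)$ of $B$ satisfies $|B_0 : B_0 \cap C_B(a/K)| < d$ for every $a \in A_1$; equivalently the image $[a, B_0] + K$ occupies fewer than $d$ cosets modulo $K$, uniformly in $a$. The symmetric application to $\{C_{\mathfrak{g}}(b/K)\}_{b \in B_0}$ produces a finite-index subgroup $A_0 \leq A_1$ for which $|([A_0, b] + K)/K|$ is also uniformly bounded as $b$ varies in $B_0$.

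The heart of the proof is then to promote these row-and-column bounds on the $\mathbb{Z}$-bilinear bracket pairing $A_0 \times B_0 \to \mathfrak{g}/K$ into finiteness of the whole image subgroup $\overline{[A_0, B_0]} \leq \mathfrak{g}/K$; a compactness-plus-bilinearity argument modelled on Proposition~3.27 of \cite{hempel} transfers to this Lie ring setting. Once $\overline{[A_0, B_0]}$ is finite, the finiteness of $A/A_0$ and $B/B_0$ together with bilinearity of the bracket express any element of $[A,B]$ as a sum of an element of $[A_0, B_0]$ and finitely many fixed commutators of coset representatives, giving $[A,B] \apprle K$. For the moreover, if $H$ is an almost ideal then $\widetilde{N}_{\mathfrak{g}}(H) = \mathfrak{g} = \widetilde{N}_{\mathfrak{g}}(K)$, so Lemma \ref{almide1} first upgrades $A$ to an ideal of $\mathfrak{g}$ and, applied again to $(A,K)$, upgrades $\widetilde{C}_{\mathfrak{g}}(A/K)$ to an ideal; combining these ideal properties with the Jacobi identity yields that $[A,B]$ is normalised by $H$. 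The main obstacle is exactly the promotion step: even with uniform bounds on rows and columns supplied by the two applications of $\widetilde{\mathfrak{M}}_c$, deducing finiteness of the full image of the bilinear pairing requires a careful Lie-ring analogue of Hempel's compactness argument, exploiting that the bracket is additive in each slot.
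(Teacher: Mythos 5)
Your reduction to finite-index subgroups $A_0 \leq A_1 \leq A$ and $B_0 \leq B$ with uniformly bounded ``rows'' $[a, B_0]$ and ``columns'' $[A_0, b]$ modulo $K$ is sound, as is the reassembly at the end and the appeal to Lemma~\ref{almide1} for the normalisation clause. The problem is that you then place the entire content of the lemma in the ``promotion step'' (bounded rows and columns $\Rightarrow$ the image subgroup $\overline{[A_0,B_0]}$ of $\mathfrak{g}/K$ is finite) and discharge it by citing ``a compactness-plus-bilinearity argument modelled on Proposition~3.27 of \cite{hempel}''. That proposition \emph{is} the group-theoretic ancestor of precisely this lemma; invoking it here is circular, and you yourself flag the promotion step as ``the main obstacle''. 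So the proposal contains a correct framing plus an IOU, not a proof. And the promotion is genuinely not a one-line compactness remark: a $\mathbb{Z}$-bi-additive pairing with every row and every column of bounded order can a priori have unbounded total image; ruling that out requires an induction on the maximal orbit size together with an argument that uses the column bound to make that maximum drop, which is several lines of careful work.

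The paper's own argument does not proceed by two separate $\widetilde{\mathfrak{M}}_c$ applications at all. Writing $\mathfrak{b} = \widetilde{C}_{\mathfrak{g}}(H/K)$ and $W = \widetilde{C}_H(\mathfrak{b}/K)$, it fixes by compactness a single element $u \in W$ whose orbit $[\mathfrak{b},u]+K/K$ has maximal size $n$, takes a transversal $b_1=0,\dots,b_n$ of $C_{\mathfrak{b}}(u/K)$ in $\mathfrak{b}$, observes that $C := \bigcap_{i\neq j} C_W(b_i-b_j/K)$ has finite index in $W$ (this is where the fact that $\mathfrak{b}$ almost-centralises $H$ modulo $K$, i.e.\ your ``column'' information, enters), and then shows directly, by a coset computation exploiting bi-additivity and the maximality of $n$ at once, that $[\mathfrak{b},W]$ lies inside the finite extension $F := K + \sum_i [\mathfrak{b},w_i] + [\mathfrak{b},u]$ of $K$, where $\{w_i\}$ is a transversal of $C$ in $W$. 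If you want to keep your symmetric row/column framing, you would need to actually run that induction (including the awkward stable case where the maximal row size does not decrease on passing from $B$ to $B_0$), rather than cite Hempel; alternatively, reproduce the single-maximal-orbit construction above, which is what transfers from the group case.
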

\begin{proof}
Let $\mathfrak{b}=\widetilde{C}_{\mathfrak{g}}(H/K)$ and $W=\widetilde{C}_{H}(\widetilde{C}_{\mathfrak{g}}(H/K)/K)$.\\
 By definition, for every $w\in W$, $[\mathfrak{b},w]+K$ is finite in $(\mathfrak{g}/K,+)$. By compactness, there exists $u$ with $[\mathfrak{b},u]+K/K$ of maximal index equal to $n$. Let $\{b_1=0,...,b_n\}$ be a transversal of $C_{\mathfrak{b}}(u/K)$ in $\mathfrak{b}$. Define the subgroup $C:=\bigcap_{i\not=j=1}^n C_W(b_i-b_j/K)$ of finite index in $W$. Finally, let $\{w_1,...,w_m\}$ be a transversal of $C$ in $W$ and let $F=\sum_{i=1}^m [\mathfrak{b}, w_i]+K$. Any $w_i$ has orbit of maximal cardinality since $[b_j,w_i]+K\not=[b_k,w_i]+K$ by hypothesis. If we verify that 
 $$\{[b, w]:\ b\in \mathfrak{b},w\in W\}+K\subseteq \mathfrak{g}/K$$
 is contained in $F$, the theorem is proven, being $F$ a finite extension of $K$. Take $v\in W$ such that $v=c+w_i$ for $c\in C$ and $w_i\not=0$. Then $[b_j,c+w_i]=[b_j, w_i]+K$ by definition. Since any $[b_j, w_i]+K\not=[b_k, w_i]+K$ for $j\not=k\leq n$ and by maximality of $n$, $[\mathfrak{b}, v]\leq F$.
 Given $c\in C$, then $[\mathfrak{g},c]\leq [\mathfrak{g},c-u]+[\mathfrak{g},u]\leq F+F=F$. Therefore, $[\mathfrak{g},H]\leq F$ that is a finite extension of $K$.\\
 For the second part, assume that $H$ is an almost ideal. Then, it is sufficient to prove that, for $b\in \mathfrak{b}$, $w\in W$ and $h\in H$, $[h,[b,w]]\in [\widetilde{C}_{\mathfrak{g}}(H/K),\widetilde{C}_{H}(\widetilde{C}_{\mathfrak{g}}(H/K)/K)]$. By Jacobi identity, $[h,[b,w]]=[w,[h,b]]+[b,[w,h]]$. The second term is clearly in $A$. For the second, it is sufficient to observe that $[[h,h'],\widetilde{C}_{\mathfrak{g}}(H/K)]$ for $h'\in \widetilde{C}_{H}(\widetilde{C}_{\mathfrak{g}}(H/K))$ is equal to $[h',[h,\widetilde{C}_{\mathfrak{g}}(H/K)]]+[h,[h',\widetilde{C}_{\mathfrak{g}}(H/K)]]\apprle [h,K]+[h',\widetilde{C}_{\mathfrak{g}}(H/K)]\apprle K+K=K$. 
\end{proof}
Another interesting property is the following.
\begin{corollary}\label{IntAlmIde}
    Let $\mathfrak{g}$ be a hereditarily $\widetilde{\mathfrak{M}}_c$-Lie ring and $H,H_1$ two definable almost ideals with finite intersection. Then, $H\apprle \widetilde{C}_{\mathfrak{g}}(H_1)$ and vice-versa.
\end{corollary}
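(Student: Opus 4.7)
The plan is to exploit Lemma~\ref{ChaAlmIde} (which is where the hereditarily~$\widetilde{\mathfrak{M}}_c$-assumption enters) to obtain a finite-index subgroup $H'\leq H$ such that $[h,\mathfrak{g}]\apprle H$ for every $h\in H'$. The key idea is that for such $h$ the subgroup $[h,H_1]$ lies almost inside $H$ by choice of $H'$, and simultaneously almost inside $H_1$ because $H_1$ is an almost ideal. Combined with the hypothesis that $H\cap H_1$ is finite, this will force $[h,H_1]$ itself to be finite, that is, $h\in \widetilde{C}_{\mathfrak{g}}(H_1)$.

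Concretely, I would proceed as follows. Fix $h\in H'$. Since $[h,H_1]\leq [h,\mathfrak{g}]$ and $[h,\mathfrak{g}]\cap H$ has finite index in $[h,\mathfrak{g}]$, a routine index argument (quotienting the subgroup inclusion) yields that $[h,H_1]\cap H$ has finite index in $[h,H_1]$. Next, applying the almost-ideality of $H_1$ directly to $h\in \mathfrak{g}$ gives $[h,H_1]\apprle H_1$, so $[h,H_1]\cap H_1$ also has finite index in $[h,H_1]$. Intersecting these two finite-index subgroups, $[h,H_1]\cap H\cap H_1$ still has finite index in $[h,H_1]$. Because $H\cap H_1$ is finite by hypothesis, $[h,H_1]$ is finite, so $h\in \widetilde{C}_{\mathfrak{g}}(H_1)$. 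Thus $H'\subseteq \widetilde{C}_{\mathfrak{g}}(H_1)$, and since $H'$ has finite index in $H$, we obtain $H\apprle \widetilde{C}_{\mathfrak{g}}(H_1)$. The ``vice versa'' direction is immediate from the fact that the hypotheses are symmetric in $H$ and $H_1$ (alternatively one could appeal to Lemma~\ref{sym}).

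The only subtle point is the one that Lemma~\ref{ChaAlmIde} is there to handle: the bare definition of almost ideal provides $[g,H]\apprle H$ for each $g\in \mathfrak{g}$, which controls commutators $[h_1,H]$ with $h_1\in H_1$ fixed in the first slot, but what we actually need is control of $[h,H_1]$ with $h\in H$ fixed in the first slot. Passing to the finite-index subgroup $H'$ furnished by Lemma~\ref{ChaAlmIde} — and hence promoted from $[h,H]\apprle H$ to the stronger $[h,\mathfrak{g}]\apprle H$ — is exactly what bridges this asymmetry and makes the argument go through.
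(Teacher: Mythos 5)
Your proof is correct and follows essentially the same route as the paper: the paper takes $K=\widetilde{C}_{H}(\mathfrak{g}/H)$, which is exactly the finite-index subgroup that Lemma~\ref{ChaAlmIde} produces, and then observes that for $k\in K$ the subgroup $[k,H_1]$ is almost contained in both $H$ and $H_1$, hence finite. The only cosmetic difference is that you cite Lemma~\ref{ChaAlmIde} where the paper re-runs the Lemma~\ref{sym} argument inline.
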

\begin{proof}
    To prove that $H$ is almost contained in $\widetilde{C}_{\mathfrak{g}}(H_1)$, it is sufficient to show that there exists a definable subgroup of finite index $K$ in $H$ such that, for every $k\in K$, $[k,H_1]\apprle H\cap H_1$. Take $K=\widetilde{C}_{H}(\mathfrak{g}/H):=\{h\in H:\ [\mathfrak{g},h]\apprle H\}$. $K$ is of finite index in $H$ by Lemma \ref{sym} and since $H$ is an almost ideal. For any $k\in K$, $[k,H_1]\leq [k,\mathfrak{g}]\apprle H$. On the other hand, by definition of almost invariant subgroup, $[h,H_1]\apprle H_1$. This proves the corollary.
\end{proof}
Returning to finite-dimensional Lie rings, we have the following Lemma.
\begin{lemma}\label{DefBrack}
    Let $\mathfrak{g}$ be a finite-dimensional definable Lie ring and $\mathfrak{h},\mathfrak{k}$ two definable Lie subrings that normalise each other. Then, $[\mathfrak{h},\mathfrak{k}]$ is a definable Lie subring normalised by $\mathfrak{h}$ and $\mathfrak{k}$.
\end{lemma}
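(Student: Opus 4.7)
The plan is to adapt the dimension-maximisation argument of Lemma~\ref{Definabilityder} by replacing connectivity with the almost-centraliser machinery developed in this section. I first verify that $[\mathfrak{h},\mathfrak{k}]$ is closed under the Lie bracket (Jacobi, as in Lemma~\ref{Definabilityder}) and that $\mathfrak{h},\mathfrak{k}$ normalise it: given $h\in\mathfrak{h}$ and $[h',k']\in[\mathfrak{h},\mathfrak{k}]$, the identity $[h,[h',k']] = [[h,h'],k'] + [h',[h,k']]$ places both summands in $[\mathfrak{h},\mathfrak{k}]$ since $[h,h']\in\mathfrak{h}$ and $[h,k']\in\mathfrak{k}$ by mutual normalisation; the argument for $\mathfrak{k}$ is symmetric.

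For definability I would pick $L = \sum_{i=1}^{n}[\mathfrak{h},k_i]$ of maximal dimension among finite sums with $k_i\in\mathfrak{k}$. Maximality forces $L + [\mathfrak{h},k]$ to share the dimension of $L$ for every $k\in\mathfrak{k}$, and Lemma~\ref{boundedind} applied to the uniformly definable family $\{L + [\mathfrak{h},k]\}_{k\in\mathfrak{k}}$ then provides a uniform bound on $|L + [\mathfrak{h},k] : L|$. Thus $\mathfrak{k}\subseteq\widetilde{C}_\mathfrak{g}(\mathfrak{h}/L)$, and Lemma~\ref{sym} yields $\mathfrak{h}\apprle\widetilde{C}_\mathfrak{g}(\mathfrak{k}/L)$, so $\mathfrak{h}_1 := \mathfrak{h}\cap\widetilde{C}_\mathfrak{g}(\mathfrak{k}/L)$ is a definable subgroup of finite index in $\mathfrak{h}$. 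A Jacobi computation analogous to the normalisation step shows that $\mathfrak{h}$ and $\mathfrak{k}$ almost normalise $L$, so $L$ is an almost ideal of the definable Lie subring $\widetilde{N}_\mathfrak{g}(L)\supseteq\mathfrak{h}\cup\mathfrak{k}$; applying Lemma~\ref{fin[]Alm} in $\widetilde{N}_\mathfrak{g}(L)$ with $K=L$, $H=\mathfrak{k}$ and using Lemma~\ref{sym} one more time, one produces a subgroup $\mathfrak{k}_1\leq\mathfrak{k}$ of finite index with $[\mathfrak{h}_1,\mathfrak{k}_1]\apprle L$, hence $[\mathfrak{h}_1,\mathfrak{k}_1]\subseteq L + F$ for a finite set $F$.

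Finally, choosing coset representatives $R$ for $\mathfrak{h}/\mathfrak{h}_1$ and $R'$ for $\mathfrak{k}/\mathfrak{k}_1$, bilinearity gives
$$[\mathfrak{h},\mathfrak{k}] \;=\; [\mathfrak{h}_1,\mathfrak{k}_1] + \sum_{r\in R}[r,\mathfrak{k}] + \sum_{r'\in R'}[\mathfrak{h},r'] \;=\; D + (\langle F\rangle + L),$$
where $D := \sum_{r\in R}[r,\mathfrak{k}] + \sum_{r'\in R'}[\mathfrak{h},r']$ is definable as a finite sum of the definable subgroups $\operatorname{ad}(r)(\mathfrak{k})$ and $\operatorname{ad}(r')(\mathfrak{h})$, and $\langle F\rangle + L$ is definable because each $f + L$ has finite order in $\mathfrak{g}/L$ (since $([\mathfrak{h}_1,\mathfrak{k}_1] + L)/L$ is finite), so $\langle F+L\rangle$ is a finitely generated torsion abelian subgroup of $\mathfrak{g}/L$ and therefore finite. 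The main obstacle, relative to the connected case, is that dimension maximality alone no longer forces $L = [\mathfrak{h},\mathfrak{k}]$; the proof must cascade the symmetric almost-centraliser bounds together with the almost-ideal structure of $L$ in $\widetilde{N}_\mathfrak{g}(L)$ to extract precisely the torsion information in $\mathfrak{g}/L$ that rescues definability in the absence of connectivity.
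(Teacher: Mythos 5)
Your proof is correct, but it takes a genuinely different (and somewhat heavier) route than the paper's. The paper builds a \emph{symmetric} maximal sum $H=\sum_{i}[k_i,\mathfrak{h}]+\sum_{j}[\mathfrak{k},h_j]$, so that maximality of dimension yields \emph{both} inclusions $\mathfrak{k}\subseteq\widetilde{C}_{\mathfrak{g}}(\mathfrak{h}/H)$ \emph{and} $\mathfrak{h}\subseteq\widetilde{C}_{\mathfrak{g}}(\mathfrak{k}/H)$ on the nose; then the proof of Lemma~\ref{fin[]Alm} gives $[\mathfrak{h},\mathfrak{k}]\apprle H$ directly, so $[\mathfrak{h},\mathfrak{k}]$ is a finite extension of the definable $H$ and one is done. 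Your one-sided $L=\sum_i[\mathfrak{h},k_i]$ only gives $\mathfrak{k}\subseteq\widetilde{C}_{\mathfrak{g}}(\mathfrak{h}/L)$ exactly; the reverse inclusion is recovered only up to finite index via Lemma~\ref{sym}, which forces you to drop to $\mathfrak{h}_1,\mathfrak{k}_1$, decompose along coset representatives, and argue that $\langle F\rangle+L$ is a finite extension of $L$ via torsion in $\mathfrak{g}/L$. The cascade you build does go through — the decomposition $[\mathfrak{h},\mathfrak{k}]=D+[\mathfrak{h}_1,\mathfrak{k}_1]$, the application of Lemma~\ref{fin[]Alm} inside $\widetilde{N}_{\mathfrak{g}}(L)$, and the finiteness of the finitely generated torsion abelian group $(\langle F\rangle+L)/L$ are all sound — but symmetrising the generating sum from the start eliminates all of this extra bookkeeping. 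One small inaccuracy worth fixing: Lemma~\ref{boundedind} bounds iterated \emph{intersections}, not the indices $|L+[\mathfrak{h},k]:L|$ of a uniformly definable family of \emph{sums}; in fact you do not need a uniform bound at that point (finiteness of each index, which follows purely from the dimension equality, is enough to put $\mathfrak{k}$ inside $\widetilde{C}_{\mathfrak{g}}(\mathfrak{h}/L)$), and the definability of the almost centralisers you later intersect with is what Lemma~\ref{DefZ} is for.
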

\begin{proof}
    Let $H=\sum_{i=1}^n [k_i,\mathfrak{h}]+\sum_{j=1}^m [\mathfrak{k},h_j]$ be a finite sum of subgroups of maximal dimension among all the sums of this form. By definition, $H\leq [\mathfrak{k},\mathfrak{h}]\leq \mathfrak{h}\cap\mathfrak{k}$ since the two Lie subrings normalize each other. Then, $\widetilde{N}_{\mathfrak{g}}(H)$ contains $\mathfrak{h}+\mathfrak{k}$. Indeed, for $g\in \mathfrak{h}\cup \mathfrak{k},$ $[g,H]+H\leq [g,\mathfrak{h}]+H\apprle H$ by maximality of the dimension. Moreover, for every $h\in \mathfrak{h}$, $[h,\mathfrak{k}]\apprle H$ by maximality of the dimension and $[k,\mathfrak{h}]\apprle H$ for any $k\in \mathfrak{k}$. This implies that $\widetilde{C}_{\mathfrak{g}}(\mathfrak{h}/H)\geq \mathfrak{k}$ and vice-versa. Therefore, $[\widetilde{C}_{\mathfrak{k}}(\mathfrak{h}/H),\widetilde{C}_{\mathfrak{h}}(\mathfrak{k}/H)]=[\mathfrak{k},\mathfrak{h}]\apprle H\leq [\mathfrak{h},\mathfrak{k}]$ by the same proof of Lemma \ref{fin[]Alm}. Therefore, $[\mathfrak{h},\mathfrak{k}]$ is a finite extension of $H$ and so $[\mathfrak{h},\mathfrak{k}]$ is definable and clearly normalised both by $\mathfrak{h}$ and $\mathfrak{k}$.
\end{proof}
A corollary of Lemma \ref{DefBrack} is that the commutator between two definable ideals in $\mathfrak{g}$ is a definable ideal.
\begin{corollary}\label{DerivedSeries}
    Given $\mathfrak{g}$ a definable Lie ring of finite dimension and $I,J$ two definable ideals in $\mathfrak{g}$. Then $[I,J]$ is a definable ideal. In particular, for every $n<\omega$, $\mathfrak{g}^{(n)}$ is a definable ideal.
\end{corollary}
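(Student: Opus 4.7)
The plan is to apply Lemma \ref{DefBrack} directly, after noting that two ideals in $\mathfrak{g}$ automatically normalise each other (in fact they are normalised by all of $\mathfrak{g}$), so its hypotheses are satisfied. This immediately yields that $[I,J]$ is a definable Lie subring of $\mathfrak{g}$, normalised by both $I$ and $J$.

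The remaining point is to upgrade ``normalised by $I$ and $J$'' to ``ideal in $\mathfrak{g}$''. For this I would use the Jacobi identity: for $g\in\mathfrak{g}$, $i\in I$ and $j\in J$ one has
\[
[g,[i,j]]=[[g,i],j]+[i,[g,j]],
\]
and both $[g,i]\in I$ and $[g,j]\in J$ because $I,J$ are ideals; hence both terms lie in $[I,J]$. By bilinearity this extends to arbitrary elements of $[I,J]$, so $[\mathfrak{g},[I,J]]\subseteq [I,J]$, which is precisely the ideal condition. Combined with definability from Lemma \ref{DefBrack}, this gives the first statement.

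For the second statement I would argue by induction on $n$. The base case $n=0$ is trivial since $\mathfrak{g}^{(0)}=\mathfrak{g}$. Assuming $\mathfrak{g}^{(n)}$ is a definable ideal of $\mathfrak{g}$, the first part of the corollary applied with $I=J=\mathfrak{g}^{(n)}$ shows that $\mathfrak{g}^{(n+1)}=[\mathfrak{g}^{(n)},\mathfrak{g}^{(n)}]$ is a definable ideal of $\mathfrak{g}$, closing the induction.

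There is essentially no obstacle here: the serious work has already been absorbed into Lemma \ref{DefBrack} (whose proof required the finite-dimensionality hypothesis via the maximal-dimension sum argument and the symmetry of the almost centraliser). The only small check is the Jacobi computation above, and the inductive step for the derived series is then automatic.
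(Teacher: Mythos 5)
Your proof is correct and is exactly the argument the paper intends: the paper gives no written proof, simply asserting the corollary as an immediate consequence of Lemma \ref{DefBrack}, and your Jacobi-identity computation together with the induction on $n$ fills in precisely the routine details that were left implicit.
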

The notion of absolutely simple Lie rings substitutes the notion of definably simple in non-virtually-connected finite-dimensional Lie rings.
\begin{defn}
    Let $\mathfrak{g}$ be a definable Lie ring of finite dimension. Then, $\mathfrak{g}$ is \emph{absolutely simple} if for any definable Lie subring $\mathfrak{h}$ of finite index in $\mathfrak{g}$, $\mathfrak{h}$ has no proper definable infinite ideal of infinite index. 
\end{defn}
The following is an important characterization of absolutely simple Lie rings in the hereditarily $\widetilde{\mathfrak{M}}_c$-case.
\begin{lemma}\label{AlmSim}
Let $\mathfrak{g}$ be an hereditarily $\widetilde{\mathfrak{M}}_c$-Lie ring. $\mathfrak{g}$ is absolutely simple iff it has no definable Lie subring of finite index with an infinite definable almost ideal of infinite index.
\end{lemma}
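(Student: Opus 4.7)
The backward direction is immediate: any ideal is an almost ideal, so the subring witnessing non-absolute-simplicity already supplies an infinite definable almost ideal of infinite index. For the forward direction I argue by contrapositive. Suppose $\mathfrak{h}\leq\mathfrak{g}$ is a definable Lie subring of finite index and $I\leq\mathfrak{h}$ is an infinite definable almost ideal of infinite index; the goal is to produce a proper definable infinite ideal of infinite index in some definable subring of finite index in $\mathfrak{g}$.

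The natural first move is to set $J:=\widetilde{C}_{\mathfrak{h}}(\mathfrak{h}/I)$. Since $\mathfrak{h}$ and $I$ are both almost ideals of $\mathfrak{h}$, Corollary \ref{C(H/K)AlmId} gives that $J$ is a definable ideal of $\mathfrak{h}$, and the proof of Lemma \ref{ChaAlmIde} shows that $J\cap I$ has finite index in $I$, so $J$ is infinite. If $J$ has infinite index in $\mathfrak{h}$, then $J$ itself is the desired witness and we are done.

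The difficult case is when $J$ has finite index in $\mathfrak{h}$, hence also in $\mathfrak{g}$. Replace $(\mathfrak{h},I)$ by $(J,I\cap J)$: the new $I\cap J$ remains infinite, of infinite index in $J$, and an almost ideal of $J$ since $[J,I\cap J]\subseteq[\mathfrak{h},I]\cap J\apprle I\cap J$. However, the construction just performed is \emph{self-reproducing}: any $j\in J$ satisfies $[j,J]\subseteq[j,\mathfrak{h}]\apprle I$ and $[j,J]\subseteq J$, hence $[j,J]\apprle I\cap J$ and so $\widetilde{C}_J(J/(I\cap J))=J$. Iterating Corollary \ref{C(H/K)AlmId} therefore produces no new information. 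The escape comes from Lemma \ref{fin[]Alm}: applied with $\mathfrak{g}=H=J$ and $K=I\cap J$, both almost centralizers appearing in its statement equal $J$, so $[J,J]$ is almost contained in $I\cap J$, and its proof moreover produces an explicit \emph{definable} subgroup $F=\sum_{i=1}^{m}[J,w_i]+(I\cap J)$ that contains $[J,J]$ and has finite index over $I\cap J$. Since $F\subseteq J$ and $F\supseteq[J,J]$, we have $[J,F]\subseteq[J,J]\subseteq F$, so $F$ is a proper definable infinite ideal of infinite index in the finite-index subring $J$ of $\mathfrak{g}$, contradicting absolute simplicity.

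The main obstacle is the stabilization phenomenon in the finite-index case: the most natural definable ideal $\widetilde{C}_{\mathfrak{h}}(\mathfrak{h}/I)$ absorbs itself on iteration, so one cannot simply keep applying Corollary \ref{C(H/K)AlmId}. The resolution is to pass from the abstract almost-containment $[J,J]\apprle I\cap J$ to the explicit definable finite extension $F$ produced inside the proof of Lemma \ref{fin[]Alm}: it is precisely this effective construction, rather than the qualitative statement, that upgrades the almost ideal to a genuine ideal.
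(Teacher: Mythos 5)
Your argument is correct and follows essentially the same route as the paper: form the definable ideal $\widetilde{C}_{\mathfrak{h}}(\mathfrak{h}/I)$ via Corollary \ref{C(H/K)AlmId}, split on whether its index is finite or infinite, and in the finite-index case invoke Lemma \ref{fin[]Alm} to see that the commutator together with the almost ideal is a definable finite extension of it, hence a genuine infinite ideal of infinite index. The only cosmetic difference is that the paper first shrinks the almost ideal via Lemma \ref{ChaAlmIde} before forming the almost centraliser, whereas you intersect afterwards; the substance is identical.
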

\begin{proof}
Clearly, the second implies the first. Assume, for a contradiction, that $\mathfrak{g}$ is a is an hereditarily $\widetilde{\mathfrak{M}}_c$-Lie ring such that any definable Lie subring of finite index in $\mathfrak{g}$ does not contain an infinite ideal of infinite index, but there exists a definable Lie subring $\mathfrak{h}$ of finite index containing an infinite almost ideal $H$ of infinite index.\\
By Lemma \ref{ChaAlmIde}, there exists a definable almost ideal $H_1$ of finite index in $H$ such that $\widetilde{C}_{H_1}(\mathfrak{h}/H_1)=\{h\in H_1:\ [\mathfrak{h},h]\apprle H_1\}=H_1$. Let $\mathfrak{h}_1=\widetilde{Z}(\mathfrak{h}/H_1)=\{h\in \mathfrak{h}:\ [h,\mathfrak{h}]\apprle H_1\}$. By Lemma \ref{C(H/K)AlmId}, $\mathfrak{h}_1$ is an ideal in $\mathfrak{h}$ containing $H_1$. If it is of infinite index, the Lemma is proved. Therefore, assume that it is of finite index and work in $\mathfrak{h}_1$. By Lemma \ref{fin[]Alm}, $[\mathfrak{h}_1,\mathfrak{h}_1]+H_1$ is a finite extension of $H_1$ and therefore definable, infinite, and of infinite index in $\mathfrak{h}_1$. Since it contains the commutator, it is an ideal in $\mathfrak{h}_1$, contradicting absolute simplicity.
\end{proof}
For finite-dimensional Lie rings, we obtain another equivalent definition of absolute simplicity. We need the following lemma.
\begin{lemma}\label{IdeFromNDIdeal}
    Let $\mathfrak{g}$ be a definable Lie ring of finite dimension. Let $I$ be an ideal in $\mathfrak{g}$ (not necessarily definable). Then, we have three possibilities:
    \begin{itemize}
        \item $I$ is almost contained in $\widetilde{Z}(\mathfrak{g})$;
        \item $I$ is of finite index in $\mathfrak{g}$ and definable;
        \item $\mathfrak{g}$ contains a definable infinite almost ideal of infinite index.
    \end{itemize}
\end{lemma}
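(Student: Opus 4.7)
The strategy is to build a definable subgroup inside $I$ that captures ``most of $I$'' and then split on its size. For each $i\in I$, the set $[i,\mathfrak{g}]=\operatorname{ad}_i(\mathfrak{g})$ is a definable subgroup of $\mathfrak{g}$ (image of the additive homomorphism $\operatorname{ad}_i$), and since $I$ is an ideal it is contained in $I$. I would choose a finite sum
\[
B=\sum_{k=1}^{n}[i_k,\mathfrak{g}]
\]
with $i_1,\dots,i_n\in I$ such that $\dim(B)$ is maximal among all finite sums of this form. Then $B$ is definable, $B\subseteq I$, and by maximality of $\dim(B)$ we have $[i,\mathfrak{g}]\apprle B$ for every $i\in I$ (since $B+[i,\mathfrak{g}]$ would otherwise have strictly larger dimension).

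The first serious step is to show that $B$ is an almost ideal of $\mathfrak{g}$. For $g\in\mathfrak{g}$, the Jacobi identity gives
\[
[g,[i_k,h]]=-[i_k,[h,g]]-[h,[g,i_k]].
\]
The first summand lies in $[i_k,\mathfrak{g}]\subseteq B$. For the second, the key observation is that $[g,i_k]\in I$ because $I$ is an ideal; hence the definable subgroup $[[g,i_k],\mathfrak{g}]$ is itself of the form $[i',\mathfrak{g}]$ with $i'\in I$, and so it is almost contained in $B$ by maximality of $\dim(B)$. Consequently $[g,[i_k,\mathfrak{g}]]\subseteq B+[[g,i_k],\mathfrak{g}]$, a definable subgroup almost equal to $B$. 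Summing over $k$ yields $[g,B]\apprle B$, so $B$ is an almost ideal.

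I would then split on the position of $B$ in $\mathfrak{g}$. If $B$ is infinite and of infinite index, it is the desired definable infinite almost ideal of infinite index, giving the third alternative. If $B$ is finite, then for every $i\in I$ the group $[i,\mathfrak{g}]$ is almost contained in the finite group $B$, hence finite; so $C_{\mathfrak{g}}(i)=\ker(\operatorname{ad}_i)$ has finite index in $\mathfrak{g}$, and $i\in\widetilde{Z}(\mathfrak{g})$. This yields $I\subseteq\widetilde{Z}(\mathfrak{g})$, which is \emph{a fortiori} the first alternative. Finally, if $B$ has finite index in $\mathfrak{g}$, then $B\subseteq I$ forces $I$ to be a union of finitely many cosets of $B$, hence definable and of finite index, giving the second alternative.

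The main obstacle is verifying that $B$ is an almost ideal: the Jacobi expansion must be combined with the crucial observation that $[g,i_k]$ lies again in $I$, so the maximality property of $\dim(B)$ applies to $[[g,i_k],\mathfrak{g}]$. Once this is in place, the three-case split is immediate from dimension considerations, and the fact that, in case B, finiteness of $[i,\mathfrak{g}]$ for every $i\in I$ translates cleanly into $I\subseteq\widetilde{Z}(\mathfrak{g})$.
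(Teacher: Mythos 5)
Your proposal is correct and follows essentially the same route as the paper's proof: form a finite sum $\sum_k[i_k,\mathfrak{g}]$ of maximal dimension inside $I$, use the Jacobi identity together with the fact that $[g,i_k]\in I$ to see it is an almost ideal, and then split into the three cases according to whether it is finite, of finite index, or infinite of infinite index. No gaps.
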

\begin{proof}
    If $I$ is finite, it is clearly contained in $\widetilde{Z}(\mathfrak{g})$. Let $I$ be infinite and assume $I\not\leq \widetilde{Z}(\mathfrak{g})$. Then, for every $i\in I$, $[i,\mathfrak{g}]\leq I$. Let $J=\sum_{j=1}^n [i_j,\mathfrak{g}]\leq I$ a sum of maximal dimension with $i_j\in I$ for every $j$. If this is finite, it means that $I\leq \widetilde{Z}(\mathfrak{g})$, a contradiction. We prove that $J$ is an almost ideal. Given $g\in \mathfrak{g}$, $[g,J]=\sum_{i=1}^n [g,[\mathfrak{g},i_j]]$. For every $j$, $[g,[\mathfrak{g},i_j]]\leq [[g,\mathfrak{g}],i_j]+[\mathfrak{g},[g,i_j]]\leq [\mathfrak{g},i_j]+[\mathfrak{g},[g,i_j]]$. Since $[g,i_j]\in I$ and by maximality of the dimension of $J$, $J+[g,J]\apprle J$ \hbox{i.e.} $J$ is an almost ideal of $\mathfrak{g}$. If it is of finite index in $\mathfrak{g}$, $I$ is a finite extension of $J$ and therefore definable.
\end{proof}
In conclusion, we derive the following corollary.
\begin{corollary}\label{NoInfIde}
    Let $\mathfrak{g}$ be a definable Lie ring of finite dimension not almost abelian. $\mathfrak{g}$ is absolutely simple iff every definable Lie subring of finite index has no infinite ideals (also not definable) of infinite index.
\end{corollary}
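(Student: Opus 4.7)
The backward direction is immediate from the definitions: if no Lie subring of finite index in $\mathfrak{g}$ has any infinite ideal of infinite index, then in particular no such \emph{definable} ideal, so $\mathfrak{g}$ is absolutely simple. The content is in the forward direction.

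My plan for the forward direction is to take any definable Lie subring $\mathfrak{h}$ of finite index in $\mathfrak{g}$ and any ideal $I$ of $\mathfrak{h}$, and feed $I$ into Lemma \ref{IdeFromNDIdeal}. This is legitimate because $\mathfrak{h}$ is itself a definable finite-dimensional Lie ring. That lemma produces three alternatives: $I$ is almost contained in $\widetilde{Z}(\mathfrak{h})$, $I$ is definable and of finite index in $\mathfrak{h}$, or $\mathfrak{h}$ contains a definable infinite almost ideal of infinite index. The middle case is exactly the desired conclusion. The last case contradicts absolute simplicity of $\mathfrak{g}$ via Lemma \ref{AlmSim}, applied to the definable subring $\mathfrak{h}$ of finite index.

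The delicate case is $I \apprle \widetilde{Z}(\mathfrak{h})$. Here I would invoke absolute simplicity a second time, this time on the definable ideal $\widetilde{Z}(\mathfrak{h})$ of $\mathfrak{h}$: either it is finite, in which case $I$ is forced to be finite, or it has finite index in $\mathfrak{h}$. In the latter subcase, a short calculation using a finite additive transversal $F$ of $\mathfrak{h}$ in $\mathfrak{g}$ and bilinearity of the bracket gives $[x,\mathfrak{g}]\subseteq [x,\mathfrak{h}]+[x,F]$, which is finite whenever $x\in \widetilde{Z}(\mathfrak{h})$. Hence $\widetilde{Z}(\mathfrak{h})\leq \widetilde{Z}(\mathfrak{g})$, so $\widetilde{Z}(\mathfrak{g})$ has finite index in $\mathfrak{g}$; this contradicts the standing hypothesis that $\mathfrak{g}$ is not almost abelian.

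The main (and essentially only) obstacle is this last step, the propagation of the almost center from a finite-index subring back up to $\mathfrak{g}$ itself; everything else is a bookkeeping combination of Lemmas \ref{IdeFromNDIdeal} and \ref{AlmSim}.
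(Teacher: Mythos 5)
Your argument follows the paper's skeleton (feed the ideal into Lemma \ref{IdeFromNDIdeal}, contradict Lemma \ref{AlmSim}), and is in fact more careful than the paper's own one-line proof: the paper applies the lemma only to an ideal of $\mathfrak{g}$ itself and never addresses ideals of a proper finite-index subring $\mathfrak{h}$, nor does it say why the first alternative of Lemma \ref{IdeFromNDIdeal} cannot occur. Your transversal computation giving $\widetilde{Z}(\mathfrak{h}) \leq \widetilde{Z}(\mathfrak{g})$ when $[\mathfrak{g}:\mathfrak{h}]$ is finite is correct.

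The gap is in the very last step. Concluding that $\widetilde{Z}(\mathfrak{g})$ has finite index in $\mathfrak{g}$ does \emph{not} contradict ``$\mathfrak{g}$ is not almost abelian'': that hypothesis only says $\widetilde{Z}(\mathfrak{g}) \neq \mathfrak{g}$, and is perfectly compatible with $\widetilde{Z}(\mathfrak{g})$ being a proper subgroup of finite index. Concretely, take $K$ a minimal field of characteristic $p$ (say $K = \overline{\mathbb{F}_p}$ in $\mathrm{ACF}_p$) and set $\mathfrak{g} = K^+ \oplus \mathbb{F}_p h$ with $K^+$ abelian, $[h,v]=v$ for $v \in K^+$. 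Then $\widetilde{Z}(\mathfrak{g}) = K^+$ has index $p$, so $\mathfrak{g}$ is not almost abelian; $\mathfrak{g}$ is absolutely simple because the only definable infinite subgroup of $K^+$ is $K^+$ itself; yet the non-definable $\mathbb{F}_p$-subspaces of $K^+$ are infinite ideals of the finite-index subring $K^+$ (indeed of $\mathfrak{g}$) of infinite index, so the right-hand side of the equivalence fails. So your gap is real, and it in fact reveals that the corollary as stated needs a stronger hypothesis. The natural fix is to replace ``not almost abelian'' by ``$\widetilde{Z}(\mathfrak{g})$ finite,'' which is the form the downstream application actually provides (it invokes the corollary on $\mathfrak{g}/\widetilde{Z}(\mathfrak{g})$, whose almost centre is trivial by Corollary \ref{g/Z(g)}). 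Under that strengthened hypothesis your argument closes: $\widetilde{Z}(\mathfrak{h}) \leq \widetilde{Z}(\mathfrak{g})$ is then finite, so the first alternative of Lemma \ref{IdeFromNDIdeal} forces $I$ to be finite, and the other two alternatives go through as you describe.
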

\begin{proof}
    Clearly, the absolute simplicity is implied by the other assumption. On the other hand, assume that $\mathfrak{g}$ has an infinite ideal of infinite index. By Lemma \ref{IdeFromNDIdeal}, there exists a definable infinite almost ideal of infinite index. This contradicts Lemma \ref{AlmSim}. 
\end{proof}
\subsection{Linearising in the non virtually connected case}
We state the extension of Lemma \ref{LinDimCon} for non-virtually-connected modules. We need some preliminary notions, as essentially infiniteness and unboundness.
\begin{defn}
    Let $A$ be an abelian group and $\phi,\phi'$ two endomorphisms of $A$. Then, $\phi$ and $\phi'$ are \emph{equivalent}, denoted by $\phi\sim\phi'$, if $im(\phi-\phi')$ is finite. This is clearly an equivalence relation on $\operatorname{End}(A)$.\\
    A subset $X$ of $\operatorname{End}(A)$ is \emph{essentially infinite} if $X/\sim $ is infinite. The essentially unboundedness is defined similarly.
\end{defn}
We define absolutely minimal modules.
\begin{defn}
    Let $A$ be a definable abelian group, $B\leq A$ and $X\subseteq \operatorname{End}(A)$. Then, $B$ is \emph{almost $X$-invariant} if $xB\apprle B$ for any $x\in X$. $A$ is \emph{absolutely $X$-minimal} if $A$ has no infinite almost $X$-invariant definable subgroups $B$ of infinite index.
\end{defn}
We state Theorem B from \cite{InvittiDim}.
\begin{theorem}\label{End}
    Let $A$ be an abelian definable group and $\Gamma,\Delta$ two invariant rings of definable endomorphisms such that:
    \begin{itemize}
        \item $C(\Gamma)=\Delta$ and $C(\Delta)=\Gamma$;
        \item $A$ is absolutely $(\Gamma,\Delta)$-minimal;
        \item $\Gamma$ is essentially unbounded and $\Delta$ is essentially infinite or vice-versa;
    \end{itemize}
    Then, there exists a definable infinite field $K$ such that $A/A_0$ is a finite-dimensional $K$-vector space with $A_0$ a finite subgroup in $A$. Moreover, $\Gamma/{\sim}$ and $\Delta/{\sim}$ act $K$-linearly on $A/A_0$.
\end{theorem}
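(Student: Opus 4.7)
The plan is to extract a field from the bicommutant data $(\Gamma, \Delta)$ by analysing the action of individual nonzero endomorphisms modulo~$\sim$, and then to use absolute minimality to force the resulting structure to be either field-like or degenerate. In this sense the argument is a Hrushovski-Zilber style field interpretation adapted to the ``almost'' setting, where subgroups and endomorphisms are only controlled up to a finite error.

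First I would fix a representative $\gamma \in \Gamma$ that is not $\sim$-equivalent to $0$ and study the definable subgroups $\ker(\gamma)$ and $\mathrm{Im}(\gamma)$ of $A$. Since $C(\Gamma) = \Delta$, any $\delta \in \Delta$ commutes with $\gamma$, so $\ker(\gamma)$ and $\mathrm{Im}(\gamma)$ are $\Delta$-invariant. The more delicate point is almost-$\Gamma$-invariance; this should come either from commutativity of $\Gamma$ modulo $\sim$ (which I expect to derive from the bicommutant relation plus essential unboundedness, via a Schur-type argument: a nontrivial commutator $[\gamma, \gamma']$ would have small image on a configuration of generic witnesses), or from passing to a large $\sim$-saturated commutative subring. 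Granting this, absolute $(\Gamma,\Delta)$-minimality forces $\ker(\gamma)$ to be either finite or of finite index; the finite-index case makes $\gamma A$ finite, hence $\gamma \sim 0$, which is excluded, so $\ker(\gamma)$ is finite and by the same dichotomy $\mathrm{Im}(\gamma)$ has finite index in $A$.

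Next I would promote the commutative ring $\Gamma/{\sim}$ (now consisting of almost-automorphisms) to a field $K$. Essential unboundedness of $\Gamma$ (or essential infiniteness of $\Delta$) ensures the ring is infinite. For inverses, observe that if $\gamma, \gamma' \in \Gamma$ are nonzero mod $\sim$ then $\gamma\gamma'$ again has finite kernel and finite-index image, so the ring is an integral domain; a descending-chain argument on the images $\gamma^n A$, in the spirit of Lemma~\ref{DecGen}, then yields definable one-sided inverses modulo a finite subgroup. Choose a finite $A_0 \subseteq A$ absorbing all the kernels that appear, and define $K$ as the definable subring of $\mathrm{End}(A/A_0)$ generated by the images of $\Gamma$ and of these inverses; the bicommutant condition, reinterpreted inside $\mathrm{End}(A/A_0)$, forces $K$ to be a commutative field and makes the induced actions of $\Gamma/{\sim}$ and $\Delta/{\sim}$ on $A/A_0$ automatically $K$-linear.

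Finally, finite-dimensionality of $A/A_0$ over $K$ should follow by iterating the above: any proper $K$-subspace is almost $(\Gamma,\Delta)$-invariant and thus finite by absolute minimality, so an almost-direct-sum decomposition argument (again patterned on Lemma~\ref{DecGen} and the claim embedded in its proof) terminates in finitely many steps. The hard part, I expect, is the construction of $K$ itself: turning an integral domain of definable almost-endomorphisms into a \emph{definable} field requires a genuine interpretation step, because the natural fraction field of $\Gamma/{\sim}$ is only a quotient of a type-definable object. Navigating this without an honest vector-space structure to start from, and justifying that the resulting field $K$ remains first-order definable with its actions faithful modulo $A_0$, is where the bulk of the technical work will lie.
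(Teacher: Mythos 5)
A preliminary remark: the paper does not actually prove Theorem \ref{End} --- it is imported verbatim as Theorem B of \cite{InvittiDim} --- so there is no in-text argument to measure your sketch against; I can only assess it on its own terms. The overall route you choose (kernel/image dichotomy from minimality, promotion of a ring of almost-automorphisms to a field, almost-direct-sum decomposition for finite-dimensionality) is the expected Zilber-style one, but three steps are genuinely missing rather than merely compressed.

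First, your finite-or-finite-index dichotomy for $\ker(\gamma)$ and $\mathrm{Im}(\gamma)$ needs these subgroups to be almost invariant under \emph{both} $\Gamma$ and $\Delta$, since the hypothesis is absolute $(\Gamma,\Delta)$-minimality. Commutation with $\Delta$ gives exact $\Delta$-invariance for free, but the almost-$\Gamma$-invariance is made to rest on commutativity of $\Gamma$ modulo $\sim$, which you do not establish: the bicommutant relation does not make either ring commutative a priori, and the ``Schur-type argument'' is a placeholder for what is in fact one of the main points to be proved (in the paper's related arguments, e.g.\ Theorem \ref{LinAlmAbe}, commutativity is obtained only at the very end, by a trace computation \emph{after} linearisation, not as an input to it). Second, ``choose a finite $A_0$ absorbing all the kernels that appear'' is unjustified: each kernel is finite, but there are unboundedly many relevant endomorphisms, and the subgroup generated by all their kernels could well be infinite; one must first prove that a maximal finite invariant subgroup exists --- this is exactly where essential unboundedness is used in the analogous step of the proof of Theorem \ref{LinAlmAbe} --- and then take $A_0$ to be that subgroup. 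Third, the definability of $K$: inverting an isogeny only produces a map defined on a finite-index subgroup modulo a finite one, and turning the resulting ``fraction field'' of $\Gamma/{\sim}$ into an honest first-order definable field acting on a definable quotient (the role played by results such as \cite[Proposition 3.6]{wagner2020dimensional}) is the technical heart of \cite{InvittiDim}; you correctly locate the difficulty there, but leaving it unresolved means the proposal stops short of a proof precisely at the step the theorem exists to settle.
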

We derive the following corollary of Lemma \ref{End} in case $\Gamma=\mathfrak{g}$ and $A=V$ where $(\mathfrak{g},V)$ is a definable module.
\begin{corollary}\label{LinNVC}
    Let $(\mathfrak{g},V)$ be a definable $\mathfrak{g}$-module in a finite-dimensional theory. Assume that:
    \begin{itemize}
        \item $\mathfrak{g}/\widetilde{C}_{\mathfrak{g}}(V)$ is infinite;
        \item $C_{\operatorname{End}(V)}(\mathfrak{g})/\sim$ is infinite; 
        \item $V$ is absolutely $\mathfrak{g}$-minimal.
    \end{itemize} 
    Then, there exists a finite $\mathfrak{g}$-module $A$ in $V$ such that $V/A$ is a finite-dimensional vector space over a definable field $K$ of finite dimension and $\mathfrak{g}/\widetilde{C}_{\mathfrak{g}}(V)$ embeds in $K^+\cdot Id$.
\end{corollary}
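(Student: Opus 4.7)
The plan is to apply Theorem \ref{End} after packaging the action into a bicommutant pair. Identify $\mathfrak{g}$ with its image in $\operatorname{End}(V,+)$ (kernel $C_{\mathfrak{g}}(V)$), set $\Delta = C_{\operatorname{End}(V)}(\mathfrak{g})$ and $\Gamma = C_{\operatorname{End}(V)}(\Delta)$, so that $\Gamma \supseteq \mathfrak{g}$ and the bicommutant relations $C(\Gamma)=\Delta$, $C(\Delta)=\Gamma$ hold tautologically. Both $\Gamma$ and $\Delta$ are definable rings of endomorphisms, the centralisers being uniformly definable and controlled via Lemma \ref{boundedind}.

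Next I would verify the three hypotheses of Theorem \ref{End}. Absolute $(\Gamma,\Delta)$-minimality of $V$ is inherited from absolute $\mathfrak{g}$-minimality: any almost $(\Gamma,\Delta)$-invariant definable subgroup of $V$ is in particular almost $\mathfrak{g}$-invariant (since $\mathfrak{g}\subseteq\Gamma$), hence finite or of finite index by hypothesis. The assumption that $\mathfrak{g}/\widetilde{C}_{\mathfrak{g}}(V)$ is infinite says that the image of $\mathfrak{g}$ modulo $\sim$ is infinite in $\operatorname{End}(V)$, so $\Gamma/\sim$ is essentially infinite; being an additive subring of $\operatorname{End}(V)$, this also yields essential unboundedness. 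Finally, $\Delta/\sim$ is infinite by assumption.

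Theorem \ref{End} then produces a finite subgroup $A_0\subseteq V$ and a definable infinite field $K$ such that $V/A_0$ is a finite-dimensional $K$-vector space on which $\Gamma/\sim$ and $\Delta/\sim$ act $K$-linearly. I would enlarge $A_0$ to a genuine finite $\mathfrak{g}$-submodule $A$ using absolute $\mathfrak{g}$-minimality and finite-dimensionality: the definable $\mathfrak{g}$-invariant subgroup $\widetilde{C}_V(\mathfrak{g})$ (given by Lemma \ref{DefZ}) is either finite or of finite index, and in either case a routine adjustment (using Lemma \ref{boundedind} so that only finitely many generators of $\mathfrak{g}$ modulo $\sim$ matter) produces a finite $\mathfrak{g}$-stable $A\supseteq A_0$ with $V/A$ still a finite-dimensional $K$-vector space.

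It remains to promote ``$\mathfrak{g}/\sim$ acts $K$-linearly'' into ``$\mathfrak{g}/\sim$ embeds in $K\cdot Id$''. Absolute $\mathfrak{g}$-minimality makes $V/A$ irreducible as a $K[\mathfrak{g}]$-module: any proper nonzero $\mathfrak{g}$-invariant $K$-subspace would be an infinite almost $\mathfrak{g}$-invariant subgroup of $V$ of infinite index. Schur's lemma then exhibits $D:=\operatorname{End}_{K[\mathfrak{g}]}(V/A)$ as a division ring over $K$ which contains (the image of) $\Delta/\sim$, hence is definable, infinite, and of finite $K$-dimension. Since definable infinite division rings in a finite-dimensional theory are commutative, $D$ is a finite field extension of $K$. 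Replacing $K$ by $D$ strictly lowers the $K$-dimension of $V/A$; iterating finitely many times yields a definable field, still denoted $K$, with $C_{\operatorname{End}_K(V/A)}(\mathfrak{g})=K$, and therefore $\mathfrak{g}/\widetilde{C}_{\mathfrak{g}}(V)\hookrightarrow K\cdot Id$. The main obstacle I expect is exactly this final promotion step, whose justification rests on the commutativity of definable infinite division rings in finite-dimensional theories—a standard but delicate ingredient.
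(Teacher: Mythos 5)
Your setup via the bicommutant pair $\Delta = C_{\operatorname{End}(V)}(\mathfrak{g})$, $\Gamma = C(\Delta)$ is a reasonable way to make the hypotheses of Theorem \ref{End} hold literally, and the verification of absolute $(\Gamma,\Delta)$-minimality together with the enlargement of $A_0$ to a finite $\mathfrak{g}$-submodule are sound. The gap is in the closing step. After iterating Schur's lemma and the commutativity of definable division rings you obtain a definable field $K$ with $C_{\operatorname{End}_K(V/A)}(\mathfrak{g})=K$, and from this you conclude ``therefore $\mathfrak{g}/\widetilde{C}_{\mathfrak{g}}(V)\hookrightarrow K\cdot Id$.'' That implication does not hold: having scalar commutant is precisely absolute irreducibility of the $K$-linear representation, which says nothing about $\mathfrak{g}$ itself lying among the scalars. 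If $\dim_K(V/A)=n>1$ there is no reason for $\mathfrak{g}$ to consist of multiples of the identity, and indeed the conclusion would force $\mathfrak{g}/\widetilde{C}_{\mathfrak{g}}(V)$ to be abelian. Concretely, $\mathfrak{g}=\operatorname{sl}_n(K)$ acting on $V=K^n$ satisfies all three hypotheses of the corollary as written (the action is not almost trivial, the commutant is $K\cdot Id$ hence essentially infinite, and $V$ is absolutely minimal), yet $\operatorname{sl}_n(K)$ does not embed in $K\cdot Id$ for $n>1$.

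What Theorem \ref{End}, applied with $\Gamma$ the associative ring generated by the image of $\mathfrak{g}$ and $\Delta=C_{\operatorname{End}(V)}(\mathfrak{g})$, actually delivers is that $\mathfrak{g}/\widetilde{C}_{\mathfrak{g}}(V)$ acts $K$-linearly, i.e.\ embeds in $\operatorname{gl}_n(K)$, while it is $\Delta/{\sim}$ that sits inside $K\cdot Id$. The scalar conclusion for $\mathfrak{g}$ is only recovered under an extra hypothesis such as $\mathfrak{g}$ abelian (as in Lemma \ref{MinLinAbe}) or $\dim(V)=1$, where $\operatorname{gl}_1(K)=K\cdot Id$ (as in Lemma \ref{LinDim1}). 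Your Schur iteration is a good instinct but it addresses the wrong object: it pins down the commutant to be exactly $K$, not $\mathfrak{g}$. You should either weaken your conclusion to $K$-linearity of the action, or flag that the corollary's stated conclusion appears to need more than its hypotheses provide.
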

In particular, if in the previous hypothesis, $\dim(V)=1$, then $\mathfrak{g}/\sim$ is isomorphic to a subgroup of finite index in $K^+$. This result is called "linearization in dimension $1$".
\begin{lemma}\label{LinDim1}
    Let $\mathfrak{g}$ be a Lie ring of finite dimension acting on a $\mathfrak{g}$-module of dimension $1$ and assume that $Z(\mathfrak{g})\cap \widetilde{C}_{\mathfrak{g}}(A)$ is not of finite index in $Z(\mathfrak{g})$. Then, there exists a finite subgroup $A_0$ in $A$ and a definable field $K$ of dimension $1$ such that $A/A_0\simeq K^+$ and $\mathfrak{g}/\widetilde{C}_{\mathfrak{g}}(A)$ is isomorphic to a subgroup of finite index in $K^+$.
\end{lemma}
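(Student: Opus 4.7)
The plan is to derive Lemma \ref{LinDim1} as a direct application of Corollary \ref{LinNVC}, followed by a short dimension count. First I would verify the three hypotheses of that corollary in turn. Since $\dim(A)=1$, any infinite definable subgroup of $A$ has dimension $1$ and is therefore of finite index, so in particular $A$ has no infinite almost $\mathfrak{g}$-invariant definable subgroup of infinite index and is absolutely $\mathfrak{g}$-minimal. The assumption that $Z(\mathfrak{g})\cap \widetilde{C}_{\mathfrak{g}}(A)$ has infinite index in $Z(\mathfrak{g})$ immediately forces $\mathfrak{g}/\widetilde{C}_{\mathfrak{g}}(A)$ to be infinite.

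The crucial third hypothesis is that $C_{\operatorname{End}(A)}(\mathfrak{g})/{\sim}$ is infinite. Here I would use the fact that every $z\in Z(\mathfrak{g})$ defines a definable endomorphism $\widetilde{z}:a\mapsto z\cdot a$ of $A$, and the module identity $[g,z]\cdot a=g\cdot(z\cdot a)-z\cdot(g\cdot a)$ together with $[g,z]=0$ shows $\widetilde{z}\in C_{\operatorname{End}(A)}(\mathfrak{g})$. Two such $\widetilde{z},\widetilde{z'}$ are equivalent precisely when $(z-z')\cdot A$ is finite, i.e.\ when $z-z'\in \widetilde{C}_{\mathfrak{g}}(A)$. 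Hence the induced map
\[
Z(\mathfrak{g})/\bigl(Z(\mathfrak{g})\cap \widetilde{C}_{\mathfrak{g}}(A)\bigr)\ \longrightarrow\ C_{\operatorname{End}(A)}(\mathfrak{g})/{\sim}
\]
is injective, and the left-hand side is infinite by hypothesis.

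With all three hypotheses in place, Corollary \ref{LinNVC} yields a finite $\mathfrak{g}$-submodule $A_0\leq A$, a definable field $K$ of finite dimension and an embedding $\mathfrak{g}/\widetilde{C}_{\mathfrak{g}}(A)\hookrightarrow K^+\cdot \mathrm{Id}$ with $A/A_0$ a finite-dimensional $K$-vector space. A dimension count then gives $1=\dim(A/A_0)=\dim_K(A/A_0)\cdot\dim(K)$, forcing $\dim(K)=1$ and $\dim_K(A/A_0)=1$, so $A/A_0\simeq K^+$. Finally, since $K^+$ has dimension $1$ every infinite definable subgroup of $K^+$ has finite index, and the image of the infinite group $\mathfrak{g}/\widetilde{C}_{\mathfrak{g}}(A)$ is therefore of finite index in $K^+$.

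The only step that is not essentially automatic is verifying the second hypothesis of Corollary \ref{LinNVC}; the extraction of the field and the reduction to dimension one over $K$ are then purely a bookkeeping matter. I do not expect any serious obstacle beyond recognising that the centre of $\mathfrak{g}$ supplies exactly the endomorphisms needed to populate $C_{\operatorname{End}(A)}(\mathfrak{g})/{\sim}$.
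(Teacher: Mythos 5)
Your proposal is correct and follows exactly the route the paper intends: Lemma \ref{LinDim1} is presented there as the particular case $\dim(V)=1$ of Corollary \ref{LinNVC}, and your verification of its three hypotheses (absolute minimality from $\dim(A)=1$, infiniteness of $\mathfrak{g}/\widetilde{C}_{\mathfrak{g}}(A)$, and essential infiniteness of $C_{\operatorname{End}(A)}(\mathfrak{g})$ via the injection of $Z(\mathfrak{g})/(Z(\mathfrak{g})\cap\widetilde{C}_{\mathfrak{g}}(A))$ into $C_{\operatorname{End}(A)}(\mathfrak{g})/{\sim}$) together with the dimension count $1=\dim_K(A/A_0)\cdot\dim(K)$ is precisely the argument the paper leaves implicit.
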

The opposite case is when $\widetilde{C}_{\mathfrak{g}}(V)\sim \mathfrak{g}$. In this case, the module is called \emph{almost trivial}.
\begin{lemma}
    Let $(\mathfrak{g},V)$ be a $\mathfrak{g}$-module definable in a finite-dimensional theory. Then, $\widetilde{C}_{\mathfrak{g}}(V)$ is of finite index in $\mathfrak{g}$ iff $\widetilde{C}_V(\mathfrak{g})$ is of finite index in $V$. If one of them (and so both) holds, the module is \emph{almost trivial}.
\end{lemma}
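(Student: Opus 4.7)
The plan is to reduce the statement to the already-established symmetry of the almost centraliser (Lemma \ref{sym}) by repackaging the module as a Lie ring. Namely, set $\mathfrak{G} = \mathfrak{g} \oplus V$, declaring $V$ to be an abelian ideal, the bracket on the $\mathfrak{g}$-summand inherited from $\mathfrak{g}$, and the cross-bracket given by $[g,v] := g \cdot v$. The Jacobi identity for a triple $(g_1, g_2, v)$ unfolds into exactly the module law $g_1 \cdot (g_2 \cdot v) - g_2 \cdot (g_1 \cdot v) = [g_1, g_2] \cdot v$, while triples with at least two $V$-entries vanish trivially. Everything is definable in the ambient finite-dimensional theory, so $\mathfrak{G}$ itself is finite-dimensional, hence hereditarily $\widetilde{\mathfrak{M}}_c$ by Lemma \ref{boundedind}, which is precisely the setting required by Lemma \ref{sym}.

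Next I would identify the two module-style almost centralisers appearing in the statement with subgroup-style almost centralisers inside $\mathfrak{G}$ modulo $\{0\}$. The map $g \mapsto g \cdot v$ is a group homomorphism $(\mathfrak{g},+) \to (V,+)$ with kernel $C_{\mathfrak{g}}(v)$, so $[v, \mathfrak{g}] = \mathfrak{g}\cdot v \cong \mathfrak{g}/C_{\mathfrak{g}}(v)$ is finite in $\mathfrak{G}$ precisely when $C_{\mathfrak{g}}(v)$ has finite index in $\mathfrak{g}$. Consequently
\[
V \cap \widetilde{C}_{\mathfrak{G}}(\mathfrak{g}/\{0\}) = \widetilde{C}_V(\mathfrak{g}),
\]
and the symmetric calculation using $g \cdot V \cong V/C_V(g)$ gives $\mathfrak{g} \cap \widetilde{C}_{\mathfrak{G}}(V/\{0\}) = \widetilde{C}_{\mathfrak{g}}(V)$.

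Applying Lemma \ref{sym} inside $\mathfrak{G}$ with $H = \mathfrak{g}$, $K = V$ and $N = \{0\}$ then delivers
\[
V \apprle \widetilde{C}_{\mathfrak{G}}(\mathfrak{g}/\{0\}) \iff \mathfrak{g} \apprle \widetilde{C}_{\mathfrak{G}}(V/\{0\}),
\]
and via the identifications above these translate to $|V : \widetilde{C}_V(\mathfrak{g})| < \infty$ and $|\mathfrak{g} : \widetilde{C}_{\mathfrak{g}}(V)| < \infty$ respectively, which is the claimed equivalence. The "almost trivial" terminology is then just a name for this common situation.

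I do not foresee a serious obstacle: the only point requiring verification is the Jacobi identity for $\mathfrak{G}$, which is nothing more than bilinearity of the module action repackaged as a bracket. If one prefers to avoid the semidirect-product construction, the alternative is to rerun the proof of Lemma \ref{sym} verbatim on the action $\mathfrak{g} \times V \to V$: starting from an infinite family $(v_i)$ with $v_i - v_j \notin \widetilde{C}_V(\mathfrak{g})$, each $C_{\mathfrak{g}}(v_i - v_j)$ has infinite index in $\mathfrak{g}$, Fact \ref{UniTraSub} forbids $\mathfrak{g}$ from being covered by finitely many translates of these, and compactness in a saturated extension produces a $g^*$ with $g^* \cdot (v_i - v_j) \neq 0$ for all $i \neq j$; then $C_V(g^*)$ has infinite index, so $g^*$ lies outside $\widetilde{C}_{\mathfrak{g}}(V)$, and repeating the construction shows $\widetilde{C}_{\mathfrak{g}}(V)$ has infinite index. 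The reverse direction is symmetric.
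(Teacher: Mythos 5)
Your proposal is correct, but it takes a genuinely different route from the paper. The paper's proof is a direct dimension computation: it forms the incidence set $X=\{(g,v)\in\mathfrak{g}\times V:\ gv=0\}$, computes $\dim(X)$ two ways via the fibration and union axioms (once through the second projection using that fibers over $\widetilde{C}_V(\mathfrak{g})$ have full dimension, once through the first projection after splitting off the part over $\widetilde{C}_{\mathfrak{g}}(V)$), and concludes $\dim(\widetilde{C}_{\mathfrak{g}}(V))=\dim(\mathfrak{g})$, hence finite index. You instead package the module as the split extension $\mathfrak{G}=\mathfrak{g}\oplus V$ (the Jacobi identity does reduce to the Lie-module law, and the construction is definable, hence $\mathfrak{G}$ is finite-dimensional and so hereditarily $\widetilde{\mathfrak{M}}_c$ by Lemma \ref{boundedind}), identify $\widetilde{C}_V(\mathfrak{g})$ and $\widetilde{C}_{\mathfrak{g}}(V)$ with $V\cap\widetilde{C}_{\mathfrak{G}}(\mathfrak{g}/\{0\})$ and $\mathfrak{g}\cap\widetilde{C}_{\mathfrak{G}}(V/\{0\})$ via the homomorphisms $g\mapsto g\cdot v$ and $v\mapsto g\cdot v$, and invoke the symmetry Lemma \ref{sym}, which rests on Neumann's covering lemma and compactness rather than on dimension. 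Your identifications are the right ones: $A\apprle B$ unwinds to exactly the finite-index conditions in the statement. What your approach buys is generality — it only uses the hereditarily $\widetilde{\mathfrak{M}}_c$ property of the enveloping Lie ring, so the same argument (or your second variant, rerunning the proof of Lemma \ref{sym} directly on the action) proves the equivalence without any appeal to the dimension calculus; what the paper's approach buys is that it stays entirely inside the fibration bookkeeping used throughout that section and avoids passing to a saturated extension.
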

\begin{proof}
We assume $\widetilde{C}_V(\mathfrak{g)}$ of finite index. We verify that $\widetilde{C}_{\mathfrak{g}}(V)$ is of finite index. The proof of the other direction follows by symmetry.\\
    Define $X=\{(g,v)\in \mathfrak{g}\times V:\ gv=0\}$. We verify that $\dim(X)=\dim(\mathfrak{g})+\dim(V)$. $X$ contains $Y:=\{(g,v)\in \mathfrak{g}\times \widetilde{C}_V(\mathfrak{g}):\ gv=0\}$. By definition, the dimension of every fiber of the second projection is of dimension equal to $\dim(\mathfrak{g})$. Consequently,
    $$\dim(\mathfrak{g})+\dim(V)\geq \dim(X)\geq \dim(\widetilde{C}_V(\mathfrak{g}))+\dim(\mathfrak{g})=\dim(\mathfrak{g})+\dim(V)$$
    since $\widetilde{C}_V(\mathfrak{g})$ has same dimension as $V$.\\
    On the other hand, $X$ is the union of $Y_1:=\{(g,v)\in C_{\mathfrak{g}}(V)\times V:\ gv=0\}$ and $Y_1^c$. Therefore, by union, $\dim(X)=\max\{\dim(Y_1),\dim(Y_1^c)\}$. Working as before with the first projection, we have that $\dim(Y_1)=\dim(\widetilde{C}_{\mathfrak{g}}(V))+\dim(V)$, while, by lower fibration, $\dim(Y_1^c)\leq \dim(\mathfrak{g})+\dim(V)-1$. Therefore $\dim(X)=\max\{\dim(\widetilde{C}_{\mathfrak{g}}(V))+\dim(V),\dim(\mathfrak{g})+\dim(V)-1\}$ and so $\dim(\widetilde{C}_{\mathfrak{g}}(V))=\dim(\mathfrak{g})$.
\end{proof}
We have that $[\widetilde{C}_{\mathfrak{g}}(V),\widetilde{C}_{V}(\mathfrak{g})]$ is finite. The proof follows as in Lemma \ref{fin[]Alm}
\begin{lemma}\label{fin[]}
    Let $(\mathfrak{g},V)$ be a definable module in a finite-dimensional theory. Then $[\widetilde{C}_{\mathfrak{g}}(V),\widetilde{C}_{V}(\mathfrak{g})]$ is finite.
\end{lemma}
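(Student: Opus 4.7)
The plan is to mimic the proof of Lemma~\ref{fin[]Alm}, substantially simplified because here the role of $K$ is played by $\{0\}$ and $H$ is replaced by the ambient module $V$. Set $\mathfrak{b} = \widetilde{C}_{\mathfrak{g}}(V)$ and $W = \widetilde{C}_V(\mathfrak{g})$; both are definable by Lemma~\ref{DefZ}. The goal is to show that the subgroup of $V$ generated by $\{bw : b \in \mathfrak{b},\ w \in W\}$ is finite.

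The key step is to prove that $C_W(\mathfrak{b}) = \bigcap_{b \in \mathfrak{b}} C_W(b)$ has finite index in $W$. For each individual $b \in \mathfrak{b}$ the image $bV$ is finite by definition of $\mathfrak{b}$, so $C_W(b) = \{w \in W : bw = 0\}$ has index at most $|bV|$ in $W$; moreover the family $\{C_W(b) : b \in \mathfrak{b}\}$ is uniformly definable in $V$. Applying Lemma~\ref{boundedind} to this family in the finite-dimensional group $V$ produces a finite subset $\{b_1,\ldots,b_N\} \subseteq \mathfrak{b}$ for which $C_W(\mathfrak{b})$ is commensurable with $\bigcap_{i=1}^N C_W(b_i)$. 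The latter has finite index in $W$ as an intersection of finitely many finite-index subgroups, so $C_W(\mathfrak{b})$ has finite index in $W$ as well.

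With this in hand the conclusion is immediate. Let $w_1, \ldots, w_m$ be a transversal of $C_W(\mathfrak{b})$ in $W$; any $w \in W$ decomposes as $w = w_0 + w_k$ with $w_0 \in C_W(\mathfrak{b})$, and then $bw = bw_k$ for every $b \in \mathfrak{b}$. Therefore $[\mathfrak{b}, W] = \sum_{k=1}^{m} \mathfrak{b}\, w_k$, a sum of finitely many finite subgroups of $V$, each $\mathfrak{b}\, w_k \subseteq \mathfrak{g}\, w_k$ being finite because $w_k \in W$.

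The only delicate point is the finite-index statement for $C_W(\mathfrak{b})$: a priori $\mathfrak{b}$ is infinite and the bounds on the individual indices $|W : C_W(b)|$ are not uniform in $b$. Lemma~\ref{boundedind} supplies exactly the chain condition that collapses the intersection to a finite sub-intersection, playing the role of the maximal-orbit and compactness manipulations used in the proof of Lemma~\ref{fin[]Alm}.
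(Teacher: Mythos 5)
Your reduction at the end is fine (each $\mathfrak{b}\,w_k$ is indeed a finite subgroup, being the image of the homomorphism $b\mapsto bw_k$ with $w_k\in \widetilde{C}_V(\mathfrak{g})$), but the key step does not go through. Lemma~\ref{boundedind} only produces $n,d$ such that no chain of $n$ successive sub-intersections can drop the index by \emph{more than} $d$ at every step; it says nothing about chains in which each step drops the index by a factor at most $d$, and there may be infinitely many such steps. Hence it does not yield a finite sub-intersection commensurable with $C_W(\mathfrak{b})=\bigcap_{b\in\mathfrak{b}}C_W(b)$ (that conclusion would require DCC or the icc of the NIP section, neither of which is available here). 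Worse, the intermediate claim itself is false in the generality of the lemma: take $\mathfrak{g}=V$ an infinite $\mathbb{F}_p$-vector space with trivial bracket, equipped with a bi-additive form $\langle\cdot,\cdot\rangle$ whose radical contains a fixed vector $e_0$, acting by $g\cdot v=\langle g,v\rangle e_0$ (this is a Lie module, and such configurations live in supersimple theories of finite rank, e.g.\ infinite extraspecial groups). Every $C_V(g)$ and every $C_{\mathfrak{g}}(v)$ has index at most $p$, so $\mathfrak{b}=\mathfrak{g}$ and $W=V$, and $[\mathfrak{b},W]=\mathbb{F}_p e_0$ is finite as the lemma asserts; yet $C_W(\mathfrak{b})$ is the radical of the form and has infinite index in $W$. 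So no correct proof can pass through the finiteness of $|W:C_W(\mathfrak{b})|$.

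The proof of Lemma~\ref{fin[]Alm}, which the paper invokes verbatim here, is built precisely to avoid this. One first bounds the orbit sizes uniformly (by definability of the almost centralisers and saturation, $|\mathfrak{b}w|$ is bounded over $w\in W$), chooses $u\in W$ with $|\mathfrak{b}u|=n$ maximal and a transversal $b_1=0,\dots,b_n$ of $C_{\mathfrak{b}}(u)$ in $\mathfrak{b}$, and only intersects the \emph{finitely many} centralisers $C=\bigcap_{i\neq j}C_W(b_i-b_j)$, which is of finite index in $W$. Elements of $C$ are not claimed to be annihilated by $\mathfrak{b}$; instead, the maximality of $n$ forces the orbit of every $w\in W$ into the finite subgroup $F=\sum_i \mathfrak{b}\,w_i$, where the $w_i$ run over a transversal of $C$ together with $u$. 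That maximal-orbit device is exactly what replaces your (false) finite-index claim, and your write-up needs it.
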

If $A$ has finite $n$-torsion for any $n<\omega$, we may reduce to the connected case, thanks to the following corollary of \cite[Lemma 11.1]{InvittiDim}.
\begin{corollary}\label{LinChar0}
    Let $\Gamma$ be a ring of definable endomorphisms of the abelian group $A$. Assume that:
    \begin{itemize}
        \item $\Gamma$ is essentially unbounded;
        \item $A$ is $\Gamma$-minimal (not necessarily almost minimal);
        \item $A[n]$ is finite for any $n\in \mathbb{N}$.
    \end{itemize}
    Then, either there exists $B$ almost $\Gamma$-invariant such that $\Gamma_B/\sim$ is bounded or $A$ is virtually connected.
\end{corollary}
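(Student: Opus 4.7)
The plan is to reduce the statement directly to \cite[Lemma~11.1]{InvittiDim}, using the finite $n$-torsion hypothesis as the bridge between ordinary $\Gamma$-minimality and the stronger absolute $\Gamma$-minimality that is assumed in Theorem~\ref{End}.

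First, I would argue that the finite torsion hypothesis forces a plentiful supply of $\Gamma$-invariant subgroups of finite index. For each $n\in\mathbb{N}$, the multiplication-by-$n$ map is an element of the centralizer of $\Gamma$, so $nA$ is $\Gamma$-invariant and definable. Its kernel $A[n]$ is finite by assumption, so $nA$ has the same dimension as $A$; in particular it is infinite. By $\Gamma$-minimality, $nA$ therefore has finite index in $A$. Thus $\{nA\}_{n\geq 1}$ is a descending family of finite-index $\Gamma$-invariant definable subgroups.

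Next, I would assume that $A$ is not virtually connected and build a candidate for $B$. Non-virtual-connectedness plus $\omega$-DCC for groups definable in a finite-dimensional theory (Corollary~2.4 of \cite{wagner2020dimensional}) forces the indices $[A:nA]$ to stabilise along a definable subgroup of finite index, from which compactness and uniformity produce a definable almost $\Gamma$-invariant subgroup $B\leq A$ with prescribed torsion behavior. This is exactly the sort of hypothesis in which Lemma~11.1 of \cite{InvittiDim} applies: it is designed to handle the gap between $\Gamma$-minimality and almost minimality, and it concludes that either $A$ becomes virtually connected after passing to $B$, or the induced action $\Gamma_B/{\sim}$ is bounded.

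The main obstacle is the bookkeeping to ensure that the subgroup $B$ extracted from the chain $\{nA\}$ is genuinely almost $\Gamma$-invariant, rather than merely $\Gamma$-invariant on a sparse sequence, and that the non-virtual-connectedness of $A$ transfers to the finite-index subgroup on which Lemma~11.1 is invoked. Both issues should be resolved by the standard fact that virtual connectedness is inherited through finite-index subgroups, together with uniform commensurability arguments in the spirit of Schlichting's theorem applied to the family $\{\gamma B : \gamma\in\Gamma\}$. Once $B$ is produced, the dichotomy in the corollary is a direct restatement of the output of \cite[Lemma~11.1]{InvittiDim}.
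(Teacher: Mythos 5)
The paper itself offers no proof of this corollary: it is stated as an immediate consequence of \cite[Lemma 11.1]{InvittiDim}, so the only "approach" to match is the citation. Your opening observation is sound and almost certainly the intended use of the torsion hypothesis: multiplication by $n$ lies in $C_{\operatorname{End}(A)}(\Gamma)$, so $nA$ is a definable $\Gamma$-invariant subgroup with finite kernel $A[n]$, hence of full dimension, hence of finite index by $\Gamma$-minimality. Up to that point you are fine.

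The middle of your argument, however, has concrete gaps. First, the $\omega$-DCC of Corollary 2.4 of \cite{wagner2020dimensional} only forbids infinite descending chains in which each step has \emph{infinite} index; the chain $n!A\supseteq (n+1)!A\supseteq\cdots$ has finite index at every step, so $\omega$-DCC says nothing about it, and the indices $[A:nA]$ need not stabilise (consider $A=\prod_p\mathbb{Z}_p$, which has trivial $n$-torsion and $[A:nA]=n$). Failure of virtual connectedness makes this worse, not better. Second, the step "compactness and uniformity produce a definable almost $\Gamma$-invariant subgroup $B$ with prescribed torsion behavior" is not an argument, and even granting such a $B$, nothing in your construction addresses the property actually required in the first horn of the dichotomy, namely that $\Gamma_B/{\sim}$ is \emph{bounded}: the subgroups $nA$ are commensurable with $A$, so the induced action of $\Gamma$ on them is essentially unbounded whenever $\Gamma$ is. Third, the appeal to Schlichting needs the translates $\gamma B$ to form a uniformly commensurable family, which is automatic for automorphisms but not for a ring of endomorphisms (this is precisely the case distinction the paper makes in Lemma \ref{DecGen}). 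Finally, the logic is circular: you assume $A$ is not virtually connected in order to build $B$, and then invoke \cite[Lemma 11.1]{InvittiDim} as "concluding that either $A$ becomes virtually connected or $\Gamma_B/{\sim}$ is bounded" --- but that is the dichotomy to be proved, so either the lemma already gives the corollary directly (in which case your construction is unused) or it requires the stronger absolute $\Gamma$-minimality you announced you would establish but never did. To repair the proposal you would need to either (a) quote the precise hypotheses of \cite[Lemma 11.1]{InvittiDim} and verify them from $\Gamma$-minimality plus finite torsion, or (b) show directly that finite $n$-torsion together with $\Gamma$-minimality upgrades to absolute $\Gamma$-minimality; neither is done here.
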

Let $\mathfrak{g}$ be a Lie ring acting on a definable abelian group $A$ not of torsion. Being $A[n]$ a $\mathfrak{g}$-invariant definable subgroup for any $n$, then either it is finite or $A[n]$ is of finite index. Then, given $m=|A/A[n]|$, for any $a\in A$, $a^{nm}=(a^m)^n=0$ since $a^m\in A[n]$. Assume that the action is not almost trivial, then we are in the hypothesis of Corollary \ref{LinChar0}. Assume there exists $B$ such that the ring generated by $\mathfrak{g}$ is not essentially unbounded. Then, $\mathfrak{g}_B/\sim=\mathfrak{g}/\widetilde{C}_{\mathfrak{g}}(B)$ is bounded iff it is finite. This implies that $B\leq \widetilde{C}_{A}(\mathfrak{g})$. By minimality, the action is almost trivial. Therefore, we derive the following corollary.
\begin{corollary}
    Let $\mathfrak{g}$ be a definable Lie ring of finite dimension acting not almost trivially on a definable abelian group $A$ with finite $n$-torsion for every $n<\omega$. Then, $A$ is virtually connected and with finite torsion. Moreover, by Lemma \ref{LinDimCon},there exists a field $K$ such that $A^{0}/T(A^0)$ is a $K$-vector space of linear dimension $n$ and $\mathfrak{g}/\widetilde{C}_{\mathfrak{g}}(A)$ embeds in $\mathfrak{gl}_n(K)$ as Lie ring.
\end{corollary}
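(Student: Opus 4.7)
The plan is to derive the corollary from Corollary~\ref{LinChar0} together with the connected linearization theorem (Lemma~\ref{LinDimCon}). Let $\Gamma$ denote the subring of definable endomorphisms of $A$ generated by $\operatorname{ad}(\mathfrak{g})$. Since the $\mathfrak{g}$-action is not almost trivial, $\mathfrak{g}/\widetilde{C}_{\mathfrak{g}}(A)$ is infinite, and one verifies that this forces $\Gamma$ to be essentially unbounded. Together with the hypothesis that $A[n]$ is finite for every $n$, one can apply Corollary~\ref{LinChar0} to obtain the dichotomy: either $A$ is virtually connected, or there exists an almost $\Gamma$-invariant definable $B\leq A$ with $\Gamma_B/{\sim}$ bounded.

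To rule out the bad alternative I would follow the paragraph preceding the statement: in the finite-dimensional setting, boundedness of a quotient $\Gamma_B/{\sim} = \mathfrak{g}/\widetilde{C}_{\mathfrak{g}}(B)$ collapses to finiteness, so $\widetilde{C}_{\mathfrak{g}}(B)$ has finite index in $\mathfrak{g}$. Invoking the symmetry property of the almost centralizer (the module analogue of Lemma~\ref{sym}), one gets $B\leq \widetilde{C}_A(\mathfrak{g})$ up to a finite piece; combined with the non-almost-triviality of the action and the minimality built into the setup, $B$ cannot be both infinite and of infinite index. Hence $A$ must be virtually connected.

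Next I would establish the finite-torsion assertion inside the connected component $A^0$. The multiplication-by-$n$ map $\mu_n:A^0\to A^0$ has kernel $A^0[n]$, which is finite by hypothesis, so by fibration $\dim(nA^0)=\dim(A^0)$, and connectivity forces $nA^0=A^0$; thus $A^0$ is $n$-divisible for every $n$. From divisibility together with the finiteness of each $A^0[n]$, one then shows $T(A^0)$ is finite: otherwise $T(A^0)$ would, via the $\omega$-DCC and the $\mathfrak{g}$-invariance of each $A^0[n]$, produce an infinite definable almost-invariant subgroup of infinite index in $A^0$, contradicting the minimality we used to rule out the bad $B$ in Step~2.

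Finally, $A^0/T(A^0)$ is a virtually connected torsion-free finite-dimensional $\mathfrak{g}$-module on which the action remains non-almost-trivial. Applying Lemma~\ref{LinDimCon} to $(\mathfrak{g}, A^0/T(A^0))$, whose infiniteness hypotheses are guaranteed by the infiniteness of $\mathfrak{g}/\widetilde{C}_{\mathfrak{g}}(A)$, produces a definable field $K$ making $A^0/T(A^0)$ a $K$-vector space of linear dimension $n$ with $\mathfrak{g}$ acting $K$-linearly; the induced map $\mathfrak{g}/\widetilde{C}_{\mathfrak{g}}(A)\hookrightarrow \mathfrak{gl}_n(K)$ gives the embedding as Lie rings. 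I expect the main obstacle to be the finiteness of $T(A^0)$: excluding a $\mathbb{Q}/\mathbb{Z}$-type configuration requires delicately combining connectivity, divisibility, and the almost-invariance restrictions inherited from Step~2, and is the place where the definability arguments need the most care.
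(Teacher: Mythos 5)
Your overall route is exactly the one the paper takes (the corollary is proved in the paragraph immediately preceding its statement): apply Corollary \ref{LinChar0} to the ring of endomorphisms generated by the action, kill the alternative branch by noting that a bounded quotient $\mathfrak{g}/\widetilde{C}_{\mathfrak{g}}(B)$ in a finite-dimensional theory must be finite, push $B$ into $\widetilde{C}_A(\mathfrak{g})$ by symmetry and contradict minimality together with non-almost-triviality, and then feed the virtually connected module into Fact \ref{LinDimCon}. Those steps are fine and faithful to the text.

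The one step that does not go through as written is the finiteness of $T(A^0)$. The $\omega$-DCC is a \emph{descending} chain condition and gives no information about the ascending union $T(A^0)=\bigcup_n A^0[n]$, which need not be definable, so it cannot by itself ``produce an infinite definable almost-invariant subgroup of infinite index.'' The correct mechanism is simpler: every finite $\mathfrak{g}$-invariant subgroup $F\leq A$ is contained in $\widetilde{C}_A(\mathfrak{g})$, because for $f\in F$ the additive map $g\mapsto g\cdot f$ has image inside the finite set $F$, hence its kernel $C_{\mathfrak{g}}(f)$ has finite index in $\mathfrak{g}$. Applying this to each $A^0[n]$ gives $T(A^0)\leq \widetilde{C}_A(\mathfrak{g})$, and the latter is a definable $\mathfrak{g}$-invariant subgroup which, by the minimality of $A$ and the non-almost-triviality of the action, must be finite. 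Note that the divisibility of $A^0$ you establish does not help here; on the contrary, for a divisible group any nonzero torsion contains a Pr\"ufer group and is automatically infinite, so bounding each $A^0[n]$ separately is not enough and one really must trap all of $T(A^0)$ inside the finite group $\widetilde{C}_A(\mathfrak{g})$. With that repair your argument is complete and coincides with the paper's.
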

These results will be applied in the following section for the analysis of Lie rings of characteristic $0$.
\section{Dimensional Lie rings of characteristic 0}
In this section, we prove that the finite-dimensional version of the "log-CZ"-conjecture holds for Lie rings of finite dimension and characteristic $0$.\\
This is clearly the easiest case, but it is not as immediate as in the finite Morley rank context, since a field of finite dimension is not necessarily algebraically closed.\\
Given $\mathfrak{g}$ a Lie ring definable in a finite-dimensional theory, either $\mathfrak{g}[p]$ is an infinite definable ideal or $\mathfrak{g}[p]\leq \widetilde{Z}(\mathfrak{g})$. In the first case, we may reduce to the analysis of a Lie ring of prime characteristic. In the second, we may quotient by $\widetilde{Z}(\mathfrak{g})$, that is a definable ideal in $\mathfrak{g}$ by Lemma \ref{DefZ}. The Lie ring $\mathfrak{g}/\widetilde{Z}(\mathfrak{g})$ cannot have infinite $p$-torsion for any $p$ prime. If not, let $A$ be a definable ideal in $\mathfrak{g}$ such that $A/\widetilde{Z}( \mathfrak{g})=(\mathfrak{g}/\widetilde{Z}(\mathfrak{g}))[p]$. Then, the homomorphism 
$$p\cdot\_:A\to \widetilde{Z}(\mathfrak{g})$$
has infinite kernel. Therefore, $\mathfrak{g}[p]$ is infinite, a contradiction. Iterating this process, we obtain the following conclusion.
\begin{lemma}\label{Cha0orNil}
    Let $\mathfrak{g}$ be a definable Lie ring of finite dimension with $\mathfrak{g}[p]$ finite for all $p$ prime. Then, either $\mathfrak{g}$ is almost nilpotent (and it is virtually nilpotent by Lemma \ref{NilFromAlmNil}) or there exists $n$ such that $\widetilde{Z}^n(\mathfrak{g})=\widetilde{Z}^{n+1}(\mathfrak{g})$ and $\mathfrak{g}/\widetilde{Z}^{n+1}(\mathfrak{g})$ is of characteristic 0.
\end{lemma}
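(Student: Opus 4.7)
The plan is to exploit finite-dimensionality to force the almost-central chain to stabilize, and then extract torsion-freeness of the resulting quotient from the hypothesis that $\mathfrak{g}[p]$ is finite for every prime $p$.

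First, $\{\widetilde{Z}^k(\mathfrak{g})\}_{k\in \omega}$ is an ascending chain of definable ideals (Lemma \ref{DefZ} applied inductively), so the sequence $\dim(\widetilde{Z}^k(\mathfrak{g}))$ is non-decreasing and bounded by $\dim(\mathfrak{g})$. Take the least $k^*$ such that $\dim(\widetilde{Z}^{k^*+1}) = \dim(\widetilde{Z}^{k^*})$; then $\widetilde{Z}^{k^*+1}/\widetilde{Z}^{k^*}$ is finite, so $\widetilde{Z}^{k^*+1}$ is almost contained in $\widetilde{Z}^{k^*}$, and the Observation above (with index $k^*+1$) yields $\widetilde{Z}^{k^*+1} = \widetilde{Z}^{k^*+2}$. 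Setting $n = k^*+1$, we obtain $\widetilde{Z}^n(\mathfrak{g}) = \widetilde{Z}^{n+1}(\mathfrak{g})$. If this common value equals $\mathfrak{g}$, then $\mathfrak{g}$ is almost nilpotent, and Lemma \ref{NilFromAlmNil} upgrades this to virtual nilpotence. Otherwise, $\overline{\mathfrak{g}} := \mathfrak{g}/\widetilde{Z}^{n+1}(\mathfrak{g})$ is a nonzero finite-dimensional Lie ring with trivial almost center, since $\widetilde{Z}(\overline{\mathfrak{g}}) = \widetilde{Z}^{n+1}/\widetilde{Z}^n = 0$.

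It remains to show $\overline{\mathfrak{g}}$ has characteristic $0$, which reduces to proving $\overline{\mathfrak{g}}[p] = 0$ for every prime $p$. Fix $p$, let $A \leq \mathfrak{g}$ be the definable preimage of $\overline{\mathfrak{g}}[p]$, and split on the size of $\overline{\mathfrak{g}}[p]$. If $\overline{\mathfrak{g}}[p]$ is infinite, then $\dim(A) > \dim(\widetilde{Z}^n(\mathfrak{g}))$, and the fibration axiom applied to the definable morphism $p\cdot\_ : A \to \widetilde{Z}^n(\mathfrak{g})$ forces some fiber, which is a coset of $A[p] \subseteq \mathfrak{g}[p]$, to be infinite — contradicting finiteness of $\mathfrak{g}[p]$. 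If $\overline{\mathfrak{g}}[p]$ is finite, then for each $x \in \overline{\mathfrak{g}}[p]$ the adjoint map $y \mapsto [y, x]$ has image in the finite ideal $\overline{\mathfrak{g}}[p]$, hence finite-index kernel; so $\overline{\mathfrak{g}}[p] \subseteq \widetilde{Z}(\overline{\mathfrak{g}}) = 0$.

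The main obstacle I anticipate is the finite-$p$-torsion case. The informal paragraph preceding the lemma rules out only \emph{infinite} $p$-torsion in $\overline{\mathfrak{g}}$, and a priori $\overline{\mathfrak{g}}$ could still contain a nontrivial finite $p$-torsion ideal, which is invisible to the fibration argument. The key idea is that a finite definable ideal is automatically almost central, so once the chain has stabilized and $\widetilde{Z}(\overline{\mathfrak{g}})$ has been forced to vanish, any such torsion must in fact be $0$. The interplay between the stabilization of $\widetilde{Z}^k$ and the absorption of finite torsion into the (now trivial) almost center is the crux.
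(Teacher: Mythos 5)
Your proof is correct and follows essentially the same route as the paper's: the paper's (informal) argument likewise quotients by the almost center, uses the fibration axiom on the multiplication-by-$p$ map $A\to\widetilde{Z}(\mathfrak{g})$ to exclude infinite $p$-torsion in the quotient, absorbs finite $p$-torsion into the almost center (a finite ideal has all centralisers of finite index), and iterates until the almost-central series stabilises by finite-dimensionality. If anything, your write-up is more careful than the paper about why the iteration terminates and about the residual finite-torsion case, which the paper leaves implicit in the phrase ``iterating this process''.
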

Therefore, we can concentrate on the study of Lie rings of characteristic 0.\\
We verify that any Lie ring of finite dimension and characteristic $0$ with finite almost center is isogenous to a Lie algebra of finite dimension over a field of characteristic $0$. This implies that it is connected.
\begin{Char0}
  Let $(\mathfrak{g},V)$ be definable module of finite dimension with $char(\mathfrak{g})=0$. Assume that $V$ is minimal and that the action is not almost trivial. Then, $\mathfrak{g}/\widetilde{C}_{\mathfrak{g}}(V)$ embeds in $\operatorname{gl}_n(K)$ for a certain $n\in \omega$ and a definable field $K$ of characteristic $0$. Moreover, it is virtually connected and $(\mathfrak{g}/\widetilde{C}_{\mathfrak{g}}(V))^0$ is a Lie algebra over a definable finite-dimensional field of characteristic $0$.  
\end{Char0}
\begin{proof}
   Since the action is not almost trivial, $\widetilde{C}_V(\mathfrak{g})$ is finite. This contains the torsion of $A$. Therefore, up to factorise by $\widetilde{C}_A(\mathfrak{g})$, we may assume that $A$ is without torsion. Then, by Corollary \ref{LinChar0}, this action can be linearised \hbox{i.e.} there exists a definable field $K$ of finite dimension and characteristic $0$ such that $A/\widetilde{C}_A(\mathfrak{g})\simeq K^n$ and $\mathfrak{g}/\widetilde{C}_{\mathfrak{g}}(V)$ embeds in $\operatorname{gl}_n(K)$ as a homomorphism of Lie rings. Since $\operatorname{gl}_n(K)$ is a vector space over $K$ of characteristic $0$, it has the DCC by Corollary \ref{DCCK^n}. From now on, we may assume the almost centralisers equal to $0$. We define 
   $$R=\{k\in K:\ \forall g\in \mathfrak{g}^0\ k\cdot g\in \mathfrak{g}^0\}$$
   where $\mathfrak{g}$ is seen as a Lie subring of $\mathfrak{gl}_n(K)$.
   $R$ is a subfield of $K$. Indeed, it is clearly closed for sum, opposite, and product. We verify that it is closed for the inverse. By dimensionality, since the multiplication by $r\in R$ is an automorphism of $\mathfrak{gl}_n(K)$ that preserves $\mathfrak{g}^0$, the image $r(\mathfrak{g}^0)$ is a subgroup of finite index in $\mathfrak{g}^0$. Being the latter connected, $r(\mathfrak{g}^0)=\mathfrak{g}^0$. Let $r\in R$, then, by previous proof, for any $g\in \mathfrak{g}^0$, $g=r\cdot g'$. Then $r^{-1}\cdot g=r^{-1}\cdot r\cdot g'=g'\in \mathfrak{g}^0$. Moreover, $R$ is infinite, since it contains $\mathbb{N}$. This implies that $\mathfrak{g}^0$ is a Lie algebra of finite dimension over a definable field $R$ of finite dimension.
\end{proof}
\section{NIP Lie rings}
We extend the results on connected Lie rings of finite dimension to Lie rings definable in a NIP theory of finite dimension. As a consequence, we obtain a characterization of stable definable Lie rings of Lascar rank less than or equal to $4$.\\
The NIP (not the independence property) is a model-theoretic condition introduced by Shelah (for an exhaustive analysis of NIP theories see \cite{simon2015guide}). Subclasses of NIP theories are the stable and the $o$-minimal ones. We recall the definition of NIP theory.
\begin{defn}
    A formula $\phi(x;y)$ is said to have the independence property if there exists an infinite set of $|x|$-tuples $A$ and a set $\{b_I:\ I\subseteq A\}$ of $|y|$-tuples such that 
    $$\phi(a,b_I)\iff a\in I$$
    for all $a\in A$.\\
    A theory $T$ is said to be NIP if no formulas in $T$ have the independence property.
\end{defn}
 NIP theories comprehend structures as Henselian fields of characteristic $0$ with NIP residue field and ordered group \cite{delon1981types}, the fields of $p$-adics \cite{belair1999types}, and ordered abelian groups \cite{gurevich1984theory}. The interest in NIP theories increased in particular after 2000: in a series of papers, Shelah characterizes the basic properties of forking in NIP theories. In particular, we can still define generics linked to forking, called $f$-generics. This work culminates in \cite{shelah2012dependent}, where he proves NIP theories have few types over saturated models. Fundamental objects in the study of NIP theories are Keisler measures, introduced by Keisler in \cite{keisler1987measures}, and analyzed by Hrushovski, Peterzil, and Pillay in \cite{hrushovski2008groups}. A NIP group with a definable Keisler measure is called \emph{definable amenable}. This is linked to the existence of a global $f$-generic by Stonestrom in \cite{stonestrom2023f}.\\
 In NIP theories, there exists a notion of "rank" that is called the \emph{dp-rank} (for definition and property see, for example, \cite{halevi2019dp}). The relation between dp-rank and dimension in NIP finite-dimensional theories is still mysterious. 
\subsection{NIP groups of finite dimension}
In this article, we use only two basic instruments of the study of NIP groups: the Baldwin-Saxl condition and the existence of a type-definable connected component.\\
The Baldwin-Saxl condition is a very important chain condition on uniformly definable subgroups. For a proof, originally made by Shelah, see, for example, Theorem 2.13 of \cite{simon2015guide}.
\begin{lemma}(Baldwin-Saxl Condition)\label{BSC}
    Let $G$ be a NIP group and $\{H_i\}_{i\in I}$ a family of uniformly definable subgroups of $G$. Then, there exists $n<\omega$ such that, for any intersection of $m$ elements $H_{j_1}\cap\cdot\cdot\cdot H_{j_m}$ from $\{H_i\}_{i\in I}$, there exist $i_1,...,i_n\in I$ such that $\bigcap_{k\leq m} H_{j_k}=\bigcap_{k\leq n} H_{i_k} $.
\end{lemma}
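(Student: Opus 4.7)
The plan is to argue by contradiction, reducing the statement to showing that a certain formula has unbounded VC-dimension. Let $\phi(x;y)$ be a formula such that the uniformly definable family takes the form $\{H_i\}_{i\in I}=\{\phi(G;b_i)\}_{i\in I}$ for suitable parameter tuples $b_i$. Suppose for contradiction that the desired $n$ does not exist, so for every $n<\omega$ there is an intersection $H_{j_1}\cap\cdots\cap H_{j_m}$ (with $m>n$) which is not equal to any sub-intersection of $n$ members of the family.

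First, I would perform an irredundance reduction: starting from such a collection, iteratively discard any $H_{j_k}$ whose removal does not strictly enlarge the intersection. This produces, for arbitrarily large $\ell$, a subfamily $H_{i_1},\ldots,H_{i_\ell}$ in which $\bigcap_{k\neq r}H_{i_k}\supsetneq\bigcap_k H_{i_k}$ for every $r$. For each such $r$ I can then pick a witness $a_r\in\bigcap_{k\neq r}H_{i_k}\setminus H_{i_r}$, so that $a_r\in H_{i_j}$ if and only if $j\neq r$.

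The key combinatorial step is the following product construction. For each subset $S\subseteq\{1,\ldots,\ell\}$, let $c_S$ be the product of the $a_k$ for $k\in S^c$ taken in a fixed order (for $S=\{1,\ldots,\ell\}$, take $c_S=e$). I claim that $c_S\in H_{i_j}$ if and only if $j\in S$. If $j\in S$, then every factor $a_k$ appearing in $c_S$ satisfies $k\neq j$, hence lies in $H_{i_j}$, and so the product does too. If $j\notin S$, the factor $a_j$ does appear in $c_S$, and writing $c_S=h_1\cdot a_j\cdot h_2$ with $h_1,h_2\in H_{i_j}$ (the products of the remaining factors, all of which lie in $H_{i_j}$), the relation $h_1 a_j h_2\in H_{i_j}$ would force $a_j\in H_{i_j}$, contradicting the choice of $a_j$.

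Thus the family $\{c_S:S\subseteq\{1,\ldots,\ell\}\}$ shatters the set of parameters $\{b_{i_1},\ldots,b_{i_\ell}\}$ via instances of $\phi$, so $\phi$ has VC-dimension at least $\ell$. Since $\ell$ is arbitrary, this contradicts the NIP hypothesis, completing the proof. The main delicate point is the non-commutative product step, which requires fixing an order of multiplication and using the elementary fact that $HaH\cap H=\emptyset$ whenever $H$ is a subgroup and $a\notin H$; the irredundance reduction and the passage to arbitrarily large $\ell$ are standard once one formalises them via compactness.
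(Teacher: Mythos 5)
Your argument is correct and is precisely the standard Baldwin--Saxl argument (irredundance reduction, witnesses $a_r$, the ordered products $c_S$ shattering the parameter set, contradiction with NIP); the paper does not prove this lemma itself but cites exactly this proof from Simon's \emph{A Guide to NIP Theories}, Theorem 2.13. No gaps: the non-commutative product step and the fact that $h_1a_jh_2\in H$ with $h_1,h_2\in H$ forces $a_j\in H$ are handled correctly.
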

From Lemma \ref{boundedind} and Lemma \ref{BSC}, we can derive the icc (descending chain condition on the intersection of uniformly definable subgroups) for groups definable in a NIP finite-dimensional theory.
\begin{lemma}\label{icc}
    Any definable intersection of a family of uniformly definable subgroups of a NIP finite-dimensional group is equal to an intersection of finitely many of them.
\end{lemma}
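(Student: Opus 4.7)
The plan is to combine Lemma \ref{BSC} (Baldwin-Saxl) with the uniform chain condition, which for a finite-dimensional group is an immediate consequence of Lemma \ref{boundedind}. Writing $\phi(x,y)$ for the formula uniformly defining the family $\{H_i\}_{i\in I}$, let $n<\omega$ be the integer provided by Baldwin-Saxl and set
$$\mathcal{F}=\bigl\{H_{i_1}\cap\cdots\cap H_{i_n}:(i_1,\dots,i_n)\in I^n\bigr\},$$
a uniformly definable family of subgroups. By Baldwin-Saxl, every finite subintersection $\bigcap_{i\in J}H_i$ with $J\subseteq I$ finite already belongs to $\mathcal{F}$, and consequently $\bigcap_{i\in I}H_i=\bigcap_{K\in\mathcal{F}}K$.

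I would first observe that $\mathcal{F}$ is closed under binary intersection: given $K,K'\in\mathcal{F}$, the subgroup $K\cap K'$ is a finite subintersection of the $H_i$'s involving at most $2n$ of them, and a second application of Baldwin-Saxl places it back in $\mathcal{F}$. Next, because $\mathcal{F}$ is uniformly definable and $G$ has the ucc, no strictly descending chain inside $\mathcal{F}$ is infinite. Starting from an arbitrary element of $\mathcal{F}$ and iteratively replacing it with a strictly smaller element of $\mathcal{F}$ (say, by intersecting with some $K'\in\mathcal{F}$ strictly shrinking it) as long as possible, the procedure terminates at some $K_0\in\mathcal{F}$ satisfying $K_0\cap K'=K_0$, i.e.\ $K_0\subseteq K'$, for every $K'\in\mathcal{F}$. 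This $\subseteq$-minimal $K_0$ is contained in the full intersection $\bigcap_{K\in\mathcal{F}}K=\bigcap_{i\in I}H_i$, and since it already lies in $\mathcal{F}$, the reverse containment is immediate; hence $K_0=\bigcap_{i\in I}H_i$, which is a finite subintersection.

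The roles of the two hypotheses are transparent: NIP enters only through Baldwin-Saxl, ensuring that arbitrary finite subintersections can be realised inside the uniformly definable family $\mathcal{F}$, while finite-dimensionality enters only through ucc, to guarantee termination of the descending process. The one subtle point is the closure of $\mathcal{F}$ under binary intersection, but this follows from a double application of Baldwin-Saxl, so I do not anticipate any genuine obstacle in the argument.
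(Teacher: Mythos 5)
Your argument follows the same overall strategy as the paper's: Baldwin--Saxl (Lemma \ref{BSC}) confines all finite subintersections to the uniformly definable family $\mathcal{F}$ of $n$-fold intersections, and a chain condition coming from finite-dimensionality then produces a $\subseteq$-minimal element of $\mathcal{F}$, which is forced to coincide with the full intersection. The closure of $\mathcal{F}$ under binary intersection and the identification of the minimal element with $\bigcap_{i\in I}H_i$ are both correct. The one point to be careful about is the termination step: you invoke the full ucc, i.e.\ the non-existence of \emph{any} infinite strictly descending chain of uniformly definable subgroups. The paper does assert this, but Lemma \ref{boundedind} by itself only excludes long chains in which every step has index greater than some fixed $d$; strictly descending chains whose steps all have small finite index are not directly controlled by it. The paper's own proof is organized precisely to avoid leaning on the blanket ucc: it first takes an $n$-fold intersection $A$ of minimal \emph{dimension} (so every further intersection has finite index in $A$), and then uses Lemma \ref{boundedind} together with compactness to find one of maximal finite index in $A$, which is then the required minimal element. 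If you want your version to rest only on Lemma \ref{boundedind} and Lemma \ref{BSC}, replace ``terminates by ucc'' with this two-step minimization (first minimize dimension, then maximize the finite index); otherwise your proof is fine as written, modulo the paper's stated ucc.
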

\begin{proof}
    Let $\{H_i\}_{i\in I}$ be a uniformly definable family of subgroups in $G$, a NIP definable group of finite dimension. By Lemma \ref{BSC}, there exists $n<\omega$ such that, for any finite subset $J$ of $I$, $\bigcap_{j\in J} H_j$ is equal to the intersection of at most $n$ elements. Take $A=\bigcap_{j\leq n} H_{i_j}$ of minimal dimension among all the intersections of $n$ elements from $\{H_i\}_{i\in I}$. Then, for any $\bigcap_{j\in J}H_j$ with $J\subseteq  I$ of cardinality $n$, the intersection with $A$ is of finite index (by minimality of the dimension). By Lemma \ref{boundedind}, we conclude that there exists $B_1=\bigcap_{j\in J}H_j$ such that $A\cap B_1$ is of maximal index in $A$. This, by NIP, is an intersection of $n$ elements in $\{H_i\}_{i\in I}$ that we call $B$. Then $B=B\cap H_i$ and so $B=\bigcap_{i\in I} H_i$. This completes the proof. 
\end{proof}
Another fundamental result for NIP groups is the existence of the type-definable connected component, denoted by $G^{00}$.
\begin{defn}
    Let $G$ be a definable group. we denote by $G^{00}$ the smallest type-definable subgroup of bounded index, if it exists.
\end{defn}
Its existence for NIP groups has been proved in \cite{shelah2008minimal}.
\begin{lemma}\label{G{00}}
    Let $G$ be a group definable in a NIP theory. Then, $G^{00}$ exists.
\end{lemma}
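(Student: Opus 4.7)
The plan is to exhibit $G^{00}$ as the intersection of all $A$-type-definable subgroups of $G$ of bounded index for a fixed small parameter set $A$, and then to use NIP (through Lemma \ref{BSC}) to verify this intersection is canonical. I work throughout in a sufficiently saturated monster model $\mathfrak{M}$, where ``bounded'' means ``of cardinality strictly less than the saturation cardinal''.

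First I would fix a small set $A$ containing the parameters of $G$ and let $\mathcal{H}_A$ be the collection of all $A$-type-definable subgroups of $G$ of bounded index. Each $H \in \mathcal{H}_A$ is cut out by a partial type over $A$, and there are at most $2^{|T|+|A|}$ such partial types, so $|\mathcal{H}_A|$ is bounded. Setting $G^{00}_A := \bigcap_{H \in \mathcal{H}_A} H$, the union of the defining partial types is again a partial type over $A$ of bounded cardinality, so $G^{00}_A$ is type-definable over $A$. Its index is at most $\prod_{H \in \mathcal{H}_A} |G : H|$, a bounded product of bounded cardinals, hence bounded.

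The essential NIP step is to show $G^{00}_A$ does not depend on $A$. For each formula $\phi(x, \bar y) \in L(\emptyset)$, I consider the family
$$\mathcal{F}_\phi := \{\phi(\mathfrak{M}, \bar a) : \bar a \in \mathfrak{M},\ \phi(\mathfrak{M}, \bar a) \text{ is a subgroup of } G \text{ of bounded index}\}.$$
By Lemma \ref{BSC} there is $n_\phi < \omega$ such that every finite intersection of members of $\mathcal{F}_\phi$ equals an intersection of at most $n_\phi$ of them; by saturation, $\bigcap \mathcal{F}_\phi$ itself equals some $n_\phi$-fold intersection and is definable of bounded index. The set $\bigcap \mathcal{F}_\phi$ is $\emptyset$-invariant because any automorphism of $\mathfrak{M}$ permutes $\mathcal{F}_\phi$, so the global intersection $G^{00} := \bigcap_{\phi \in L(\emptyset)} \bigcap \mathcal{F}_\phi$ is an $\emptyset$-invariant type-definable subgroup of bounded index. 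It is contained in every $G^{00}_A$; minimality then forces equality, and $G^{00}$ is the desired smallest type-definable subgroup of bounded index.

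The main obstacle is precisely this NIP-based reduction. Without the Baldwin--Saxl condition there is no a priori control on how many members of $\mathcal{F}_\phi$ are needed to cut out its full intersection, so $\bigcap \mathcal{F}_\phi$ might require unboundedly many conjuncts; this would mean $G^{00}_A$ could shrink properly as $A$ grows, obstructing the construction of a canonical smallest bounded-index type-definable subgroup.
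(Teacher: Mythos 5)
There is a genuine gap here --- in fact two, and both sit exactly where the real content of Shelah's theorem lies. (The paper does not prove this lemma but quotes it from \cite{shelah2008minimal}; your first paragraph, the existence of $G^{00}_A$ for each fixed small $A$, is standard and correct.) First, the step ``by saturation, $\bigcap\mathcal{F}_\phi$ itself equals some $n_\phi$-fold intersection and is definable of bounded index'' does not follow from Lemma \ref{BSC}. Baldwin--Saxl reduces \emph{finite} intersections to $n_\phi$-fold ones, but the resulting downward-directed family of $n_\phi$-fold intersections can strictly decrease along an infinite chain, so the total intersection need not be attained. Concretely, in $\mathrm{Th}(\mathbb{Q}_p)$ (which is NIP) take $G=\mathcal{O}^+$ and $\phi(x,y)$ expressing $v(x)\geq v(y)\geq 0$: the instances of bounded index in $G$ are exactly the subgroups $p^n\mathcal{O}$ for standard $n$, each of finite index, and their intersection is type-definable but equal to no finite sub-intersection. (The paper's Lemma \ref{icc} does prove your claim, but only by invoking finite-dimensionality through Lemma \ref{boundedind} in addition to NIP; that hypothesis is not available in the general statement you are proving.) Without this reduction, $\bigcap\mathcal{F}_\phi$ is an intersection over the unbounded, non-type-definable set of parameters $\bar a$ with $\phi(\mathfrak{M},\bar a)\in\mathcal{F}_\phi$, so it is a priori only an invariant set, not type-definable over a small set, and its index is not obviously bounded.

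Second, and more fatally, the final inclusion $\bigcap_\phi\bigcap\mathcal{F}_\phi\subseteq G^{00}_A$ is false, so ``minimality forces equality'' cannot be run. Your candidate only intersects \emph{definable} subgroups of bounded index, whereas $G^{00}_A$ is an intersection of type-definable subgroups whose defining instances $\phi(x,a)$ need not be subgroups at all. In a saturated real closed field take $G$ to be the circle group $[0,1)$ with addition modulo $1$: it has no proper definable subgroup of bounded index, so your candidate equals $G$, while the group of infinitesimals is a proper type-definable subgroup of bounded index, so $G^{00}_A\subsetneq G$. The containment you need goes the wrong way. The actual proof of parameter-independence must handle arbitrary type-definable bounded-index subgroups --- for instance by writing them as intersections of definable symmetric ``generic'' sets and running an alternation-rank or Baldwin--Saxl-type argument on those approximations; this is the missing idea, and it is the substance of Shelah's theorem rather than a routine saturation remark.
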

Moreover, since in a definable NIP field $K$, ${K^+}^{00}$ is an ideal of bounded index and so never $0$, $K$ has connected additive group.
\begin{lemma}\label{FieldNIP}
    Let $K$ be a NIP definable field. Then, $(K,+)$ is connected.
\end{lemma}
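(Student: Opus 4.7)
The plan is to apply Lemma~\ref{G{00}} to the additive group of $K$, obtaining a type-definable subgroup $(K^+)^{00}$ of bounded index, and to show, along the lines hinted at in the paragraph preceding the statement, that $(K^+)^{00}$ is in fact an ideal of $K$. Since $K$ is an infinite field, this ideal must equal $K^+$ itself, whence connectedness of $(K,+)$ will follow immediately.

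The crucial step is to verify that $(K^+)^{00}$ is invariant under multiplication by every $k \in K^\times$. For any such $k$, the map $m_k : x \mapsto kx$ is a definable group automorphism of $(K,+)$, so both $(K^+)^{00}$ and $m_k\bigl((K^+)^{00}\bigr)$ are type-definable subgroups of $(K,+)$ of the same bounded index, and the minimality characterization of $(K^+)^{00}$ forces them to coincide. Combined with additive closure, this shows $(K^+)^{00}$ is a two-sided ideal of the field $K$.

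To conclude, I would argue that $(K^+)^{00}$ is nonzero: working in a sufficiently saturated model, $K$ has unbounded cardinality while $[K^+ : (K^+)^{00}]$ is bounded, so $(K^+)^{00}$ must itself be infinite. A nonzero ideal in a field equals the whole field, hence $(K^+)^{00} = K^+$. Any definable subgroup $H \leq K^+$ of finite index is automatically of bounded index and therefore contains $(K^+)^{00} = K^+$, so $H = K^+$; this is exactly the definition of connectedness. The only mild subtlety is justifying the automorphism-invariance of $(K^+)^{00}$ under the parameter-definable map $m_k$, but this is a standard consequence of its minimality characterization and presents no real obstacle.
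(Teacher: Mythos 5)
Your proposal is correct and matches the paper's approach exactly: the paper gives only a one-line justification before the lemma ("$(K^+)^{00}$ is an ideal of bounded index and so never $0$, $K$ has connected additive group"), and your argument simply fills in the details — invariance of $(K^+)^{00}$ under the definable automorphisms $m_k$ (immediate from it being the unique smallest type-definable subgroup of bounded index), non-triviality in a saturated model, and the field axiom that a nonzero ideal is everything.
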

\subsection{NIP Lie rings of finite dimension}
We characterize NIP Lie rings of small dimension (in particular up to $4$) in the hypothesis that $\mathfrak{g}$ has characteristic different from $2,3$. The general idea is that either the Lie ring is the form we desire, or it is virtually connected. In the latter case, the proofs follow from the analysis of connected Lie rings of finite dimension.\\
We need the following easy Lemma.
\begin{lemma}\label{ObsVc}
    Let $G$ be a group and $H$ be a normal subgroup of $G$. Then,
    \begin{itemize}
        \item if $G/A$ is connected for a finite normal subgroup $A$ in $G$, then $G$ is virtually connected;
        \item if $H$ and $G/H$ are virtually connected, $G$ is virtually connected.
    \end{itemize}
\end{lemma}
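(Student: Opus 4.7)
My plan is to handle the two parts in order, using the first as an ingredient for the second.

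For the first part, I would argue that the indices of definable subgroups of finite index in $G$ are bounded by $|A|$. Given such a subgroup $K$, the product $KA$ is a definable subgroup of finite index (since $A$ is finite), so $KA/A$ is definable of finite index in $G/A$. Connectedness of $G/A$ forces $KA=G$, whence $|G:K|=|A:A\cap K|\le |A|$. I would then pick $K_{0}$ of minimum index: any definable subgroup $L$ of finite index in $K_{0}$ is also of finite index in $G$, and $|G:L|\ge |G:K_{0}|$ by minimality forces $L=K_{0}$. Thus $K_{0}$ is connected and $G$ is virtually connected.

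For the second part, the idea is to produce a definable, connected subgroup $H^{*}\le H$ that is normal in $G$ and of finite index in $H$, then reduce to the first part. Starting from a definable connected $H_{1}\le H$ of finite index, I would apply Schlichting's theorem to the family $\{gH_{1}g^{-1}:g\in G\}$, which is uniformly definable and uniformly commensurable (each conjugate has the same index in $H$), to obtain a definable $G$-invariant subgroup $H_{2}\sim H_{1}$, necessarily normal in $G$. Since $H_{2}\cap H_{1}$ has finite index in the connected $H_{1}$, we get $H_{1}\le H_{2}$, so $H_{2}$ is virtually connected, and its connected component $H^{*}:=H_{2}^{0}$ is definable, connected, of finite index in $H$, and characteristic in $H_{2}$, hence normal in $G$.

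Now let $\bar K/H$ be a definable connected subgroup of finite index in $G/H$. Then $H/H^{*}$ is finite and normal in $\bar K/H^{*}$, with $(\bar K/H^{*})/(H/H^{*})\cong \bar K/H$ connected, so the first part applies to $\bar K/H^{*}$, yielding a definable $K^{*}/H^{*}$ that is connected and of finite index in $\bar K/H^{*}$. To close, I would check that $K^{*}$ itself is connected: if $L\le K^{*}$ is definable of finite index, then $LH^{*}/H^{*}$ has finite index in the connected $K^{*}/H^{*}$, so $LH^{*}=K^{*}$; meanwhile $L\cap H^{*}$ has finite index in the connected $H^{*}$, so $H^{*}\le L$; combining gives $L=K^{*}$. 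Hence $K^{*}$ is connected of finite index in $G$.

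The only delicate step is the Schlichting application producing a \emph{normal} and \emph{connected} refinement $H^{*}$ of $H_{1}$; everything else is a routine diagram argument using the first bullet and the connectedness of the pieces.
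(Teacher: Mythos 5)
Your overall route coincides with the paper's: for the first bullet you bound the index of every definable finite-index subgroup $K$ by $|A|$ via $KA=G$, and for the second you reduce to a connected normal piece $H^{*}$ of finite index in $H$ and run the standard extension argument ($L\supseteq H^{*}$ and $LH^{*}=K^{*}$ force $L=K^{*}$). Two remarks. First, there is a slip in your extremal choice: picking $K_{0}$ of \emph{minimum} index does not work. For $L\le K_{0}$ of finite index one always has $|G:L|=|G:K_{0}|\,|K_{0}:L|\ge|G:K_{0}|$, so ``$|G:L|\ge|G:K_{0}|$ by minimality'' is vacuous and does not force $L=K_{0}$ (consider $G=\mathbb{Z}/4\mathbb{Z}$, where the subgroup of minimum index is $G$ itself, which is not connected). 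You must take $K_{0}$ of \emph{maximum} index, which exists precisely because of the bound $|A|$; then a proper finite-index $L<K_{0}$ would have $|G:L|>|G:K_{0}|$, contradicting maximality. With that one-word repair the first part is complete (the paper itself leaves this extremal step implicit). Second, your Schlichting detour to obtain a $G$-normal connected $H^{*}$ is valid but unnecessary: the connected component $H^{0}$ is the unique smallest definable finite-index subgroup of $H$, hence is preserved by conjugation by any $g\in G$ (an automorphism of the normal subgroup $H$ carrying definable subgroups to definable subgroups of the same index), so it is already normal in $G$; this is what the paper uses. Apart from these points the argument is correct.
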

\begin{proof}
For every subgroup $H$ of finite index in $G$, $HA/A$ is a subgroup of finite index in $G/A$ and so coincides with $G/A$ \hbox{i.e.} $HA=G$. This implies that $|G:H|\leq |A|$ and so $G$ is virtually connected.\\
For the second, observe that we can assume that $H$ and $G/H$ are connected. Indeed, if $H$ is virtually connected, $H^0$ is of finite index in $H$, and $G/H^0$ is virtually connected by the previous point. Define $G_1/H=(G/H^0)^0$. Then, $G_1$ is a subgroup of finite index in $G$. Therefore, to prove the Lemma, it is sufficient to verify that $G_1$ is connected. Let $G_2$ be a definable subgroup of finite index in $G_1$. Then, $G_2\cap H^0$ is of finite index in $H^0$. Being $H^0$ connected, $G_2\geq H^0$. This implies that $G_2/H^0$ is a definable subgroup of finite index in $G_1/H^0$ and so it coincides with $G_1$. This shows that $G_1$ is connected.
\end{proof}
The following Lemma is an easy application of the icc condition. 
\begin{lemma}\label{C(h)}
    Let $\mathfrak{g}$ be a definable NIP finite-dimensional Lie ring. Then, there exists a definable Lie subring $\mathfrak{h}$ of finite index in $\mathfrak{g}$ such that $\widetilde{Z}(\mathfrak{g})\cap \mathfrak{h}=Z(\mathfrak{h})=\widetilde{Z}(\mathfrak{h})$.
\end{lemma}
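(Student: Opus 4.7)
The plan is to realise the subring $\mathfrak{h}$ as the full centraliser of the almost center, $\mathfrak{h} := C_{\mathfrak{g}}(\widetilde{Z}(\mathfrak{g}))$, and to use the icc for NIP finite-dimensional groups (Lemma \ref{icc}) to see that $\mathfrak{h}$ has finite index.

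Here are the steps I would follow. First, by Lemma \ref{DefZ}, the almost center $\widetilde{Z}(\mathfrak{g})$ is a definable ideal, so the family $\{C_{\mathfrak{g}}(g)\}_{g\in\widetilde{Z}(\mathfrak{g})}$ is uniformly definable and each member is a Lie subring of $\mathfrak{g}$. Moreover, by definition of the almost center each $C_{\mathfrak{g}}(g)$ has finite index in $\mathfrak{g}$. Their intersection is exactly $\mathfrak{h}=C_{\mathfrak{g}}(\widetilde{Z}(\mathfrak{g}))$, which is definable since $\widetilde{Z}(\mathfrak{g})$ is, and it is a Lie subring (a centraliser is closed under the bracket via the Jacobi identity). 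Applying Lemma \ref{icc} to this uniformly definable family, $\mathfrak{h}$ already equals a finite subintersection $C_{\mathfrak{g}}(g_1)\cap\cdots\cap C_{\mathfrak{g}}(g_n)$ with $g_i\in\widetilde{Z}(\mathfrak{g})$; since each factor has finite index in $\mathfrak{g}$, so does $\mathfrak{h}$.

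Next, I would check the chain of inclusions $\widetilde{Z}(\mathfrak{g})\cap\mathfrak{h}\subseteq Z(\mathfrak{h})\subseteq \widetilde{Z}(\mathfrak{h})\subseteq \widetilde{Z}(\mathfrak{g})\cap\mathfrak{h}$, which forces the three sets to coincide. For the first inclusion, if $x\in\widetilde{Z}(\mathfrak{g})\cap\mathfrak{h}$ and $y\in\mathfrak{h}=C_{\mathfrak{g}}(\widetilde{Z}(\mathfrak{g}))$, then $[y,x]=0$ because $x\in\widetilde{Z}(\mathfrak{g})$, so $x\in Z(\mathfrak{h})$. The inclusion $Z(\mathfrak{h})\subseteq\widetilde{Z}(\mathfrak{h})$ is trivial. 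For the last inclusion, if $h\in\widetilde{Z}(\mathfrak{h})$, then $C_{\mathfrak{h}}(h)$ has finite index in $\mathfrak{h}$; since $\mathfrak{h}$ has finite index in $\mathfrak{g}$, $C_{\mathfrak{g}}(h)\supseteq C_{\mathfrak{h}}(h)$ has finite index in $\mathfrak{g}$, so $h\in\widetilde{Z}(\mathfrak{g})\cap\mathfrak{h}$.

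There is essentially no real obstacle here: the only non-trivial ingredient is the icc from Lemma \ref{icc}, which supplies definability and finite index of $\mathfrak{h}=C_{\mathfrak{g}}(\widetilde{Z}(\mathfrak{g}))$; everything else is a routine cycle of inclusions exploiting the definitions of $Z$ and $\widetilde{Z}$ together with the fact that finite-index subrings preserve the property of having a centraliser of finite index.
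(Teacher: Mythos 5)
Your proof is correct and follows the same route as the paper: take $\mathfrak{h}=C_{\mathfrak{g}}(\widetilde{Z}(\mathfrak{g}))$, use the icc (Lemma \ref{icc}) on the uniformly definable family of finite-index centralisers to get finite index, then verify the equalities. You spell out the final chain of inclusions (in particular $\widetilde{Z}(\mathfrak{h})\subseteq\widetilde{Z}(\mathfrak{g})\cap\mathfrak{h}$) more explicitly than the paper does, and you only note that $\mathfrak{h}$ is a Lie subring where the paper observes it is in fact an ideal, but neither point affects correctness.
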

\begin{proof}
Let $\mathfrak{h}$ be the intersection of all the centralisers of elements in $\widetilde{Z}(\mathfrak{g})$ \hbox{i.e.} $\mathfrak{h}=C_{\mathfrak{g}}(\widetilde{Z}(\mathfrak{g}))$. This is clearly an ideal being $\widetilde{Z}(\mathfrak{g})$ an ideal.\\
By Lemma \ref{icc}, the intersection of the uniformly definable family of subgroups $\{C_{\mathfrak{g}}(z)\}_{z\in \widetilde{Z}({\mathfrak{g}})}$ coincides with the intersection of finitely many of them. Since all of them are subgroups of finite index, $\mathfrak{h}$ is of finite index. Finally $h\in \mathfrak{h}$ is in $\widetilde{Z}(\mathfrak{g})$ iff $h\in Z(\mathfrak{h})$ by definition.
\end{proof}
Let $\mathfrak{g}$ be a definable Lie ring. We denote by $\mathfrak{g}^{00}$ the type-connected component of $(\mathfrak{g},+)$, if it exists. We verify that, when $\mathfrak{g}^{00}$ exists, it is an ideal of bounded index. In particular, this holds for Lie rings definable in NIP theories.
\begin{lemma}\label{g^00}
    Let $\mathfrak{g}$ be a definable Lie ring. Then, if $\mathfrak{g}^{00}$ exists, it is an ideal of bounded index of $\mathfrak{g}$.
\end{lemma}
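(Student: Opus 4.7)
The plan is to exploit the minimality of $\mathfrak{g}^{00}$ among type-definable subgroups of bounded index, combined with the fact that the adjoint maps $\operatorname{ad}(g) = [g,-]$ are definable additive homomorphisms of $(\mathfrak{g},+)$. Bounded index is part of the definition of $\mathfrak{g}^{00}$, so the only real content is showing that $\mathfrak{g}^{00}$ is closed under the Lie bracket with arbitrary elements of $\mathfrak{g}$.

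Fix $g \in \mathfrak{g}$ and consider $\operatorname{ad}(g)\colon \mathfrak{g} \to \mathfrak{g}$, which is a definable group homomorphism of $(\mathfrak{g},+)$ by bilinearity of the bracket and definability of $\mathfrak{g}$. I will look at
\[
H_g := \operatorname{ad}(g)^{-1}(\mathfrak{g}^{00}) = \{x \in \mathfrak{g} : [g,x] \in \mathfrak{g}^{00}\}.
\]
Since $\mathfrak{g}^{00}$ is type-definable and $\operatorname{ad}(g)$ is definable, $H_g$ is type-definable; since $\mathfrak{g}^{00}$ is a subgroup and $\operatorname{ad}(g)$ is an additive homomorphism, $H_g$ is a subgroup of $(\mathfrak{g},+)$.

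Next I would check that $H_g$ has bounded index. The map $\operatorname{ad}(g)$ induces an injective group homomorphism
\[
\mathfrak{g}/H_g \hookrightarrow \mathfrak{g}/\mathfrak{g}^{00},
\]
so $|\mathfrak{g} : H_g| \leq |\mathfrak{g} : \mathfrak{g}^{00}|$, which is bounded. By minimality of $\mathfrak{g}^{00}$, we get $\mathfrak{g}^{00} \subseteq H_g$, which is exactly the statement $[g,\mathfrak{g}^{00}] \subseteq \mathfrak{g}^{00}$. Since this holds for every $g \in \mathfrak{g}$, the subgroup $\mathfrak{g}^{00}$ is an ideal, and it has bounded index by definition.

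There is no real obstacle here; the argument is the standard ``preimage under a definable endomorphism'' trick for $G^{00}$, transposed from the group setting to the Lie ring setting by regarding $(\mathfrak{g},+)$ as the ambient group and the operators $\operatorname{ad}(g)$ as definable endomorphisms of it. The only points to be careful about are that $\operatorname{ad}(g)$ is a homomorphism of the additive group (so preimages of subgroups are subgroups) and that type-definability is preserved under preimages by definable maps, both of which are immediate.
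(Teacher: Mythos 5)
Your proof is correct and takes essentially the same approach as the paper: both pull back a type-definable subgroup of bounded index along $\operatorname{ad}(g)$ and invoke minimality of $\mathfrak{g}^{00}$. You simply spell out the injection $\mathfrak{g}/H_g \hookrightarrow \mathfrak{g}/\mathfrak{g}^{00}$ that justifies the bounded-index claim, which the paper leaves implicit.
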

\begin{proof}
    To verify that $\mathfrak{g}^{00}$ is an ideal, it is sufficient to prove that for any type-definable subgroup $H$ of bounded index and $g\in \mathfrak{g}$, there exists a type-definable subgroup of bounded index $H_1$ such that $[g,H_1]\leq H$. Assume that $H$ is a subgroup of bounded index type-definable over a set of parameters $M$. Then, the subgroup 
    $$H_1:=\{g'\in \mathfrak{g}:\ [g,g']\in H\}$$
    is a type-definable subset over $M\cup\{g\}$ of bounded index. This completes the proof.
\end{proof}
In the NIP context, we have the following version of Lemma \ref{LinDim1}. 
\begin{lemma}\label{LieNip}
    Let $(\mathfrak{g},V)$ be a $\mathfrak{g}$-module definable in a finite-dimensional NIP theory. Assume that $\dim(V)=1$ and $Z(\mathfrak{g})\cap \widetilde{C}_{\mathfrak{g}}(v)$ is not of finite index in $Z(\mathfrak{g})$. Then, there exists a finite subgroup $V_0$ in $V$ and a definable field $K$ of dimension $1$ such that $V/V_0\simeq K^+$ and $\mathfrak{g}/\widetilde{C}_{\mathfrak{g}}(V)$ is isomorphic to $K^+$. This implies, by Lemma \ref{virtconn}, that $V$ and $(\mathfrak{g}/\widetilde{C}_{\mathfrak{g}}(V),+)$ are virtually connected.\\
    Moreover, for every $k\not=0\ (mod\ p)$, there exists $g\in \mathfrak{g}$ such that $g\cdot a=ka$ for a subgroup of finite index in $A$.
\end{lemma}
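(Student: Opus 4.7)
The plan is to combine the dimension-one linearisation (Lemma~\ref{LinDim1}) with the NIP-specific connectedness of the additive group of a definable field (Lemma~\ref{FieldNIP}). The hypothesis on $Z(\mathfrak{g})\cap \widetilde{C}_{\mathfrak{g}}(V)$ is exactly the one of Lemma~\ref{LinDim1}, so applying that lemma produces a finite subgroup $V_0\leq V$, a definable field $K$ of dimension $1$, an isomorphism $V/V_0\simeq K^+$, and an isomorphism of $\mathfrak{g}/\widetilde{C}_{\mathfrak{g}}(V)$ onto a definable subgroup $H$ of finite index in $K^+$.

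Next I would invoke NIP: by Lemma~\ref{FieldNIP} the additive group $K^+$ is connected, so its only definable subgroup of finite index is itself; hence $H=K^+$ and $\mathfrak{g}/\widetilde{C}_{\mathfrak{g}}(V)\simeq K^+$. Both $V$ and $(\mathfrak{g}/\widetilde{C}_{\mathfrak{g}}(V),+)$ are then isogenous to the connected group $K^+$, and virtual connectedness of each follows from the isogeny/virtual-connectedness principle already recorded in the preliminaries (this is the rôle of Lemma~\ref{virtconn}).

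For the moreover part, set $p=\operatorname{char}(K)$; for an integer $k$ with $k\not\equiv 0\pmod{p}$, the image $\overline k$ of $k$ in the prime subfield $\mathbb{F}_p\subseteq K$ is nonzero, and via the isomorphism $\mathfrak{g}/\widetilde{C}_{\mathfrak{g}}(V)\simeq K^+$ I pick $g\in\mathfrak{g}$ whose class corresponds to $\overline k$. The action of $g$ on $V/V_0\simeq K^+$ is multiplication by $\overline k$, which on the underlying abelian group of $K^+$ is ordinary integer multiplication by $k$ (since $K^+$ is a vector space over $\mathbb{F}_p$, so the $\mathbb{F}_p$-action factors the $\mathbb{Z}$-action). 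Consequently the additive endomorphism $\phi:V\to V$ given by $\phi(a)=g\cdot a-ka$ takes values in the finite group $V_0$, so $\ker\phi$ has finite index in $V$, and on $\ker\phi$ one has $g\cdot a=ka$, as required.

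I do not foresee any genuine obstacle: the whole argument is a direct application of Lemma~\ref{LinDim1} with NIP used only to remove the ``finite-index'' qualification through Lemma~\ref{FieldNIP}. The only step requiring a little care is the identification of the $\mathbb{F}_p$-scalar action on $K^+$ with ordinary integer multiplication, which is routine.
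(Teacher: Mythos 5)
Your proposal is correct and follows essentially the same route as the paper: linearise in dimension one (the paper cites Corollary \ref{LinNVC}, of which Lemma \ref{LinDim1} is the dimension-one specialisation you use), invoke Lemma \ref{FieldNIP} to upgrade the finite-index subgroup of $K^+$ to all of $K^+$, and obtain the ``moreover'' clause by noting that $v\mapsto g\cdot v-kv$ is an additive map into the finite group $V_0$, hence has kernel of finite index. The paper's own proof is just a terser version of exactly this argument.
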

\begin{proof}
    The first part follows easily by Lemma \ref{FieldNIP} and \ref{LinNVC}.\\
    Since $\mathfrak{g}/C_{\mathfrak{g}}(V)=K^+$, there exists an element acting as the multiplication by $k$ on $V/V_0$ \hbox{i.e.} $g\cdot v=kv+V_0$ for any $v\in V$. Then, the map
    $$g\cdot\_-k\cdot \_ : V\to V_0$$
    has kernel of finite index. This completes the proof.
\end{proof}
\subsubsection{Dimension 1}
We prove the virtual abelianity of definably minimal NIP Lie rings of finite dimension and characteristic $p\not=2,3$.
\begin{theorem}\label{NIPdim1}
    Let $\mathfrak{g}$ be a NIP finite-dimensional definably minimal Lie ring of characteristic $p\not=2,3$. Then, $\mathfrak{g}$ is virtually abelian.
\end{theorem}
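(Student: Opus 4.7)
The plan is to case-split on the definable ideal $\widetilde{Z}(\mathfrak{g})$ (Lemma~\ref{DefZ}); by definable minimality, it is either finite or of finite index in $\mathfrak{g}$. If $\widetilde{Z}(\mathfrak{g})$ has finite index, Lemma~\ref{C(h)} furnishes a definable finite-index Lie subring $\mathfrak{h}$ with $Z(\mathfrak{h})=\widetilde{Z}(\mathfrak{g})\cap\mathfrak{h}$; this intersection has finite index in $\mathfrak{h}$ and is abelian, producing the required definable abelian subring of finite index in $\mathfrak{g}$ and finishing this case.

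Assume henceforth that $\widetilde{Z}(\mathfrak{g})$ is finite. The key idea is to use $\mathfrak{g}^{00}$, which in the NIP setting is a type-definable ideal of bounded index (Lemma~\ref{g^00}), to force virtual connectedness and then invoke Theorem~\ref{Dim1}. Passing to a sufficiently saturated elementary extension, where $\mathfrak{g}^{00}$ is infinite while $\widetilde{Z}(\mathfrak{g})$ remains finite, I pick $g\in\mathfrak{g}^{00}\setminus\widetilde{Z}(\mathfrak{g})$. Since $C_{\mathfrak{g}}(g)$ is a Lie subring by the Jacobi identity, definable minimality leaves only the alternatives finite or of finite index; the latter would force $g\in\widetilde{Z}(\mathfrak{g})$, contradicting the choice, so $C_{\mathfrak{g}}(g)$ is finite.

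By fibration, $V_{1}:=[g,\mathfrak{g}]$ then satisfies $\dim V_{1}=\dim\mathfrak{g}$ and is of finite index in $\mathfrak{g}$. The crucial observation is that $V_{1}\subseteq\mathfrak{g}^{00}$, since $g\in\mathfrak{g}^{00}$ and $\mathfrak{g}^{00}$ is an ideal; consequently $\mathfrak{g}^{00}$ itself has finite index in $\mathfrak{g}$ and, being a union of finitely many cosets of the definable $V_{1}$, is in fact definable. It is connected (any proper definable finite-index subgroup would be a strictly smaller bounded-index type-definable subgroup, contradicting the minimality of $\mathfrak{g}^{00}$), inherits definable minimality from $\mathfrak{g}$, and retains characteristic different from $2,3$; Theorem~\ref{Dim1} therefore yields that $\mathfrak{g}^{00}$ is abelian, so $\mathfrak{g}$ is virtually abelian, and this first-order property transfers back to the original model.

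The main obstacle is that the Rosengarten-style iterated-centraliser argument underlying Theorem~\ref{Dim1} relies on the identity $[g,\mathfrak{g}]=\mathfrak{g}$, which holds under connectedness but fails in the non-virtually-connected setting, so one cannot simply rerun the proof of Theorem~\ref{Dim1} and reach a contradiction from unbounded finite Lie subrings via Lemma~\ref{FinSubAlg}. The remedy is precisely to insist that $g$ lie inside the ideal $\mathfrak{g}^{00}$: this forces $[g,\mathfrak{g}]\subseteq\mathfrak{g}^{00}$, and combined with the finiteness of $C_{\mathfrak{g}}(g)$ this collapses $\mathfrak{g}^{00}$ to finite index, recovering virtual connectedness and reducing everything to the already-proved connected case.
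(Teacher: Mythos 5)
Your proposal is correct and follows essentially the same route as the paper: the same case split on $\widetilde{Z}(\mathfrak{g})$ handled via Lemma~\ref{C(h)}, then in the finite-almost-center case the same use of $\mathfrak{g}^{00}$ together with the finiteness of centralisers to show $[g,\mathfrak{g}]\subseteq\mathfrak{g}^{00}$ has finite index, forcing virtual connectedness and reducing to Theorem~\ref{Dim1}. The only cosmetic difference is that the paper first quotients by $\widetilde{Z}(\mathfrak{g})$ before running the $\mathfrak{g}^{00}$ argument, whereas you work with $g\in\mathfrak{g}^{00}\setminus\widetilde{Z}(\mathfrak{g})$ directly.
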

\begin{proof}
    If $\widetilde{Z}(\mathfrak{g})$ is infinite, being definable by Lemma \ref{DefZ}, it must be a Lie subring of finite index. By Lemma \ref{C(h)}, there exists $\mathfrak{h}$ of finite index such that $\widetilde{Z}(\mathfrak{g})\cap \mathfrak{h}=Z(\mathfrak{h})$. $Z(\mathfrak{h})$ is an abelian definable Lie ring of finite index.\\
    Therefore, we may assume $\widetilde{Z}(\mathfrak{g})$ finite and, by Corollary \ref{g/Z(g)}, $\mathfrak{g}/\widetilde{Z}(\mathfrak{g})$ has trivial almost center. Up to work here, we may assume $\widetilde{Z}(\mathfrak{g})=0$. By definable minimality of $\mathfrak{g}$, any $g\in \mathfrak{g}$ has finite centraliser and $[g,\mathfrak{g}]$ is of finite index in $\mathfrak{g}$.
    By Lemma \ref{boundedind}, this family of uniformly definable subgroups has index bounded by a certain $n<\omega$. $\mathfrak{g}^{00}$ is an ideal of bounded index in $\mathfrak{g}$ by Lemma \ref{g^00}. Therefore, for any $g$ in $\mathfrak{g}^{00}$, the group homomorphism 
    $$\ad_g: \mathfrak{g}\to\mathfrak{g}$$ 
    has image contained in $g^{00}$. Since the kernel is finite, the image is a definable subgroup of finite index. This implies that $\mathfrak{g}^{00}$ is definable of finite index \hbox{i.e.} $\mathfrak{g}$ is virtually connected. Taking the connected component and proceeding as in the proof of Lemma \ref{Dim1}, we obtain the conclusion.
\end{proof}
\subsubsection{Dimension 2}
We characterize definable NIP Lie rings of dimension $2$.
\begin{lemma}\label{Dim2NIP}
    Let $\mathfrak{g}$ be a definable NIP Lie ring of dimension $2$. Then $\mathfrak{g}$ is virtually soluble. If $\mathfrak{g}$ is not virtually nilpotent, $\mathfrak{g}$ is virtually connected, and we can apply Lemma \ref{Dim2}.
\end{lemma}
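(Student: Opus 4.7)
The plan is to mimic the strategy for the connected case (Theorem \ref{Dim2}) through the almost-centraliser machinery, and to invoke the NIP-specific tools: the definability of $\widetilde{Z}(\mathfrak{g})$ (Lemma \ref{DefZ}), the type-definable connected component $\mathfrak{g}^{00}$ which is an ideal of bounded index (Lemma \ref{g^00}), connectedness of the additive group of a NIP field (Lemma \ref{FieldNIP}), and the dimension-one linearisation Lemma \ref{LieNip}. After Lemma \ref{C(h)} I may replace $\mathfrak{g}$ by a definable subring of finite index in which $\widetilde{Z}(\mathfrak{g})=Z(\mathfrak{g})$, and the argument then splits on $\dim\widetilde{Z}(\mathfrak{g})\in\{0,1,2\}$.

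If $\dim\widetilde{Z}(\mathfrak{g})=2$ then $Z(\mathfrak{g})$ has finite index and $\mathfrak{g}$ is virtually abelian. If $\dim\widetilde{Z}(\mathfrak{g})=1$, the quotient $\mathfrak{g}/\widetilde{Z}(\mathfrak{g})$ has dimension one, and Theorem \ref{NIPdim1} (applied to a definably minimal quotient obtained by modding out a maximal finite definable ideal) yields a definable finite-index subring $\mathfrak{h}\leq \mathfrak{g}$ with $[\mathfrak{h},\mathfrak{h}]\subseteq Z(\mathfrak{g})\subseteq Z(\mathfrak{h})$, so $\mathfrak{h}$ is $2$-step nilpotent. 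In either case $\mathfrak{g}$ is virtually nilpotent and the lemma is proved.

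The substantive case is $\widetilde{Z}(\mathfrak{g})$ finite; I shall prove $\mathfrak{g}$ is virtually connected and then invoke Theorem \ref{Dim2} on $\mathfrak{g}^0$. First I observe that $\mathfrak{g}$ cannot be virtually nilpotent, for a nilpotent dimension-$2$ Lie ring $\mathfrak{h}$ has infinite centre (via a Schur-type argument on $\mathfrak{h}/Z(\mathfrak{h})$) which lands in $\widetilde{Z}(\mathfrak{g})$. Set $\mathfrak{a}:=[\mathfrak{g},\mathfrak{g}]$, definable by Lemma \ref{DefBrack}: the case $\mathfrak{a}$ finite contradicts $\widetilde{Z}(\mathfrak{g})$ finite through Schur, and in the dimension-two ``perfect'' sub-case I use $\mathfrak{g}^{00}$: if $[g,\mathfrak{g}]$ has dimension two for some $g\in\mathfrak{g}^{00}$, then, since $\mathfrak{g}^{00}$ is an ideal of bounded index and $[g,\mathfrak{g}]\subseteq\mathfrak{g}^{00}$ is a definable finite-index subgroup, minimality of $\mathfrak{g}^{00}$ among type-definable bounded-index subgroups forces $\mathfrak{g}^{00}=[g,\mathfrak{g}]$ and $\mathfrak{g}$ is virtually connected; otherwise I iterate the derived series $\mathfrak{g}^{(n)}$ and use the hereditarily $\widetilde{\mathfrak{M}}_c$ bounds to descend to dimension one. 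In the dimension-one case I apply Lemma \ref{LieNip} to the adjoint action of $\mathfrak{g}$ on $\mathfrak{a}$; the action is not almost trivial, for otherwise Lemma \ref{fin[]Alm} would place a finite-index subgroup of $\mathfrak{a}$ in $\widetilde{Z}$ of a finite-index subring, contradicting finiteness of $\widetilde{Z}(\mathfrak{g})$. Linearisation then yields a definable field $K$ of dimension one with $\mathfrak{a}$ virtually isomorphic to $K^+$ and $\mathfrak{g}/\widetilde{C}_{\mathfrak{g}}(\mathfrak{a})$ embedding virtually in $K^+$; by Lemma \ref{FieldNIP}, $K^+$ is connected, so both $\mathfrak{a}$ and $\mathfrak{g}/\widetilde{C}_{\mathfrak{g}}(\mathfrak{a})$ are virtually connected. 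Since $\widetilde{C}_{\mathfrak{g}}(\mathfrak{a})$ is a finite extension of $\mathfrak{a}$ (both of dimension one), it too is virtually connected, and Lemma \ref{ObsVc} upgrades this to virtual connectedness of $\mathfrak{g}$.

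The main obstacle is the ``perfect'' sub-case $\dim[\mathfrak{g},\mathfrak{g}]=2$: transferring dimension-two information from the definable images $[g,\mathfrak{g}]$ for $g\in\mathfrak{g}^{00}$ to the type-definable $\mathfrak{g}^{00}$ is delicate because $\mathfrak{g}^{00}$ has bounded but a priori not finite index, and the iteration on the derived series demands uniform bounds that are not automatic in the NIP setting.
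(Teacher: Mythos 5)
Your overall strategy (split on the size of $\widetilde{Z}(\mathfrak{g})$, reduce to the finite-almost-centre case, then linearise in dimension one) is reasonable, but two steps do not go through. The decisive one is the ``perfect'' sub-case $\dim[\mathfrak{g},\mathfrak{g}]=2$, which you yourself flag: your argument via $\mathfrak{g}^{00}$ is purely conditional on finding some $g\in\mathfrak{g}^{00}$ with $\ad_g$ of finite kernel, and the ``otherwise'' branch (``iterate the derived series and use the hereditarily $\widetilde{\mathfrak{M}}_c$ bounds'') is not an argument --- if $\mathfrak{g}$ is perfect the derived series never descends, and no uniform bound is available. Second, even in the sub-case $\dim[\mathfrak{g},\mathfrak{g}]=1$ your appeal to Lemma \ref{LieNip} is illegitimate: that lemma's hypothesis that $Z(\mathfrak{g})\cap\widetilde{C}_{\mathfrak{g}}(V)$ is \emph{not} of finite index in $Z(\mathfrak{g})$ is exactly what feeds the ``essentially infinite commutant'' requirement of Corollary \ref{LinNVC}, and it is vacuously false in the regime you are in, where $\widetilde{Z}(\mathfrak{g})$ (hence $Z(\mathfrak{g})$) is finite. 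A smaller slip: ``a nilpotent dimension-$2$ Lie ring has infinite centre'' is false (infinite Heisenberg-type Lie rings over $\mathbb{F}_p$ have centre of order $p$); the correct statement, which is what you actually need, is that a nilpotent Lie ring with finite \emph{almost} centre is finite, proved by pushing each $Z_{i+1}$ into $\widetilde{Z}$ via the finite-image maps $\ad_z:\mathfrak{g}\to Z_i$.

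The paper's proof avoids both obstacles by never looking at $[\mathfrak{g},\mathfrak{g}]$. It produces a definable \emph{abelian} Lie subring $\mathfrak{h}$ of dimension $1$ (from Theorem \ref{NIPdim1} applied to any dimension-one definable subring, the definably minimal case being immediate) and lets $\mathfrak{h}$ act on $(\mathfrak{g}/\mathfrak{h},+)$. Because the acting ring is abelian, its image in $\operatorname{End}(\mathfrak{g}/\mathfrak{h})$ is commutative and Lemma \ref{LieNip} applies with its hypotheses actually satisfied. The dichotomy is then: if the action is almost trivial, the icc (Lemma \ref{icc}) turns the almost normaliser of a finite-index subring $\mathfrak{h}_1\leq\mathfrak{h}$ into a genuine definable finite-index subring $\mathfrak{g}_1$ in which $\mathfrak{h}_1$ is an ideal with $\mathfrak{g}_1/\mathfrak{h}_1$ and $\mathfrak{h}_1$ both abelian, giving virtual solubility outright; if it is not almost trivial, linearisation plus Lemma \ref{FieldNIP} makes $\mathfrak{h}$ and $\mathfrak{g}/\mathfrak{h}$ virtually connected, hence $\mathfrak{g}$ virtually connected by Lemma \ref{ObsVc}, and Theorem \ref{Dim2} finishes. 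You would need to restructure your proof along these lines --- in particular, replace the acting ring $\mathfrak{g}$ by an abelian dimension-one subring before linearising, and replace the case analysis on $\dim[\mathfrak{g},\mathfrak{g}]$ by the ``almost trivial or not'' dichotomy --- for the argument to close.
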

\begin{proof}
     If $\mathfrak{g}$ is definably minimal, it is virtually abelian by Lemma \ref{NIPdim1} and the proof follows. Therefore, we may assume that $\mathfrak{g}$ has a definable Lie subring $\mathfrak{b}$ of dimension $1$. This must be virtually abelian by Lemma \ref{NIPdim1} and consequently $\mathfrak{g}$ contains a definable abelian Lie subring $\mathfrak{h}$ of dimension $1$. Take the action of $\mathfrak{h}$ on $(\mathfrak{g}/\mathfrak{h},+)$. If this action is almost trivial, then there exists a Lie sub ring $\mathfrak{h}_1$ of finite index in $\mathfrak{h}$ (given by the almost centraliser of the action in $\mathfrak{g}$) such that for every  $h\in \mathfrak{h}_1$, $[h,\mathfrak{g}]$ is almost contained in $\mathfrak{h}$. Since $\mathfrak{h}_1$ is of finite index in $\mathfrak{h}$, $[h,\mathfrak{g}]\apprle \mathfrak{h}\iff [h,\mathfrak{g}]\apprle \mathfrak{h}_1$ since $\mathfrak{h}_1\sim\mathfrak{h}$. Therefore,
     $$\{C_{\mathfrak{g}}(h/\mathfrak{h}_1):=\{g\in \mathfrak{g}: [g,h]\in \mathfrak{h}_1\}\}_{h\in \mathfrak{h}_1}$$
     is a family of uniformly definable subgroups of finite index in $\mathfrak{g}$. By icc, 
     $$\mathfrak{g}_1=N_{\mathfrak{g}}(\mathfrak{h}_1)=\bigcap_{h\in\mathfrak{h}_1}C_{\mathfrak{g}}(h/\mathfrak{h}_1)$$
     is of finite index in $\mathfrak{g}$. This is clearly a Lie subring and $\mathfrak{h}_1$ is an ideal in $\mathfrak{g}_1$. Therefore, by Lemma \ref{NIPdim1}, $\mathfrak{g}_1/\mathfrak{h}_1$ and $\mathfrak{h}_1$ are abelian. This implies that $\mathfrak{g}$ is virtually soluble.\\
    If the action of $\mathfrak{h}$ on $\mathfrak{g/h}$ is not almost trivial, $\mathfrak{h}$ and $\mathfrak{g/h}$ are virtually connected by Lemma \ref{LieNip}. Therefore, $\mathfrak{g}$ is virtually connected by Lemma \ref{virtconn} and the proof follows as in Theorem \ref{Dim2}.\\
    For the second part, let $\mathfrak{h}$ be a definable ideal in $\mathfrak{g}$ of dimension $1$. Then, $\mathfrak{h}$ is virtually abelian by Lemma \ref{NIPdim1} and $\widetilde{Z}(\mathfrak{h})$ is of finite index and an ideal in $\mathfrak{g}$. By icc, $Z(\widetilde{Z}(\mathfrak{h}))=\bigcap_{i=1}^n C_{\widetilde{Z}(\mathfrak{h})}(h_i)$ and so it is an abelian ideal of $\mathfrak{g}$ of finite index in $\mathfrak{g}$. Assume $\mathfrak{h}=Z(\widetilde{Z}(\mathfrak{g}))$ then the action of $\mathfrak{g}/\mathfrak{h}$ on $(\mathfrak{h},+)$ can be almost trivial or not. In the first case, we obtain, proceeding as before, that $\mathfrak{g}$ is virtually nilpotent.\\
    If the action is not trivial, then $\mathfrak{h}$ is virtually connected as $\mathfrak{g}/\mathfrak{h}$. This implies that $\mathfrak{g}$ is virtually connected by Lemma \ref{ObsVc} and we can proceed as in \ref{Dim2}.
\end{proof}
\subsection{Absolute simplicity and virtual connectivity}
In this subsection, we verify that a NIP definable absolutely simple Lie ring of finite dimension is virtually connected. This result will be applied to the characterization of NIP Lie ring of dimension $3$ and $4$. We will use some results from section 5.\\
For the following Lemma, in which we prove that the virtually connectivity follows by the existence of an infinite definable connected subgroup, we need a technical definition.
\begin{defn}
    Let $\overline{g}\in \mathfrak{g}^{<\omega}:=\{(g_1,...,g_n):\ \forall i<n\ g_i\in \mathfrak{g}\wedge n<\omega\}$ and $H$ a subgroup of $\mathfrak{g}$. We define $[\overline{g},H]$ the subgroup $[g_1,[g_2,[...,[g_n,H]],...,]$.
\end{defn}
\begin{lemma}\label{ConNip}
    Let $\mathfrak{g}$ be an absolutely simple Lie ring of finite dimension. If $\mathfrak{g}$ has a definable infinite connected subgroup, $\mathfrak{g}$ is virtually connected.
\end{lemma}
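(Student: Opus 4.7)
The plan is to manufacture from $H$ a definable connected almost ideal of $\mathfrak{g}$ and then invoke absolute simplicity to force it to have finite index. Concretely, I would consider all subgroups of the form $H + \sum_{i=1}^{n} [\overline{g}_i, H]$ with $n<\omega$ and $\overline{g}_i \in \mathfrak{g}^{<\omega}$, and pick $K$ of maximal dimension among these. Such a maximum exists because $\dim(\mathfrak{g}) < \omega$. Each summand $[\overline{g}_i, H]$ is the image of $H$ under an iterated composition of adjoint endomorphisms $\ad_{g}$, hence is a definable connected subgroup (a homomorphic image of a connected definable group is connected, since any definable finite-index subgroup of the image pulls back to a finite-index subgroup of the preimage). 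A finite sum $A+B$ of connected definable subgroups is again connected: if $C \leq A+B$ has finite index, then $A\cap C$ has finite index in $A$, so equals $A$ by connectivity, giving $A\leq C$ and similarly $B\leq C$, whence $C = A+B$. Thus $K$ is definable, connected, and infinite (it contains $H$).

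Next I would verify that $K$ is an almost ideal of $\mathfrak{g}$. For every $g \in \mathfrak{g}$, bilinearity of the bracket yields
$$[g, K] \;=\; [g, H] + \sum_{i=1}^{n} [g, [\overline{g}_i, H]] \;=\; [(g), H] + \sum_{i=1}^{n} [(g, \overline{g}_i), H],$$
so $[g, K] + K$ is itself a finite sum of the allowed form. By maximality of $\dim K$, $\dim([g, K] + K) = \dim K$, which means $[g, K] \apprle K$. Thus $K$ is a definable almost ideal of $\mathfrak{g}$ in the sense of Section~4.

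Since $\mathfrak{g}$ is finite-dimensional, Lemma~\ref{boundedind} makes it hereditarily $\widetilde{\mathfrak{M}}_c$, so the equivalence of Lemma~\ref{AlmSim} applies: absolute simplicity of $\mathfrak{g}$ (taking the definable Lie subring of finite index to be $\mathfrak{g}$ itself) rules out any infinite definable almost ideal of infinite index. Because $K$ is infinite, $K$ has finite index in $\mathfrak{g}$. A connected definable subgroup of finite index is automatically contained in every definable finite-index subgroup (by the connectivity argument above applied to the intersections), and is therefore the connected component $\mathfrak{g}^{0}$; hence $\mathfrak{g}$ is virtually connected. The only non-routine point is the almost-ideal property of $K$, which the maximality-of-dimension trick handles in the same spirit as Lemma~\ref{Definabilityder}; the rest is bookkeeping on top of the characterisations of absolute simplicity already established.
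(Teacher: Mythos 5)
Your proof is correct and follows essentially the same route as the paper: build from $H$, via iterated brackets, a definable connected subgroup that absolute simplicity forces to have finite index. The only cosmetic difference is that you establish $K$ is an almost ideal and invoke Lemma~\ref{AlmSim}, whereas the paper observes (using connectivity of $K$ and $[g,K]+K$) that it is in fact an actual ideal and uses absolute simplicity directly; both routes are fine, and you give more detail on why the sum is definable.
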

\begin{proof}
    Let $H$ be a definable infinite connected subgroup. Then, $[\overline{g},H]$ is connected for every $\overline{g}\in \mathfrak{g}^{<\omega}$. The subgroup $B=\sum_{\overline{g}\in \mathfrak{g}^{<\omega}} [\overline{g},H]$ is definable and connected. Since $[g,B]\leq B$, $B$ is an ideal in $\mathfrak{g}$. Since $\mathfrak{g}$ is absolutely simple, $B$ is of finite index in $\mathfrak{g}$ and so $\mathfrak{g}$ is virtually connected.
\end{proof}
We divide the proof into two parts: in the first, we verify that a soluble non-nilpotent definable NIP Lie ring of finite dimension has an infinite connected subgroup. This, by Lemma \ref{ConNip}, implies that not bad NIP Lie rings are virtually connected. The second part is devoted to the analysis of bad NIP Lie rings.
\begin{defn}
A \emph{Borel} is a definable virtually soluble Lie subring $\mathfrak{b}\leq \mathfrak{g}$ not almost contained in any definable virtually soluble Lie subring of strictly greater dimension. A Lie ring $\mathfrak{g}$ \emph{bad} if it has a virtually nilpotent Borel, not bad otherwise.
\end{defn}
Given a definable NIP Lie ring of finite dimension, there always exists a Borel. Indeed, taken $\mathfrak{h}$ a definably minimal Lie ring, $\mathfrak{h}$ is virtually abelian by Lemma \ref{NIPdim1}. Any virtually soluble Lie subring of maximal dimension is a Borel.\\
We verify that a soluble non-nilpotent NIP Lie ring of finite dimension has an infinite connected definable subgroup.
\begin{lemma}\label{VirSolImpVirCon}
Let $\mathfrak{h}$ be a definable NIP Lie ring of finite index. Assume that $\mathfrak{h}$ is virtually soluble non-virtually nilpotent. Then, $\mathfrak{h}$ has an infinite connected definable subgroup.
\end{lemma}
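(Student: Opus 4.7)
The strategy is to locate a one-dimensional abelian subquotient of $\mathfrak{h}$ on which $\mathfrak{h}$ acts non-almost-trivially, linearise that action via the NIP dimension-one linearisation of Lemma~\ref{LieNip}, and then exploit the fact that the additive group of a NIP definable field is connected (Lemma~\ref{FieldNIP}) to produce the desired infinite definable connected subgroup of $\mathfrak{h}$.

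First I would replace $\mathfrak{h}$ by a definable soluble Lie subring of finite index, at no loss of generality since an infinite definable connected subgroup of such a subring is still one of $\mathfrak{h}$. By Corollary~\ref{DerivedSeries}, the derived series $\mathfrak{h}=\mathfrak{h}^{(0)}\supseteq\mathfrak{h}^{(1)}\supseteq\cdots\supseteq\mathfrak{h}^{(n)}=\{0\}$ is a strictly decreasing chain of definable ideals with abelian quotients $V_i=\mathfrak{h}^{(i)}/\mathfrak{h}^{(i+1)}$. If $\mathfrak{h}$ acted almost trivially on every $V_i$, then $\bigcap_{i<n}\widetilde C_{\mathfrak h}(V_i)$ would be a finite-index definable Lie subring of $\mathfrak{h}$ almost centralising each abelian quotient, and a downward induction on $i$ would show $\mathfrak{h}$ to be virtually nilpotent, contradicting the hypothesis. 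Fix then an index $i$ for which the $\mathfrak{h}$-action on $V=V_i$ fails to be almost trivial.

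The next step is to descend from $V$ to a one-dimensional definable $\mathfrak{h}$-sub\-quotient $U$ on which the action remains non-almost-trivial. Using the $\omega$-DCC, I pick an infinite definable $\mathfrak{h}$-invariant subgroup $W\leq V$ minimal in dimension with $\mathfrak{h}$ not almost centralising it; iterating this with quotients by maximal proper $\mathfrak{h}$-invariant definable infinite subgroups of $W$, and using Lemma~\ref{icc} to keep the intersections of the uniformly definable $\mathfrak{h}$-invariant subgroups in play definable, produces a one-dimensional subquotient $U$ with $\mathfrak{h}/\widetilde C_{\mathfrak h}(U)$ infinite. Lemma~\ref{LieNip} then yields a finite subgroup $U_0\leq U$ and a definable field $K$ of dimension one with $U/U_0\simeq K^+$; since $K$ is NIP, Lemma~\ref{FieldNIP} says that $K^+$ is connected, so $U$ is virtually connected, and pulling its connected component back to $\mathfrak{h}^{(i)}$ exhibits the sought-after infinite definable connected subgroup of $\mathfrak{h}$.

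The main obstacle lies in this descent to dimension one with non-almost-trivial action: a filtration can have all composition factors trivial while the total action is non-trivial (witness a strictly upper-triangular matrix, whose action on a maximal invariant subgroup and on the corresponding quotient is zero), so the reduction must be performed delicately. The NIP assumption --- through Baldwin--Saxl (Lemma~\ref{BSC}) and the icc (Lemma~\ref{icc}) --- is precisely what lets one control uniformly definable $\mathfrak{h}$-invariant subgroups tightly enough to carry out the descent, and verifying the exact centre-condition hypothesis of Lemma~\ref{LieNip} on the action on $U$ is where the full strength of the ``not virtually nilpotent'' hypothesis must be invoked.
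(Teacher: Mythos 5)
Your overall strategy — extract a one-dimensional abelian subquotient on which some suitably large piece of $\mathfrak{h}$ acts non-almost-trivially, then linearise via Lemma~\ref{LieNip} and Lemma~\ref{FieldNIP} — is the right kind of idea, but the proposal has two gaps at exactly the point you flag as ``delicate'', and the paper's proof is organised differently precisely to avoid them.

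The first gap is the hypothesis of Lemma~\ref{LieNip}. That lemma does not ask that the acting Lie ring $\mathfrak{g}$ act non-almost-trivially on $V$; it asks that $Z(\mathfrak{g})\cap\widetilde C_{\mathfrak{g}}(V)$ have infinite index in $Z(\mathfrak{g})$, i.e.\ that the \emph{centre} of the acting ring act non-almost-trivially. You end up wanting to apply it to the action of $\mathfrak{h}$ (or of $\mathfrak{h}/\widetilde C_{\mathfrak{h}}(U)$) on a one-dimensional subquotient $U$, but nothing in the argument controls the action of $Z(\mathfrak{h})$ on $U$. If $Z(\mathfrak{h})$ almost centralises $U$ — which is entirely compatible with $\mathfrak{h}$ acting non-almost-trivially — Lemma~\ref{LieNip} simply does not apply. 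The paper sidesteps this by never applying the linearisation to $\mathfrak{h}$ itself: it drops down to an \emph{abelian} subring $I_1$ (so that $Z(I_1)=I_1$ and the centre condition collapses to non-almost-triviality of the whole action) sitting inside a carefully chosen derived term, and linearises the $I_1$-action on a minimal $I_1$-invariant subgroup.

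The second gap is the descent itself, which you acknowledge but do not carry out: as your own upper-triangular example shows, a module can have non-almost-trivial total action while every one-dimensional subquotient of a composition series is almost trivial, so ``iterating with quotients by maximal proper $\mathfrak{h}$-invariant subgroups'' can terminate without ever producing the $U$ you need. The paper's mechanism for breaking this impasse is an induction on $\dim(\mathfrak{h})$ together with a different choice of test object: rather than working inside one derived quotient $V_i$, it fixes a definable ideal $J$ of minimal dimension inside the last nontrivial derived term $\mathfrak{h}^{(n)}$ and asks which derived terms $\mathfrak{h}^{(i)}$ act non-almost-trivially on $J$. If none do, then $J$ is almost central, and the proof passes to $\mathfrak{h}/Z(\mathfrak{h})$ — a soluble, non-nilpotent Lie ring of strictly smaller dimension — applies the inductive hypothesis there to get a connected subgroup $A/Z(\mathfrak{h})$, and then runs $\ad_h:A/Z(\mathfrak{h})\to\mathfrak{h}$ for $h\in\mathfrak{h}$ to pull a connected infinite subgroup back down into $\mathfrak{h}$ itself. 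That inductive reduction is not present in your proposal, and without it the nilpotent-style failure mode you describe genuinely blocks the argument. (A smaller point: your claim that almost-triviality on every derived quotient forces virtual nilpotency needs an appeal to Lemma~\ref{NilFromAlmNil}, since stacking $n$ ``up to finite'' conditions gives only almost nilpotency a priori.)
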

\begin{proof}
Let $\mathfrak{h}$ be a soluble non-nilpotent NIP finite-dimensional Lie ring. We prove the theorem by induction on the dimension of $\mathfrak{h}$. For $\dim(\mathfrak{h})=2$, the conclusion follows by Lemma \ref{Dim2NIP}.\\
We may assume that the center of $Z(\mathfrak{h})$ is infinite. Let $\mathfrak{h}^{(n)}$ be the last term of the abelian descending series, that is definable by Lemma \ref{DerivedSeries}. Let $J$ be a definable ideal of minimal dimension in $\mathfrak{h}^{(n)}$. We verify that $J$ must be central. Assume not and let $\mathfrak{h}^{(i)}$ be the last term of the abelian descending series such that $\mathfrak{h}^{(i)}/\widetilde{C}_{\mathfrak{h}}(J)$ is not finite. Then, $\mathfrak{h}^{(i)}/C_{\mathfrak{h}}(J)$ is an almost abelian Lie ring that acts not almost trivially on $J$. By icc, there exists an abelian ideal $I_1$ in $\mathfrak{h}^{(i)}/\widetilde{C}_{\mathfrak{h}}(J)$ that acts not almost trivially. Let $J_1$ be a minimal $I_1$-invariant subgroup of $J$. Then, the action cannot be almost trivial. If not, by minimality, the subgroup $\widetilde{C}_{J}(I_1)$ must be of finite index in $J$ \hbox{i.e.} $I_1$ acts almost trivially, a contradiction. Therefore, the action on $J_1$ of $I_1$ is minimal and not almost trivial and, by Lemma \ref{LieNip}, $J_1$ must be virtually connected, proving the Lemma. Therefore, $J$ is almost central in $\mathfrak{h}$ and, up to pass to a Lie subring of finite index, $J\leq Z(\mathfrak{h})$. $\mathfrak{h}/Z(\mathfrak{h})$ is soluble not nilpotent, if not $\mathfrak{h}$ would be nilpotent, a contradiction. Therefore, by induction, it has a connected definable infinite subgroup $A/Z(\mathfrak{h})$.  
 Take, for any $h\in \mathfrak{h}$, the group homomorphism 
$$\ad_h:A/Z(\mathfrak{h})\mapsto \mathfrak{h}.$$
Since $A/Z(\mathfrak{h})$ is connected, the image is connected. If it is infinite, the conclusion follows. Therefore, we may assume that $[h,A]=0$ for any $h\in \mathfrak{h}$. This implies that $A\leq Z(\mathfrak{h})$, a contradiction.
\end{proof}
A corollary of Lemma \ref{ConNip} and Lemma \ref{VirSolImpVirCon} is that not bad absolutely simple Lie rings are virtually connected.
\begin{corollary}\label{VirConNotBadLie}
    Let $\mathfrak{g}$ be a not bad NIP Lie ring of finite dimension. Then, $\mathfrak{g}$ is virtually connected.
\end{corollary}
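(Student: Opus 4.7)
My plan is a direct two-step combination of the preceding lemmas. First, I would extract a Borel $\mathfrak{b}$ of $\mathfrak{g}$: as noted immediately after the definition of Borel, a definably minimal Lie subring of $\mathfrak{g}$ exists by finite-dimensionality and is virtually abelian by Lemma \ref{NIPdim1}, so virtually soluble definable Lie subrings of $\mathfrak{g}$ exist, and any such subring of maximal dimension is by definition a Borel. The hypothesis that $\mathfrak{g}$ is not bad is exactly that no Borel is virtually nilpotent, so this $\mathfrak{b}$ is virtually soluble but not virtually nilpotent.

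Next, I would apply Lemma \ref{VirSolImpVirCon} to $\mathfrak{b}$, producing an infinite definable connected subgroup $H\leq\mathfrak{b}\leq\mathfrak{g}$. Finally, using the absolutely simple framing of this subsection, Lemma \ref{ConNip} applied to $\mathfrak{g}$ and $H$ yields that $\mathfrak{g}$ is virtually connected: internally, that argument sums the iterated brackets $[\overline{g},H]$ over $\overline{g}\in\mathfrak{g}^{<\omega}$ to build a definable connected ideal, which by absolute simplicity must have finite index in $\mathfrak{g}$.

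The conclusion is therefore a one-line assembly of Lemma \ref{VirSolImpVirCon} and Lemma \ref{ConNip}. The only real point to keep track of is that the two hypotheses line up correctly: non-virtual-nilpotency of the Borel is immediate from the not-bad hypothesis, and absolute simplicity must be in force at the last step (as is implicit from the framing of this subsection, since Lemma \ref{ConNip} is stated only in that setting). I do not expect any substantial technical obstacle beyond this bookkeeping.
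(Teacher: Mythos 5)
Your proof is exactly the paper's: extract a Borel (which exists and is virtually soluble but not virtually nilpotent since $\mathfrak{g}$ is not bad), apply Lemma \ref{VirSolImpVirCon} to obtain an infinite connected definable subgroup, and conclude by Lemma \ref{ConNip}. You are also right that absolute simplicity is required for the final step — the corollary's statement omits it, but the paper's prose introducing this corollary explicitly says ``not bad \emph{absolutely simple} Lie rings,'' confirming your reading.
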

We verify that also NIP bad Lie rings are virtually connected. This yields to the following conclusion.
\begin{lemma}\label{abssimimpvirtcon}
   Let $\mathfrak{g}$ be an absolutely simple NIP Lie ring of finite dimension. Then, $\mathfrak{g}$ is virtually connected.
\end{lemma}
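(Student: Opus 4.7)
The plan is to reduce via Corollary~\ref{VirConNotBadLie} to the bad case and then produce a definable infinite connected subgroup, after which Lemma~\ref{ConNip} closes the argument. By Corollary~\ref{VirConNotBadLie} a not bad absolutely simple NIP Lie ring of finite dimension is already virtually connected, so I may assume $\mathfrak{g}$ admits a virtually nilpotent Borel $\mathfrak{b}$.

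The first step is to extract an abelian definable Lie subring $\mathfrak{a} \leq \mathfrak{b}$: I pass to a nilpotent Lie subring $\mathfrak{b}_{0} \leq \mathfrak{b}$ of finite index, and take $\mathfrak{a}$ to be the first non-trivial term of its upper central series. The definability of $\mathfrak{a}$ follows from the icc condition of Lemma~\ref{icc} applied to the centralisers of finitely many generators, and $\mathfrak{a}$ is infinite and abelian by construction.

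The second step is to analyse the definable ideal $\widetilde{C}_{\mathfrak{g}}(\mathfrak{a})$ (an ideal by Corollary~\ref{C(H/K)AlmId}). Absolute simplicity forces it to be either finite or of finite index in $\mathfrak{g}$. If it is of finite index, then $\mathfrak{a}$ is almost central, so $\widetilde{Z}(\mathfrak{g})$ is infinite and, again by absolute simplicity, of finite index; combining Lemma~\ref{C(h)} with icc produces a definable abelian Lie subring of finite index, so $\mathfrak{g}$ is virtually abelian. A virtually abelian absolutely simple NIP Lie ring of finite dimension is definably minimal on a Lie subring of finite index, whose definable connected component (which exists by $\omega$-DCC) is then forced to be of finite index, giving virtual connectivity. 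Otherwise $\widetilde{C}_{\mathfrak{g}}(\mathfrak{a})$ is finite, so the adjoint action of $\mathfrak{a}$ on $(\mathfrak{g},+)$ is not almost trivial; I then pick a definable absolutely minimal $\mathfrak{a}$-invariant subquotient $V = H / K$ of $(\mathfrak{g},+)$ on which $\mathfrak{a}$ still acts non-trivially.

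The third step is linearisation: applying Corollary~\ref{LinNVC} to the action of $\mathfrak{a}$ on $V$ yields a definable field $F$ with $V / V_{0} \simeq F^{n}$ for some finite subgroup $V_{0} \leq V$. Since $F$ is a NIP definable field, $(F,+)$ is connected by Lemma~\ref{FieldNIP}, so $V$ is virtually connected. Pulling back through the quotient $H \twoheadrightarrow V$ and extracting the connected component of the preimage of the connected component of $V$ produces a definable infinite connected subgroup of $\mathfrak{g}$, at which point Lemma~\ref{ConNip} applies. The main obstacle is this last step: one must verify that the $\mathfrak{a}$-invariant subquotient can be chosen genuinely absolutely minimal (and not merely almost minimal) so that the hypotheses of Corollary~\ref{LinNVC} are met, and then carefully lift the virtual connectivity of the subquotient back to a connected subgroup of $\mathfrak{g}$ itself, which uses Lemma~\ref{ObsVc}.
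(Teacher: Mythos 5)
Your skeleton matches the paper's: reduce to the bad case via Corollary~\ref{VirConNotBadLie}, linearise the action of a nilpotent piece, use that NIP fields have connected additive group, and close with Lemma~\ref{ConNip}. But there are two genuine gaps.

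\textbf{First gap: $\widetilde{C}_{\mathfrak{g}}(\mathfrak{a})$ is not an ideal.} Your $\mathfrak{a}$ is the centre of a finite-index subring of a proper Borel $\mathfrak{b}$; it is an ideal of $\mathfrak{b}_0$ but not an ideal (or almost ideal) of $\mathfrak{g}$. Corollary~\ref{C(H/K)AlmId} only makes $\widetilde{C}_{\mathfrak{g}}(H/K)$ an ideal when $H$ and $K$ are almost ideals of $\mathfrak{g}$, and Lemma~\ref{DefZ} only applies to genuine $\mathfrak{g}$-modules, which $\mathfrak{a}$ is not. So absolute simplicity gives you no information about the index of $\widetilde{C}_{\mathfrak{g}}(\mathfrak{a})$ in $\mathfrak{g}$, and the dichotomy at the heart of your second step collapses. (Separately, the ``almost abelian'' branch is also problematic: $\omega$-DCC does \emph{not} give existence of a connected component — it only bounds strictly infinite-index descents — so ``the definable connected component exists by $\omega$-DCC'' is false in general for NIP groups.)

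\textbf{Second gap (the serious one): you cannot lift virtual connectivity of a subquotient.} You obtain a subquotient $V=H/K$ of $(\mathfrak{g},+)$ that is isogenic to $F^n$ for a NIP field $F$, hence virtually connected. But a virtually connected $H/K$ says nothing about connectedness of $H$ when $K$ is arbitrary, and Lemma~\ref{ObsVc} does not help: it requires \emph{both} $K$ and $H/K$ to be virtually connected, and you have no control over $K$. This is exactly the obstacle the paper's proof is engineered to avoid. After linearising via Corollary~\ref{LinAlmNil} the paper never passes to preimages; instead it exploits the upper central series of the Borel $\mathfrak{h}$ to build group homomorphisms whose \emph{domain} is the virtually connected module $A/\mathfrak{h}$ but whose \emph{image lands directly in} subgroups of $\mathfrak{g}$. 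Concretely, for $h\in Z(\mathfrak{h})$ the map $\ad_h: A/\mathfrak{h}\to A$ is well-defined because $[h,\mathfrak{h}]=0$, and its image $[h,A]\leq\mathfrak{g}$ is virtually connected; if any such image is infinite one is done by Lemma~\ref{ConNip}, and if all of them are finite one climbs to $Z_2(\mathfrak{h})$ using iterated brackets $[h',[h,\_]]:A/\mathfrak{h}\to A$, eventually forcing $Z_2(\mathfrak{h})=Z(\mathfrak{h})$ and contradicting that the Borel is nilpotent but not abelian. That bypass is the key idea your proposal is missing, and without it the final step does not go through.
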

\begin{proof}
   By Corollary \ref{VirConNotBadLie}, it is sufficient to prove the bad case. Therefore, assume that $\mathfrak{g}$ is a bad Lie ring \hbox{i.e.} it has a nilpotent Borel $\mathfrak{h}$. Take the action of $\mathfrak{h}$ on $\mathfrak{g}/\mathfrak{h}$. This action cannot be almost trivial, since it would imply, by icc, that $\mathfrak{h}$ is an ideal of a definable Lie subring of finite index in $\mathfrak{g}$, contradicting the absolute simplicity. Let $A/\mathfrak{h}$ be a subgroup that is of minimal dimension among all the definable subgroups $A_i/\mathfrak{h}$ that are invariant for a definable Lie subring of finite index in $\mathfrak{h}$. Let $\mathfrak{h}_1$ the Lie ring such that $A/\mathfrak{h}$ is $\mathfrak{h}_1$-invariant. The action cannot be almost trivial. Assume not, then there exists a definable Lie subring $\mathfrak{h}_2$ of finite index in $\mathfrak{h}$ such that $N=N_{\mathfrak{g}}(\mathfrak{h}_2)/\mathfrak{h}_2$ is infinite. Taking a definably minimal Lie subring $\mathfrak{l}$ in $N/\mathfrak{h}_2$, then $\mathfrak{l}$ is a virtually soluble definable Lie ring containing $\mathfrak{h}_2$ and of dimension strictly greater, a contradiction to the hypothesis. By Corollary \ref{LinAlmNil}, $A/\mathfrak{h}$ is virtually connected and isogenic to a vector space of finite dimension over a NIP field. We verify that $A\leq C_{\mathfrak{g}}(Z_{\mathfrak{h}_1})$. For any $h\in Z(\mathfrak{h})$,  we define the group homomorphisms 
   $$\ad_h:A/\mathfrak{h}\to A.$$
   Since $A/\mathfrak{h}$ is virtually connected, also the image is virtually connected. If the image is infinite, $\mathfrak{g}$ is virtually connected by Lemma \ref{ConNip}. Therefore, we may assume that, for every $h\in Z(\mathfrak{h})$, $[h,A]$ is finite. By icc, the subgroup $C_{\mathfrak{g}}(Z(\mathfrak{h}))=\bigcap_{h\in Z(\mathfrak{h})}(C_{\mathfrak{g}}(h))$ is definable and given by the intersection of finitely many centraliser. By previous observation, this implies that $A$ is almost contained in $C_{\mathfrak{g}}(Z(\mathfrak{h}))$. Up to take a submodule of finite index in $A$, we may assume that $A\leq C_{\mathfrak{g}}(Z(\mathfrak{h}))$. Let $h\in Z_2(\mathfrak{h})$. Then, the group homomorphism 
   $$\ad_h:A/\mathfrak{h}\to A/Z(\mathfrak{h})$$
   is well-defined and has virtually connected image.
   For any $h'\in \mathfrak{h}$, also the image of the group homomorphism 
   $$[h',[h,\_]]:A/\mathfrak{h}\to A$$
   is well-defined and virtually connected. By Lemma \ref{ConNip}, either it is finite or $\mathfrak{g}$ is virtually connected. Up to pass to a submodule of finite index, we may assume that, for any $h\in Z_2(\mathfrak{h})$ and $h'\in \mathfrak{h}$, $[h',[h,A]]=0$. Since $[h',h]\in Z(\mathfrak{h})\leq C_{\mathfrak{g}}(A)$, we have that $[h,[h',A]]=0$ for any $h\in Z_2(\mathfrak{h})$ and $h'\in \mathfrak{h}$. This implies that $[\mathfrak{h},A]\leq C_{\mathfrak{g}}(Z_2(\mathfrak{h}))$. The latter is a $\mathfrak{h}$-module. Indeed, for $h\in \mathfrak{h}$, $g\in C_{\mathfrak{g}}(Z_2(\mathfrak{h}))$,
   $$[[h,g],Z_2(\mathfrak{h})]\leq [h,[g,Z_2(\mathfrak{h})]]+[g,[h,Z_2(\mathfrak{h})]\leq 0+[g,Z(\mathfrak{h})]=0.$$
   On the other hand, $C_{\mathfrak{g}}(Z_2(\mathfrak{h}))+\mathfrak{h}$ is strictly greater than $\mathfrak{h}$, if not $[\mathfrak{h},A]\leq \mathfrak{h}$ contradicting the non triviality of the action of $\mathfrak{h}$ on $A/\mathfrak{h}$. Therefore, $A/\mathfrak{h}=C_{A}(Z_2(\mathfrak{h}))+\mathfrak{h}/\mathfrak{h}$. Taken $h\in Z_2(\mathfrak{h})-Z(\mathfrak{h})$, the subgroup $[h,A]\leq [h,C_{\mathfrak{g}}(Z_2(\mathfrak{h}))+\mathfrak{h}]\leq Z(\mathfrak{h})$. Moreover, it is connected. This implies that the image, up to pass to a Lie subring of finite index, must be $0$. This implies $[h,\mathfrak{h}]=0$ \hbox{i.e.} $Z_2(\mathfrak{h})=Z(\mathfrak{h})$. This contradicts the nilpotency of $\mathfrak{h}$ and the conclusion follows.


\end{proof}
Another result in this direction is the following.
\begin{lemma}\label{ConnFinAlmCen}
    Let $\mathfrak{g}$ be a definable soluble NIP Lie ring with finite almost center. Then, $\mathfrak{g}$ is virtually connected and has the DCC.
\end{lemma}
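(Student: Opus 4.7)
I would prove the lemma by induction on $\dim(\mathfrak{g})$, with the base case $\dim(\mathfrak{g})=0$ trivial. For the inductive step, I first use Lemma \ref{C(h)} to pass to a definable Lie subring of finite index in which $Z(\mathfrak{g})=\widetilde{Z}(\mathfrak{g})$ is finite, and then rule out the possibility that $\mathfrak{g}$ is virtually nilpotent. Indeed, if $\mathfrak{g}$ were infinite virtually nilpotent, passing to a nilpotent subring of finite index and examining the upper central series would show that the smallest term $Z_i$ of the ascending central chain which is infinite must satisfy $[Z_i,\mathfrak{g}]\leq Z_{i-1}$ (a finite set), so $Z_i\leq\widetilde{Z}(\mathfrak{g})$, contradicting finiteness of the almost center.

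Since $\mathfrak{g}$ is then soluble but not virtually nilpotent, Lemma \ref{VirSolImpVirCon} provides an infinite definable connected subgroup. Summing finitely many such subgroups to maximum total dimension yields a unique maximal definable connected subgroup $H$; for each $g\in\mathfrak{g}$ the subgroup $[g,H]$ is the connected image of $H$ under $\mathrm{ad}_g$, hence contained in $H$, so $H$ is a definable connected ideal. I would then apply Theorem \ref{DCCseries} to $H$, obtaining a series of definable $H$-ideals $H=\mathfrak{h}_0\geq\cdots\geq\mathfrak{h}_n=\{0\}$ whose quotients are either central in $H/\mathfrak{h}_{i+1}$ or $H$-minimal connected with DCC. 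To upgrade this to DCC for $H$, the central pieces must be shown finite: one analyzes each such piece as a $\mathfrak{g}$-module (noting that the centres appearing in the series are $\mathfrak{g}$-invariant), finds a minimal $\mathfrak{g}$-submodule inside it, and argues that if the action is almost trivial the piece lies in an iterated almost-center and is therefore finite, whereas a non-trivial action is linearized by Lemma \ref{LieNip} into a virtual vector space over a NIP field, which has DCC by Corollary \ref{DCCK^n}.

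Finally, I turn to $\mathfrak{g}/H$, which has strictly smaller dimension and, by maximality of $H$ together with the fact that an extension of a connected abelian group by a connected abelian group is connected, contains no infinite definable connected subgroup. Combined with Lemma \ref{VirSolImpVirCon}, $\mathfrak{g}/H$ must therefore be virtually nilpotent. The main obstacle of the proof is then to show that $\mathfrak{g}/H$ is in fact finite, which is equivalent to proving that $\widetilde{Z}(\mathfrak{g}/H)$ is finite: one must show that any $g\in\mathfrak{g}$ whose commutator $[g,\mathfrak{g}]$ is almost contained in $H$ already belongs to $\widetilde{Z}(\mathfrak{g})$. The argument exploits the linearized structure of $H$ (as a virtual vector space over a NIP field, obtained by iteratively applying Lemma \ref{LieNip} to minimal $\mathfrak{g}$-submodules inside the abelian descending series of $H$) together with Lemma \ref{icc} to control the $\mathrm{ad}_g$-map $\mathfrak{g}/H\to H$ and force the desired contradiction. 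Once $\mathfrak{g}/H$ is finite, virtual connectivity and DCC of $\mathfrak{g}$ follow from those of $H$ via Lemma \ref{ObsVc} and the preservation of DCC under finite-index extensions.
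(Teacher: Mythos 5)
Your architecture is essentially the paper's, reorganized top-down: where the paper builds up from a $\mathfrak{g}$-minimal ideal $\mathfrak{h}$ (linearized to $K^+$) and then climbs through successive central extensions of increasing dimension, you take the unique maximal connected definable ideal $H$ and try to show the quotient is finite. Your preliminary steps check out: the upper-central-series argument excluding virtual nilpotency, the existence and uniqueness of the maximal connected definable subgroup $H$, and the observation that $[g,H]$ is the connected image of $H$ under $\ad_g$ and hence contained in $H$, so that $H$ is an ideal.

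The gap is exactly the step you yourself label ``the main obstacle'': proving that $\widetilde{Z}(\mathfrak{g}/H)$ is finite, i.e.\ that any $g$ for which $\{x\in\mathfrak{g}:[g,x]\in H\}$ has finite index already lies in $\widetilde{Z}(\mathfrak{g})$. Nothing in ``the linearized structure of $H$ together with Lemma \ref{icc}'' yields this: a priori $\ad_g$ can map a finite-index subgroup of $\mathfrak{g}$ onto an infinite piece of $H$ while $C_{\mathfrak{g}}(g)$ has infinite index, so the implication is far from formal. This is precisely where the paper's proof does all of its real work: it takes a definably minimal $\mathfrak{h}_1/\mathfrak{h}$ inside $Z(\mathfrak{g}/\mathfrak{h})$, shows $C_{\mathfrak{g}}(\mathfrak{h})\leq C_{\mathfrak{g}}(\mathfrak{h}_1)$, and then runs a descending analysis of the kernels $\mathfrak{h}_1\cap\ker(\ad_g)\cap\ker(\ad_{g'})\cap\cdots$ for $g,g'\notin C_{\mathfrak{g}}(\mathfrak{h})$ (exploiting that such elements act on $\mathfrak{h}\simeq K^+$ by nonzero scalars) to force the central piece sitting above the connected part to be virtually connected, and then iterates up in dimension until it exhausts $\mathfrak{g}$. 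Some argument of this kind is indispensable in your framework too; as written, your proposal asserts the conclusion of the hard step rather than proving it. A secondary soft spot: Theorem \ref{DCCseries} alone does not give DCC for $H$, since its central quotients need not have DCC, so your claim that those pieces are finite needs the finite-iterated-almost-centre argument written out in full, though there your sketch is at least pointed in the right direction.
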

\begin{proof}
We proceed by induction on the dimension. In dimension $2$, the conclusion follows by Lemma \ref{Dim2NIP}. Assume that the Lemma holds for every NIP definable soluble Lie ring of dimension $n$ and finite almost center, and let $\mathfrak{g}$ be a NIP soluble Lie ring of dimension $n+1$ with finite almost center. Up to work in $\mathfrak{g}/\widetilde{Z}(\mathfrak{g})$, we may assume $\widetilde{Z}(\mathfrak{g})=0$. By finite dimension, there exists a definable Lie subring of finite index $\mathfrak{g}_1$ and be a definable $\mathfrak{g}_1$-minimal ideal $\mathfrak{h}$ that is minimal also for any definable Lie subring of finite index in $\mathfrak{g}_1$. Without loss of generality, we may assume $\mathfrak{g}=\mathfrak{g}_1$. The action of $\mathfrak{g}$ on this ideal cannot be almost trivial, since it would imply that the almost center is infinite. Since $\mathfrak{g}$ is soluble and any $\mathfrak{g}^{(n)}$ is definable for any $n\in \mathbb{N}$ by Lemma \ref{Definabilityder}, there must exists $\mathfrak{g}^{(i)}$ such that $\mathfrak{g}^{(i+1)}$ is almost contained in $\widetilde{C}_{\mathfrak{g}}(\mathfrak{h})$ and $\mathfrak{g}^{(i)}$ is not. Therefore, we may apply Lemma \ref{LieLieRin} to linearize the action. By Lemma \ref{LieNip} and icc, $\mathfrak{h}$ is virtually connected and $\mathfrak{h}^0\simeq K^+$ for a definable NIP field $K$. By Lemma \ref{DCCfield}, it has DCC. Assume $\mathfrak{h}$ connected, then $\widetilde{C}_{\mathfrak{g}}(\mathfrak{h})=C_{\mathfrak{g}}(\mathfrak{h})$ and, by previous proof, $\mathfrak{g}/C_{\mathfrak{g}}(\mathfrak{h})$ acts by multiplication for an element in $K$. 
$\mathfrak{g}/\mathfrak{h}$ is still a soluble NIP definable Lie ring of finite dimension. Since $\dim(\mathfrak{g}/\mathfrak{h})<\dim(\mathfrak{g})$, if $\widetilde{Z}(\mathfrak{g}/\mathfrak{h})$ is finite, then $\mathfrak{g}/\mathfrak{h}$ is virtually connected by induction hypothesis. In this case, the conclusion follows from Lemma \ref{virtconn}. Therefore, we may assume that $\widetilde{Z}(\mathfrak{g}/\mathfrak{h})$ is infinite. Passing to a definable Lie subring of finite index in $\mathfrak{g}$, $Z(\mathfrak{g}/\mathfrak{h})$ is infinite. Define $\mathfrak{h}_1$ such that $\mathfrak{h}_1/\mathfrak{h}$ is a definably minimal subgroup in $Z(\mathfrak{g}/\mathfrak{h})$. We prove that $\mathfrak{h}_1$ has the DCC. Since $\mathfrak{h}_1/\mathfrak{h}$ is definably minimal, it is sufficient to prove that $\mathfrak{h}_1/\mathfrak{h}$ is virtually connected. Assume, for a contradiction, that this is not the case. For $g\in C_{\mathfrak{g}}(\mathfrak{h})$, the homomorphism
$$\ad_g:\mathfrak{h}_1/\mathfrak{h}\to \mathfrak{h}$$
is well-defined. It must be trivial since, if not, $\mathfrak{h}_1/\mathfrak{h}$ is isogenous to a subgroup of $\mathfrak{h}$ that has the DCC. Consequently, $\mathfrak{h}_1/\mathfrak{h}$ is virtually connected. Therefore, we may assume that $C_{\mathfrak{g}}(\mathfrak{h})\leq C_{\mathfrak{g}}(\mathfrak{h}_1)$.\\
Let $g\in \mathfrak{g}-C_{\mathfrak{g}}(\mathfrak{h})$. Then, the kernel of the group homomorphism
$$\ad_g:\mathfrak{h}_1\to \mathfrak{h}$$
is infinite by dimensionality. The kernel cannot be equal to $\mathfrak{h}_1$ by assumption on $\mathfrak{g}$. If the kernel is contained in $\mathfrak{h}$, then $\mathfrak{h}_1/\mathfrak{h}$ is virtually connected. Therefore, we may assume that there exists $g\in \mathfrak{g}-C_{\mathfrak{g}}(\mathfrak{h})$ such that $\dim(\mathfrak{h}_1\cap C_{\mathfrak{g}}(g))<\dim(\mathfrak{h}_1)$ and $\mathfrak{h}_1\cap C_{\mathfrak{g}}(g)+\mathfrak{h}\sim \mathfrak{h}_1$. Let $g'\in \mathfrak{g}-C_{\mathfrak{g}}(\mathfrak{h})$. The group homomorphism
$$\ad_{g'}:\mathfrak{h}_1\cap C_{\mathfrak{g}}(g)\to \mathfrak{h}$$
cannot be trivial since $g\not\in C_{\mathfrak{g}}(\mathfrak{h})$. Moreover, in case $\ad_{g'}$ is not trivial the kernel is not almost contained in $\mathfrak{h}\cap \ker([g,\_])$, again $\mathfrak{h}_1$ has the DCC. So we may assume $\mathfrak{h}_1\cap \ker([g,\_])\cap \ker([g',\_])$ is infinite with dimension strictly less that $\dim(\mathfrak{h}_1\cap \ker([g,\_]))$ and $\mathfrak{h}_1\cap \ker([g,\_])\cap \ker([g',\_])+\mathfrak{h}\sim\mathfrak{h}_1$. By finite dimensionality, this process must end after finitely many steps. This means that there exists an infinite definable subgroup $H_1$ such that $H_1+\mathfrak{h}\sim \mathfrak{h}_1$ and $H_1/\mathfrak{h}$ is virtually connected. Consequently, $\mathfrak{h}_1$ has the DCC.\\
Therefore, we may assume that $\mathfrak{h}_1$ has the DCC. The definable Lie ring $\mathfrak{g}/\mathfrak{h}_1$ has infinite almost center, if not the conclusion follows by Lemma \ref{ObsVc}. Iterating the previous proof, we conclude that there exists a Lie subring $\mathfrak{h}_2$ of $\mathfrak{g}$ with the DCC such that $\dim(\mathfrak{h}_2)>\dim(\mathfrak{h}_1)$. The iteration must eventually stop by finite dimensionality. Therefore, $\mathfrak{g}$ is connected. The DCC follows by Theorem \ref{DCCseries}.
\end{proof} 
The condition $\widetilde{Z}(\mathfrak{g})$ finite is necessary: let $K$ be a definable NIP field of dimension $1$ and let $\mathfrak{ga}_1(K)$ be a definable soluble Lie ring of dimension $2$. Let $A$ be a definable NIP abelian not virtually connected group. Then, $A\oplus \mathfrak{ga}_1(K)$ with bracket given by $[a+g,a+g']=[g,g']$ is a soluble Lie ring of dimension $3$ that is not virtually connected.\\
We may conclude this section with the characterization of definable NIP Lie rings of dimension $3$ and $4$.
\begin{lemma}
   Let $\mathfrak{g}$ be a definable NIP Lie ring of finite dimension. Then,
   \begin{itemize}
       \item If $\dim(\mathfrak{g})=3$, either $\widetilde{Z}(\mathfrak{g})$ is infinite or $\mathfrak{g}$ is virtually connected;
       \item if $\dim(\mathfrak{g})=4$, either $\widetilde{Z}(\mathfrak{g})$ is infinite or $\mathfrak{g}$ is virtually connected.
   \end{itemize}
\end{lemma}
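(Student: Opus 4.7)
The plan is to proceed by induction on $\dim(\mathfrak{g})$, where dimensions $1$ and $2$ are already covered: in dimension $1$ Lemma~\ref{NIPdim1} forces the almost center to have finite index, and in dimension $2$ Lemma~\ref{Dim2NIP} together with Lemma~\ref{ConnFinAlmCen} yields the conclusion. Fix now $\mathfrak{g}$ of dimension $3$ or $4$, NIP and of finite dimension, with $\widetilde{Z}(\mathfrak{g})$ finite. Using Lemma~\ref{C(h)} I first replace $\mathfrak{g}$ by a definable Lie subring of finite index so that $Z(\mathfrak{g}) = \widetilde{Z}(\mathfrak{g})$ is finite; this is harmless since virtual connectedness is preserved under such a reduction. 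If $\mathfrak{g}$ is virtually soluble, Lemma~\ref{ConnFinAlmCen} concludes; if $\mathfrak{g}$ is absolutely simple, Lemma~\ref{abssimimpvirtcon} concludes.

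Otherwise $\mathfrak{g}$ is neither virtually soluble nor absolutely simple, so some definable Lie subring of finite index contains a proper infinite definable ideal $\mathfrak{h}$ of infinite index; replacing $\mathfrak{g}$ by this subring, I carry out a case analysis on $\dim(\mathfrak{h})$. For $\dim(\mathfrak{g}) = 3$ one has $\dim(\mathfrak{h}), \dim(\mathfrak{g}/\mathfrak{h}) \in \{1,2\}$, and by Lemmas~\ref{NIPdim1} and \ref{Dim2NIP} both factors are virtually soluble, so $\mathfrak{g}$ is virtually soluble, contradicting the case assumption; hence the dim-$3$ case is concluded. For $\dim(\mathfrak{g}) = 4$ the same argument rules out $\dim(\mathfrak{h}) = 2$.

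For $\dim(\mathfrak{h}) = 1$ in dimension $4$, Lemma~\ref{C(h)} extracts a characteristic abelian ideal $\mathfrak{a}$ of $\mathfrak{g}$ of finite index in $\mathfrak{h}$. Since $\widetilde{Z}(\mathfrak{g})$ is finite the action of $\mathfrak{g}$ on $\mathfrak{a}$ is not almost trivial, so Lemma~\ref{LieNip} makes both $\mathfrak{a}$ and $\mathfrak{g}/\widetilde{C}_{\mathfrak{g}}(\mathfrak{a}) \simeq K^{+}$ virtually connected for a definable NIP field $K$. Applying the already-established dim-$3$ case to $\mathfrak{g}/\mathfrak{a}$, either $\mathfrak{g}/\mathfrak{a}$ is virtually connected (and then $\mathfrak{g}$ is virtually connected by Lemma~\ref{ObsVc}), or $\widetilde{Z}(\mathfrak{g}/\mathfrak{a})$ is infinite; in the latter sub-case a minimal infinite definable subring of $Z(\mathfrak{g}/\mathfrak{a})$ lifts to a definable ideal $\mathfrak{b}$ of $\mathfrak{g}$ of dimension $2$, because $[\mathfrak{b},\mathfrak{g}] \leq \mathfrak{a} \leq \mathfrak{b}$, reducing to the excluded dim-$2$ sub-case.

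The hardest sub-case is $\dim(\mathfrak{h}) = 3$. Applying the dim-$3$ result to $\mathfrak{h}$: if $\widetilde{Z}(\mathfrak{h})$ is infinite it is a characteristic definable ideal of $\mathfrak{g}$ of strictly smaller dimension, reducing to a previous sub-case (dimension $3$ there would force $\mathfrak{h}$ virtually abelian and $\mathfrak{g}$ virtually soluble); if $\widetilde{Z}(\mathfrak{h})$ is finite then $\mathfrak{h}$ is virtually connected and, after passing to its connected component, is a connected non-virtually-soluble Lie ring of dimension $3$. The classification of connected dim-$3$ Lie rings then gives $\mathfrak{h} \simeq \operatorname{sl}_2(K)$ or $\mathfrak{h}$ bad with $(\mathfrak{h},+) \simeq K^{3}$ for a definable dim-$1$ field $K$; in both cases $Z(\mathfrak{h})$ is finite, so $C_{\mathfrak{g}}(\mathfrak{h})$ has dimension at most $1$, and either it is infinite (reducing to the dim-$1$ ideal sub-case) or finite, in which case the adjoint representation embeds $\mathfrak{g}$, modulo a finite kernel, into the Lie ring of definable derivations of $\mathfrak{h}$; using triviality of definable field derivations from Lemma~\ref{DerTri} one shows this target has dimension at most $\dim(\mathfrak{h}) = 3$, contradicting $\dim(\mathfrak{g}) = 4$. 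Controlling the definable derivations of $\mathfrak{h}$ in the bad case, and ensuring the characteristic restrictions under which the connected dim-$3$ classification applies, are the main technical obstacles.
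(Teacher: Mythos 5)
Your reduction scheme coincides with the paper's: when $\widetilde{Z}(\mathfrak{g})$ is finite you dispose of the virtually soluble case by Lemma \ref{ConnFinAlmCen} and the absolutely simple case by Lemma \ref{abssimimpvirtcon}, and otherwise analyse a proper infinite definable ideal of infinite index by its dimension. Dimension $3$, and the sub-cases of dimension $4$ where the ideal has dimension $1$ or $2$, go through essentially as in the paper (modulo a small point: you invoke Lemma \ref{LieNip} for the action of all of $\mathfrak{g}$ on the $1$-dimensional abelian ideal $\mathfrak{a}$ without checking its hypothesis that the centre does not almost-centralise the module; the paper instead linearises the action of a definably minimal \emph{abelian} subring of the quotient, for which the hypothesis is available).

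The genuine gap is in the sub-case of a $3$-dimensional absolutely simple ideal $\mathfrak{h}$. Your plan is to classify $\mathfrak{h}$ via the connected dimension-$3$ theorem and then embed $\mathfrak{g}$, modulo a finite kernel, into the definable derivations of $\mathfrak{h}$, claiming this target has dimension at most $3$. This imports a characteristic $>3$ hypothesis that the statement does not carry, and, more seriously, the bound on $\operatorname{DefDer}(\mathfrak{h})$ is unjustified in the bad case: there one only knows $(\mathfrak{h},+)\simeq K^3$ and nothing about the bracket, and a derivation is a priori only determined by its values on a generating set, which gives nothing better than $2\dim(\mathfrak{h})$. You flag exactly this as ``the main technical obstacle'', i.e.\ the step is not carried out. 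The paper avoids any structural classification here: after passing to a finite-index subring on which $\mathfrak{g}/\mathfrak{h}$ is abelian, it considers the homomorphisms $\ad_g:\mathfrak{g}\to\mathfrak{h}$ and shows by an iterated dimension count (each kernel is infinite, is not contained in $\mathfrak{h}$ on pain of $\mathfrak{g}/\mathfrak{h}$ being virtually connected, and restricting further $\ad_{g'}$ to an intersection of kernels cannot have finite image on pain of that intersection lying in $\widetilde{Z}(\mathfrak{g})$) that every finite intersection of kernels is infinite; by the icc of Lemma \ref{icc}, $Z(\mathfrak{g})=\bigcap_g\ker(\ad_g)$ is such a finite intersection, hence infinite, contradicting the finiteness of $\widetilde{Z}(\mathfrak{g})$. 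You should replace the derivation argument by a kernel-intersection argument of this kind.
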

\begin{proof}
   By Lemma \ref{ConnFinAlmCen} and Lemma \ref{abssimimpvirtcon}, the first statement follows. For the second one, again by Lemma \ref{ConnFinAlmCen} and Lemma \ref{abssimimpvirtcon}, it sufficient to study the following two cases:
   \begin{itemize}
       \item $\mathfrak{g}$ has a definable ideal $I$ such that $\dim(\mathfrak{g}/I)=3$ and $\mathfrak{g}/I$ is absolutely simple;
       \item $\mathfrak{g}$ has a definable ideal $I$ such that $\dim(I)=3$ and $I$ is absolutely simple.
   \end{itemize} 
   In the first case, up to pass to a definable Lie subring of finite index, we may assume $I$ abelian and $\mathfrak{g}/I$ is connected with the DCC. Taking the action of a definably minimal connected Lie subring $\mathfrak{h}$ of $\mathfrak{g}/I$ on $I$, either it is almost trivial or $I$ and $\mathfrak{h}$ are connected. In the latter, $\mathfrak{g}$ is virtually connected by Lemma \ref{virtconn}. Assume the first, then $\widetilde{C}_{\mathfrak{g}}(I)\geq \mathfrak{h}$. Therefore, $\widetilde{C}_{\mathfrak{g}}(I)$ is an ideal of dimension at least $2$. Since $\widetilde{C}_{\mathfrak{g}}(I)/I$ is an ideal in $\mathfrak{g}/I$, it must be of finite index by absolute simplicity. Consequently, there exists a Lie subring of finite index $\mathfrak{g}_1$ in $\mathfrak{g}$ such that $I\leq Z(\mathfrak{g}_1)\leq \widetilde{Z}(\mathfrak{g})$ \hbox{i.e.} $\widetilde{Z}(\mathfrak{g})$ is infinite.\\
   Assume that $I$ is an absolutely simple ideal in $\mathfrak{g}$. By Lemma \ref{abssimimpvirtcon}, $I$ is virtually connected. Moreover, by Lemma \ref{DCC}, it has the DCC. Passing to a Lie subring of finite index, we may assume that $\mathfrak{g}/I$ is abelian. Then, for every $g\in \mathfrak{g}$, the group homomorphism 
   $$\ad_g:\mathfrak{g}\mapsto I$$
   is well-defined. Since the almost center can be assumed equal to $0$, the kernel cannot be all $\mathfrak{g}$. Moreover, the kernel is infinite by dimensionality. Assume that the kernel is contained in $I$, then $\mathfrak{g}/I$ would be virtually connected. This implies that $\mathfrak{g}$ would be virtually connected by Lemma \ref{virtconn}. Therefore, we may assume that $\ker([g,_])+I=\mathfrak{g}$. Define, for $g'\in \mathfrak{g}$, the group homomorphism 
   $$\ad_{g'}:\ker(\ad_g)\mapsto I.$$
   This homomorphism cannot have finite image for any $g\in \mathfrak{g}$, since it would imply that $\ker(\ad_g)$ is contained in the almost center of $\mathfrak{g}$, which is infinite. Again, the kernel cannot be contained in $I$, if not $\mathfrak{g}/I$ would be virtually connected. By dimensionality, $\ker([g,_])\cap \ker([g',\_])+I=\mathfrak{g}$. Iterating the process, we deduce that any finite intersection of kernels is infinite. By icc, $Z(\mathfrak{g})=\bigcap_{g\in \mathfrak{g}} \ker([g,\_])$ is the intersection of a finite number of subgroups. This implies that $Z(\mathfrak{g})$ is infinite, a contradiction.
\end{proof}

\subsection{Stable Lie rings of finite dimension}
In this subsection we verify that stable fields of finite dimension are algebraically closed. This is used to refine the characterization of stable Lie rings of finite dimension.\\
Stability is a model theoretic condition introduced, again, by Shelah stronger than NIP. The study of stable theories and in particular stable groups immediately capture the interest of model theorist, who have seen in stability the natural extension of finite Morley rank. In 2006, Sela showed that free groups are stable \cite{sela2013diophantine}, mining the possibility of a geometric approach to stable groups. The definition of stable theories is the following.
\begin{defn}
    For a given infinite cardinal $\kappa $, $T$ is \emph{$\kappa$-stable} if for every set $A$ of cardinality $\kappa$ in a model of $T$, the set $S(A)$ of complete types over $A$ also has cardinality $\kappa$. $T$ is \emph{stable} if it is $\kappa$-stable for some infinite cardinal $\kappa$.
\end{defn}
A very important notion for the analysis of stable theories is forking.
\begin{defn}
\begin{itemize}
    \item A formula $\phi(x,a)$ \emph{divides} over $B$ if there is a sequence $(a_i)_{i<\omega}$ and $k\in \omega$ such that $tp(a_i/B)=tp(a/B)$ for every $i<\omega$ and $\{\phi(x,a_i)\}_{i<\omega}$ is $\kappa$-inconsistent.
\item A formula $\phi(x,a)$ \emph{forks} over $B$ if it implies a finite disjunction of formulas dividing over $B$.
\end{itemize} 
\end{defn}
For stable theories, we can define by induction a rank called the Lascar rank, denoted by $U$.
\begin{defn}
    Let $p$ be a type in $S(A)$ for $A\leq \mathfrak{M}$ with $\mathfrak{M}$ a model of $T$ stable. Then, we define $U(p)\geq \alpha$, for $\alpha$ an ordinal, by induction:
    \begin{itemize}
        \item $U(p)\geq 0$ iff $p$ is finitely satisfable;
        \item $U(p)\geq \alpha+1$ if there exists $B\supseteq A$ and $q\in S(B)$ such that $q$ forks over $A$, $q$ extends $p$ and $U(q)\geq \alpha$;
        \item $U(p)\geq \alpha$ for $\alpha$ a limit ordinal if $U(p)\geq \lambda$ for any $\lambda<\alpha$.
    \end{itemize}
\end{defn}
One of the most important results in stability theory is that a superstable field is algebraically closed, as proved by Shelah and Cherlin in \cite{cherlin1980superstable}. Substantially the same proof verifies that a stable field of finite dimension is algebraically closed.
\begin{lemma}
   Let $K$ be a stable field of finite dimension. Then, $K$ is algebraically closed.
\end{lemma}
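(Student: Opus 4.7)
The plan is to adapt the Cherlin--Shelah proof for superstable fields to the stable finite-dimensional setting. Since Lemma~\ref{perfection} already gives that $K$ is perfect, inseparable extensions are ruled out and it suffices to show $K$ has no nontrivial finite Galois extension. By Cauchy's theorem applied to the Galois group, together with the observation that any finite separable extension of $K$ is again stable and finite-dimensional (being interpretable in $K$), the problem reduces to the following claim: no stable finite-dimensional field admits a cyclic Galois extension of prime degree.

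Since stability implies NIP, Lemma~\ref{FieldNIP} gives that $(K,+)$ is connected. For cyclic Galois extensions of degree $p = \operatorname{char}(K)$ (Artin--Schreier), I would consider the definable additive endomorphism $\wp(x) = x^p - x$: its kernel $\mathbb{F}_p$ is finite, so by the fibration axiom $\dim(\wp(K)) = \dim(K)$, and $\wp(K)$ is a definable subgroup of finite index in $K^+$; connectedness forces $\wp(K) = K$, ruling out such extensions.

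For cyclic Galois extensions of prime degree $l \neq \operatorname{char}(K)$, I would pass to $K(\zeta_l)$, which is stable and finite-dimensional, and invoke Kummer theory: it suffices to show $(K(\zeta_l)^*)^l = K(\zeta_l)^*$. The $l$-th power map has finite kernel (the $l$-th roots of unity), so by fibration its image is of finite index in the multiplicative group. It remains to show that the multiplicative group of any stable finite-dimensional field has no proper definable subgroup of finite index. Following Cherlin--Shelah, a putative proper definable subgroup $H$ of finite index in $K^*$ is exploited against the additive structure: the cosets of $H$ partition $K \setminus \{0\}$ into finitely many definable pieces, and analyzing the interaction of addition with this partition (using that $K^*$ acts transitively by multiplication on $K\setminus\{0\}$) yields either a proper definable additive subgroup of finite index in $K^+$, contradicting additive connectedness, or a proper nonzero ideal in $K$, which is impossible.

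This multiplicative connectedness step is the main obstacle: in the superstable setting it is transparent from rank considerations, whereas in the stable finite-dimensional case one must combine the chain condition on definable subgroups granted by stability (see Lemma~\ref{icc}) with the fibration axiom to transport the contradiction from the multiplicative to the additive side. Once this is secured, the Artin--Schreier and Kummer conclusions together rule out every cyclic Galois extension of prime degree over any stable finite-dimensional field, and the reduction outlined above yields that $K$ is algebraically closed.
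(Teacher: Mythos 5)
Your reduction is essentially the paper's: use perfection (Lemma~\ref{perfection}) and additive connectedness (Lemma~\ref{FieldNIP}), pass via Cauchy to a cyclic Galois extension of prime degree over some stable finite-dimensional subfield, then close the Artin--Schreier and Kummer cases by surjectivity of $\wp$ and of the $l$-th power map. The Artin--Schreier half is complete and correct. The gap is precisely where you flag it: you never actually establish that $K^\times$ (or $K(\zeta_l)^\times$) has no proper definable subgroup of finite index, and the sketch you offer is not on the right track. Partitioning $K\setminus\{0\}$ into cosets of a finite-index $H\leq K^\times$ and ``analyzing the interaction with addition to produce a finite-index additive subgroup or a nonzero ideal'' does not describe a working argument: no additive subgroup or ideal appears that way, and neither Lemma~\ref{icc} nor the fibration axiom transports a multiplicative finite-index subgroup into an additive one --- fibration only tells you the power map has finite-index image, which is your starting point, not your goal.

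The paper disposes of this by citing Theorem 35 and Lemma 34 of Cherlin--Shelah, and the mechanism behind those results is genuinely different from your sketch: it exploits uniqueness of the generic type of the connected stable group $K^+$. For any $c\in K^\times$, multiplication by $c$ is a definable group automorphism of $(K,+)$ and hence fixes the generic type $p$; since $p\vdash x\neq 0$, the generic concentrates on a single coset $aH$ of a putative finite-index $H\leq K^\times$, and invariance under $x\mapsto cx$ forces $caH=aH$, i.e.\ $c\in H$, for every $c$, so $H=K^\times$. Until you either reproduce this generic-type argument or simply quote Cherlin--Shelah as the paper does, the Kummer case --- and with it the theorem --- remains open.
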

\begin{proof}
   We recall that a finite-dimensional NIP field is perfect and $K^+$ is connected. By Theorem 35 and Lemma 34 of \cite{cherlin1980superstable}, also $K^{\times}$ is connected. This implies that $K$ has no Artin-Schreier extensions nor Kummer extensions. Assume, for a contradiction, that there exists a stable not algebraically closed finite-dimensional field $K$. Being perfect, it has an algebraic Galois extension. Let $(F,K)$ be a couple of stable fields of finite dimension such that $F$ is a proper algebraic extension of $K$ and assume that $|F:K|$ is minimal among these couples. If we prove that this extension splits and $q$ is prime, we reach a contradiction by Lemma 19 of \cite{cherlin1980superstable}. Assume $q$ is not prime and let $r$ be a proper divisor of $q$. Then, the fixed field $F_1$ of an element of order $r$ in $\operatorname{Gal}(F/K)$ is a proper subfield of $F$ containing $K$, contradicting the minimality. The index of the splitting field of $x^q-1$ divides $q-1$ and, by minimality, it must be equal to $F$. This completes the proof.
\end{proof}
From the conclusions of Lemma \ref{NIPdim1}, \ref{Dim2NIP}, and \ref{abssimimpvirtcon}, we may assume we are working with connected Lie subrings. Then, by Lemma \ref{DCC}, they have the DCC. Applying the same proof of \cite{deloro2023simple}, we derive the following theorem.
\begin{theorem}
  Let $\mathfrak{g}$ be a stable Lie ring of finite dimension. Then, 
  \begin{itemize}
      \item If $\dim(\mathfrak{g})=1$, $\mathfrak{g}$ is virtually abelian;
      \item If $\dim(\mathfrak{g})=2$, $\mathfrak{g}$ is virtually soluble. If it is not virtually nilpotent, it is virtually connected and the connected component is isomorphic to $\mathfrak{ga}_1(K)$ for an algebraically closed field $K$;
      \item If $\dim(\mathfrak{g})=3$ and $\mathfrak{g}$ is not virtually soluble, it is virtually connected and the connected component is isomorphic to $\operatorname{sl}_2(K)$ for an algebraically closed field $K$;
      \item If $\dim(\mathfrak{g})=4$, $\mathfrak{g}$ is not absolutely simple.
  \end{itemize}
\end{theorem}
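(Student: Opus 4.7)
The plan is to reduce to the connected case using the NIP classification results already established in the paper (stability implies NIP), and then to apply the finite Morley rank arguments of \cite{deloro2023simple} verbatim, the missing ingredient in the finite-dimensional setting being algebraic closedness of the definable fields, which is now supplied by the preceding lemma on stable finite-dimensional fields.

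For the one- and two-dimensional cases, Lemmas \ref{NIPdim1} and \ref{Dim2NIP} apply directly. In dimension~2, if $\mathfrak{g}$ is not virtually nilpotent, then Lemma \ref{Dim2NIP} makes it virtually connected, and Theorem \ref{Dim2} applied to $\mathfrak{g}^0$ gives $\mathfrak{g}^0/Z(\mathfrak{g}^0) \simeq \mathfrak{ga}_1(K)$ for a definable perfect field $K$; by the preceding lemma $K$ is algebraically closed, as required.

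For dimension~3, assume $\mathfrak{g}$ is not virtually soluble. The NIP classification in dimension~3 yields that either $\widetilde{Z}(\mathfrak{g})$ is infinite or $\mathfrak{g}$ is virtually connected. In the first case $\mathfrak{g}/\widetilde{Z}(\mathfrak{g})$ has dimension at most $2$ and Lemma \ref{Dim2NIP} forces it to be virtually soluble, hence so is $\mathfrak{g}$, a contradiction. Therefore $\mathfrak{g}$ is virtually connected; passing to $\mathfrak{g}^0$ and applying Lemma \ref{gDim3Con} together with the classification of connected non-soluble Lie rings of dimension~3, we get that $\mathfrak{g}^0$ is either isomorphic to $\mathfrak{sl}_2(K)$ or bad with $(\mathfrak{g}^0,+)\simeq K^3$ for a field $K$ of dimension~$1$. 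Since $K$ is stable and finite-dimensional, it is algebraically closed, and the Macintyre-based obstruction from \cite{deloro2023simple} excludes the bad possibility, leaving $\mathfrak{g}^0\simeq \mathfrak{sl}_2(K)$.

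For dimension~4, suppose for contradiction that $\mathfrak{g}$ is absolutely simple. Lemma \ref{abssimimpvirtcon} makes $\mathfrak{g}$ virtually connected; its connected component $\mathfrak{g}^0$ is then a simple connected definable Lie ring of dimension~$4$, which by Lemma \ref{Dim4} must be bad with $(\mathfrak{g}^0,+)\simeq K^4$ for a definable field $K$ of dimension~$1$. Again, $K$ is stable of finite dimension and therefore algebraically closed, and the Macintyre-based argument from \cite{deloro2023simple} yields the contradiction.

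The main obstacle is transferring the Macintyre-theoretic steps of \cite{deloro2023simple} to the present context: they succeed precisely because each of the definable fields appearing in the analysis is stable and finite-dimensional, hence algebraically closed by the lemma just proved, and because the connected components carry the DCC from Corollary \ref{DCC}, providing the chain-condition backbone that the original finite Morley rank proofs rely on.
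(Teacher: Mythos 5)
Your proposal is correct and takes essentially the same approach as the paper: reduce to the connected case via the NIP lemmas (Lemma \ref{NIPdim1}, Lemma \ref{Dim2NIP}, and Lemma \ref{abssimimpvirtcon}), invoke Corollary \ref{DCC} to get the DCC on the connected component, and then run the arguments of \cite{deloro2023simple} with algebraic closedness of the relevant definable fields supplied by the stable finite-dimensional field lemma rather than by Macintyre. One small imprecision: in dimension 4 you write that Lemma \ref{Dim4} forces $\mathfrak{g}^0$ to be bad, but Lemma \ref{Dim4} only describes the bad case; the not-bad case is excluded separately by the Deloro--Ntsiri argument using the DCC, as the paper notes just before that lemma.
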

\section{Action of a Lie ring}
This section is devoted to the analysis of the action of a Lie ring over a definable module. The idea is to extend the corresponding result for Lie rings of finite Morley rank (see \cite{deloro2025soluble}). We will need a preliminary result on the action of an abelian Lie ring on a definable module. This characterization is fundamental in the analysis of the action of a Lie ring with an abelian definable ideal. 
We also characterize the action of an almost abelian Lie ring.
\subsection{Action of a Lie ring with an abelian ideal}
We prove that the action on an abelian Lie ring on a minimal (not necessarily absolutely minimal) module can be linearized. This is very similar to \cite[Theorem 12.3]{InvittiDim}.
\begin{lemma}\label{MinLinAbe}
Let $(\mathfrak{g},W)$ be a definable $\mathfrak{g}$-module of finite dimension.
   Assume that $\mathfrak{g}$ is abelian, $W$ is $\mathfrak{g}$-minimal, and that the action is not almost trivial. Then, there exists a definable field $K$ such that $W/W_0\simeq K^+$ for a finite $\mathfrak{g}$-submodule $W_0$ and $\mathfrak{g}/\mathfrak{g}_0$ embeds in $K^+\cdot Id$ for a definable ideal $\mathfrak{g}_0$ of infinite index in $\mathfrak{g}$. This implies that there are no proper $\mathfrak{g}$-submodules in $W/W_0$.
\end{lemma}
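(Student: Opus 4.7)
The plan is to invoke Corollary \ref{LinNVC} applied to $(\mathfrak{g},W)$. For this, I need to verify that the action of $\mathfrak{g}$ is not almost trivial (given), that $C_{\operatorname{End}(W)}(\mathfrak{g})/{\sim}$ is essentially infinite, and that $W$ is absolutely $\mathfrak{g}$-minimal. The non-almost-triviality of the action says that $\mathfrak{g}/\widetilde{C}_{\mathfrak{g}}(W)$ is infinite; by Lemma \ref{fin[]} the almost-centralizer $\widetilde{C}_{W}(\mathfrak{g})$ is a proper $\mathfrak{g}$-submodule of $W$ and is therefore finite by $\mathfrak{g}$-minimality. Because $\mathfrak{g}$ is abelian, the image of $\mathfrak{g}$ in $\operatorname{End}(W)$ lies inside its own commutant, so $C_{\operatorname{End}(W)}(\mathfrak{g})/{\sim}$ contains the infinite quotient $\mathfrak{g}/\widetilde{C}_{\mathfrak{g}}(W)$ and is thus essentially infinite. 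The only missing input is absolute minimality.

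The core of the proof is to upgrade $\mathfrak{g}$-minimality to absolute $\mathfrak{g}$-minimality, which is where I expect the main difficulty. Given a definable, infinite, almost $\mathfrak{g}$-invariant subgroup $B\leq W$, I would construct a genuinely $\mathfrak{g}$-invariant subgroup commensurable with $B$ via the iterative refinement
\[
B_0 \;=\; B,\qquad B_{n+1} \;=\; B_n \;\cap\; \bigcap_{g\in\mathfrak{g}}\; g^{-1}(B_n).
\]
The standard factorisation $B_n/(B_n\cap\ker g)\cong gB_n$ identifies the index $|B_n : B_n\cap g^{-1}(B_n)|$ with $|gB_n : gB_n\cap B_n|$; since $B_n\sim B$ at every step, the almost $\mathfrak{g}$-invariance of $B$ propagates to $B_n$ and, via the hereditarily $\widetilde{\mathfrak{M}}_c$-property (Lemma \ref{boundedind}), these indices are uniformly bounded as $g$ ranges over $\mathfrak{g}$. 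The ucc then collapses the defining intersection into a finite subintersection, so each $B_{n+1}$ is definable of finite index in $B_n$. A further ucc argument shows that the chain stabilizes at some definable $B_N$ which is $\mathfrak{g}$-invariant by construction. By $\mathfrak{g}$-minimality, $B_N$ — and hence $B$ — is either finite or of finite index in $W$. The delicate point in this step, and the main obstacle in the whole argument, is bookkeeping the uniform definability of the iterated family so that the ucc really applies at each level despite the accumulating parameters.

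With absolute $\mathfrak{g}$-minimality in hand, Corollary \ref{LinNVC} yields a finite $\mathfrak{g}$-submodule $W_0\leq W$, a definable field $K$ of finite dimension, an isomorphism of $W/W_0$ with a finite-dimensional $K$-vector space, and an embedding of $\mathfrak{g}/\widetilde{C}_{\mathfrak{g}}(W)$ into $K^+\cdot Id$. Any proper $K$-subspace of $W/W_0$ lifts to an infinite proper $\mathfrak{g}$-invariant subgroup of $W$ (since $\mathfrak{g}$ acts by scalars), contradicting $\mathfrak{g}$-minimality; hence $\dim_K(W/W_0)=1$ and $W/W_0\simeq K^+$. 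Taking $\mathfrak{g}_0 = \widetilde{C}_{\mathfrak{g}}(W)$, which is a definable ideal by Lemma \ref{DefZ} and of infinite index by non-almost-triviality, completes the construction. The final sentence of the statement is automatic: because $\mathfrak{g}/\mathfrak{g}_0$ embeds into $K^+\cdot Id$ and $W/W_0$ is one-dimensional over $K$, every $\mathfrak{g}$-submodule of $W/W_0$ is a $K$-ideal of $K^+$, hence trivial or everything.
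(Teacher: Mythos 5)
Your route is genuinely different from the paper's, and it has a gap at exactly the point you flag as ``delicate''. You reduce everything to Corollary \ref{LinNVC}, which needs \emph{absolute} $\mathfrak{g}$-minimality, and you try to bridge from plain $\mathfrak{g}$-minimality via the iteration $B_{n+1}=B_n\cap\bigcap_{g}g^{-1}(B_n)$. Two things do not close. First, even granting that each $B_{n+1}$ is definable of finite index in $B_n$ (which already needs a uniform bound on $|B_n:B_n\cap g^{-1}(B_n)|$ over all $g$ — this is not what Lemma \ref{boundedind} says, and a compactness argument requires almost invariance of $B_n$ to persist in an elementary extension, which is not first-order without the very uniformity you are trying to prove), the chain $B_0\geq B_1\geq\dots$ consists of subgroups defined by formulas of growing complexity with accumulating parameters. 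They are not uniformly definable, so the ucc does not apply, and since every step has \emph{finite} index the $\omega$-DCC does not apply either. No chain condition available in the paper forces stabilization, so you never actually produce the $\mathfrak{g}$-invariant $B_N$. It is telling that the paper's own upgrade statement, Lemma \ref{LinAlmAbe2}, assumes the strictly stronger hypothesis that $W$ is minimal for every definable ideal of finite index in $\mathfrak{g}$, and proves it by a one-shot fibration/dimension argument rather than an iteration; deducing absolute minimality from bare $\mathfrak{g}$-minimality is not established anywhere in the paper.

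The paper's proof avoids absolute minimality altogether by exploiting abelianity at the level of individual endomorphisms: since the elements of $\mathfrak{g}$ commute as endomorphisms of $W$, both $\ker(g)$ and $\operatorname{im}(g)$ are $\mathfrak{g}$-invariant, so plain $\mathfrak{g}$-minimality already forces each $g$ to act either almost trivially or as a monomorphism with image of finite index (after killing the finite submodule $\widetilde{C}_W(\mathfrak{g})$). The ring generated by these near-automorphisms then has a definable field of fractions $K$ acting on $W/W_0\simeq K^+$, as in \cite{InvittiDim}. If you want to salvage your approach, replace the iteration by the fibration argument of Lemma \ref{LinAlmAbe2} — but you will then need to justify the stronger minimality hypothesis it requires, or else switch to the paper's direct argument, where your correct observations about $\widetilde{C}_W(\mathfrak{g})$ being finite and about $\mathfrak{g}_0=\widetilde{C}_{\mathfrak{g}}(W)$ being a definable ideal of infinite index still do the work you intend.
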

\begin{proof}
    We work in the module $(\mathfrak{g}/\widetilde{C}_{\mathfrak{g}}(W),W/\widetilde{C}_{\mathfrak{g}}(W))$. This action is well-defined since, by abelianity, if $g\in C_{\mathfrak{g}}(W)$, then $\mathrm{im}(g)$ is a finite $\mathfrak{g}$-invariant subgroup, and therefore contained in $\widetilde{C}_W{\mathfrak{g}}$. Let $g\in \mathfrak{g}$, then both $\operatorname{ker}(g)$ and $\mathrm{im}(g)$ are $\mathfrak{g}$-invariant. By minimality, they are either finite or of finite index in $W$. In the first case, $g$ acts as the $0$-endomorphism since any finite $\mathfrak{g}$-invariant subgroup is contained in $\widetilde{C}_{W}(\mathfrak{g})=\{0\}$. Therefore, all the non-trivial endomorphisms given by elements of $\mathfrak{g}$ are monomorphisms. The same holds for all the endomorphisms generated by $\mathfrak{g}$. Then, proceeding as in \cite{InvittiDim}, we have that $\langle \mathfrak{g}\rangle$ acts by automorphisms and $W/\widetilde{C}_{W}(\mathfrak{g})\simeq K^+$ for $K$ the field of fractions of $\langle \mathfrak{g}\rangle$.\\
  For the second point, any $\mathfrak{g}$-submodule $W_1$ is a $K$-subvector space. Since $\operatorname{lindim}(W/W_0)=1$, either $W_1=W_0$ or $W_1=W$.
\end{proof}
We verify that, given a definable module $(\mathfrak{g},A)$ such that $A$ is minimal for any definable ideal of finite index in $\mathfrak{g}$, then $A$ is absolutely $\mathfrak{g}$-minimal.
\begin{lemma}\label{LinAlmAbe2}
    Let $\mathfrak{g}$ be a Lie ring acting on $A$, both definable in a finite-dimensional theory. Assume that $A$ is minimal for any definable ideal of finite index in $\mathfrak{g}$. Then, $A$ is absolutely $\mathfrak{g}$-minimal.
\end{lemma}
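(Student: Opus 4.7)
The plan is to argue by contraposition: assuming the existence of a definable, infinite, almost $\mathfrak{g}$-invariant subgroup $B \leq A$ of infinite index, I will manufacture a definable ideal $\mathfrak{g}_0 \triangleleft \mathfrak{g}$ of finite index together with a definable $\mathfrak{g}_0$-invariant subgroup $B' \leq A$ commensurable with $B$. Since $B' \sim B$ is then infinite and of infinite index, this contradicts the hypothesis that $A$ is $\mathfrak{g}_0$-minimal.

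The first and main step is to replace $B$ by a nearby subgroup that is genuinely (not merely almost) invariant under a large definable subring of $\mathfrak{g}$. For each $g \in \mathfrak{g}$, the map $g : B \to gB$ yields the identity $|B : B \cap g^{-1}(B)| = |gB : gB \cap B|$, which is uniformly finite in $g$ by compactness applied to the hypothesis $gB \apprle B$. The family $\{B \cap g^{-1}(B)\}_{g \in \mathfrak{g}}$ is therefore uniformly definable and consists of subgroups of uniformly bounded finite index in $B$. Invoking Lemma \ref{boundedind}, greedy descent through this family, saturating first on dimension and then on further index drops of size $\geq d$, produces after finitely many steps a definable subgroup $B_1 \leq B$ of finite index in $B$ for which every intersection $B_1 \cap g^{-1}(B_1)$ has index at most a fixed $d$ in $B_1$. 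A bounded iteration of this construction, which must terminate by finite-dimensionality and a second application of Lemma \ref{boundedind}, yields a definable $B' \sim B$ such that $gB' \leq B'$ holds for every $g$ in a definable subring $\mathfrak{g}_1 \leq \mathfrak{g}$ of finite index.

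It remains to upgrade $\mathfrak{g}_1$ to a definable ideal of finite index. Writing $\mathfrak{g}_1 = \{g \in \mathfrak{g} : gB' \leq B'\}$ and taking its ``ideal core,'' for instance $\mathfrak{g}_0 = \{g \in \mathfrak{g}_1 : [\mathfrak{g},g] \subseteq \mathfrak{g}_1\}$, one checks via Lemma \ref{boundedind} applied to the adjoint action on $\mathfrak{g}$ together with the symmetry principle of Lemma \ref{sym} that $\mathfrak{g}_0$ is definable, of finite index in $\mathfrak{g}$, and an ideal preserving $B'$. This $\mathfrak{g}_0$ together with $B'$ completes the contradiction.

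The main obstacle is the first step. Since $\mathfrak{g}$ acts by endomorphisms rather than automorphisms, Schlichting's theorem is not directly available, so a genuinely invariant subgroup cannot be extracted by symmetry from the orbit $\{gB\}_{g \in \mathfrak{g}}$; a further subtlety is that $gB$ can have strictly smaller dimension than $B$, so one cannot simply form a maximal sum of translates as in Lemma \ref{DecGen}. The invariant $B'$ must instead be constructed by a descent controlled at each stage by Lemma \ref{boundedind}, and verifying that this descent stabilizes at a finite-index subgroup of $B$, rather than collapsing to a finite subgroup, is the technical heart of the argument.
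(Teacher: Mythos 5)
Your high-level plan — assume a counterexample $B$, find a definable ideal $\mathfrak{g}_0$ of finite index and a definable $\mathfrak{g}_0$-invariant subgroup $B' \sim B$, then contradict $\mathfrak{g}_0$-minimality — diverges from the paper's route, and the divergence occurs precisely at the step you flag as the ``technical heart'': producing $B'$.

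The descent you describe does not yield what you claim. Taking $B_1 := \bigcap_{g\in\mathfrak{g}}\bigl(B\cap g^{-1}(B)\bigr)$ indeed gives a definable subgroup of finite index in $B$ (the family $\{B\cap g^{-1}(B)\}_g$ is uniformly commensurable, and Lemma~\ref{boundedind} makes the full intersection equal a finite sub-intersection). But the assertion that ``every intersection $B_1 \cap g^{-1}(B_1)$ has index at most a fixed $d$ in $B_1$'' does not follow. What one actually gets is $|B_1 : B_1\cap g^{-1}(B_1)| \leq d\cdot|B:B_1|$: factoring $g\colon B_1\cap g^{-1}(B)\to B$ through $B/B_1$ shows the second term contributes a factor of $|B:B_1|$, which is not controlled a priori and grows under iteration. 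So the chain $B\geq B_1\geq B_2\geq\dots$ is not a chain of \emph{uniformly} definable subgroups (each level uses a more complex formula), Lemma~\ref{boundedind} does not apply to it, and there is no reason the iteration stabilizes at something of finite index in $B$ rather than sliding down past commensurability. You correctly observe that Schlichting is unavailable because $\mathfrak{g}$ acts by endomorphisms, but you have not replaced it with a working argument; the proposal as written has a genuine gap here, and the subsequent ``upgrade $\mathfrak{g}_1$ to an ideal'' step rests entirely on it.

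The paper sidesteps the problem you are stuck on. Rather than trying to shrink $B$ to a genuinely invariant $B'$, it enlarges: it sets $M:=\{m\in A : \mathfrak{g}(m)\apprle B\}$, which \emph{is} a genuine $\mathfrak{g}$-submodule of $A$ because the Jacobi-type identity $\mathfrak{g}(g(m))\subseteq [\mathfrak{g},g](m)+g(\mathfrak{g}(m))$ combined with \emph{almost}-invariance of $B$ (here $\apprle$, not $\leq$, is exactly what makes the verification go through) yields $\mathfrak{g}(g(m))\apprle B$. A fibration count on $X=\{(g,b)\in\mathfrak{g}\times B: g(b)\in B\}$ shows $\dim(M\cap B)=\dim B$, hence $M\apprge B$, hence $M$ is infinite and by $\mathfrak{g}$-minimality $M\sim A$. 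Repeating the fibration count on $\mathfrak{g}\times A$ gives a definable ideal $\mathfrak{h}=\{g:g(A)\apprle B\}$ of finite index in $\mathfrak{g}$, and a transversal argument then shows $\mathfrak{h}(A)+B$ is a finite extension of $B$, hence a definable $\mathfrak{h}$-submodule of $A$ of dimension $\dim B < \dim A$, contradicting $\mathfrak{h}$-minimality of $A$. Notice this never produces an invariant subgroup commensurable with $B$; the invariant object is $M$ (large) and $\mathfrak{h}(A)+B$ (small), both constructed with almost containment rather than containment, which is exactly what the Lie ring identity tolerates. If you wanted to rescue your plan, you would need an argument of this type somewhere — the naive descent on the genuine containments does not close.
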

\begin{proof}
 Assume, for a contradiction, that $A$ is not absolutely $\mathfrak{g}$-minimal. Then, there exists an infinite almost $\mathfrak{g}$-invariant subgroup $B\leq A$ of infinite index. By definition, $\{g\in \mathfrak{g}:\ g(B)\apprle B\}=\mathfrak{g}$. We denote by $M$ the subgroup $M=\{m\in A:\ \mathfrak{g}(m)\apprle B\}$. This is a $\mathfrak{g}$-module. Indeed, given $m\in M$ and $g\in \mathfrak{g}$, then $\mathfrak{g}(g(m))=[\mathfrak{g},g](m)+g(\mathfrak{g}(m))\apprle B+g(B)\apprle B$ by hypothesis on $B$. We verify that $M\apprge B$. Let 
$$X=\{(g,b)\in \mathfrak{g}\times B:\ g(b)\in B\}.$$
The dimension of every fiber of the first projection is $\dim(B)$, by previous observation. Therefore, by fibration, $\dim(X)=\dim(\mathfrak{g})+\dim(B)$. Let $\pi_2:X\to B$ be the second projection. Then, proceeding as in Lemma \ref{DCC}, $\dim(M)=\dim(\{b\in B:\ \dim(\pi_2^{-1}(b))=\dim(\mathfrak{g})\})=\dim(B)$ (the latter is a definable subgroup by Lemma \ref{boundedind}). Consequently, $M\apprge B$ and so $M$ is infinite. By minimality of $A$, we may conclude that $M\sim A$. Therefore, for almost all $a\in A$, $\mathfrak{g}(a)\apprle B$. Taking 
$$Y=\{(g,b)\in \mathfrak{g}\times A:\ g(a)\in B\}$$
and proceeding as before, we may conclude that the Lie ring
$$\mathfrak{h}=\{g\in \mathfrak{g}:\ g(A)\apprle B\}$$
is of finite index in $\mathfrak{g}$. Moreover, it is an ideal. Let $g\in \mathfrak{g}$ and $h\in \mathfrak{h}$, then $[g,h](A)=g(h(A))-h(g(A))\apprle g(B)-h(A)\apprle B+B\apprle B$ since $B$ is almost $\mathfrak{g}$-invariant. Consequently, $A$ is $\mathfrak{h}$-invariant. Let $a\in A$ be such that $|\mathfrak{h}(a)+B/B|=N$ is maximal (it is finite by assumption). Let $\{h_1,...,h_N\}$ be representatives of $C_{\mathfrak{h}}(a/B)$ in $\mathfrak{h}$. Since $h(A)\apprle B$ for any $h\in \mathfrak{h}$, the subgroups $C_A(h_i/B)$ are all of finite index in $A$. Let $C=\bigcap_{i=1}^N C_A(h_i/B)$. This is a subgroup of finite index in $A$, and let $F=\{f_1,...,f_M\}$ be a system of representatives of $C$ in $A$. Given $a'\in A$ such that $a'=c+f$ with $f\not=0$, then, $h_i(a')=h_i(c+f)=h_i(c)+h_i(f)=h_i(f)$ for any $i\leq N$. Since $h_i(f)+B\not=h_j(f)+B$ for any $i\not=j$, the image of $a$ is contained in $\{h_i(f_j)+B\}_{i\leq N,j\leq M}$. Taken $c\in C$, $\mathfrak{h}(c)\leq \mathfrak{h}(c-a)+\mathfrak{h}(a)\leq F+B/B$ that is a finite extension of $B$. Therefore, 
$\mathfrak{h}(A)$ is a finite extension of $B$. Moreover, it is a $\mathfrak{h}$-module. By $\mathfrak{h}$-minimality, $\dim(B)=\dim([\mathfrak{h},A])=\dim(A)$, contradicting the assumption on $B$.

\end{proof}
We prove the extension of \cite[Theorem A]{deloro2025soluble} to finite-dimensional theories.
\begin{theorem}\label{LieLieRin}
    Let $\mathfrak{g}$ be a Lie ring acting on $V$, an absolutely minimal $\mathfrak{g}$-module, both definable in a finite-dimensional theory. Assume that $\mathfrak{g}$ has a definable abelian ideal $\mathfrak{a}$ such that the action of $\mathfrak{a}$ on $V$ is not almost trivial. Then, there exists a definable ideal $\mathfrak{g}_1$ in $\mathfrak{g}$ of finite index in $\mathfrak{a}$ such that $\mathfrak{a}/\mathfrak{g}_1\leq Z(\mathfrak{g}/\mathfrak{g}_1)$. Moreover, there exists a definable field $K$ such that $\mathfrak{a}/\mathfrak{g}_1$ embeds in $K\cdot Id$ and $V/V_1$ is a $K$-vector field of finite dimension, with $V_1$ a finite $\mathfrak{g}$-module, on which $\mathfrak{g}/\mathfrak{g}_1$ acts $K$-linearly. 
\end{theorem}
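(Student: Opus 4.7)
The plan is to mimic the Deloro–Ntsiri proof of the analogous statement in finite Morley rank, with absolute $\mathfrak{g}$-minimality replacing connectivity and the triviality of definable derivations on finite-dimensional fields (Lemma~\ref{DerTri}) substituting for algebraic closedness. First I would reduce to the case $\widetilde{C}_V(\mathfrak{a})=\{0\}$: this almost centraliser is $\mathfrak{g}$-invariant, since $a\,g(v)=g(av)+[a,g](v)$ has finite $\mathfrak{a}$-image whenever $\mathfrak{a}(v)$ is finite (using $[a,g]\in\mathfrak{a}$), and absolute $\mathfrak{g}$-minimality together with the non-almost-triviality of the action forces it to be finite, to be absorbed into the eventual $V_1$. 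After this reduction, every infinite $\mathfrak{a}$-invariant definable subgroup of $V$ is acted on non-almost-trivially by $\mathfrak{a}$.

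Next I would pick a definable $\mathfrak{a}$-invariant subgroup $W\le V$ which is $\mathfrak{a}$-minimal, and apply Lemma~\ref{MinLinAbe} to $(\mathfrak{a},W)$. This produces a definable field $K$, a finite $W_0\le W$ with $W/W_0\simeq K^+$, and an additive embedding $\sigma\colon \mathfrak{a}/\ker\sigma\hookrightarrow K\cdot Id$ with $K=\mathrm{Frac}(\sigma(\mathfrak{a}))$. The key propagation step is that if $U$ is $\mathfrak{a}$-invariant and $g\in\mathfrak{g}$, the identity $a\,g(u)=g(au)+[a,g](u)$ together with $[a,g](U)\le U$ shows $U+g(U)$ is $\mathfrak{a}$-invariant and the induced map $u\mapsto g(u)+U$ is $\mathfrak{a}$-equivariant modulo $U$. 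Iterating, the smallest $\mathfrak{g}$-invariant subgroup $\tilde W$ containing $W$ is itself $\mathfrak{a}$-invariant, a finite sum of iterated $\mathfrak{g}$-translates of $W$ by finite dimensionality, and of finite index in $V$ by absolute $\mathfrak{g}$-minimality. Each factor in the filtration of $\tilde W$ by these iterated translates is an $\mathfrak{a}$-module quotient of $W$, hence, by $\mathfrak{a}$-minimality of $W$, either finite or isogenous to $K^+$. Enlarging $V_1$ to absorb the finite factors yields $V/V_1\simeq K^n$ as $\mathfrak{a}$-modules with $\mathfrak{a}$ acting diagonally through $\sigma$.

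Finally, set $\mathfrak{g}_1:=\widetilde C_\mathfrak{a}(V/V_1)=\ker\sigma$, which is an ideal of $\mathfrak{g}$ because $\mathfrak{a}$ is an ideal and $V_1$ is $\mathfrak{g}$-invariant. For fixed $g\in\mathfrak{g}$, the map $D_g\colon \sigma(a)\mapsto\sigma([g,a])$ is well-defined and additive on $\sigma(\mathfrak{a})$; the identity $[\rho(g),\rho(a)\rho(b)]=[\rho(g),\rho(a)]\rho(b)+\rho(a)[\rho(g),\rho(b)]$ together with $[\rho(g),\sigma(a)]=\sigma([g,a])\cdot Id$ shows $D_g$ satisfies the Leibniz rule on $\mathbb{Z}[\sigma(\mathfrak{a})]$, so it extends uniquely and definably to a derivation of $K=\mathrm{Frac}(\sigma(\mathfrak{a}))$. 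Lemma~\ref{DerTri} forces $D_g=0$, whence $[\mathfrak{g},\mathfrak{a}]\le\ker\sigma=\mathfrak{g}_1$, i.e.\ $\mathfrak{a}/\mathfrak{g}_1\le Z(\mathfrak{g}/\mathfrak{g}_1)$; the same vanishing shows each $\rho(g)$ commutes with $\sigma(\mathfrak{a})$ and therefore with all of $K$, so $\mathfrak{g}/\mathfrak{g}_1$ acts $K$-linearly on $V/V_1$. The principal obstacle is the second paragraph: $\mathfrak{a}$-invariant subgroups need not be $\mathfrak{g}$-invariant, and propagating the local $K^+$-structure on $W$ to a global $K^n$-structure on $V/V_1$ is where absolute $\mathfrak{g}$-minimality and finite dimensionality really pull their weight.
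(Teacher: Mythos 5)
Your reduction to $\widetilde{C}_V(\mathfrak{a})=\{0\}$ and your closing derivation argument both match the mechanism of the paper's proof (which likewise ends by producing a definable derivation of $K$ and killing it with Lemma~\ref{DerTri}). The genuine gap is exactly where you locate it, in the second paragraph, and it is not merely a technical obstacle: knowing that $V/V_1$ carries a finite filtration by $\mathfrak{a}$-invariant subgroups whose successive quotients are all isomorphic to $K^+$ with $a$ acting as $\sigma(a)$ does \emph{not} yield ``$V/V_1\simeq K^n$ with $\mathfrak{a}$ acting diagonally through $\sigma$''. The extension can fail to split: for any additive $\delta\colon\mathfrak{a}\to K$, letting $a$ act on $K^2$ by $\left(\begin{smallmatrix}\sigma(a)&\delta(a)\\0&\sigma(a)\end{smallmatrix}\right)$ gives an abelian action whose two graded pieces are both $(K^+,\sigma)$ but which has a nontrivial unipotent part. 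This breaks everything downstream: $\ker\sigma$ is then strictly larger than $\widetilde{C}_{\mathfrak{a}}(V/V_1)$ (an element acting as $0$ on every graded piece is nilpotent, not almost trivial), so your $\mathfrak{g}_1$ is not what you claim, $\rho(\mathfrak{a})$ does not lie in $K\cdot Id$, and the identity $[\rho(g),\sigma(a)\cdot Id]=\sigma([g,a])\cdot Id$ on which $D_g$ is built is unavailable.

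The paper resolves this by reversing the order of your two final steps: it proves the centrality of $\mathfrak{a}$ in $\mathfrak{g}$ \emph{before} any global linearisation. Since a fixed $h\in\mathfrak{g}$ need not stabilise $W$, one cannot form $[\rho(h),\rho(a)]$ inside $\operatorname{End}(W)$; instead the paper builds the $h$-cyclic filtration $S_i=\sum_j\binom{i}{j}h^jW$, shows the top quotient $X=S_k/S_{k-1}$ is again $\mathfrak{a}$-minimal and isomorphic to $W$, and extracts from $h$ a definable endomorphism $\eta$ of $X$ satisfying $\eta a=a\eta+ka'$; the derivation is then $\lambda\mapsto[\eta,\lambda]$ on $K=C_{\operatorname{DefEnd}(X)}(\mathfrak{a})$, and its vanishing forces $[h,\mathfrak{a}]$ to almost centralise $W$, contradicting the choice of $h$. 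Once $\mathfrak{a}$ is central, kernels and images of elements of $\mathfrak{a}$ are $\mathfrak{g}$-invariant, so absolute $\mathfrak{g}$-minimality together with $\widetilde{C}_V(\mathfrak{a})=0$ shows every element of $\mathfrak{a}$ acts either as $0$ or as a monomorphism with cofinite image; this is precisely what excludes the unipotent behaviour above and lets the field of fractions of $\mathfrak{a}$ act on all of $V$ at once, with no filtration-splitting required. If you want to keep your architecture, you must first prove that $[\mathfrak{g},\mathfrak{a}]$ acts almost trivially on the minimal module $W$ by some device of this kind.
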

\begin{proof}
    Since $\widetilde{C}_{V}(\mathfrak{a})$ is $\mathfrak{g}$-invariant and the action of $\mathfrak{a}$ is not almost trivial by assumption, $\widetilde{C}_{V}(\mathfrak{a})$ is finite by $\mathfrak{g}$-minimality of $V$. By abelianity, $\widetilde{C}_{\mathfrak{a}}(V)$ acts trivially on $V/\widetilde{C}_V(\mathfrak{a})$. Indeed, for any $a\in \widetilde{C}_{\mathfrak{a}}(V)$, $a(V)$ is a finite $\mathfrak{a}$-invariant group and so contained in $\widetilde{C}_{V}(\mathfrak{a})$. $\widetilde{C}_{\mathfrak{a}}(V)$ is an ideal in $\mathfrak{g}$: let $a\in \widetilde{C}_{\mathfrak{a}}(V)$ and $g\in \mathfrak{g}$, then $[g,a](V)\leq (a(g(V))+(g(a(V))$ that is finite since sum of finite subgroups.\\
   Taking $V=V/\widetilde{C}_V(\mathfrak{a})$ and $\mathfrak{g}/\widetilde{C}_{\mathfrak{a}}(V)$, we may assume that $\mathfrak{a}:=\mathfrak{a}/\widetilde{C}_{\mathfrak{a}}(V)$ is infinite and that $\widetilde{C}_V(\mathfrak{a})=\{0\}$. Consequently, there are no finite $\mathfrak{a}$-invariant subgroups in $V$. In particular, since any image or kernel of an element in $\mathfrak{a}$ is $\mathfrak{a}$-invariant, any $a\in \mathfrak{a}$ cannot have finite image or finite kernel.\\
    We verify that $\mathfrak{a}$ is central in $\mathfrak{g}$. Assume, for a contradiction, that $\mathfrak{a}$ is not central. Let $W$ be a minimal $\mathfrak{a}$-invariant subgroup of $V$. Then, by Lemma \ref{MinLinAbe}, $W\simeq K^+$ for a definable field $K$ and $W$ has no proper definable $\mathfrak{a}$-invariant subgroups.
    \begin{claim}
        In the previous hypothesis, we have that:
        \begin{itemize}
            \item No ideal of $\mathfrak{g}$ contained in $\mathfrak{a}$ almost centralises $W$;
            \item There exists $h\in \mathfrak{g}$ such that $[h,\mathfrak{a}]$ does not almost centralize $W$.
        \end{itemize}
    \end{claim}
    \begin{claimproof}
        Assume, for a contradiction, that $\mathfrak{b}$ is an ideal of $\mathfrak{g}$ contained in $\mathfrak{a}$ that almost centralises $W$. Then, $\widetilde{C}_{V}(\mathfrak{b})$ is an invariant $\mathfrak{g}$-module. Indeed, given $g\in \mathfrak{g}$ and $v\in \widetilde{C}_V(\mathfrak{b})$,
        $$\mathfrak{b}(gv)=[\mathfrak{b},g](v)+g(\mathfrak{b}v)$$
        that is finite by hypothesis. Since $V$ is absolutely $\mathfrak{g}$-minimal without finite $\mathfrak{a}$-submodules, this submodule must be of finite index. Therefore, the action of $\mathfrak{b}$ on $V$ is almost trivial, $\mathfrak{b}$ is finite and $\mathfrak{b}\cap \mathfrak{a}=\{0\}$. This implies that $\mathfrak{b}=0$.\\
        For the second point, suppose, for a contradiction, that $[h,\mathfrak{a}]\leq \widetilde{C}_{\mathfrak{a}}(W)$ for any $h\in \mathfrak{g}$. Then, $[h,\widetilde{C}_{\mathfrak{a}}(W)]\leq [h,\mathfrak{a}]\leq \widetilde{C}_{\mathfrak{a}}(W)$. Therefore, $\widetilde{C}_{\mathfrak{a}}(W)$ is an ideal that almost centralises $W$ and contained in $\mathfrak{a}$. By preceding point, it must be $\{0\}$.
    \end{claimproof}\\
    We define $S_i=\sum_{j=0}^i \binom{i}{j} h^jW$.
    \begin{claim}
        In the previous hypothesis
        \begin{itemize}
            \item Each $S_i$ is an $\mathfrak{a}$-module;
            \item For $w\in W-\{0\}$ and $i\geq 0$, $h^iw\in S_i$ iff $S_i=S_{i+1}$;
            \item There is $k\leq \operatorname{dim}(V)$ such that $\sum_{m\geq 0} h^mW=S_k=\oplus_{i=0}^{k-1} h^iW$.
        \end{itemize}
    \end{claim}
    \begin{claimproof}
        For the first point, observe that
        $$aS_i=\sum_{j=0}^i \binom{i}{j} ah^jW=\sum_{j=0}^i \binom{i}{j} [a,h^j]W+h^jaW\leq W+\sum_{j=0}^i \binom{i}{j} h^jW\leq S_i.$$
        For the second point, define 
        $$W_1=\{w\in W :\ h^iw\in S_i\}.$$
        $W_1$ is a non-trivial, definable and $\mathfrak{a}$-invariant subgroup of $W$. By Lemma \ref{MinLinAbe}, either $W_1=0$ or $W_1=W$. Therefore $h^iW\leq S_i$ and $S_{i+1}=S_i$.\\
        The third point follows from the fact that if the intersection is finite, it is $0$ being a $\mathfrak{a}$-invariant subgroup in $W$.
    \end{claimproof}\\
    Let $k\in \mathbb{N}$ be maximal such that $S_k\not=S_{k-1}$. This exists by finite-dimensionality and the third point of the previous lemma.
    \begin{claim}
    In the previous hypothesis,
    \begin{itemize}
        \item Taking $X=S_k/S_{k-1}$, then $X\simeq W$ as $\mathfrak{a}$-module;
        \item $K=C_{\operatorname{DefEnd}(X)}(\mathfrak{a})$ is a definable infinite field;
        \item $h$ induces an homomorphism $\eta\in \operatorname{DefEnd}(X)$ on $X$ and for any $a\in \mathfrak{a}$, $\eta a=a\eta+ka'$.
    \end{itemize}
    \end{claim}
    \begin{claimproof}
        The homorphism taking $w\in W$ to $h^{k-1}w+S_{k-1}$ is clearly an $\mathfrak{a}$-invariant homorphism. If $h^{k-1}w\in S_{k-1}$, then $S_k=S_{k-1}$ by the second point of Claim $2$, a contradiction.\\
        Therefore, being $W\simeq X$, $X$ is $\mathfrak{a}$-minimal and again isomorphic to the additive group of an infinite field.\\
        For the second point, $K$ acts $\mathfrak{a}$-linearly on $X$ and so any image or kernel is equal to $X$ or $\{0\}$. This implies that $K$ acts on $X$ by automorphisms. Since the inverse of an isomorphism commuting with $\mathfrak{a}$ again commutes with $\mathfrak{a}$, $K$ is an infinite field.\\
        Let $x=h^{k-1}+S_{k-1}\in X$ and $\eta(x)=h^kw+S_{k-1}$. $\eta$ is well defined since, if $h^{k-1}w\in S_{k-1}$, then $w=0$ so $h^kw=0$.\\
        Clearly $h^{k-1}aw=ah^{k-1}w+S_{k-1}$ so $h^{k-1}aw$ represents $a\cdot \eta(x)$ and $\eta(a\cdot x)$ since 
        $$\eta(a\cdot x)=hah^{k-1}w+S_{k-1}=h^kaw=a\cdot \eta(x).$$
        Therefore, $\eta(a\cdot x)=ah^kw+ka'h^{k-1}=a\eta(x)+ka'\cdot x$ and $\eta a=a\eta+ka'$.
     \end{claimproof}
     \begin{claim}
         For $\lambda\in K$, $\delta(\lambda)=[\eta,\lambda]$ is a definable derivation of $K$.
     \end{claim}
     \begin{claimproof}
         For the first point, it is sufficient to prove that $\delta(\lambda)\in K$ \hbox{i.e.} $\delta(\lambda)$ commutes with every element of $\mathfrak{a}$. Given $\lambda\in K$, then 
         $$(\eta\lambda-\lambda\eta)a=\eta\lambda a-\lambda\eta a=\eta a\lambda-\lambda(\eta a)=a\eta\lambda+ka'\lambda-\lambda a\eta-\lambda ka'=a(\eta\lambda-\lambda\eta)$$
     \end{claimproof}\\
     Since a finite-dimensional field has no non-trivial definable derivations by Lemma \ref{DerTri}, $[\eta,\lambda]=0$ for any $\lambda\in C_{\operatorname{DefEnd}}(\mathfrak{a})\geq \mathfrak{a}$. Therefore, $ka'$ acts trivially on $X$ and also $a'$ acts trivially on $X$. Consequently, it acts trivially on $W$. So $[h,\mathfrak{a}]$ acts almost trivially on $W$, a contradiction. This implies that $\mathfrak{a}$ is central in $\mathfrak{g}$.\\
     Therefore, any image or kernel of an element in $\mathfrak{a}$ must be of finite index or finite. The second case is impossible since $\widetilde{C}_V(\mathfrak{a})$ is $0$. Since any non-trivial element of $\mathfrak{a}$ acts as a monomorphism with image of finite index, we conclude that the field of fractions of $\mathfrak{a}$, that is definable by \cite[Proposition 3.6]{wagner2020dimensional}, acts on $V$ by automorphisms. Therefore, $V$ is a vector field of finite dimension over a definable infinite field $K$ and $\mathfrak{g}$ acts $K$-linearly since any element of $K$ commutes with any element in $\mathfrak{g}$. This completes the proof.
\end{proof}

\subsection{Action on an almost abelian Lie ring}
In the following theorem, we linearize the action of an almost abelian Lie ring. This is not only a techincal result, since, working in IP finite dimensional theories, we cannot hope to reduce almost abelian Lie rings to abelian ones. For example, pseudofinite Lie rings of dimension $1$ are almost abelian but not virtually abelian. The following proof is similar to \cite[Theorem 12.4]{InvittiDim}.
\begin{theorem}\label{LinAlmAbe}
    Let $\mathfrak{g}$ be an almost abelian Lie ring acting on the abelian group $A$, both definable in a finite-dimensional theory. Assume that the action is not almost trivial and absolutely minimal and that $\mathfrak{g}'\leq Z(\mathfrak{g})$. Then, $\mathfrak{g}/\widetilde{C}_{\mathfrak{g}}(A)$ acts on $A/\widetilde{C}_A(\mathfrak{g})$ and $\mathfrak{g}/\widetilde{C}_{\mathfrak{g}}(A)$ is abelian. Then, we can linearize the action by Lemma \ref{MinLinAbe}. 
\end{theorem}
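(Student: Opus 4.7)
The argument splits into three stages that mirror Theorem~12.4 of \cite{InvittiDim}.

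\emph{Stage 1 (well-definedness).} I verify $\widetilde{C}_{\mathfrak{g}}(A)\cdot A\subseteq\widetilde{C}_A(\mathfrak{g})$. Pick $g\in\widetilde{C}_{\mathfrak{g}}(A)$, so $g(A)$ is finite, and $a\in A$. Expanding $h\cdot g(a)=[h,g](a)+g(h(a))$ for every $h\in\mathfrak{g}$, one obtains $\mathfrak{g}\cdot g(a)\subseteq [\mathfrak{g},g](a)+g(A)$. The second summand is finite by choice of $g$; the first is finite because $\mathfrak{g}=\widetilde{Z}(\mathfrak{g})$ makes $[\mathfrak{g},g]\simeq\mathfrak{g}/C_{\mathfrak{g}}(g)$ finite, whence $[\mathfrak{g},g](a)$ is finite. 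Therefore $g(a)\in\widetilde{C}_A(\mathfrak{g})$.

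\emph{Stage 2 ($\mathfrak{g}/\widetilde{C}_{\mathfrak{g}}(A)$ is abelian).} I apply Lemma~\ref{fin[]} to the adjoint module $(\mathfrak{g},\mathfrak{g})$: almost-abelianness gives $\widetilde{C}_{\mathfrak{g}}(\mathfrak{g})=\mathfrak{g}$, so $\mathfrak{g}'=[\mathfrak{g},\mathfrak{g}]$ is finite. Because $\mathfrak{g}'\leq Z(\mathfrak{g})$, each $c(A)$ (for $c\in\mathfrak{g}'$) is a $\mathfrak{g}$-invariant subgroup of $A$, so $\mathfrak{g}'(A)=\sum_{c\in\mathfrak{g}'}c(A)$ is a finite sum of $\mathfrak{g}$-invariant subgroups, hence itself $\mathfrak{g}$-invariant. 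Absolute $\mathfrak{g}$-minimality forces $\mathfrak{g}'(A)$ to be finite or of finite index in $A$. In the finite case every $c(A)$ is finite, so $\mathfrak{g}'\subseteq\widetilde{C}_{\mathfrak{g}}(A)$ and the quotient is abelian.

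To rule out the finite-index case I reduce to $\widetilde{C}_{\mathfrak{g}}(A)=\widetilde{C}_A(\mathfrak{g})=0$, using that $\widetilde{C}_A(\mathfrak{g})$ is finite (absolute minimality plus non-almost-triviality, as $\widetilde{C}_A(\mathfrak{g})$ is $\mathfrak{g}$-invariant). Any nonzero $c\in\mathfrak{g}'$ then has $\ker c=0$ (being finite and $\mathfrak{g}$-invariant) and $c(A)$ of finite index. Since $c$ has finite additive order $n$ in $\mathfrak{g}$ (as $\mathfrak{g}'$ is finite), $c(A)\subseteq A[n]$ has finite index, and absolute minimality forces $A$, and hence $\mathfrak{g}$ by faithfulness, to be of exponent $n$. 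I then invoke Theorem~\ref{LieLieRin} with the abelian ideal $\mathfrak{a}=Z(\mathfrak{g})$ (whose action is not almost trivial, witnessed by such a $c$) to $K$-linearize $\mathfrak{g}/\mathfrak{g}_1\hookrightarrow\mathfrak{gl}(V/V_1)$ with $Z(\mathfrak{g})/\mathfrak{g}_1\hookrightarrow K\cdot\mathrm{Id}$. Inside $\mathfrak{gl}(V/V_1)$ every commutator has trace zero, while the image of $c$ is a nonzero scalar; by choosing a sub-quotient of $V/V_1$ whose $K$-dimension is coprime to the characteristic, this yields a contradiction.

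\emph{Stage 3 (linearization).} With $\bar{\mathfrak{g}}=\mathfrak{g}/\widetilde{C}_{\mathfrak{g}}(A)$ abelian acting on $\bar{A}=A/\widetilde{C}_A(\mathfrak{g})$, absolute $\bar{\mathfrak{g}}$-minimality of $\bar{A}$ descends from that of $A$ (any infinite almost-invariant subgroup of infinite index would pull back to one in $A$), and Lemma~\ref{MinLinAbe} produces the definable field $K$, the finite submodule $W_0$, and the ideal $\mathfrak{g}_0$ of infinite index. The principal obstacle of the proof lies in the trace/dimension argument closing Stage~2: one must carefully track the minimal $K$-irreducible piece inside $V/V_1$ so that the trace calculation cannot be obstructed by the characteristic dividing the dimension.
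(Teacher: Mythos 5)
Your proposal diverges from the paper at the decisive step, and the divergence introduces a gap you yourself flag but do not close. Stage~1 is fine (your direct computation $\mathfrak{g}\cdot g(a)\subseteq[\mathfrak{g},g](a)+g(A)$ is a valid alternative to the paper's observation that $g(A)$ is finite and invariant for a finite-index centraliser). Stage~3 is routine. The problem is in Stage~2.

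In the finite-index case you invoke Theorem~\ref{LieLieRin} with $\mathfrak{a}=Z(\mathfrak{g})$, obtain a $K$-linearisation of $A$ (more precisely $V/V_1$) of $K$-dimension $n$, and try to kill a nonzero scalar commutator $c=[g,g']\in\mathfrak{g}'$ by $\mathrm{tr}([g,g'])=0$. But $\mathrm{tr}(\lambda I_n)=n\lambda$, which is zero for every $\lambda$ whenever $p\mid n$. There is no reason the $K$-dimension $n$ supplied by Theorem~\ref{LieLieRin} is coprime to $p$: the field $K$ there is built as the field of fractions of $Z(\mathfrak{g})$, whose dimension can be strictly smaller than $\dim(A)$, forcing $n>1$ and leaving $n$ uncontrolled. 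Your proposed remedy --- passing to a $K$-irreducible sub-quotient whose $K$-dimension is coprime to $p$ --- does not exist in general: the basic characteristic-$p$ phenomenon (a commuting pair with $[x,y]=\mathrm{Id}$ acting irreducibly on a $p$-dimensional space, as in the restricted Weyl representation) shows that a central scalar commutator can survive on every irreducible piece when $p\mid n$. So the contradiction you need cannot be manufactured at this stage by a trace calculation alone.

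The paper's proof avoids this entirely by a different construction. Rather than applying Theorem~\ref{LieLieRin}, it directly builds the skew-field $K$ from the ring of definable endomorphisms generated by a suitable finite-index centraliser $M\leq\mathfrak{g}$, obtaining $\dim(K)=\dim(A)$. This forces $\operatorname{lindim}_K(A)=1$, so that elements of $\mathfrak{g}$ act as $1\times1$ matrices over a commutative field and the trace (or simply commutativity of $K$) gives $[g,g']=0$ unconditionally, with no dependence on the characteristic. In short: the paper makes the field large enough to trivialise the trace issue; your route makes it only as large as $Z(\mathfrak{g})$ permits, and that is not enough.

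Two smaller remarks. First, "absolute minimality forces $A$ to be of exponent $n$" is not quite right --- $A[n]$ of finite index only gives $A$ of bounded exponent (a multiple of $n$), which is what you actually use, so this is a wording issue rather than a gap. Second, the crucial fact that $\mathfrak{g}'$ is a finite subgroup (not just a finite set of brackets) does follow from Lemma~\ref{fin[]} as you claim, since that lemma controls the generated subgroup and not merely the bracket set; you should state this explicitly.
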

\begin{proof}
    Let $\mathcal{C}$ be the family of centralisers in $\mathfrak{g}$ of finite tuples of elements in $\mathfrak{g}$. These are all Lie subrings of finite index by almost abelianity. Let $B$ be a definable subgroup of minimal dimension invariant for some subgroup in $\mathcal{C}$ and let $N_B\in \mathcal{C}$ be a subgroup in $\mathcal{C}$ that stabilises $B$. For any tuple $\overline{g}=(g_1,...,g_n)$ of elements in $\mathfrak{g}$, the subgroup $g_1\cdot\cdot \cdot g_n\cdot B$ is of same dimension as $B$ or finite. Indeed, $g\cdot B$ is a definable subgroup of dimension less than or equal to $B$ normalised by $N_B\cap C_{\mathfrak{g}}(g)\in \mathcal{C}$. By minimality, $gB$ is either finite or of finite index in $B$. We conclude by induction.\\
    Given a sum of translates $\sum_{i=1}^m g_1\cdot\cdot\cdot g_{n_i}(B)$ of maximal dimension, this is an almost $\mathfrak{g}$-invariant subgroup in $A$ and therefore of finite index by absolute minimality. Let $M$ be the intersection of all the centralisers of the tuples $\{g_{1},...,g_{n_i}\}$ with $N_B$. Let $R_M$ be the ring of endomorphisms generated by $M$: it is clearly contained in $\bigcap_{i=1}^m C_{\operatorname{End}(A)}(g_1,...,g_{n_i})$. Therefore, fixed $r\in R_M$, either the image through $r$ of $g_1\cdot\cdot\cdot g_{n_i}(B)$ is finite for every $i$ or it is of finite index for every $g_1\cdot\cdot\cdot g_n\cdot B$. In the first case, $r\in \widetilde{C}_{\mathfrak{g}}(A)$ that is not of finite index in $\mathfrak{g}$. In the second, the image of $r$ is of finite index in $A$ since $rg_1,...g_{n_i}(B)\leq g_1...g_nr(B)$ by hypothesis. Since $A\sim \sum_{i=1}^n g_1...g_{n_i}(B)$, the proof follows. Therefore, there exist unboundedly many elements $\{r_i\}_{i\in I}\subseteq R_M$ such that $r_i-r_j\not\in \widetilde{C}_{\mathfrak{g}}(A)$ for $i\not=j\in I$.\\ 
    We show that there exists a maximal finite subgroup in $A$ invariant for the action of an element in $\mathcal{C}$. Assume not, and let $S=\sum_{i<\omega} S_i$ for $\{S_i\}_{i<\omega}$ an ascending series of finite subgroup invariant for an element of $\mathcal{C}$. This is stabilised by $N\cap M$ with $N=\bigcap_{i<\omega} N_i$ where $N_i\in \mathcal{C}$ stabilises $S_i$. This is clearly a subgroup of bounded index in $\mathfrak{g}$. Then $(N\cap M)+ \widetilde{C}_{\mathfrak{g}}(A)/\widetilde{C}_{\mathfrak{g}}(A)$ is not finite (since $\widetilde{C}_{\mathfrak{g}}(A)$ is not of finite index). Consequently, there exist two elements $r_i,r_j$ that are in $R_{N\cap M}$ such that $(r_i-r_j)(S)=\{0\}$ and $(r_i-r_j)(A)$ is not finite. This contradicts the previous result. In conclusion, there exists a maximal subgroup that necessarily is $\mathfrak{g}$-invariant. Assume not, and take $g\in \mathfrak{g}$ such that $g(S)\not\leq S$. Then, $S+g(S)$ is a finite subgroup strictly containing $S$ and invariant for $N\cap C_{\mathfrak{g}}(g)$, a contradiction. Being $\mathfrak{g}$-invariant and finite, $S\leq \widetilde{C}_A({\mathfrak{g}})$. Consequently, $A/\widetilde{C}_A(\mathfrak{g})$ has no finite subgroups invariant for an element in $\mathcal{C}$. Given $g\in \widetilde{C}_{\mathfrak{g}}(A)$, $g(A)$ is a finite subgroup invariant for $C_{\mathfrak{g}}(g)\in \mathcal{C}$. Therefore, $g(A)$ is contained in $\widetilde{C}_{A}(\mathfrak{g})$ and so $\mathfrak{g}/\widetilde{C}_{\mathfrak{g}}(A)$ acts on $A/\widetilde{C}_{A}(\mathfrak{g})$. Working here, we may assume $\widetilde{C}_{\mathfrak{g}}(A)=0$ and $\widetilde{C}_{A}(\mathfrak{g})=0$. Still $\mathfrak{g'}\leq Z(\mathfrak{g})$ and so it is contained in $M$ (being a finite intersection of centralisers). Given the kernel or the image of any $r\in R_M$ generated by $\{g_1,...,g_m\}$, they are invariant under $C_{\mathfrak{g}}(g_1,...,g_m)\in \mathcal{C}$, and so one of them is finite and the other of finite index. Since $\widetilde{C}_{\mathfrak{g}}(A)=\{0\}$, either $r=0$ or the kernel is $\{0\}$. Therefore, we can construct a definable skew-field $K$, proceeding as in \cite{InvittiDim}, with $\dim(K)=\dim(A)$. We verify that $K$ must be a field. For any $k\in K$, $k$ is generated by elements $\{g_1,...,g_m\}\in M$. Let $N=C_{\mathfrak{g}}(g_1,...,g_n)\cap M$. Then, $B$ is invariant for $N$ and minimal (by minimality of the dimension). All the elements different from $0$ in $R_{M\cap N}$ are monomorphisms with image of finite index in $A$. We can define a skew subfield $K_1$ of $K$ of the same dimension as $\dim(A)=\dim(K)$. Then, $K$ is a vector space over $K_1$ and so $\dim(K)=\operatorname{lindim}(K/K')\cdot \dim(K')$. Since the two dimensions coincide, we may conclude that $\operatorname{lin.dim}(K/K')=1$ and so $K=K'$. On the other hand, $K'$ centralises $k\in K$: $C_{K}(k)$ is a skew subfield and so it contains the skew subfield generated by $M\cap N$. This implies that $k\in Z(K)$ and, by arbitrariety of $k\in K$, $K$ is a field. By linearisation, $M$ acts as $K^+\cdot Id$ on the $K$-vector space of finite dimension $A$. \\
    We verify that $\mathfrak{g}$ acts $K$-linearly on $A$. Indeed $C_{\mathfrak{g}}(g)\cap M$ is of finite index in $M$. Proceeding as before, we obtain that $K$ is equal to the field generated by $R_{C_{\mathfrak{g}}(g)\cap M}$ and so $g$ acts $K$-linearly. Given $g,g'\in \mathfrak{g}$ seen as matrices in $\mathfrak{gl}_n(K)$, $\mathrm{tr}([g,g'])=0$. On the other hand, $[g,g']\in \mathfrak{g}'\leq M$ that acts as a diagonal matrix. Therefore, $[g,g']=0$ and we conclude that $\mathfrak{g}$ is abelian.
\end{proof}
Observe that, in general, given an almost abelian Lie ring $\mathfrak{g}$, we can always find a characteristic Lie subring $\mathfrak{h}$ such that $\mathfrak{h}'\leq Z(\mathfrak{h})$, simply taking $\mathfrak{h}=C_{\mathfrak{g}}(\mathfrak{g}')$. On the other hand, we cannot be sure that $A$ remains absolutely $\mathfrak{h}$-invariant. If we assume that $A$ is $\mathfrak{h}$-minimal for any definable Lie subring $\mathfrak{h}$ of finite index in $\mathfrak{g}$, $A$ is absolutely $C_{\mathfrak{g}}(\mathfrak{g}')$-minimal by Lemma \ref{LinAlmAbe2} and therefore we can apply Theorem \ref{LinAlmAbe}. \\
A corollary of Theorem \ref{LinAlmAbe} and Lemma \ref{LinAlmAbe2} is the characterization of the action of a nilpotent Lie ring in a finite-dimensional theory. This is fundamental in the analysis of the \emph{ almost cohomology groups} in \cite{InvittiCohomology}.
\begin{corollary}\label{LinAlmNil}
    Let $\mathfrak{g}$ be a definable nilpotent Lie ring acting on a definable module $A$. Assume that the action is not almost trivial and that $A$ is $\mathfrak{h}$-minimal for any $\mathfrak{h}$ definable ideal of finite index in $\mathfrak{g}$. Then, there exists a definable ideal $\mathfrak{h}$ of finite index such that $\mathfrak{h}/\widetilde{C}_{\mathfrak{h}}(A)$ is abelian and acts scalarly on the $\mathfrak{h}$-module $A_1/A_2$ with $A_1$ a subgroup of finite index in $A$ and $A_2$ finite.
\end{corollary}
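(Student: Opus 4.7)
The plan is to reduce the corollary to Theorem~\ref{LieLieRin} by producing an abelian ideal of $\mathfrak{g}$ whose action on $A$ is non-almost-trivial, and then to upgrade the resulting $K$-linear structure into a scalar action using the nilpotency of $\mathfrak{g}$.

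First, the minimality hypothesis on $A$ combined with Lemma~\ref{LinAlmAbe2} yields that $A$ is absolutely $\mathfrak{g}$-minimal. Since the action is not almost trivial, $\widetilde{C}_A(\mathfrak{g})$ is a $\mathfrak{g}$-invariant definable submodule of infinite index in $A$, so by $\mathfrak{g}$-minimality it is finite; this submodule will be absorbed into $A_2$.

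I would then locate an abelian ideal $\mathfrak{a}\leq\mathfrak{g}$ whose action on $A$ is not almost trivial, arguing by induction on the nilpotency class of $\mathfrak{g}$. The last non-trivial term $\mathfrak{g}^c$ of the lower central series satisfies $[\mathfrak{g},\mathfrak{g}^c]=\mathfrak{g}^{c+1}=0$, hence $\mathfrak{g}^c\leq Z(\mathfrak{g})$ is a definable abelian ideal. If $\mathfrak{g}^c$ acts non-almost-trivially, take $\mathfrak{a}=\mathfrak{g}^c$; otherwise $\mathfrak{g}^c\leq\widetilde{C}_{\mathfrak{g}}(A)$, and after passing to a suitable subquotient (preserving absolute minimality) one reduces to a nilpotent Lie ring of strictly lower class still acting non-almost-trivially, and invokes the inductive hypothesis.

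Applying Theorem~\ref{LieLieRin} to $(\mathfrak{g},A)$ with the abelian ideal $\mathfrak{a}$ produces a definable ideal $\mathfrak{g}_1\leq\mathfrak{a}$ of finite index in $\mathfrak{a}$, a finite submodule $V_1\leq A$, and a definable field $K$ such that $A/V_1$ is a finite-dimensional $K$-vector space, $\mathfrak{a}/\mathfrak{g}_1$ embeds in $K\cdot\mathrm{Id}$, and $\mathfrak{g}/\mathfrak{g}_1$ acts $K$-linearly on $A/V_1$. Absolute $\mathfrak{g}$-minimality forbids proper $K$-invariant subspaces in $A/V_1$, so the image $\bar{\mathfrak{g}}\leq\mathfrak{gl}_n(K)$ is a nilpotent $K$-Lie subalgebra acting $K$-irreducibly.

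The final step is to promote the $K$-linear action to a scalar one. A Schur-type argument applied to this irreducible nilpotent action identifies the commutant $L$ of $\bar{\mathfrak{g}}$ inside $\mathfrak{gl}_n(K)$ as a division ring; the finite-dimensionality hypothesis, together with Lemmas~\ref{perfection} and~\ref{DerTri}, ensures that $L$ is in fact a definable field extending $K$, and an Engel-type argument exploiting nilpotency then forces $\bar{\mathfrak{g}}\subseteq L\cdot\mathrm{Id}$. Setting $\mathfrak{h}$ to be the preimage in $\mathfrak{g}$ of this image gives a definable ideal of finite index with $\mathfrak{h}/\widetilde{C}_{\mathfrak{h}}(A)$ abelian and acting scalarly on $A_1/A_2$, for $A_1$ of finite index in $A$ and $A_2$ finite.

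The main obstacle is this last sharpening step: over a finite-dimensional field $K$ which need not be algebraically closed, a nilpotent Lie subalgebra of $\mathfrak{gl}_n(K)$ need not act by $K$-scalars, and the centralising subfield $L$ must be constructed definably, without recourse to algebraic closure, presumably via a further application of Theorem~\ref{End}.
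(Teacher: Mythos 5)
Your outline is broadly parallel to the paper's: both arguments first apply Lemma~\ref{LinAlmAbe2} to upgrade the hypothesis to absolute $\mathfrak{g}$-minimality, then funnel through Theorem~\ref{LieLieRin} via an abelian ideal acting non-almost-trivially, and finally sharpen the $K$-linear action to a scalar one using nilpotency. The decomposition you use to locate that abelian ideal, however, is genuinely different: you work bottom-up through the lower central series, taking $\mathfrak{a}=\mathfrak{g}^c$ and reducing the nilpotency class when $\mathfrak{g}^c$ acts almost trivially, whereas the paper works top-down through the upper almost-central series $\widetilde{Z}^n(\mathfrak{g})$, uses Lemma~\ref{LinAlmAbe} to extract an actual abelian quotient of $\widetilde{Z}(\mathfrak{g})$, and then feeds that into Theorem~\ref{LieLieRin} directly. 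Your version has the advantage that $\mathfrak{g}^c$ is automatically abelian (no need for the Lemma~\ref{LinAlmAbe} step), while the paper's version avoids ever passing to a quotient of the acting Lie ring.

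That last distinction is where the gap in your proposal lies. In the reduction step ``otherwise $\mathfrak{g}^c\leq\widetilde{C}_{\mathfrak{g}}(A)$, pass to a suitable subquotient,'' you need the quotient $\mathfrak{g}/\mathfrak{g}^c$ to actually act on some $A/A'$ with $A'$ finite, which requires $\mathfrak{g}^c$ to act genuinely \emph{trivially} on a submodule of finite index, not merely almost trivially. This upgrade is not automatic and is precisely the technical content of the paper's second case: it proves that $\widetilde{Z}(\mathfrak{g})\cap\widetilde{C}_{\mathfrak{g}}(A)\leq C_{\mathfrak{g}}(A)$ by noting that for such an element $g$, the image $g(A)$ is a finite $\mathfrak{g}$-invariant subgroup (using centrality of $g$ modulo a finite-index centralizer), hence $g(A)\leq\widetilde{C}_A(\mathfrak{g})=0$. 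You would need an analogous argument for $\mathfrak{g}^c$; in addition, you must verify that after quotienting, the minimality hypothesis (``$A$ is $\mathfrak{h}$-minimal for every definable ideal $\mathfrak{h}$ of finite index'') persists, which is not immediate when the acting Lie ring shrinks. As for your final sharpening step, you correctly identify the obstacle that $K$ need not be algebraically closed; the paper's own appeal to ``Engel's Theorem'' there is itself terse, and a fuller treatment along the Schur--Engel lines you sketch (or via the weight-theoretic argument for nilpotent actions on irreducible modules, using that the central part already acts through $K\cdot\mathrm{Id}$) would need to be carried out over the definable field $K$ rather than its algebraic closure.
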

\begin{proof}
By Lemma \ref{LinAlmAbe2}, $A$ is absolutely $\mathfrak{g}$-minimal. Therefore, since the action is not almost trivial, $\widetilde{C}_A(\mathfrak{g})$ is finite. Without loss of generality, we may assume $\widetilde{C}_{A}(\mathfrak{g})=0$.
   Assume that the action of $\widetilde{Z}(\mathfrak{g})$ on $A$ is not almost trivial. Then, $\widetilde{C}_A(\widetilde{Z}(\mathfrak{g}))$ is a $\mathfrak{g}$-invariant subgroup that, by assumption, cannot be of finite index. By hypothesis, it is equal to $0$. Let $B$ be a minimal $Z(\mathfrak{g})$-invariant subgroup and a sum $\sum_{\overline{g}\in \mathfrak{g}^{<\omega}} \overline{g}B$ of maximal dimension. Then, $Z(\mathfrak{g})$ acts on $B$ not almost trivially. Moreover, the almost centraliser $\widetilde{C}_{B}(\widetilde{Z}(\mathfrak{g}))$ is contained in $\widetilde{C}_{A}(\widetilde{Z}(\mathfrak{g}))=0$. Therefore, by Lemma \ref{LinAlmAbe}, $\widetilde{Z}(\mathfrak{g})/\widetilde{C}_{A}(\widetilde{Z}(\mathfrak{g}))$ is abelian. By Lemma \ref{LieLieRin}, $\widetilde{Z}(\mathfrak{g})/\widetilde{C}_{\widetilde{Z}(\mathfrak{g})}(A)$ is central in $\mathfrak{g}/\widetilde{C}_{\mathfrak{g}}(A)$. Moreover, there exists a definable field $K$ such that $\mathfrak{g}/\widetilde{C}_{\mathfrak{g}}(A)$ acts $K$-linearly on the $K$-vector space of finite dimension $A$. Since $\mathfrak{g}$ is nilpotent, then, by Engel's Theorem, there exists a $\mathfrak{g}$-invariant subgroup $A_1$ of $A$ of $K$-dimension $1$. By minimality, we may conclude that $\mathfrak{g}/C_{\mathfrak{g}}(A_1)$ is abelian and acts $K$-scalarly on the $\mathfrak{g}$-module $A_1$ of finite index in $A$.\\
   Assume that $\widetilde{Z}(\mathfrak{g})$ acts almost trivially. Then, $\widetilde{Z}(\mathfrak{g})\cap \widetilde{C}_{\mathfrak{g}}(A)$ is of finite index. We verify that $\widetilde{Z}(\mathfrak{g})\cap \widetilde{C}_{\mathfrak{g}}(A)\leq C_{\mathfrak{g}}(A)$. It is sufficient to prove that, for $g\in \widetilde{Z}(\mathfrak{g})\cap \widetilde{C}_{\mathfrak{g}}(A)$, $g(A)\leq \widetilde{C}_{A}(\mathfrak{g})=0$ by hypothesis. For any $g'\in C_{\mathfrak{g}}(g)$, $g'(g(A))=[g',g](A)+g(g'(A))\leq gA$. Since, by hypothesis, $C_{\mathfrak{g}}(g)$ is of finite index in $\mathfrak{g}$ and $g(A)$ is finite, $g(A)\leq \widetilde{C}_A(\mathfrak{g})$. Therefore, $\widetilde{Z}^2(\mathfrak{g})/C_A(\mathfrak{g})$ is almost abelian. Proceeding as before, we either obtain the linearization or $\widetilde{Z}^2(\mathfrak{g})$ is almost contained in $\widetilde{C}_{\mathfrak{g}}(A)$. Let $g\in \widetilde{C}_{\mathfrak{g}}(A)\cap \widetilde{Z}^2(\mathfrak{g})$. By definition, $[g,\mathfrak{g}]\apprle \widetilde{Z}(\mathfrak{g})\apprle C_{\mathfrak{g}}(A)$. Let $C_g=\{g'\in \mathfrak{g}:\ [g',g]\in C_{\mathfrak{g}}(A)\}$. By previous proof, $C_g$ is a definable subgroup of finite index in $\mathfrak{g}$. For every $g'\in C_g$, $g'(gA)=[g,g'](A)+g(g'(A))=g(A)$ since $[g,g'](A)=0$. Therefore, $g(A)$ is contained in $\widetilde{C}_A(\mathfrak{g})=0$ \hbox{i.e.} $g\in C_{\mathfrak{g}}(A)$. This implies that $\widetilde{Z}^2(\mathfrak{g})$ is almost contained in the centraliser of the action. We iterate the process for any $\widetilde{Z}^n(\mathfrak{g})$ and eventually we conclude.

   \end{proof}
\section{Definable envelopes}
In this section, we prove the existence of (almost) soluble and (almost) nilpotent definable envelopes respectively for (almost) soluble and (almost) nilpotent Lie subrings of an hereditarily $\widetilde{\mathfrak{M}}_c$-Lie ring.
\begin{theorem}
    Let $\mathfrak{g}$ be a definable hereditarily $\widetilde{\mathfrak{M}_c}$-Lie ring and $\mathfrak{h}$ a Lie subring. Then,
\begin{itemize}
    \item if $\mathfrak{h}$ is almost nilpotent, $\mathfrak{h}$ is contained in a definable almost nilpotent Lie ring $\mathfrak{b}$ such that $N_{\mathfrak{g}}(\mathfrak{h})\leq N_{\mathfrak{g}}(\mathfrak{b})$;
    \item if $\mathfrak{h}$ is almost soluble with almost abelian series
    $$\mathfrak{h}=\mathfrak{h}_0\geq \mathfrak{h}_1\geq...\geq \mathfrak{h}_n=0$$
    $\mathfrak{h}$ is contained in a definable almost soluble Lie ring with almost abelian series normalised by $\bigcap_{i\leq n} N_{\mathfrak{g}}(\mathfrak{h}_i)$.
\end{itemize}
\end{theorem}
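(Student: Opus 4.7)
I handle both statements in parallel by induction on the length $n$ of the appropriate series — the almost central series in the nilpotent case, the given almost abelian series in the soluble case. The inductive step reduces, by passing to the quotient $\widetilde{N}_\mathfrak{g}(\mathfrak{b}_1)/\mathfrak{b}_1$ where $\mathfrak{b}_1$ is an envelope for $\mathfrak{h}_1$, to the base case $n=1$: producing a definable almost abelian envelope of an almost abelian Lie subring, with the appropriate normaliser property.

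\textbf{The base case.} Suppose $\mathfrak{h}$ is almost abelian. By hereditary $\widetilde{\mathfrak{M}}_c$ applied to $\mathfrak{g}$, the chain of centralisers $C_\mathfrak{g}(h_1,\dots,h_k)$ indexed by tuples from $\mathfrak{h}$ stabilises uniformly modulo a fixed finite index after finitely many steps; this produces $h_1,\dots,h_k \in \mathfrak{h}$ and a definable subgroup $C := C_\mathfrak{g}(h_1,\dots,h_k)$ that is uniformly almost contained in $C_\mathfrak{g}(h)$ for every $h \in \mathfrak{h}$. Set $\mathfrak{b} := \widetilde{C}_\mathfrak{g}(C)$, which is definable by Lemma~\ref{DefZ}. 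Then $\mathfrak{h} \leq \mathfrak{b}$ since every $h \in \mathfrak{h}$ almost centralises $C$ by construction. Using the Jacobi identity together with the uniform almost-containment $C \apprle C_\mathfrak{g}(\mathfrak{h})$, the group $N_\mathfrak{g}(\mathfrak{h})$ almost-normalises $C$, so by Lemma~\ref{almide1} $N_\mathfrak{g}(\mathfrak{h}) \leq N_\mathfrak{g}(\mathfrak{b})$. To prove $\mathfrak{b}$ is almost abelian, apply Lemma~\ref{fin[]Alm} with $H=\mathfrak{b}$ and $K=0$: the commutator $[\widetilde{C}_\mathfrak{g}(\mathfrak{b}),\widetilde{C}_\mathfrak{b}(\widetilde{C}_\mathfrak{g}(\mathfrak{b}))]$ is finite. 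By the symmetry of the almost centraliser (Lemma~\ref{sym}), the equality $\mathfrak{b}=\widetilde{C}_\mathfrak{g}(C)$ gives $C\apprle \widetilde{C}_\mathfrak{g}(\mathfrak{b})$; combined with the almost abelianity of $\mathfrak{h}$ and a further symmetry pass, this should promote to the inclusion $\mathfrak{b}\leq\widetilde{C}_\mathfrak{g}(\mathfrak{b})$, at which point the commutator above becomes $[\mathfrak{b},\mathfrak{b}]$ and the almost abelianity of $\mathfrak{b}$ follows.

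\textbf{Inductive step and main obstacle.} For $n \geq 2$, apply the induction hypothesis to $\mathfrak{h}_1$ to obtain a definable almost soluble envelope $\mathfrak{b}_1 \supseteq \mathfrak{h}_1$, normalised by $\bigcap_{i \geq 1} N_\mathfrak{g}(\mathfrak{h}_i)$ and carrying an ideal almost abelian series. Since $\mathfrak{b}_1$ is a definable almost ideal in $\widetilde{N}_\mathfrak{g}(\mathfrak{b}_1)$, the quotient $\widetilde{N}_\mathfrak{g}(\mathfrak{b}_1)/\mathfrak{b}_1$ is again a definable hereditarily $\widetilde{\mathfrak{M}}_c$-Lie ring, and the image of $\mathfrak{h}$ in it is almost abelian; the base case there yields a definable almost abelian envelope whose preimage is the desired $\mathfrak{b}$. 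In the almost nilpotent case the induction runs instead on the almost central series $\{\widetilde{Z}^i(\mathfrak{h})\}$, the only additional check being that each new quotient $\mathfrak{b}_i/\mathfrak{b}_{i-1}$ is almost central (and not merely almost abelian) in $\mathfrak{b}/\mathfrak{b}_{i-1}$, which is automatic for iterated centraliser constructions. \emph{The main obstacle} is the promotion step in the base case: showing that the almost-abelianity of $\mathfrak{h}$ inside $\mathfrak{b}$ transfers to almost-abelianity of $\mathfrak{b}$ itself. A priori $\mathfrak{b}$ only almost centralises the small subgroup $C$, and upgrading this to almost self-centralisation of $\mathfrak{b}$ is the subtle combinatorial heart of the argument, requiring careful bookkeeping with Lemmas~\ref{sym} and~\ref{fin[]Alm}, and possibly a second iteration of the centraliser construction.
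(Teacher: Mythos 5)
Your base case has a genuine gap, and it is exactly at the point you yourself flag as the ``main obstacle.'' You set $\mathfrak{b}:=\widetilde{C}_\mathfrak{g}(C)$ and hope to show $\mathfrak{b}$ is almost abelian, but there is no reason for this to hold: $\widetilde{C}_\mathfrak{g}(C)$ is the almost centraliser of the rather small subgroup $C$, and it can easily be a large, highly non-abelian Lie subring of $\mathfrak{g}$. Your attempted promotion --- from ``$C\apprle\widetilde{C}_\mathfrak{g}(\mathfrak{b})$'' to ``$\mathfrak{b}\apprle\widetilde{C}_\mathfrak{g}(\mathfrak{b})$'' --- is a non-sequitur: knowing that a possibly small subgroup $C$ of $\mathfrak{b}$ is almost centralised by $\mathfrak{b}$ says nothing about $\mathfrak{b}$ almost centralising itself. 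And invoking Lemma~\ref{fin[]Alm} with $H=\mathfrak{b}$, $K=0$ only tells you that $[\widetilde{C}_\mathfrak{g}(\mathfrak{b}),\widetilde{C}_\mathfrak{b}(\widetilde{C}_\mathfrak{g}(\mathfrak{b}))]$ is finite; you would need $\mathfrak{b}\leq\widetilde{C}_\mathfrak{g}(\mathfrak{b})$ already, which is precisely what you are trying to prove, so the argument is circular.

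The paper avoids this entirely by \emph{not} taking $\widetilde{C}_\mathfrak{g}(C)$ as the envelope. Instead it takes $\mathfrak{b}_1:=\widetilde{Z}\bigl(\widetilde{C}_\mathfrak{g}(C)\bigr)=\widetilde{C}_\mathfrak{g}\bigl(\widetilde{C}_\mathfrak{g}(C)\bigr)\cap\widetilde{C}_\mathfrak{g}(C)$, which is almost abelian by construction, and then $\mathfrak{b}:=\mathfrak{b}_1+\mathfrak{h}$. The reason $\mathfrak{h}$ is (almost) contained in $\mathfrak{b}_1$ is a small but essential argument using symmetry (Lemma~\ref{sym}): one shows $\mathfrak{h}\apprle C$ (because $C$ is a finite intersection of centralisers that each almost contain $\mathfrak{h}$ by almost abelianity), and $C\apprle\widetilde{C}_\mathfrak{g}\bigl(\widetilde{C}_\mathfrak{g}(C)\bigr)$ by symmetry, hence $\mathfrak{h}\apprle\widetilde{C}_\mathfrak{g}\bigl(\widetilde{C}_\mathfrak{g}(C)\bigr)$; combined with $\mathfrak{h}\apprle\widetilde{C}_\mathfrak{g}(C)$ (which you did establish) this gives $\mathfrak{h}\apprle\mathfrak{b}_1$. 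So the missing idea is to pass to the almost centre of $\widetilde{C}_\mathfrak{g}(C)$, making almost abelianity automatic, and to use the containment $\mathfrak{h}\apprle C$ to get $\mathfrak{h}$ inside that almost centre. The normaliser bookkeeping and the inductive step you sketch are essentially in line with the paper (the paper's nilpotent induction uses centralisers of elements of $\widetilde{Z}(\mathfrak{h})$ rather than of all of $\mathfrak{h}$ and intersects with $\mathfrak{b}_1$ at the end, but the shape is the same), so once the base case is repaired your plan would go through.
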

In particular, the Theorem implies that an almost nilpotent ideal is contained in an almost nilpotent definable ideal and a soluble ideal, having an ideal almost abelian series, is contained in an almost soluble definable ideal with ideal almost abelian series.\\
A consequence of these results is that, in a finite-dimensional definable Lie ring, the Radical and the Fitting ideal are definable and respectively soluble and nilpotent (see Lemma \ref{FitFinDim} and Lemma \ref{RadIdeFinDim}). Moreover, with a proof similar to \cite{hempel}, we verify that the Fitting ideal of an hereditarily $\widetilde{\mathfrak{M}}_c$-Lie ring is definable and nilpotent.
\subsection{Construction of the definable envelopes}
We start from the almost nilpotent case.
\begin{lemma}\label{AlmNilEmb}
    Let $\mathfrak{g}$ be a definable hereditarily $\widetilde{\mathfrak{M}}_c$-Lie ring and $\mathfrak{h}$ an almost nilpotent Lie subring. Then, $\mathfrak{h}$ is contained in a definable almost nilpotent Lie subring $\mathfrak{b}$ of same almost nilpotency class such that $N_{\mathfrak{g}}(\mathfrak{b})\geq N_{\mathfrak{g}}(\mathfrak{h})$.
\end{lemma}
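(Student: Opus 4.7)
The plan is to induct on the almost nilpotency class $c$ of $\mathfrak{h}$; the case $c=0$ is trivial with $\mathfrak{b}=\{0\}$, so I assume $c\geq 1$.

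For the base case $c=1$ (so $\mathfrak{h}=\widetilde{Z}(\mathfrak{h})$ is almost abelian), I would first invoke the hereditarily $\widetilde{\mathfrak{M}}_c$ hypothesis on the uniformly definable family $\{C_{\mathfrak{g}}(h)\}_{h\in\mathfrak{h}}$ to extract finitely many $h_1,\dots,h_k\in\mathfrak{h}$ and a bound $d<\omega$ such that the definable subgroup $C:=\bigcap_{i=1}^k C_{\mathfrak{g}}(h_i)$ satisfies $|C:C\cap C_{\mathfrak{g}}(h)|<d$ for every $h\in\mathfrak{h}$. In particular $[h,C]$ is uniformly finite for every $h\in\mathfrak{h}$, which places $\mathfrak{h}$ inside the definable ideal $\widetilde{C}_{\mathfrak{g}}(C)$ (Lemma~\ref{DefZ}), giving a first candidate envelope $\mathfrak{b}_1$. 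To extract almost abelianity of $\mathfrak{b}_1$, I would apply Lemma~\ref{fin[]Alm} with $H=C$ and $K=\{0\}$, combined with the symmetry Lemma~\ref{sym}, to show that $[\mathfrak{b}_1,\widetilde{C}_C(\mathfrak{b}_1)]$ is finite and that $\widetilde{C}_C(\mathfrak{b}_1)$ has finite index in $C$; intersecting $\mathfrak{b}_1$ with one further definable subgroup (essentially the double almost centraliser of $C$) then yields a definable almost abelian subring containing $\mathfrak{h}$. The normaliser condition is then secured by observing that $N_{\mathfrak{g}}(\mathfrak{h})$ permutes the uniformly commensurable family $\{C_{\mathfrak{g}}(h)\}_{h\in\mathfrak{h}}$, so Schlichting's theorem produces a canonical $N_{\mathfrak{g}}(\mathfrak{h})$-invariant subgroup $C'$ commensurable with $C$; rebuilding the envelope from $C'$ instead of $C$ makes it automatically $N_{\mathfrak{g}}(\mathfrak{h})$-invariant.

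For the inductive step $c\geq 2$, I would apply the base case to the almost abelian Lie subring $\widetilde{Z}(\mathfrak{h})$ of $\mathfrak{h}$, obtaining a definable almost abelian envelope $\mathfrak{b}_0\supseteq\widetilde{Z}(\mathfrak{h})$ normalised by $N_{\mathfrak{g}}(\widetilde{Z}(\mathfrak{h}))\supseteq N_{\mathfrak{g}}(\mathfrak{h})$. Since $\mathfrak{h}$ normalises $\mathfrak{b}_0$, the definable subring $G:=N_{\mathfrak{g}}(\mathfrak{b}_0)$ contains both $\mathfrak{h}$ and $N_{\mathfrak{g}}(\mathfrak{h})$, inherits the hereditarily $\widetilde{\mathfrak{M}}_c$ property, and has $\mathfrak{b}_0$ as a definable ideal. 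The image of $\mathfrak{h}$ in $G/\mathfrak{b}_0$ has almost nilpotency class at most $c-1$, so the induction hypothesis produces a definable almost nilpotent envelope there; its preimage in $G$ is the desired $\mathfrak{b}$, with the normaliser condition preserved by construction.

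The main obstacle is the base case: verifying that the candidate $\mathfrak{b}_1$ is genuinely almost abelian, and not merely almost contained in something almost abelian, requires a careful interplay between Lemmas~\ref{sym} and \ref{fin[]Alm}, and the Schlichting-style canonical replacement of $C$ must be performed before $\mathfrak{b}_1$ is defined so that normalisation by $N_{\mathfrak{g}}(\mathfrak{h})$ is not lost. A further subtlety arises in the inductive step: $\mathfrak{b}_0$ is in general not an ideal of $\mathfrak{g}$ but only a Lie subring, so the induction must be carried out inside the definable subring $N_{\mathfrak{g}}(\mathfrak{b}_0)$ rather than $\mathfrak{g}$ itself, and one must verify that this restriction still satisfies the hereditarily $\widetilde{\mathfrak{M}}_c$ hypothesis needed to apply the inductive step.
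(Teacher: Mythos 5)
Your overall strategy matches the paper: induction on almost nilpotency class, with the almost abelian case as the base. The base case construction (intersect $\mathfrak{h}$-centralisers down to a definable $C$ using the $\widetilde{\mathfrak{M}}_c$-property, then pass to $\widetilde{Z}(\widetilde{C}_{\mathfrak{g}}(C))$) is essentially what the paper does. However, your Schlichting detour is problematic: the family $\{C_{\mathfrak{g}}(h)\}_{h\in\mathfrak{h}}$ is not uniformly commensurable (e.g.\ $h=0$ gives all of $\mathfrak{g}$), and even after restricting to $\{C\cap C_{\mathfrak{g}}(h)\}_h$, Schlichting requires invariance under \emph{automorphisms} of $(\mathfrak{g},+)$ fixing the family set-wise, whereas $\ad_n$ for $n\in N_{\mathfrak{g}}(\mathfrak{h})$ is a derivation, not an automorphism, so the theorem does not apply as stated. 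The paper avoids this entirely by computing directly via the Jacobi identity that $N_{\mathfrak{g}}(\mathfrak{h})$ almost normalises $C$, and then invoking Lemma~\ref{almide1} to get $N_{\mathfrak{g}}(\widetilde{C}_{\mathfrak{g}}(C))\geq\widetilde{N}_{\mathfrak{g}}(C)\geq N_{\mathfrak{g}}(\mathfrak{h})$.

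The more serious gap is in your inductive step. You build a definable almost abelian $\mathfrak{b}_0\supseteq\widetilde{Z}(\mathfrak{h})$, pass to $G/\mathfrak{b}_0$ with $G=N_{\mathfrak{g}}(\mathfrak{b}_0)$, apply induction, and declare the preimage $\mathfrak{c}$ to be the desired envelope. But $\mathfrak{c}/\mathfrak{b}_0$ being almost nilpotent and $\mathfrak{b}_0$ being an almost abelian ideal of $\mathfrak{c}$ does \emph{not} make $\mathfrak{c}$ almost nilpotent: for instance $\mathfrak{c}=\mathfrak{ga}_1(K)$ has an abelian ideal with abelian quotient yet has trivial almost centre when $K$ is infinite. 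What one actually needs is $\mathfrak{b}_0\leq\widetilde{Z}(\mathfrak{c})$, which your construction does not secure, since $\mathfrak{c}\leq N_{\mathfrak{g}}(\mathfrak{b}_0)$ only gives that $\mathfrak{c}$ normalises $\mathfrak{b}_0$, not that it almost centralises it. The paper handles this by taking $\mathfrak{b}_1=\widetilde{C}_{\mathfrak{g}}(\widetilde{C}_{\mathfrak{g}}(C))$ and $\mathfrak{b}_2=\widetilde{Z}(\mathfrak{b}_1)$, then running the induction in $N_{\mathfrak{g}}(\mathfrak{b}_2)/\mathfrak{b}_2$ and \emph{intersecting the resulting preimage $\mathfrak{b}_3$ with $\mathfrak{b}_1$}. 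Since $\mathfrak{b}_2$ is by definition the almost centre of $\mathfrak{b}_1$, any Lie subring of $\mathfrak{b}_1$ containing $\mathfrak{b}_2$ automatically has $\mathfrak{b}_2$ inside its own almost centre, and this is precisely what makes the preimage almost nilpotent. To repair your argument, replace $\mathfrak{b}_0$ by the almost centre of a suitable ambient Lie ring (rather than an arbitrary almost abelian envelope of $\widetilde{Z}(\mathfrak{h})$), and intersect the preimage with that ambient ring.
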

\begin{proof}
    We proceed by induction on the almost nilpotency class of $\mathfrak{h}$. Assume that $\mathfrak{h}$ is almost abelian. Let $C=\bigcap_{i=1}^n C_{\mathfrak{g}}(h_i)$ be a centraliser of minimal $\widetilde{\mathfrak{M}}_c$-dimension among all the finite intersections of centralisers of elements in $\mathfrak{h}$. For minimal $\widetilde{\mathfrak{M}}_c$-dimension we mean that, for every $h\in \mathfrak{h}$, $C_{\mathfrak{g}}(h)\apprge C$. By assumptions, $\mathfrak{h}\apprle \widetilde{C}_{\mathfrak{g}}(C)$. We verify that $C$ almost contains $\mathfrak{h}$ and $[n,\mathfrak{h}]\apprle \mathfrak{h}$ for any $n\in N_{\mathfrak{g}}(\mathfrak{h}$). Being an intersection of finitely many Lie subrings almost containing $\mathfrak{h}$ (by almost abelianity), $C$ almost contains $\mathfrak{h}$. We prove the second statement. Let $n\in N_{\mathfrak{g}}(\mathfrak{h})$, then $[n,h_i]\in \mathfrak{h}$ by definition. Therefore, $C\cap C_{\mathfrak{g}}([n,h_i])$ is of finite index in $C$. To verify that $C$ is almost invariant for $\ad_g$, it is sufficient to show that, for every $i\leq n$, there exists a subgroup $C_i$ of finite index in $C$ such that $[[n,C_i],h_i]=0$. Let $C_i=C\cap C_{\mathfrak{g}}([n,h_i])$, then, by hypothesis, $C_i\sim C$. Moreover, 
    $$[[n,C_i],h_i]\leq [C_i,[n,h_i]]+[[h_i,C_i],n]=0.$$
    This proves that $C$ is almost invariant for $N_{\mathfrak{g}}(\mathfrak{h})$. $\widetilde{C}_{\mathfrak{g}}(C)$ is a Lie subring in $\mathfrak{g}$ such that $N_{\mathfrak{g}}(\widetilde{C}_{\mathfrak{g}}(C))\geq \widetilde{N}_{\mathfrak{g}}(C)\geq N_{\mathfrak{g}}(\mathfrak{h})$ by Lemma \ref{C(H/K)AlmId}. Therefore, also $\mathfrak{b}_1=\widetilde{Z}_{\mathfrak{g}}(\widetilde{C}_{\mathfrak{g}}(C))$ is a Lie ring normalised by $N_{\mathfrak{g}}(\mathfrak{h})$. In addiction, $\mathfrak{b}_1$ is almost abelian. $\mathfrak{b}_1$ almost contains $\mathfrak{h}$. Indeed, $\mathfrak{b}=\widetilde{C}_{\mathfrak{g}}(\widetilde{C}_{\mathfrak{g}}(C))\cap \widetilde{C}_{\mathfrak{g}}(C)$, but the latter contains $\mathfrak{h}$ by previous proof, while the first since $\widetilde{C}_{\mathfrak{g}}(\widetilde{C}_{\mathfrak{g}}(C))\apprge C$ by Lemma \ref{sym}. Finally, let $\mathfrak{b}=\mathfrak{b}_1+\mathfrak{h}$. $\mathfrak{b}$ is definable, being a finite extension of $\mathfrak{b}$, and it is almost nilpotent for the same reason. It is a Lie ring normalised by $N_{\mathfrak{g}}(\mathfrak{h})$ since $\mathfrak{h}\leq N_{\mathfrak{g}}(\mathfrak{h})$ and $N_{\mathfrak{g}}(\mathfrak{h})$ normalises both of them. This completes the proof in the almost abelian case.\\
    By induction hypothesis, assume that the conclusion of the Lemma is true for any Lie ring of almost nilpotency class $n$, and let $\mathfrak{h}$ be an almost nilpotent Lie ring of almost nilpotency class $n+1$. Let $\widetilde{Z}(\mathfrak{h})$ be the almost center of $\mathfrak{h}$ and denote with $C$ the intersection of finitely many centralisers of elements in $\widetilde{Z}(\mathfrak{h})$ of minimal $\widetilde{\mathfrak{M}}_c$-dimension. Then $C$, by definition of almost center, almost contains $\mathfrak{h}$ and its normaliser contains $N_{\mathfrak{g}}(\mathfrak{h})$ since $N_{\mathfrak{g}}(\mathfrak{h})\leq N_{\mathfrak{g}}(\widetilde{Z}(\mathfrak{h}))$. Let $\mathfrak{b}_1:=\widetilde{C}_{\mathfrak{g}}(\widetilde{C}_{\mathfrak{g}}(C))$ and $\mathfrak{b}_2:=\widetilde{Z}(\mathfrak{b}_1)$. We have that $\mathfrak{b}_1\apprge \mathfrak{h}$ and $\mathfrak{b}_2\apprge \widetilde{Z}(\mathfrak{h})$. Indeed, since $\widetilde{C}_{\mathfrak{g}}(C)=\widetilde{C}_{\mathfrak{g}}(C)$, then $\mathfrak{b}_1=\widetilde{C}_{\mathfrak{g}}\widetilde{C}_{\mathfrak{g}}(C)\apprge C\apprge \mathfrak{h}$ by Lemma \ref{sym}. For the second, it is sufficient to prove that $\widetilde{C}_{\mathfrak{g}}(C)$ almost contains $\widetilde{Z}(\mathfrak{h})$ and this follows by construction of $C$. Moreover, $N_{\mathfrak{g}}(\mathfrak{b}_2)\geq N_{\mathfrak{g}}(\widetilde{C}_{\mathfrak{g}}(C))\geq N_{\mathfrak{g}}(\mathfrak{h})$.\\
    We work in $N_{\mathfrak{g}}(\mathfrak{b}_2)/\mathfrak{b}_2$. This contains the Lie ring $\mathfrak{h}+\mathfrak{b}_2/\mathfrak{b}_2$ that has almost nilpotency class $n$. By induction, we obtain $\mathfrak{b}_3$ almost containing $\mathfrak{h}$ such that $\mathfrak{b}_3/\mathfrak{b}_2$ is almost nilpotent and $N_{\mathfrak{g}}(\mathfrak{b}_3)\geq N_{\mathfrak{g}}(\mathfrak{h})$. Let $\mathfrak{b}=\mathfrak{b}_3\cap \mathfrak{b}_1$. $\mathfrak{b}$ is a definable Lie ring such that:
    \begin{itemize}
        \item $\mathfrak{b}$ almost contains $\mathfrak{h}$;
        \item $N_{\mathfrak{g}}(\mathfrak{b})\geq N_{\mathfrak{g}}(\mathfrak{h})$;
        \item $\mathfrak{b}/\mathfrak{b}_2$ is almost nilpotent;
        \item $\mathfrak{b}_2\leq \widetilde{Z}(\mathfrak{b})$.
    \end{itemize}
    Therefore, $\mathfrak{b}$ is a definable almost nilpotent Lie ring that almost contains $\mathfrak{h}$ and such that $N_{\mathfrak{g}}(\mathfrak{h})\leq N_{\mathfrak{g}}(\mathfrak{b})$. Taking $\mathfrak{b}+\mathfrak{h}$, the proof follows.
\end{proof}
An immediate corollary is the following.
\begin{corollary}
    Let $\mathfrak{g}$ be a definable hereditarily $\widetilde{\mathfrak{M}}_c$-Lie ring and $\mathfrak{h}$ an almost nilpotent ideal. Then, $\mathfrak{h}$ is contained in an almost nilpotent definable ideal of the same almost nilpotency class.
\end{corollary}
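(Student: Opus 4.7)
The plan is to obtain this as a direct instantiation of Lemma \ref{AlmNilEmb}. Applying that lemma to the almost nilpotent Lie subring $\mathfrak{h}$, we get a definable almost nilpotent Lie subring $\mathfrak{b} \supseteq \mathfrak{h}$ of the same almost nilpotency class as $\mathfrak{h}$, satisfying $N_{\mathfrak{g}}(\mathfrak{b}) \geq N_{\mathfrak{g}}(\mathfrak{h})$.

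The key observation is that when $\mathfrak{h}$ is an ideal of $\mathfrak{g}$, we have $N_{\mathfrak{g}}(\mathfrak{h}) = \mathfrak{g}$. Thus the containment $N_{\mathfrak{g}}(\mathfrak{b}) \geq N_{\mathfrak{g}}(\mathfrak{h})$ forces $N_{\mathfrak{g}}(\mathfrak{b}) = \mathfrak{g}$, so $\mathfrak{b}$ is normalised by the whole of $\mathfrak{g}$. Since $\mathfrak{b}$ is a Lie subring of $\mathfrak{g}$, this means $\mathfrak{b}$ is itself an ideal of $\mathfrak{g}$.

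Combining this with the properties already provided by Lemma \ref{AlmNilEmb}, namely definability, almost nilpotency, the fact that $\mathfrak{b}$ contains $\mathfrak{h}$, and the preservation of the almost nilpotency class, we obtain exactly the desired definable almost nilpotent ideal of $\mathfrak{g}$ of the same almost nilpotency class as $\mathfrak{h}$. There is no real obstacle: the entire content is to notice that the normaliser condition in Lemma \ref{AlmNilEmb} collapses to ideality precisely when applied to an ideal.
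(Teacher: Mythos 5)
Your proof is correct and is precisely the intended argument: the paper records this as an immediate corollary of Lemma~\ref{AlmNilEmb} with no written proof, and your observation that $N_{\mathfrak{g}}(\mathfrak{h})=\mathfrak{g}$ when $\mathfrak{h}$ is an ideal, forcing $\mathfrak{b}$ to be an ideal as well, is exactly the point being left implicit.
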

We prove a similar result for almost soluble Lie subrings. 
\begin{lemma}\label{EnvAlmSol}
   Let $\mathfrak{g}$ be a definable hereditarily $\widetilde{\mathfrak{M}}_c$-Lie ring and $\mathfrak{h}$ an almost soluble Lie subring. Then, $\mathfrak{h}$ is contained in an almost soluble definable Lie subring $\mathfrak{b}$ with same almost soluble class. Moreover, if $\mathfrak{h}=\mathfrak{h}_0\geq \mathfrak{h}_1\geq...\geq \mathfrak{h}_n=\{0\}$ is the almost soluble series of $\mathfrak{h}$, we can construct an almost abelian series for $\mathfrak{b}$
   $$\mathfrak{b}=\mathfrak{b}_0\geq \mathfrak{b}_1\geq...\geq \mathfrak{b}_n=\{0\}$$
   such that $N_{\mathfrak{g}}(\mathfrak{b}_j)\geq \bigcap_{i=1}^jN_{\mathfrak{g}}(\mathfrak{h}_i)$. 
\end{lemma}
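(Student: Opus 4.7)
I would prove this by induction on the almost soluble class $n$ of $\mathfrak{h}$. The base case $n=1$ is immediate from the almost abelian case of Lemma \ref{AlmNilEmb}: one obtains a definable almost abelian envelope $\mathfrak{b}$ of $\mathfrak{h}$ with $N_{\mathfrak{g}}(\mathfrak{b}) \geq N_{\mathfrak{g}}(\mathfrak{h})$, and sets $\mathfrak{b}_0=\mathfrak{b}$, $\mathfrak{b}_1=\{0\}$. The pattern of the inductive step will closely mirror the nilpotent case of Lemma \ref{AlmNilEmb}: handle the bottom of the series by the inductive hypothesis, then recover the top piece by an almost abelian enveloping in a suitable quotient.

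For the inductive step, suppose $\mathfrak{h}$ has almost abelian series $\mathfrak{h}=\mathfrak{h}_0 \geq \mathfrak{h}_1 \geq \ldots \geq \mathfrak{h}_{n+1}=\{0\}$. First apply the inductive hypothesis to $\mathfrak{h}_1$ (with its shortened series $\mathfrak{h}_1 \geq \ldots \geq \mathfrak{h}_{n+1}=\{0\}$ of class $n$) to produce a definable almost soluble envelope with almost abelian series $\mathfrak{b}_1 \geq \mathfrak{b}_2 \geq \ldots \geq \mathfrak{b}_{n+1}=\{0\}$, where $\mathfrak{h}_1 \leq \mathfrak{b}_1$ and $N_{\mathfrak{g}}(\mathfrak{b}_j) \geq \bigcap_{i=2}^{j} N_{\mathfrak{g}}(\mathfrak{h}_i) \geq \bigcap_{i=1}^{j} N_{\mathfrak{g}}(\mathfrak{h}_i)$ for $j=2,\dots,n+1$. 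Since $\mathfrak{h}_1$ is an ideal of $\mathfrak{h}$, we have $\mathfrak{h} \leq N_{\mathfrak{g}}(\mathfrak{h}_1) \leq N_{\mathfrak{g}}(\mathfrak{b}_1)$, so one may pass to the definable quotient $\mathfrak{q} := N_{\mathfrak{g}}(\mathfrak{b}_1)/\mathfrak{b}_1$, which inherits the hereditarily $\widetilde{\mathfrak{M}}_c$-property. The image $\overline{\mathfrak{h}}=(\mathfrak{h}+\mathfrak{b}_1)/\mathfrak{b}_1$ is almost abelian, being a quotient of $\mathfrak{h}/\mathfrak{h}_1$. Applying the almost abelian case of Lemma \ref{AlmNilEmb} inside $\mathfrak{q}$ yields a definable almost abelian envelope $\overline{\mathfrak{b}_0} \supseteq \overline{\mathfrak{h}}$ whose normaliser in $\mathfrak{q}$ contains $N_{\mathfrak{q}}(\overline{\mathfrak{h}})$; pulling back, and replacing if necessary by a finite extension as in the last step of Lemma \ref{AlmNilEmb}, one obtains a definable Lie subring $\mathfrak{b}=\mathfrak{b}_0 \supseteq \mathfrak{h}$ with $\mathfrak{b}/\mathfrak{b}_1$ almost abelian. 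Stacking this on top of the inductively given series for $\mathfrak{b}_1$ produces an almost abelian series for $\mathfrak{b}$ of length $n+1$.

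The main obstacle I expect is the bookkeeping of the normaliser condition at the top step: one must show $N_{\mathfrak{g}}(\mathfrak{b}_0) \supseteq \bigcap_{i=1}^{n+1} N_{\mathfrak{g}}(\mathfrak{h}_i)$ (or $\supseteq N_{\mathfrak{g}}(\mathfrak{h}) \cap N_{\mathfrak{g}}(\mathfrak{h}_1)$ in the weaker reading of the statement). The argument should run as follows: any $g$ in $\bigcap_{i=1}^{n+1} N_{\mathfrak{g}}(\mathfrak{h}_i)$ already normalises $\mathfrak{b}_1$ by the inductive control on $\mathfrak{b}_1$, hence descends to an element of $N_{\mathfrak{q}}(\overline{\mathfrak{h}})$; the normaliser property furnished by the almost abelian case of Lemma \ref{AlmNilEmb} applied in $\mathfrak{q}$ then places this element in $N_{\mathfrak{q}}(\overline{\mathfrak{b}_0})$, and pulling back gives $g \in N_{\mathfrak{g}}(\mathfrak{b}_0)$. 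A secondary subtlety is verifying that the hereditarily $\widetilde{\mathfrak{M}}_c$-property genuinely descends to the quotient $\mathfrak{q}$, so that Lemma \ref{AlmNilEmb} is applicable there; this should follow from the definition by pulling back definable subgroups through the quotient map, but should be recorded explicitly.
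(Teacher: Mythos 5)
Your proposal is correct and follows the same route as the paper: induction on the almost soluble class, applying the inductive hypothesis to $\mathfrak{h}_1$ to obtain $\mathfrak{b}_1$, passing to the quotient $N_{\mathfrak{g}}(\mathfrak{b}_1)/\mathfrak{b}_1$, applying the almost abelian case of Lemma~\ref{AlmNilEmb} there, pulling back, and enlarging by $\mathfrak{h}$ to guarantee genuine containment. The two subtleties you flag (normaliser bookkeeping for $\mathfrak{b}_0$ and the descent of the hereditarily $\widetilde{\mathfrak{M}}_c$-property to the quotient) are real and are glossed over in the paper's own write-up; your sketches for handling them are sound.
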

\begin{proof}
    By induction on the almost soluble class. If $\mathfrak{h}$ is almost abelian, the proof follows by Lemma \ref{AlmNilEmb}. Therefore, assume that $\mathfrak{h}$ has almost soluble class $n$ and that the conclusion of the theorem holds for Lie subrings of almost soluble class $\leq n+1$. $\mathfrak{h}_1$ has soluble class $n-1$ and, by induction hypothesis, it is contained in an almost soluble ideal $\mathfrak{b}_1$ with almost soluble series with the desired properties. In particular, since $\mathfrak{h}_i$ is an ideal in $\mathfrak{h}$, then $N_{\mathfrak{g}}(\mathfrak{b}_1)\geq \bigcap_{i\leq n}N_{\mathfrak{g}}(\mathfrak{h}_i)\geq \mathfrak{h}$ for any $i\leq n$. Working in $N_{\mathfrak{g}}(\mathfrak{b}_1)/\mathfrak{b}_1$, $\mathfrak{h}+\mathfrak{b}_2/\mathfrak{b}_2$ is almost abelian. By Lemma \ref{AlmNilEmb}, there exists a definable Lie ring $\mathfrak{b}$ such that:
    \begin{itemize}
        \item $\mathfrak{b}$ almost contains $\mathfrak{h}$;
        \item $N_{\mathfrak{g}}(\mathfrak{b}/\mathfrak{b}_2)\geq N_{\mathfrak{g}}(\mathfrak{h}+\mathfrak{b}_2/\mathfrak{b}_2)$;
        \item  $\mathfrak{b}/\mathfrak{b}_2$ is almost abelian.
    \end{itemize}
    Therefore, $\mathfrak{b}$ is definable, almost soluble of almost soluble class as $\mathfrak{h}$ and with almost soluble series with the desired property. The same holds for $\mathfrak{b}+\mathfrak{h}$.
\end{proof}
The following corollary is immediate.
\begin{corollary}
    Let $\mathfrak{g}$ be a definable hereditarily $\widetilde{\mathfrak{M}}_c$-Lie ring and $\mathfrak{h}$ an almost soluble ideal with ideal almost soluble series. Then, $\mathfrak{h}$ is contained in a definable almost soluble ideal of the same soluble class and with ideal almost soluble series. This, in particular, holds for soluble ideals. 
\end{corollary}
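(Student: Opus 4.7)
The plan is to apply Lemma~\ref{EnvAlmSol} directly, using the extra hypothesis that each term of the almost soluble series of $\mathfrak{h}$ is an ideal of $\mathfrak{g}$ to upgrade the normalisation conclusion of that lemma into genuine ideality. The corollary is essentially a packaging step, so no substantive new argument is needed beyond tracking normalisers.

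More precisely, I would begin by fixing an ideal almost soluble series $\mathfrak{h}=\mathfrak{h}_0\geq \mathfrak{h}_1\geq\dots\geq\mathfrak{h}_n=\{0\}$ of $\mathfrak{h}$ with every $\mathfrak{h}_i$ an ideal of $\mathfrak{g}$. Then $N_{\mathfrak{g}}(\mathfrak{h}_i)=\mathfrak{g}$ for every $i\leq n$, and consequently $\bigcap_{i=1}^{j}N_{\mathfrak{g}}(\mathfrak{h}_i)=\mathfrak{g}$ for every $j\leq n$. Applying Lemma~\ref{EnvAlmSol} to the almost soluble subring $\mathfrak{h}$ then produces a definable almost soluble Lie subring $\mathfrak{b}$ of $\mathfrak{g}$, of the same almost soluble class as $\mathfrak{h}$ and containing $\mathfrak{h}$, equipped with an almost abelian series $\mathfrak{b}=\mathfrak{b}_0\geq \mathfrak{b}_1\geq\dots\geq \mathfrak{b}_n=\{0\}$ satisfying $N_{\mathfrak{g}}(\mathfrak{b}_j)\geq\bigcap_{i=1}^{j}N_{\mathfrak{g}}(\mathfrak{h}_i)$ for every $j$. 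Combined with the observation above, this forces $N_{\mathfrak{g}}(\mathfrak{b}_j)=\mathfrak{g}$ for every $j$, so each $\mathfrak{b}_j$ is an ideal of $\mathfrak{g}$; in particular $\mathfrak{b}=\mathfrak{b}_0$ is a definable almost soluble ideal of $\mathfrak{g}$ equipped with an ideal almost soluble series, of the same almost soluble class as $\mathfrak{h}$.

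For the final assertion about soluble ideals, I would observe that if $\mathfrak{h}$ is a soluble ideal of $\mathfrak{g}$, its derived series $\mathfrak{h}\geq\mathfrak{h}^{(1)}\geq\dots\geq\mathfrak{h}^{(n)}=\{0\}$ terminates after finitely many steps, and an easy induction using the Jacobi identity shows that $[I,J]$ is an ideal of $\mathfrak{g}$ whenever $I$ and $J$ are; hence every $\mathfrak{h}^{(i)}$ is an ideal of $\mathfrak{g}$. Since each quotient $\mathfrak{h}^{(i)}/\mathfrak{h}^{(i+1)}$ is abelian, \emph{a fortiori} almost abelian, the derived series is an ideal almost soluble series for $\mathfrak{h}$, and the first half of the corollary applies. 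The only thing to verify with any care is that the normaliser estimate in Lemma~\ref{EnvAlmSol} is stated in the form needed here; this is essentially the main (and only) obstacle, and it is resolved by a direct reading of the conclusion of that lemma.
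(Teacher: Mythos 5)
Your proposal is correct and is essentially the argument the paper has in mind (the paper labels the corollary as ``immediate'' from Lemma~\ref{EnvAlmSol}): since every $\mathfrak{h}_i$ is an ideal of $\mathfrak{g}$, each $N_{\mathfrak{g}}(\mathfrak{h}_i)$ equals $\mathfrak{g}$, so the normaliser bound $N_{\mathfrak{g}}(\mathfrak{b}_j)\geq\bigcap_{i\leq j}N_{\mathfrak{g}}(\mathfrak{h}_i)$ forces every $\mathfrak{b}_j$ (in particular $\mathfrak{b}_0=\mathfrak{b}$) to be an ideal, and passing from a soluble ideal to its derived series gives an ideal almost abelian series via the standard fact that the bracket of two ideals is an ideal. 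The only point that deserves a sentence more than you gave it is the case $j=0$: as stated the lemma's bound for $j=0$ is an empty intersection, so you should either invoke that convention explicitly or observe (as the proof of Lemma~\ref{EnvAlmSol} makes clear) that $\mathfrak{b}$ is produced inside $N_{\mathfrak{g}}(\mathfrak{b}_1)/\mathfrak{b}_1$ with normaliser at least that of $(\mathfrak{h}+\mathfrak{b}_1)/\mathfrak{b}_1$, which is all of $\mathfrak{g}/\mathfrak{b}_1$ once $\mathfrak{h}$ and $\mathfrak{b}_1$ are both ideals.
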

Moreover, similarly to the group case in \cite{hempel}, almost nilpotent definable ideals contain a nilpotent definable ideal of finite index, and almost soluble definable ideals (with soluble series given by ideals in $\mathfrak{g}$) contain a definable soluble ideal of finite index.
\begin{lemma}\label{NilFromAlmNil}
    Let $\mathfrak{g}$ be a definable hereditarily $\widetilde{\mathfrak{M}}_c$-Lie ring and $\mathfrak{h}$ an almost nilpotent definable Lie subring. Then, there exists $\mathfrak{b}$ a definable nilpotent Lie subring of finite index in $\mathfrak{h}$ such that $N_{\mathfrak{g}}(\mathfrak{b})\geq N_{\mathfrak{g}}(\mathfrak{h})$.
\end{lemma}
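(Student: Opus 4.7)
The plan is to proceed by induction on the almost nilpotency class $n$ of $\mathfrak{h}$, with Lemma \ref{fin[]Alm} as the key technical tool (it upgrades almost centralisation to a truly finite commutator) and Lemma \ref{DefZ} providing the ideality of $\widetilde{C}_{\mathfrak{g}}(-)$, which will propagate $N_{\mathfrak{g}}(\mathfrak{h})$-invariance through the construction.

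For the base case ($\mathfrak{h}$ almost abelian), set $B := \widetilde{C}_{\mathfrak{h}}(\widetilde{C}_{\mathfrak{g}}(\mathfrak{h}))$. The trivial almost containment $\widetilde{C}_{\mathfrak{g}}(\mathfrak{h}) \apprle \widetilde{C}_{\mathfrak{g}}(\mathfrak{h})$, symmetrised via Lemma \ref{sym}, yields $\mathfrak{h} \apprle \widetilde{C}_{\mathfrak{g}}(\widetilde{C}_{\mathfrak{g}}(\mathfrak{h}))$, so $B$ is definable and of finite index in $\mathfrak{h}$. Lemma \ref{fin[]Alm} with $H = \mathfrak{h}$ and $K = \{0\}$ then gives that $[\widetilde{C}_{\mathfrak{g}}(\mathfrak{h}), B]$ is finite; combined with the almost abelian inclusion $\mathfrak{h} \subseteq \widetilde{C}_{\mathfrak{g}}(\mathfrak{h})$, this forces $[B, B]$ finite. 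Setting $\mathfrak{b} := C_B([B, B])$: since $[B, B]$ is a finite abelian group with finite endomorphism ring, the kernel of the adjoint action $B \to \operatorname{End}([B, B], +)$ has finite index, hence $\mathfrak{b}$ is definable of finite index in $B$; a direct check gives $[\mathfrak{b}, [\mathfrak{b}, \mathfrak{b}]] \subseteq [\mathfrak{b}, [B, B]] = 0$, so $\mathfrak{b}$ is nilpotent of class at most $2$. For $N_{\mathfrak{g}}(\mathfrak{h})$-invariance, $\widetilde{C}_{\mathfrak{g}}(\widetilde{C}_{\mathfrak{g}}(\mathfrak{h}))$ is an ideal of $\mathfrak{g}$ by Lemma \ref{DefZ}, so $B$ is preserved by $N_{\mathfrak{g}}(\mathfrak{h})$, and the stability of $[B, B]$ and $\mathfrak{b}$ under $\operatorname{ad}_g$ for $g \in N_{\mathfrak{g}}(\mathfrak{h})$ follows from the Jacobi identity exactly as in the proof of Lemma \ref{EnvAlmSol}.

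For the inductive step, let $\mathfrak{h}$ have class $n + 1$ and set $\mathfrak{z} := \widetilde{Z}(\mathfrak{h}) = \mathfrak{h} \cap \widetilde{C}_{\mathfrak{g}}(\mathfrak{h})$: this is a definable almost abelian ideal of $\mathfrak{h}$, preserved by $N_{\mathfrak{g}}(\mathfrak{h})$ since $\widetilde{C}_{\mathfrak{g}}(\mathfrak{h})$ is an ideal of $\mathfrak{g}$. The base case applied to $\mathfrak{z}$ yields a definable nilpotent subring $C \subseteq \mathfrak{z}$ of finite index, preserved by $N_{\mathfrak{g}}(\mathfrak{h})$ and hence an ideal of $\mathfrak{h}$. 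In the quotient $N_{\mathfrak{g}}(C)/C$, which remains hereditarily $\widetilde{\mathfrak{M}}_c$, the finite image $\mathfrak{z}/C$ lies in $\widetilde{Z}(\mathfrak{h}/C)$ and collapses the almost central series by one level; a short induction on $i$ shows $\widetilde{Z}_{i+1}(\mathfrak{h})/C \subseteq \widetilde{Z}_i(\mathfrak{h}/C)$, so $\mathfrak{h}/C$ has almost nilpotency class at most $n$. The induction hypothesis then produces a definable nilpotent $\bar{B} \subseteq \mathfrak{h}/C$ of finite index whose normaliser contains the image of $N_{\mathfrak{g}}(\mathfrak{h})$, and its preimage $B \subseteq \mathfrak{h}$ is a definable subring of finite index containing $C$ as an ideal with $B/C$ nilpotent.

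The chief obstacle is that $B$ is only nilpotent-by-nilpotent: $C$ sits in the almost centre of $B$ but not in its true centre, so the two nilpotencies do not automatically combine. I would remove this obstruction by repeating the base case construction on the pair $(B, C)$: Lemma \ref{sym} applied to $C \subseteq \widetilde{C}_{\mathfrak{g}}(\mathfrak{h})$ forces $\mathfrak{h} \apprle \widetilde{C}_{\mathfrak{g}}(C)$, so $B \cap \widetilde{C}_{\mathfrak{g}}(C)$ has finite index in $B$; Lemma \ref{fin[]Alm} applied to this pair extracts a finite-index subring $B' \subseteq B$ with $[B', C]$ finite; finally $\mathfrak{b} := C_{B'}([B', C])$ is a definable subring of finite index in $B'$ (for the same reason as in the base case), and by construction $C$ is truly central in $\mathfrak{b}$. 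Combining centrality of $C$ in $\mathfrak{b}$ with nilpotency of $\mathfrak{b}/C$ (inherited from $B/C$) yields nilpotency of $\mathfrak{b}$ of class bounded by the sum of the classes of $\bar{B}$ and $C$. Normaliser invariance at each step follows from the same Jacobi identity arguments used in the base case, completing the induction.
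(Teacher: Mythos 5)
Your strategy is genuinely different from the paper's. The paper does not induct on the almost nilpotency class: it takes the whole almost central series $0\leq\widetilde{Z}(\mathfrak{h})\leq\dots\leq\widetilde{Z}_n(\mathfrak{h})=\mathfrak{h}$ at once, uses Lemma \ref{fin[]} to produce for each layer a \emph{finite} interpolating subring $A_i=[\widetilde{C}_{\mathfrak{h}}(\widetilde{Z}_i/\widetilde{Z}_{i-1}),\widetilde{Z}_i/\widetilde{Z}_{i-1}]$ normalised by $N_{\mathfrak{g}}(\mathfrak{h})$, and then sets $\mathfrak{b}=\bigcap_i\bigl(C_i\cap C_{\mathfrak{g}}(A_i)\bigr)$; the refined series $\widetilde{Z}_{i-1}\leq A_i\leq\widetilde{Z}_i$ is genuinely centralised by $\mathfrak{b}$ at every step, so nilpotency is immediate and the normaliser condition comes for free from the invariance of the $A_i$. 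Your layer-by-layer induction is workable and uses the same two pillars (Lemma \ref{fin[]Alm} to turn ``almost central'' into ``finite commutator'', then centralising that finite commutator to pass to a finite-index subring), but it forces you to confront the ``nilpotent-by-nilpotent'' gluing problem at every inductive step, which the one-shot construction sidesteps entirely. Your reduction of the class of $\mathfrak{h}/C$ is fine for $i\geq 1$ (the map $y\mapsto[x,y]+C$ into the finite group $\mathfrak{z}/C$ does the job), though note the containment $\widetilde{Z}_{i+1}(\mathfrak{h})/C\subseteq\widetilde{Z}_i(\mathfrak{h}/C)$ fails at $i=0$ and is not needed there.

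The one step that is wrong as stated is the final gluing: for $\mathfrak{b}:=C_{B'}([B',C])$ you claim ``by construction $C$ is truly central in $\mathfrak{b}$''. It is not: $\mathfrak{b}$ centralises the finite set $[B',C]$, not $C$, so what you actually get is $[\mathfrak{b},C\cap\mathfrak{b}]\subseteq[B',C]$ finite \emph{and} $[\mathfrak{b},[\mathfrak{b},C\cap\mathfrak{b}]]=0$ --- i.e.\ $C\cap\mathfrak{b}$ sits in the second centre, not the centre. (Your own base case gets this right: there you only claim class $\leq 2$, not abelianity.) Moreover $C\not\subseteq\mathfrak{b}$ in general, so ``$\mathfrak{b}/C$'' should be $\mathfrak{b}/(\mathfrak{b}\cap C)$, and the finite group $[B',C]$ need not lie inside $\mathfrak{b}$, so one must either enlarge $\mathfrak{b}$ by this finite piece or argue with $Z_2$ of a finite extension. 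These are repairable with the usual finite-index bookkeeping --- an ideal $N$ of $L$ with $N\leq Z_2(L)$ and $L/N$ nilpotent still makes $L$ nilpotent --- but as written the sentence ``combining centrality of $C$ in $\mathfrak{b}$ with nilpotency of $\mathfrak{b}/C$'' rests on a false premise and needs to be replaced by the second-centre argument.
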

\begin{proof}
    Take the ascending almost central series for $\mathfrak{h}$
    $$\mathfrak{h}=\widetilde{Z}_n(\mathfrak{h})\geq \widetilde{Z}_{n-1}(\mathfrak{h})\geq...\geq \widetilde{Z}(\mathfrak{h})\geq \{0\}.$$
    Clearly all the $n$-almost center are normalized by $N_{\mathfrak{g}}(\mathfrak{h})$ by Lemma \ref{C(H/K)AlmId}. Moreover, the action of $\mathfrak{h}$ on $\widetilde{Z}_i(\mathfrak{h})/\widetilde{Z}_{i-1}(\mathfrak{h})$ is almost trivial and, by Lemma \ref{fin[]}, 
    $$A_i:=[\widetilde{C}_{\mathfrak{h}}(\widetilde{Z}_i(\mathfrak{h})/\widetilde{Z}_{i-1}(\mathfrak{h})),\widetilde{Z}_i(\mathfrak{h})/\widetilde{Z}_{i-1}(\mathfrak{h})]/\widetilde{Z}_{i-1}(\mathfrak{h})$$
    is a finite Lie subring normalized by $N_{\mathfrak{g}}(\mathfrak{h})$. Define $C_i:=\widetilde{C}_{\mathfrak{h}}(\widetilde{Z}_i(\mathfrak{h})/\widetilde{Z}_{i-1}(\mathfrak{h}))$ a Lie subring normalized by $N_{\mathfrak{g}}(\mathfrak{h})$, by Lemma \ref{C(H/K)AlmId}, and of finite index in $\mathfrak{h}$ by Lemma \ref{sym}. Since $A_i$ is a Lie subring normalized by $N_{\mathfrak{g}}(\mathfrak{h})$, $D_i:=C_{\mathfrak{g}}(A_i)$ is an ideal of finite index in $N_{\mathfrak{g}}(\mathfrak{h})$. Defined $\mathfrak{b}:=\bigcap_{i=1}^n C_i\cap D_i$, $\mathfrak{b}$ is a Lie subring of finite index in $\mathfrak{h}$ normalized by $N_{\mathfrak{g}}(\mathfrak{h})$. Moreover, this is nilpotent since $A_i/\widetilde{Z}_{i-1}(\mathfrak{h})\leq C_{\mathfrak{h}}(\mathfrak{b}+\widetilde{Z}_{i-1}(\mathfrak{h})/\widetilde{Z}_{i-1}(\mathfrak{h}))$ and $\widetilde{Z}_i(\mathfrak{h})/A_i\leq C_{\mathfrak{h}}(\mathfrak{b}+A_i/A_i)$. Since $\mathfrak{b}$ is contained in $\mathfrak{h}$, $\mathfrak{b}$ is nilpotent. 
\end{proof}
We can derive the following corollary.
\begin{corollary}\label{EnvNil}
    Let $\mathfrak{g}$ be a definable hereditarily $\widetilde{\mathfrak{M}}_c$-Lie ring and $\mathfrak{h}$ a nilpotent ideal. Then, there exists a definable nilpotent ideal $\mathfrak{b}$ containing $\mathfrak{h}$.
\end{corollary}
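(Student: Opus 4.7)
The plan is to chain together the two preceding results. First, I would observe that $\mathfrak{h}$, being nilpotent, is a fortiori almost nilpotent, and as an ideal of $\mathfrak{g}$ satisfies $N_{\mathfrak{g}}(\mathfrak{h}) = \mathfrak{g}$. Applying Lemma~\ref{AlmNilEmb} directly then produces a definable almost nilpotent Lie subring $\mathfrak{b}_0 \supseteq \mathfrak{h}$ with $N_{\mathfrak{g}}(\mathfrak{b}_0) \geq N_{\mathfrak{g}}(\mathfrak{h}) = \mathfrak{g}$; that is, $\mathfrak{b}_0$ is a definable almost nilpotent ideal of $\mathfrak{g}$ containing $\mathfrak{h}$.

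Next, I would apply Lemma~\ref{NilFromAlmNil} to $\mathfrak{b}_0$, obtaining a definable nilpotent Lie subring $\mathfrak{b}_1 \leq \mathfrak{b}_0$ of finite index in $\mathfrak{b}_0$ with $N_{\mathfrak{g}}(\mathfrak{b}_1) \geq N_{\mathfrak{g}}(\mathfrak{b}_0) = \mathfrak{g}$, so that $\mathfrak{b}_1$ is itself a definable nilpotent ideal of $\mathfrak{g}$. The subring $\mathfrak{b}_1$ need not contain $\mathfrak{h}$, but its finite index in $\mathfrak{b}_0$ forces $\mathfrak{h} \cap \mathfrak{b}_1$ to have finite index in $\mathfrak{h}$. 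I would then set $\mathfrak{b} := \mathfrak{h} + \mathfrak{b}_1$; as a finite union of translates of $\mathfrak{b}_1$ by a system of representatives of $\mathfrak{h}/(\mathfrak{h} \cap \mathfrak{b}_1)$, this is definable, and it is visibly an ideal of $\mathfrak{g}$ containing $\mathfrak{h}$.

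The one remaining point, and the step I expect to be the main obstacle, is verifying that $\mathfrak{b} = \mathfrak{h} + \mathfrak{b}_1$ is itself nilpotent. This amounts to the Lie-ring analogue of Fitting's theorem: the sum of two nilpotent ideals of a Lie ring is nilpotent, with nilpotency class bounded by the sum of the two classes. I would prove this by a direct induction on commutator length in the lower central series $\gamma_k(\mathfrak{h} + \mathfrak{b}_1)$, expanding each nested bracket as a sum over the ways of choosing factors in $\mathfrak{h}$ versus $\mathfrak{b}_1$, and using the Jacobi identity together with the fact that both $\mathfrak{h}$ and $\mathfrak{b}_1$ are ideals of $\mathfrak{g}$ to push enough factors to one side so as to reach $\gamma_{c+1}(\mathfrak{h}) = 0$ or $\gamma_{d+1}(\mathfrak{b}_1) = 0$, where $c,d$ denote the nilpotency classes of $\mathfrak{h}$ and $\mathfrak{b}_1$ respectively.
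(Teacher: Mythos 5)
Your proposal is correct and follows essentially the same route as the paper: apply Lemma~\ref{AlmNilEmb} to get a definable almost nilpotent ideal containing $\mathfrak{h}$, then Lemma~\ref{NilFromAlmNil} to get a definable nilpotent ideal of finite index inside it, and finally take $\mathfrak{h} + \mathfrak{b}_1$. The only difference is that you spell out the Fitting-type argument (sum of two nilpotent ideals is nilpotent, with class bounded by the sum of the classes) that the paper leaves implicit, which is a welcome bit of rigour rather than a deviation.
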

\begin{proof}
    By Lemma \ref{AlmNilEmb}, there exists an almost nilpotent definable ideal $\mathfrak{b}$ that contains $\mathfrak{h}$. By Lemma \ref{NilFromAlmNil}, there exists a definable nilpotent ideal $\mathfrak{b}_2$ that almost contains $\mathfrak{h}$. Taken $\mathfrak{h}+\mathfrak{b}_2$, this is a definable nilpotent ideal containing $\mathfrak{h}$.
\end{proof}
For almost soluble Lie rings, we have a similar result.
\begin{lemma}\label{SolFromAlmSol}
    Let $\mathfrak{g}$ be a definable hereditarily $\widetilde{\mathfrak{M}}_c$-Lie subring and $\mathfrak{h}$ a definable almost soluble Lie ring with almost soluble series normalized by $H\geq \mathfrak{h}$. Then, $\mathfrak{h}$ contains a definable Lie subring $\mathfrak{b}$ of finite index in $\mathfrak{h}$, soluble and normalized by $H$.
\end{lemma}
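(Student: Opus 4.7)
The plan is to induct on the almost soluble class $n$ of $\mathfrak{h}$, mirroring the approach of Lemma~\ref{NilFromAlmNil}. As a preliminary step, apply Lemma~\ref{EnvAlmSol} to embed $\mathfrak{h}$ in a definable almost soluble envelope $\mathfrak{c}$ equipped with a definable almost abelian series $\mathfrak{c}_0\geq\cdots\geq\mathfrak{c}_n=0$ normalised by $\bigcap_i N_{\mathfrak{g}}(\mathfrak{h}_i)\supseteq H$; intersecting each $\mathfrak{c}_i$ with $\mathfrak{h}$ produces a definable almost abelian series of $\mathfrak{h}$ itself, still normalised by $H$, so we may assume from the start that the series $(\mathfrak{h}_i)_{i\leq n}$ consists of definable Lie subrings.

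For the base case $n=1$, $\mathfrak{h}$ is definable and almost abelian. Applying Lemma~\ref{fin[]Alm} in $\mathfrak{h}$ (which inherits the hereditarily $\widetilde{\mathfrak{M}}_c$ property from $\mathfrak{g}$) with $K=0$ and $H=\mathfrak{h}$, together with the equality $\widetilde{C}_{\mathfrak{h}}(\mathfrak{h})=\mathfrak{h}$, yields that $F:=[\mathfrak{h},\mathfrak{h}]$ is finite. Each $f\in F$ lies in $\widetilde{Z}(\mathfrak{h})=\mathfrak{h}$, so $C_{\mathfrak{h}}(f)$ has finite index in $\mathfrak{h}$, and $\mathfrak{b}:=\bigcap_{f\in F}C_{\mathfrak{h}}(f)$ is thus a definable subring of finite index. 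It is $H$-normalised because $F$ is, and nilpotent of class at most $2$ because $[\mathfrak{b},[\mathfrak{b},\mathfrak{b}]]\subseteq [\mathfrak{b},F]=0$, hence soluble.

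For the inductive step, apply the inductive hypothesis to the definable Lie subring $\mathfrak{h}_1$ (almost soluble of class at most $n-1$ with series $\mathfrak{h}_1\geq\cdots\geq\mathfrak{h}_n=0$ normalised by $H$) to obtain a definable soluble $H$-normalised subring $\mathfrak{b}_1$ of finite index in $\mathfrak{h}_1$. Now apply Lemma~\ref{fin[]Alm} once more in $\mathfrak{h}$, this time with $K=\mathfrak{b}_1$ and $H=\mathfrak{h}$: since $\mathfrak{b}_1\sim\mathfrak{h}_1$ and $\mathfrak{h}/\mathfrak{h}_1$ is almost abelian, $\widetilde{C}_{\mathfrak{h}}(\mathfrak{h}/\mathfrak{b}_1)=\widetilde{C}_{\mathfrak{h}}(\mathfrak{h}/\mathfrak{h}_1)=\mathfrak{h}$, and the lemma delivers $[\mathfrak{h},\mathfrak{h}]\apprle\mathfrak{b}_1$. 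Set $\tilde F:=[\mathfrak{h},\mathfrak{h}]+\mathfrak{b}_1$, a definable $H$-normalised subgroup of $\mathfrak{h}$ with $\tilde F/\mathfrak{b}_1$ finite, and define
\[
\mathfrak{b}:=C_{\mathfrak{h}}(\tilde F/\mathfrak{b}_1)=\bigcap_{i=1}^{k}C_{\mathfrak{h}}(f_i/\mathfrak{b}_1),
\]
where $\{f_1,\dots,f_k\}$ is a transversal of $\mathfrak{b}_1$ in $\tilde F$. Each $C_{\mathfrak{h}}(f_i/\mathfrak{b}_1)$ has finite index in $\mathfrak{h}$: almost abelianity of $\mathfrak{h}/\mathfrak{h}_1$ makes $C_{\mathfrak{h}}(f_i/\mathfrak{h}_1)$ finite-index, and the finite quotient $\mathfrak{h}_1/\mathfrak{b}_1$ cuts it down further. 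The definition is independent of the transversal because $\mathfrak{b}_1$ is an ideal in $\mathfrak{h}$, so $\mathfrak{b}$ is definable of finite index and $H$-normalised.

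The remaining verifications are Jacobi computations. For $a,b\in\mathfrak{b}$ and $f\in\tilde F$, $[[a,b],f]=[a,[b,f]]-[b,[a,f]]\in [\mathfrak{h},\mathfrak{b}_1]\subseteq\mathfrak{b}_1$ since $\mathfrak{h}$ normalises $\mathfrak{b}_1$, so $[a,b]\in\mathfrak{b}$ and $\mathfrak{b}$ is a Lie subring; the same identity with $g\in H$ in place of $a$ shows $[g,h]\in\mathfrak{b}$ for $h\in\mathfrak{b}$, confirming $\mathfrak{b}$ is $H$-normalised. Finally $[\mathfrak{b},[\mathfrak{b},\mathfrak{b}]]\subseteq [\mathfrak{b},\tilde F]\subseteq\mathfrak{b}_1$ gives $\mathfrak{b}^{(2)}\subseteq\mathfrak{b}_1$, so $\mathfrak{b}$ is soluble. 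The main obstacle is securing definability and finite index of $\mathfrak{b}$ in the hereditarily $\widetilde{\mathfrak{M}}_c$ (rather than finite-dimensional) setting, and the key trick is to use Lemma~\ref{fin[]Alm} to replace the a priori infinite intersection $\bigcap_{h'\in\mathfrak{h}}\{h:[h,h']\in\mathfrak{b}_1\}$ by the finite intersection of centralisers of a transversal of $\mathfrak{b}_1$ in $\tilde F$.
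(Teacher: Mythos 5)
Your proof is correct and follows essentially the same route as the paper's: both use Lemma \ref{fin[]Alm} to make each derived subring $[\mathfrak{h}_i,\mathfrak{h}_i]$ a finite extension of $\mathfrak{h}_{i+1}$ and then pass to the finite-index definable centraliser of the resulting finite quotients, verifying solubility by the same Jacobi/derived-series computation. The only differences are organisational — you induct on the almost soluble class and treat one layer at a time, whereas the paper intersects all the centralisers $C_{\mathfrak{g}}(A_i/\mathfrak{h}_{i+1})$ in a single step — together with your preliminary reduction via Lemma \ref{EnvAlmSol} to a definable series, a point the paper leaves implicit.
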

\begin{proof}
    Let $\mathfrak{h}=\mathfrak{h}_0\geq \mathfrak{h}_1\geq ...\geq \mathfrak{h}_n\geq \{0\}$ be an almost abelian series normalised by $H$. By Lemma \ref{fin[]}, $[\mathfrak{h}_{i},\mathfrak{h}_i]$ is almost contained in $\mathfrak{h}_{i+1}$. Let $A_i:=[\mathfrak{h}_{i},\mathfrak{h}_i]$, then $A_i/\mathfrak{h}_i$ is a finite Lie ring in $\mathfrak{g}/\mathfrak{h}_{i+1}$ normalised by $H$. Therefore, $C_i:=C_{\mathfrak{g}}(A_{i}/\mathfrak{h}_{i+1})$ is of finite index in $H\geq \mathfrak{h}$. Define $\mathfrak{b}:=\mathfrak{h}\cap \bigcap_{i=1}^n C_i$. This is clearly a Lie subring of finite index in $\mathfrak{h}$ normalised by $H$. We verify that $\mathfrak{b}^{2m}\leq \mathfrak{h}_n$. By induction, it is clear for $m=0$. Therefore, let $\mathfrak{b}^{2m+2}=[[\mathfrak{b}^{2m},\mathfrak{b}^{2m}],[\mathfrak{b}^{2m},\mathfrak{b}^{2m}]]$ that, by induction, is contained in  $[[\mathfrak{h}_m,\mathfrak{h}_m],\mathfrak{b}]\leq \mathfrak{h}_{m+1}$. This completes the proof.
\end{proof}
Finally, we have the following corollary.
\begin{corollary}\label{EnvSol}
    Let $\mathfrak{g}$ be a definable hereditarily $\widetilde{\mathfrak{M}}_c$-Lie ring and $\mathfrak{h}$ a soluble ideal. Then, $\mathfrak{h}$ is contained in a definable soluble ideal.
\end{corollary}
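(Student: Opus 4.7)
The plan is to mimic the argument for the nilpotent case (Corollary \ref{EnvNil}) by combining the ``almost soluble envelope'' provided by Lemma \ref{EnvAlmSol} with the passage to a genuinely soluble subring of finite index given by Lemma \ref{SolFromAlmSol}. The first step is to observe that, since $\mathfrak{h}$ is a soluble \emph{ideal} of $\mathfrak{g}$, its derived series
\[
\mathfrak{h}=\mathfrak{h}^{(0)}\geq\mathfrak{h}^{(1)}\geq\cdots\geq\mathfrak{h}^{(n)}=\{0\}
\]
is an abelian (hence almost abelian) series whose terms are all ideals of $\mathfrak{g}$. This follows by induction and the Jacobi identity: given $g\in\mathfrak{g}$ and $x,y\in\mathfrak{h}^{(i)}$, $[g,[x,y]]=[[g,x],y]+[x,[g,y]]\in[\mathfrak{h}^{(i)},\mathfrak{h}^{(i)}]=\mathfrak{h}^{(i+1)}$, using that $[g,x],[g,y]\in\mathfrak{h}^{(i)}$ by induction.

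Next I would apply Lemma \ref{EnvAlmSol} to $\mathfrak{h}$, taking this derived series as almost abelian series. The lemma yields a definable almost soluble Lie subring $\mathfrak{b}_1\supseteq\mathfrak{h}$ together with an almost abelian series $\mathfrak{b}_1=\mathfrak{b}_{1,0}\geq\cdots\geq\mathfrak{b}_{1,n}=\{0\}$ satisfying $N_{\mathfrak{g}}(\mathfrak{b}_{1,j})\geq\bigcap_{i\leq j}N_{\mathfrak{g}}(\mathfrak{h}^{(i)})=\mathfrak{g}$, the last equality coming from the fact that each $\mathfrak{h}^{(i)}$ is an ideal of $\mathfrak{g}$. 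Hence $\mathfrak{b}_1$ is itself a definable almost soluble ideal of $\mathfrak{g}$ whose almost abelian series is an ideal series.

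Now I would invoke Lemma \ref{SolFromAlmSol} with $H=\mathfrak{g}$: this produces a definable soluble Lie subring $\mathfrak{b}_2\leq\mathfrak{b}_1$ of finite index in $\mathfrak{b}_1$ and normalised by $\mathfrak{g}$, i.e.\ a definable soluble ideal of $\mathfrak{g}$. Finally, set $\mathfrak{b}:=\mathfrak{h}+\mathfrak{b}_2$. Since $\mathfrak{b}_2$ has finite index in $\mathfrak{b}_1$ and $\mathfrak{h}\leq\mathfrak{b}_1$, this is a finite extension of the definable ideal $\mathfrak{b}_2$, hence itself definable and an ideal of $\mathfrak{g}$; and it is soluble because $\mathfrak{b}_2$ is a soluble ideal of $\mathfrak{b}$ while $\mathfrak{b}/\mathfrak{b}_2\cong\mathfrak{h}/(\mathfrak{h}\cap\mathfrak{b}_2)$ is a quotient of the soluble $\mathfrak{h}$, and solubility is preserved under such extensions. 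Since $\mathfrak{h}\leq\mathfrak{b}$, this is the desired definable soluble ideal.

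There is no real obstacle here beyond bookkeeping: the only point that requires attention is checking that the ``ideal'' character of the derived series is preserved through Lemma \ref{EnvAlmSol}, but this is exactly the content of the corollary stated right after that lemma, and it reduces, as above, to the remark that $\bigcap_i N_{\mathfrak{g}}(\mathfrak{h}^{(i)})=\mathfrak{g}$ when the $\mathfrak{h}^{(i)}$ are already ideals.
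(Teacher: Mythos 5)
Your proof is correct and follows essentially the same route as the paper's: apply Lemma \ref{EnvAlmSol} (together with the observation that the derived series of an ideal consists of ideals, so the resulting almost abelian series is an ideal series), then Lemma \ref{SolFromAlmSol} with $H=\mathfrak{g}$, and finally take $\mathfrak{h}+\mathfrak{b}_2$. You are somewhat more explicit than the paper in verifying the ideal character of each step and why the final sum is soluble, but the argument is the same.
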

\begin{proof}
    By Lemma \ref{EnvAlmSol}, $\mathfrak{h}$ is contained in an almost soluble ideal $\mathfrak{b}_1$ with ideal almost soluble series. By Lemma \ref{SolFromAlmSol}, there exists a soluble definable ideal $\mathfrak{b}_2$ of finite index in $\mathfrak{b}_1$. Therefore, $\mathfrak{b}:=\mathfrak{b}_2+\mathfrak{h}$ is a finite extension of $\mathfrak{b}_2$ and so definable and soluble.
\end{proof}
\subsection{Fitting and radical ideal in dimensional theories}
We define the Fitting and the Radical ideal of a Lie ring.
\begin{defn}
    Let $\mathfrak{g}$ be a Lie ring. The \emph{Fitting ideal} $F(\mathfrak{g})$ is the sum of all the nilpotent ideals in $\mathfrak{g}$. The \emph{Radical ideal} is the sum of all the soluble ideals in $\mathfrak{g}$.
\end{defn}
We prove that for every definable Lie ring $\mathfrak{g}$ of finite dimension, $F(\mathfrak{g})$ is nilpotent and definable and that $R(\mathfrak{g})$ is soluble and definable.\\
We start verifying that, in a finite-dimensional context, any locally nilpotent Lie subring is almost nilpotent.
\begin{lemma}\label{AlmNilFinDim}
Let $\mathfrak{g}$ be a definable Lie ring of finite dimension and $\mathfrak{h}$ a locally nilpotent Lie subring. Then, $\mathfrak{h}$ is almost nilpotent. In particular, the Fitting is almost nilpotent.
\end{lemma}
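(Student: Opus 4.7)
The plan is to use the dimensional control of $\mathfrak{g}$ to bound the length of the ascending almost-central series of $\mathfrak{h}$ and then to argue, via local nilpotency, that the series exhausts $\mathfrak{h}$.

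First, I would inductively define definable subgroups $D_i \leq \mathfrak{g}$ approximating the almost-central terms of $\mathfrak{h}$: set $D_0 = \{0\}$ and $D_{i+1} = \widetilde{C}_{\mathfrak{g}}(\mathfrak{h}/D_i)$, i.e., the set of $g \in \mathfrak{g}$ whose action on $\mathfrak{h}$ modulo $D_i$ has a finite-index kernel. These $D_i$ are definable by Lemma \ref{DefZ}, and an easy induction verifies that $\widetilde{Z}^i(\mathfrak{h}) = \mathfrak{h} \cap D_i$. The chain $D_0 \leq D_1 \leq \cdots$ is ascending in $\mathfrak{g}$.

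Second, no strict step $D_i \subsetneq D_{i+1}$ can have finite index: by the observation following Lemma \ref{DefZ}, a strict-but-finite-index step would force the almost central series $\widetilde{Z}^i(\mathfrak{h})$ to stabilize at level $i$. Hence each strict step strictly increases dimension, and the chain must stabilize at some $D_N$ with $N \leq \dim(\mathfrak{g})$.

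Third, I would show that once stabilized, $\mathfrak{h} \subseteq D_N$, so that $\mathfrak{h} = \widetilde{Z}^N(\mathfrak{h})$ and almost nilpotency follows. Were this not the case, the quotient $\mathfrak{h}' = \mathfrak{h}/\widetilde{Z}^N(\mathfrak{h})$ would be a non-trivial locally nilpotent Lie ring with trivial almost center. Using local nilpotency, pick a finitely generated nilpotent subring $\mathfrak{k}' \leq \mathfrak{h}'$ with non-trivial center $Z(\mathfrak{k}')$; the hereditarily $\widetilde{\mathfrak{M}}_c$ property guaranteed by Lemma \ref{boundedind}, combined with Neumann's lemma (Fact \ref{UniTraSub}), should allow one to show that the elements of $Z(\mathfrak{k}')$ almost centralize a finite-index subgroup of $\mathfrak{h}'$, contradicting the triviality of $\widetilde{Z}(\mathfrak{h}')$.

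The main obstacle is this third step: promoting the center of a finitely generated nilpotent subring to the almost center of the ambient locally nilpotent Lie ring using only finite dimensionality of $\mathfrak{g}$. This is the technical heart of the argument and parallels the group-theoretic treatment of Hempel. The \emph{in particular} assertion about the Fitting ideal is immediate: the Fitting is a sum of nilpotent ideals, hence locally nilpotent by the standard fact that the sum of two nilpotent ideals of a Lie ring is itself a nilpotent ideal (Fitting's theorem), and so it is almost nilpotent by the main statement.
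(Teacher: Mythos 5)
Your overall architecture---bound the length of the ascending almost-central series of $\mathfrak{h}$ by $\dim(\mathfrak{g})$, then show that a non-trivial locally nilpotent quotient cannot have trivial almost centre---is a reasonable reformulation of the statement, but the proof has a genuine gap exactly where you flag ``the main obstacle'': step three \emph{is} the lemma, and the sketch you give for it does not work as stated. The centre of one finitely generated nilpotent subring $\mathfrak{k}'\leq\mathfrak{h}'$ has no a priori relation to elements of $\mathfrak{h}'$ outside $\mathfrak{k}'$, so Neumann's lemma and the $\widetilde{\mathfrak{M}}_c$ condition will not by themselves promote $Z(\mathfrak{k}')$ into the almost centre of $\mathfrak{h}'$. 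The paper's mechanism is different: working with a counterexample with $\dim(\mathfrak{g})$ minimal and $\widetilde{Z}(\mathfrak{g})$ reduced to $0$, it considers the centralisers $C_{\mathfrak{g}}(\overline{h})$ of \emph{finite tuples} $\overline{h}$ from $\mathfrak{h}$ and picks one, $C$, of minimal dimension (and, if that dimension is $0$, a minimal finite one $C_1$). Local nilpotency enters only to guarantee that these centralisers are non-trivial, since $\langle\overline{h}\rangle$ is nilpotent and hence $C_{\mathfrak{g}}(\overline{h})\supseteq Z(\langle\overline{h}\rangle)\neq 0$. Minimality then forces $C\apprle C_{\mathfrak{g}}(h)$ (resp.\ $C_1\leq C_{\mathfrak{g}}(h)$) for \emph{every} $h\in\mathfrak{h}$, because adjoining $h$ to the tuple cannot shrink the centraliser any further; hence $\mathfrak{h}\leq\widetilde{C}_{\mathfrak{g}}(C)$, and the symmetry of the almost centraliser (Lemma \ref{sym}) converts this into $C\apprle\widetilde{Z}(\mathfrak{g})$, contradicting the triviality of the almost centre. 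This ``minimal centraliser of a finite tuple'' device is the idea missing from your argument.

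There is also a secondary problem in your first two steps: the sets $D_{i+1}=\widetilde{C}_{\mathfrak{g}}(\mathfrak{h}/D_i)$ are almost centralisers of the \emph{non-definable} subring $\mathfrak{h}$, so Lemma \ref{DefZ} (which requires a definable module) does not apply, and without definability you cannot invoke $\dim(D_i)$ to bound the length of the chain. The paper sidesteps this by never forming such sets: it runs the induction on $\dim(\mathfrak{g})$ directly (quotienting by $\widetilde{Z}(\mathfrak{g})$ whenever it is infinite) and only ever manipulates centralisers of finite tuples, which are definable. Your ``in particular'' argument for the Fitting ideal (sum of nilpotent ideals is locally nilpotent by Fitting's theorem) is fine and matches the paper.
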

\begin{proof}
If $\mathfrak{h}$ is finite, it is nilpotent. Therefore, we may assume that $\mathfrak{h}$ is infinite.\\
Let $\mathfrak{g}$ and $\mathfrak{h}$ be a counterexample of the Lemma with $\mathfrak{g}$ of minimal dimension. In particular, there is no definable Lie ring $\mathfrak{g}_1$ containing $\mathfrak{h}$ such that $\dim(\mathfrak{g}_1)<\dim(\mathfrak{g})$. $\widetilde{Z}(\mathfrak{g})$ must be finite. Assume not, then $\mathfrak{g}/\widetilde{Z}(\mathfrak{g})$ is of smaller dimension and $\mathfrak{h}+\widetilde{Z}(\mathfrak{g})/\widetilde{Z}(\mathfrak{g})$ is still locally nilpotent. By induction hypothesis, $\mathfrak{h}/\widetilde{Z}(\mathfrak{g})$ is almost nilpotent, and the same holds for $\mathfrak{h}$. Up to work in $\mathfrak{g}/\widetilde{Z}(\mathfrak{g})$, we may assume $\mathfrak{g}$ centerless.\\
 Take $C$ the centraliser of a finite tuple of elements in $\mathfrak{h}$ of minimal dimension. If $\dim(C)=0$, let $C_1$ be a minimal finite centraliser of a finite tuple in $\mathfrak{h}$. By local nilpotency, $C_1\cap C_{\mathfrak{g}}(h)$ is not empty for every $h\in \mathfrak{h}$. Therefore, by minimality, $C_1\leq C_{\mathfrak{g}}(h)$ for all $h\in \mathfrak{h}$ \hbox{i.e.} the definable Lie subring $C_{\mathfrak{g}}(C_1)$ contains $\mathfrak{h}$. By minimality of the dimension, $C_{\mathfrak{g}}(C_1)$ is of finite index in $\mathfrak{g}$. Consequently, $ \widetilde{Z}(G)\geq C_1\not=0$, a contradiction.\\
    Therefore, we may assume that $C$ is infinite. Then, $C\cap C_{\mathfrak{g}}(h)$ has same dimension as $C$ for any $h\in\mathfrak{h}$. Then, $C\apprle C_{\mathfrak{g}}(h)$ for any $h\in \mathfrak{h}$. Consequently, $\mathfrak{h}\leq \widetilde{C}_{\mathfrak{g}}(C)$. This is clearly a definable Lie subring and, as before, it must be of finite index in $\mathfrak{g}$. Therefore, $C_{\mathfrak{g}}(C)\apprge \mathfrak{g}$ and so, by Lemma \ref{sym}, $C\apprle \widetilde{Z}(\mathfrak{g})$. Being $C$ infinite, we obtain a contradiction.
\end{proof}
From the almost nilpotency, we deduce the nilpotency of the Fitting ideal.
\begin{theorem}\label{FitFinDim}
    Let $\mathfrak{g}$ be a definable Lie ring of finite dimension. Then, $F(\mathfrak{g})$ is nilpotent and definable. 
\end{theorem}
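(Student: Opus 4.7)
The plan is to combine the almost-nilpotency result of Lemma~\ref{AlmNilFinDim} with the definable envelope theorems (Lemmas~\ref{AlmNilEmb} and~\ref{NilFromAlmNil}) to sandwich $F(\mathfrak{g})$ between a definable nilpotent ideal and a definable almost nilpotent ideal of the same finite index, and then invoke the Lie-ring version of Fitting's theorem that a finite sum of nilpotent ideals is a nilpotent ideal.

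First I would check that $F(\mathfrak{g})$ is locally nilpotent. Every element of $F(\mathfrak{g})$ is a finite sum of elements each lying in some nilpotent ideal, and so every finitely generated Lie subring of $F(\mathfrak{g})$ is contained in a finite sum $I_1 + \cdots + I_k$ of nilpotent ideals. Using the Jacobi identity one shows by induction that the sum of two nilpotent ideals of a Lie ring is a nilpotent ideal of class bounded by the sum of the two classes, and iterating this gives that any such finite sum is nilpotent. Hence every finitely generated subring of $F(\mathfrak{g})$ is nilpotent, so $F(\mathfrak{g})$ is locally nilpotent, and Lemma~\ref{AlmNilFinDim} then yields that $F(\mathfrak{g})$ is almost nilpotent.

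By Lemma~\ref{boundedind}, $\mathfrak{g}$ is hereditarily $\widetilde{\mathfrak{M}}_c$, so Lemma~\ref{AlmNilEmb} applied to the almost nilpotent Lie subring $F(\mathfrak{g})$ produces a definable almost nilpotent Lie subring $\mathfrak{b} \supseteq F(\mathfrak{g})$ with $N_{\mathfrak{g}}(\mathfrak{b}) \geq N_{\mathfrak{g}}(F(\mathfrak{g})) = \mathfrak{g}$; thus $\mathfrak{b}$ is a definable ideal. Applying Lemma~\ref{NilFromAlmNil} to $\mathfrak{b}$ yields a definable nilpotent Lie subring $\mathfrak{b}_0 \leq \mathfrak{b}$ of finite index in $\mathfrak{b}$ with $N_{\mathfrak{g}}(\mathfrak{b}_0) \geq N_{\mathfrak{g}}(\mathfrak{b}) = \mathfrak{g}$, so $\mathfrak{b}_0$ is a definable nilpotent ideal of $\mathfrak{g}$. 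By definition of the Fitting ideal, $\mathfrak{b}_0 \leq F(\mathfrak{g}) \leq \mathfrak{b}$, so $|F(\mathfrak{g}) : \mathfrak{b}_0|$ is finite and $F(\mathfrak{g})$ is a finite union of cosets of $\mathfrak{b}_0$, hence definable.

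For the nilpotency of $F(\mathfrak{g})$ itself, pick representatives $f_1, \dots, f_m$ of the cosets of $\mathfrak{b}_0$ in $F(\mathfrak{g})$. Each $f_i$ is a finite sum of elements from nilpotent ideals, so by taking the (nilpotent) sum of those ideals, $f_i$ sits in a single nilpotent ideal $I_i \leq F(\mathfrak{g})$. Then
\[
\mathfrak{b}_0 + I_1 + \cdots + I_m
\]
is a finite sum of nilpotent ideals, hence a nilpotent ideal, contained in $F(\mathfrak{g})$ but meeting every coset of $\mathfrak{b}_0$, so equal to $F(\mathfrak{g})$. The main obstacle I anticipate is establishing (or carefully citing) the Lie-ring Fitting-type statement that a finite sum of nilpotent ideals is nilpotent, since the paper only invokes almost-nilpotency; everything else is direct assembly from the envelope lemmas already proved.
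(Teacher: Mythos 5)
Your proposal follows essentially the same route as the paper's proof: almost nilpotency of $F(\mathfrak{g})$ via Lemma~\ref{AlmNilFinDim}, the envelope Lemmas~\ref{AlmNilEmb} and~\ref{NilFromAlmNil} to produce a definable nilpotent ideal of finite index in $F(\mathfrak{g})$ (whence definability), and then absorption of finitely many coset representatives into a finite sum of nilpotent ideals to conclude nilpotency. The Fitting-type fact you flag — that a finite sum of nilpotent ideals of a Lie ring is nilpotent — is indeed needed and is used implicitly in the paper's argument as well, so your explicit attention to it is a point in your favour rather than a gap.
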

\begin{proof}
    $F(\mathfrak{g})$ is almost nilpotent by Lemma \ref{AlmNilFinDim}. By Lemma \ref{AlmNilEmb}, there exists a definable ideal $\mathfrak{h}$ containing $\mathfrak{g}$ and almost nilpotent. By Lemma \ref{NilFromAlmNil}, there exists a definable nilpotent ideal $\mathfrak{b}$ that almost contains $F(\mathfrak{g})$. Therefore, $F(\mathfrak{g})$ is a finite extension of a definable ideal and so definable. Moreover, it is nilpotent. Let $\overline{g}$ be a transversal for $\mathfrak{b}$ in $F(\mathfrak{g})$. For every finite tuple $\overline{g}$ in $F(\mathfrak{g})$, the ideal generated by $\overline{g}$ in $\mathfrak{h}$ is still nilpotent. This implies that $\mathfrak{b}+\mathfrak{b}_1$, with $\mathfrak{b}_1$ the ideal generated by $\overline{g}$, is nilpotent and contains $F(\mathfrak{g})$. Therefore, $F(\mathfrak{g})$ is nilpotent. 
\end{proof}
We prove that the Radical ideal is soluble and definable.
\begin{theorem}\label{RadIdeFinDim}
    Let $\mathfrak{g}$ be a definable Lie ring of finite dimension. Then, $R(\mathfrak{g})$ is soluble and definable. 
\end{theorem}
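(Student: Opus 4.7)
The proof parallels Theorem \ref{FitFinDim} on the Fitting ideal. The main steps are as follows.

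First, I would establish the solubility analogue of Lemma \ref{AlmNilFinDim}: every locally soluble Lie subring of a definable finite-dimensional Lie ring is almost soluble. Since the sum of two soluble ideals is soluble (as $(\mathfrak{a}_1+\mathfrak{a}_2)/\mathfrak{a}_2 \simeq \mathfrak{a}_1/\mathfrak{a}_1\cap\mathfrak{a}_2$ is soluble, and the extension of a soluble by a soluble is soluble), the radical $R(\mathfrak{g})$ is locally soluble, and hence almost soluble. The proof of this analogue follows the induction on $\dim(\mathfrak{g})$ used in Lemma \ref{AlmNilFinDim}: on a minimal counterexample, reduce to $\widetilde{Z}(\mathfrak{g})$ finite, take a minimal-dimensional intersection $C$ of centralisers of a finite tuple from $\mathfrak{h}$, and use that the last nontrivial derived subring of a finitely generated soluble subring of $\mathfrak{h}$ plays the role of the nontrivial centre in the nilpotent case. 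This produces either a nonzero element of the almost centre or the proper definable Lie subring $\widetilde{C}_\mathfrak{g}(C) \supsetneq \mathfrak{h}$, contradicting the minimality of $\dim(\mathfrak{g})$.

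Second, the derived series of the almost soluble $R(\mathfrak{g})$ is an almost abelian series consisting of $\mathfrak{g}$-ideals. Applying Lemma \ref{EnvAlmSol} with $H = \mathfrak{g}$ yields a definable almost soluble ideal $\mathfrak{k}$ of $\mathfrak{g}$ containing $R(\mathfrak{g})$ and whose almost abelian series consists of $\mathfrak{g}$-ideals, and Lemma \ref{SolFromAlmSol} then provides a definable soluble ideal $\mathfrak{b} \leq \mathfrak{k}$ of finite index in $\mathfrak{k}$. Since $\mathfrak{b}$ is soluble, $\mathfrak{b} \leq R(\mathfrak{g}) \leq \mathfrak{k}$; since $\mathfrak{b}$ has finite index in $\mathfrak{k}$, the quotient $R(\mathfrak{g})/\mathfrak{b}$ is finite. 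Hence $R(\mathfrak{g})$ is a finite union of cosets of the definable subgroup $\mathfrak{b}$, and so definable.

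Finally, write $R(\mathfrak{g}) = \mathfrak{b} \cup (g_1 + \mathfrak{b}) \cup \dots \cup (g_m + \mathfrak{b})$ with $\overline{g} = (g_1,\dots,g_m)$ a finite transversal. Each $g_i$ lies in some soluble ideal $\mathfrak{a}_i$ of $\mathfrak{g}$, so the ideal $\mathfrak{b}_1$ of $\mathfrak{g}$ generated by $\overline{g}$ is contained in the soluble ideal $\mathfrak{a}_1 + \dots + \mathfrak{a}_m$, and is therefore soluble. Consequently $R(\mathfrak{g}) = \mathfrak{b} + \mathfrak{b}_1$ is a sum of two soluble ideals, hence soluble. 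The main obstacle is Step 1: the centraliser-based argument of Lemma \ref{AlmNilFinDim} exploits that every finite nilpotent subring has nontrivial centre, whereas the soluble version must track the last nontrivial derived subring of finitely generated soluble subrings of $\mathfrak{h}$. The dimensional contradiction still runs within the same framework but requires more careful bookkeeping on how these iterated derived subrings interact with the chosen centraliser.
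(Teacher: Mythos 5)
Your Steps 2 and 3 are sound and Step 3 coincides with the paper's own conclusion, but Step 1 is a genuine gap, and it is the step carrying all the weight. The claim that every locally soluble subring of a finite-dimensional Lie ring is almost soluble does not follow by adapting Lemma \ref{AlmNilFinDim}. That proof hinges on the fact that a finitely generated nilpotent Lie ring has nontrivial \emph{centre}: this is what guarantees that a minimal finite centraliser $C_1$ of a tuple from $\mathfrak{h}$ meets $C_{\mathfrak{g}}(h)$ nontrivially for every $h\in\mathfrak{h}$, forcing $C_1\leq C_{\mathfrak{g}}(\mathfrak{h})$ and producing the contradiction with $\widetilde{Z}(\mathfrak{g})=0$. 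A finitely generated soluble Lie ring can be centreless (already $\mathfrak{ga}_1(K)$ is), and the last nontrivial term of its derived series, while an abelian ideal of the generated subring, does not centralise the generators and therefore does not land in the centraliser $C_1$ at all. No contradiction is produced, and the "careful bookkeeping" you defer to is precisely where the argument collapses. Note also that the paper explicitly records the solubility of the radical for hereditarily $\widetilde{\mathfrak{M}}_c$-Lie rings as open; a general "locally soluble implies almost soluble" statement of the strength you posit would be a major result, not a routine analogue.

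The paper avoids this entirely by exploiting that $R(\mathfrak{g})$ is a sum of \emph{ideals} of $\mathfrak{g}$, not merely a locally soluble subring. It fixes a definable soluble ideal $\mathfrak{b}$ of maximal dimension; by Corollary \ref{EnvSol} every soluble ideal lies in a definable soluble ideal, whose sum with $\mathfrak{b}$ is again definable and soluble, hence of dimension $\dim(\mathfrak{b})$, so every soluble ideal is almost contained in $\mathfrak{b}$. In $\mathfrak{g}/\mathfrak{b}$ every soluble ideal is then finite, hence contained in $\widetilde{Z}(\mathfrak{g}/\mathfrak{b})$, so $R(\mathfrak{g})\leq \pi^{-1}(\widetilde{Z}(\mathfrak{g}/\mathfrak{b}))$. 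The latter is an almost soluble ideal with ideal almost abelian series; if $\widetilde{Z}(\mathfrak{g}/\mathfrak{b})$ were infinite, Lemma \ref{SolFromAlmSol} would produce a definable soluble ideal of dimension exceeding $\dim(\mathfrak{b})$, a contradiction. Thus $R(\mathfrak{g})$ is a finite extension of $\mathfrak{b}$, hence definable, and your transversal argument (which is the paper's) gives solubility. Your Step 2 would work as stated once almost solubility with an ideal almost abelian series is in hand, but the correct way to obtain it is the maximal-dimension argument above, not a local-to-global principle for soluble subrings.
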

\begin{proof}
    Let $\mathfrak{b}$ be a definable soluble ideal in $\mathfrak{g}$ of maximal dimension. Then, every soluble ideal $\mathfrak{b}_1$ in $\mathfrak{g}$ is almost contained in a definable soluble ideal $\mathfrak{b}_2$. Since $\mathfrak{b}+\mathfrak{b}_2$ is still a soluble definable ideal, then $\mathfrak{b}_2\apprle \mathfrak{b}$. Consequently, every soluble ideal in $\mathfrak{g}$ is almost contained in $\mathfrak{b}$. We work in $\mathfrak{g}/\mathfrak{b}$. Since $\mathfrak{b}$ is soluble, given a soluble ideal $\mathfrak{h}/\mathfrak{b}$ in $\mathfrak{g}/\mathfrak{b}$, then $\mathfrak{h}$ is soluble. By maximality of the dimension, any soluble ideal in $\mathfrak{h}/\mathfrak{b}$ is finite and so contained in the almost center \hbox{i.e.} $R(\mathfrak{g})\leq \widetilde{Z}(\mathfrak{g}/\mathfrak{b})$. Let $\mathfrak{s}=\widetilde{Z}(\mathfrak{g}/\mathfrak{b}_1)$. $\mathfrak{s}$ is an almost soluble ideal in $\mathfrak{g}$ with ideal almost series. This implies, by Lemma \ref{EnvSol}, that $\mathfrak{s}$ is almost contained in a definable soluble ideal $\mathfrak{c}$. By construction, $\dim(\mathfrak{c})>\dim(\mathfrak{b})$, contradicting the maximality of $\mathfrak{b}$. Therefore, $\widetilde{Z}(\mathfrak{g}/\mathfrak{b})$ contains $R(\mathfrak{g})$ and it is a finite extension of $\mathfrak{b}$. Therefore, it is definable. Proceeding as in Lemma \ref{FitFinDim}, the conclusion follows.
\end{proof}
We prove that, given an absolutely simple Lie ring $\mathfrak{g}$, then $\mathfrak{g}/R(\mathfrak{g})$ is semi-simple in the following sense.
\begin{defn}
    Let $\mathfrak{g}$ be a Lie ring. $\mathfrak{g}$ is \emph{semi-simple} if it is not abelian and it has no proper abelian ideals.
\end{defn}
\begin{corollary}
    Let $\mathfrak{g}$ be an absolutely simple Lie ring. Then, $\mathfrak{g}/\widetilde{Z}(\mathfrak{g})$ is absolutely simple and semi-simple.
\end{corollary}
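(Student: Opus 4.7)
I will work under the implicit assumption that $\mathfrak{g}$ is not almost abelian (otherwise $\mathfrak{g}/\widetilde{Z}(\mathfrak{g})$ is trivial and the conclusion is vacuous, since the zero Lie ring is abelian and thus not semi-simple). The plan is to first argue $\widetilde{Z}(\mathfrak{g})$ is finite, then lift absolute simplicity to the quotient, and finally exclude proper nonzero abelian ideals of the quotient via Lemma \ref{IdeFromNDIdeal}.

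For the first step, by Lemma \ref{DefZ}, $\widetilde{Z}(\mathfrak{g})$ is a definable ideal, hence a definable almost ideal. Since $\mathfrak{g}$ is finite-dimensional and therefore hereditarily $\widetilde{\mathfrak{M}}_c$, Lemma \ref{AlmSim} forces $\widetilde{Z}(\mathfrak{g})$ to be either finite or of finite index. The finite-index alternative should contradict the standing hypothesis: $\widetilde{Z}(\mathfrak{g})$ would be an almost abelian definable Lie subring of finite index, from which Lemma \ref{NilFromAlmNil} extracts a definable nilpotent ideal $\mathfrak{b}$ of finite index, and an application of Lemma \ref{fin[]Alm} with $H=\mathfrak{b}$, $K=0$ shows $[\mathfrak{b},\mathfrak{b}]$ is finite; a second application with $H=\mathfrak{g}$ and $K=\widetilde{Z}(\mathfrak{g})$, combined with Lemma \ref{sym}, propagates almost-centrality to all of $\mathfrak{g}$, forcing $\mathfrak{g}=\widetilde{Z}(\mathfrak{g})$ against the standing assumption. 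Hence $\widetilde{Z}(\mathfrak{g})$ is finite, and Corollary \ref{g/Z(g)} gives $\widetilde{Z}(\bar{\mathfrak{g}}) = 0$, where $\bar{\mathfrak{g}} = \mathfrak{g}/\widetilde{Z}(\mathfrak{g})$.

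To establish absolute simplicity of $\bar{\mathfrak{g}}$, I would lift witnesses. Suppose $\bar{\mathfrak{h}}$ is a definable Lie subring of finite index in $\bar{\mathfrak{g}}$ containing an infinite definable almost ideal $\bar{\mathfrak{I}}$ of infinite index. Their preimages $\mathfrak{h},\mathfrak{I}$ in $\mathfrak{g}$ are definable; $\mathfrak{h}$ has finite index in $\mathfrak{g}$ (as $\widetilde{Z}(\mathfrak{g})$ is finite), and $\mathfrak{I}$ is an infinite subgroup of $\mathfrak{h}$ of infinite index. Almost-ideality transfers: for $h \in \mathfrak{h}$, the hypothesis $[\bar{h}, \bar{\mathfrak{I}}] \apprle \bar{\mathfrak{I}}$ pulls back, using $\widetilde{Z}(\mathfrak{g}) \leq \mathfrak{I}$, to $[h, \mathfrak{I}] \apprle \mathfrak{I}$. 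This contradicts Lemma \ref{AlmSim} applied to $\mathfrak{g}$, so $\bar{\mathfrak{g}}$ is absolutely simple.

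For semi-simplicity, note first that $\bar{\mathfrak{g}}$ is non-abelian, since $[\mathfrak{g},\mathfrak{g}] \leq \widetilde{Z}(\mathfrak{g})$ would be finite and force $\mathfrak{g}$ almost abelian. Suppose $\bar{\mathfrak{a}}$ is a proper nonzero abelian ideal (not assumed definable). By Lemma \ref{IdeFromNDIdeal} applied to $\bar{\mathfrak{a}}$ inside the finite-dimensional $\bar{\mathfrak{g}}$, one of three cases holds: (i) $\bar{\mathfrak{a}} \apprle \widetilde{Z}(\bar{\mathfrak{g}}) = 0$, so $\bar{\mathfrak{a}}$ is finite and thus a finite ideal contained in $\widetilde{Z}(\bar{\mathfrak{g}}) = 0$; (ii) $\bar{\mathfrak{a}}$ is definable of finite index, but then every $\bar{a} \in \bar{\mathfrak{a}}$ satisfies $C_{\bar{\mathfrak{g}}}(\bar{a}) \supseteq \bar{\mathfrak{a}}$ of finite index, placing $\bar{a} \in \widetilde{Z}(\bar{\mathfrak{g}}) = 0$, hence $\bar{\mathfrak{a}} = 0$; or (iii) $\bar{\mathfrak{g}}$ admits an infinite definable almost ideal of infinite index, contradicting the absolute simplicity just established. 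Each case contradicts $\bar{\mathfrak{a}}$ being proper and nonzero. The principal obstacle is the finite-index case of the first step, where finiteness of $\widetilde{Z}(\mathfrak{g})$ must be established by iterated commutator bookkeeping; once secured, the remaining arguments are clean dichotomies via Lemmas \ref{AlmSim}, \ref{sym}, and \ref{IdeFromNDIdeal}.
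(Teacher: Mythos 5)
Your argument for semi-simplicity takes a genuinely different and lighter route than the paper's. The paper supposes $R(\bar{\mathfrak{g}})\neq 0$, invokes Theorem \ref{RadIdeFinDim} to get a definable soluble ideal, forces it to have finite index by absolute simplicity, and then runs the derived series to produce an abelian ideal of finite index, yielding a contradiction. You instead apply Lemma \ref{IdeFromNDIdeal} directly to a hypothetical nonzero proper abelian ideal of $\bar{\mathfrak{g}}$ and observe that all three branches collapse against $\widetilde{Z}(\bar{\mathfrak{g}})=0$ and the already-established absolute simplicity of $\bar{\mathfrak{g}}$. This avoids the heavy machinery of Section~7 (definable envelopes feed into Theorem \ref{RadIdeFinDim}) and stays self-contained in Section~4, which is arguably more natural for a corollary placed there. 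Your lifting argument for absolute simplicity of $\bar{\mathfrak{g}}$ is also correct and spells out what the paper dismisses as ``clear.''

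There is, however, a genuine gap in your first step, specifically in the elimination of the finite-index alternative. Lemma \ref{AlmSim} does correctly reduce to $\widetilde{Z}(\mathfrak{g})$ being finite or of finite index, but the subsequent passage --- extracting a nilpotent $\mathfrak{b}$ of finite index, getting $[\mathfrak{b},\mathfrak{b}]$ finite, and then claiming a second application of Lemma \ref{fin[]Alm} ``propagates almost-centrality to all of $\mathfrak{g}$'' --- does not produce a proof that $\mathfrak{g}=\widetilde{Z}(\mathfrak{g})$. Almost-centrality of elements \emph{inside} $\widetilde{Z}(\mathfrak{g})$ tells you nothing about elements \emph{outside} it, and indeed $\widetilde{Z}(\mathfrak{g})$ of finite index does not in general force $\mathfrak{g}$ almost abelian: a Lie ring of the form $K^{+}\oplus\mathbb{F}_{p}e$ with $[e,k]=k$, $K^{+}$ definably minimal, satisfies $\widetilde{Z}(\mathfrak{g})=K^{+}$ (infinite, of finite index) while $e\notin\widetilde{Z}(\mathfrak{g})$, and the only definable Lie subrings of finite index are $K^{+}$ and $\mathfrak{g}$, neither of which contains a proper definable infinite ideal of infinite index. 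To be fair, the paper's own one-line assertion ``by absolute simplicity, $\widetilde{Z}(\mathfrak{g})$ is finite'' has the same deficiency, so you are reproducing, rather than repairing, a gap in the source; but your more elaborate commutator bookkeeping does not actually close it, and you should not present it as if it did.
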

\begin{proof}
By absolute simplicity, $\widetilde{Z}(\mathfrak{g})$ is finite. Therefore, by Lemma \ref{g/Z(g)}, $\mathfrak{g}/\widetilde{Z}(\mathfrak{g})$ has $0$ almost center.
 Assume for a contradiction that $\mathfrak{g}/\widetilde{Z}(\mathfrak{g})$ is not semi-simple. Then, $R(\mathfrak{g}/\widetilde{Z}(\mathfrak{g}))$ is not $0$. Being a definable soluble ideal in $\mathfrak{g}$, by Lemma \ref{RadIdeFinDim}, $R(\mathfrak{g}/\widetilde{Z}(\mathfrak{g}))$ must be of finite index in $\mathfrak{g}$. Moreover, $R(\mathfrak{g}/\widetilde{Z}(\mathfrak{g}))^n$ is still a definable ideal for any $n$ and so either $0$ or of finite index. Take the biggest $m$ such that $R(\mathfrak{g}/\widetilde{Z}(\mathfrak{g}))^m$ is of finite index. Then, $R(\mathfrak{g}/\widetilde{Z}(\mathfrak{g}))^{m+1}=0$ and so $\mathfrak{g}$ is virtually abelian, a contradiction to absolute simplicity. The absolute simplicity of $\mathfrak{g}/\widetilde{Z}(\mathfrak{g})$ is clear.
\end{proof}
\subsection{Fitting in hereditarily $\widetilde{\mathfrak{M}}_c$-Lie rings}
We prove that the Fitting ideal of a hereditarily $\widetilde{\mathfrak{M}}_c$-Lie ring is nilpotent. This proof follows the steps of the proof of the nilpotency of the Fitting for $\widetilde{\mathfrak{M}}_c$-groups, which can be found in \cite{hempel}. 
\begin{lemma}\label{sol}
Let $\mathfrak{g}$ be a definable hereditarily $\widetilde{\mathfrak{M}}_c$-Lie ring. Then, $F(\mathfrak{g})$ is soluble. 
\end{lemma}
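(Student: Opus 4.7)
The strategy mirrors the proof of the analogous result for groups in \cite{hempel}. First, one observes that $F(\mathfrak{g})$ is locally nilpotent: by the Lie-ring version of Fitting's lemma, the sum of finitely many nilpotent ideals is again nilpotent, so any finitely generated subring of $F(\mathfrak{g})$ lies inside a nilpotent ideal of $\mathfrak{g}$. Applying Corollary \ref{EnvNil}, every such nilpotent ideal sits inside a definable nilpotent ideal. Consequently, $F(\mathfrak{g})$ is the directed union of the family $\mathcal{N}$ of definable nilpotent ideals of $\mathfrak{g}$ — directed under inclusion because the sum of two definable nilpotent ideals is definable and, by Fitting's lemma, still nilpotent.

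The plan is then to exhibit a uniform bound $d$ on the derived length of the members of $\mathcal{N}$. Once this is achieved, for every $\mathfrak{n} \in \mathcal{N}$ we have $\mathfrak{n}^{(d)} = 0$, and since commutators are compatible with directed unions, $F(\mathfrak{g})^{(d)} = \bigcup_{\mathfrak{n} \in \mathcal{N}} \mathfrak{n}^{(d)} = 0$, proving solubility. To produce such a bound, I would translate the derived series of a definable nilpotent ideal $\mathfrak{n}$ into an iterated chain of almost centralisers inside $\mathfrak{g}$: the successive quotients $\mathfrak{n}^{(i)}/\mathfrak{n}^{(i+1)}$ are abelian, so Lemma \ref{fin[]Alm} and the symmetry Lemma \ref{sym} allow one to replace each derived subgroup by a centraliser of finitely many elements modulo the next. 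This converts the derived series into a descending chain of centralisers of the form $C_{\mathfrak{g}}(a_0/N) \geq C_{\mathfrak{g}}(a_0,a_1/N) \geq \ldots$, precisely the objects controlled by the hereditary $\widetilde{\mathfrak{M}}_c$ constants.

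The main obstacle is checking that this translation actually produces a chain to which the hereditary $\widetilde{\mathfrak{M}}_c$ assumption applies, namely that successive inclusions have index at least the threshold $d_{HN}$. If at some step the index collapses below $d_{HN}$, one should argue that the corresponding derived quotient is finite and can be absorbed into a finite extension, shortening the effective length of the series by a bounded factor. Arguing by contradiction in a sufficiently saturated elementary extension, if definable nilpotent ideals of arbitrarily large derived length existed, compactness would yield an infinite strictly descending chain of centralisers of uniformly bounded index inside $\mathfrak{g}$, contradicting the hereditary $\widetilde{\mathfrak{M}}_c$ property. This yields the uniform bound $d$ and completes the proof that $F(\mathfrak{g})$ is soluble.
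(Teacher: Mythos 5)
Your reduction is clean as far as it goes: $F(\mathfrak{g})$ is indeed locally nilpotent, and by Corollary \ref{EnvNil} together with the Lie-ring Fitting lemma it is the directed union of the definable nilpotent ideals, so a uniform bound $d$ on their derived lengths would give $F(\mathfrak{g})^{(d)}=0$. The gap is that you never actually obtain this bound, and the mechanism you sketch for it does not work. The hereditary $\widetilde{\mathfrak{M}}_c$ condition controls descending chains $C_H(a_0/N)\geq C_H(a_0,a_1/N)\geq\ldots$ for a \emph{fixed} definable $N$, whereas the derived series of a nilpotent ideal $\mathfrak{n}$ produces quotients $\mathfrak{n}^{(i)}/\mathfrak{n}^{(i+1)}$ whose modulus changes at every step; Lemma \ref{fin[]Alm} and Lemma \ref{sym} give you no canonical elements $a_i$ whose centralisers (modulo a fixed $N$, with index at least the threshold $d_{HN}$) realise that series. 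The closing compactness argument is also not available: the definable nilpotent ideals are not a uniformly definable family, and ``derived length $\geq k$'' is not expressible by a single formula, so unbounded derived lengths in the family do not yield, by saturation, an infinite centraliser chain violating the $\widetilde{\mathfrak{M}}_c$ constants. Note also that a uniform bound on the class (or derived length) of all definable nilpotent ideals is essentially equivalent to the nilpotency of the Fitting ideal, which is the \emph{harder} theorem the paper proves only afterwards, using solubility as an input; so your route risks circularity.

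The paper's proof goes in a different direction that you should compare with. It fixes a maximal sequence $g_1,\ldots,g_n$ in $\mathfrak{g}$ such that each $C_{\mathfrak{g}}(g_1,\ldots,g_i)$ has infinite index in the previous centraliser (such a maximal $n$ exists precisely by the hereditary $\widetilde{\mathfrak{M}}_c$ condition), and proves by downward induction on $i$ that $C_{F(\mathfrak{g})}(g_1,\ldots,g_i)$ is soluble. At the bottom, $C_{F(\mathfrak{g})}(g_1,\ldots,g_n)$ is almost abelian, so its derived ideal is finite by Lemma \ref{fin[]Alm} and nilpotent by local nilpotency, giving solubility. The inductive step uses local nilpotency to find $h\in F(\mathfrak{g})\setminus\widetilde{Z}(\mathfrak{g})$ with $C_{F(\mathfrak{g})}(h/\widetilde{Z}(\mathfrak{g}))$ of bounded index, and then builds $C_{F(\mathfrak{g})}(g_1,\ldots,g_{i-1})$ out of soluble kernels of the maps $[h,\_]$ modulo finite ideals, finishing with a finitely generated (hence nilpotent) complement. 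The induction is thus on the fixed centraliser chain in $\mathfrak{g}$, not on the derived series of individual nilpotent ideals; that is the idea missing from your argument.
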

\begin{proof}
We will denote $F(\mathfrak{g})$ by $I$.\\
    We take $g_1,...,g_n\in \mathfrak{g}$ with $n<\omega$ maximal such that $C_{\mathfrak{g}}(g_1,...,g_i)$ is of infinite index in $C_{\mathfrak{g}}(g_1,...,g_{i-1})$ for any $i\geq 1$. We prove by inverse induction on $i$ that $C_I(g_1,...,g_i)$ is soluble. For $i=0$, it follows from the fact that $C_I(g_1,...,g_n)$ is a definable almost abelian Lie ring. Therefore, by Lemma \ref{fin[]}, the derived ideal is finite and nilpotent, since $I:=F(\mathfrak{g})$ is locally nilpotent. Therefore $C_I(g_1,...,g_n)$ is soluble.\\
    By induction hypothesis, for any $g\in G$ such that $C_{\mathfrak{g}}(g)$ is of infinite index in $C_{\mathfrak{g}}(g_1,...,g_{i-1})$, $C_{I}(g_1,...,g_i,g)$ is soluble. Let $H=C_I(g_1,...,g_{i-1})$ and, replacing $\mathfrak{g}$ with $C_\mathfrak{g}(g_1,...,g_{i-1})$, we may assume that $C_I(g)$ is soluble for any $g\in G-\widetilde{Z}(G)$. $H$ cannot be contained in the almost center of $G$ (if not the proof follows as before). Therefore, we can find a nilpotent Lie subring $H_0$ such that $H_0/\widetilde{Z}(\mathfrak{g})$ is not trivial. Therefore, $C_{H_0}(H_0/\widetilde{Z}(\mathfrak{g}))$ strictly contains $\widetilde{Z}(\mathfrak{g})\cap H_0$. Take $h_1$ in the difference and repeat the process. By the $\widetilde{\mathfrak{M}}_c$-condition, we may stop after finitely many steps. So we may find $h\in I-\widetilde{Z}(\mathfrak{g})$ such that $C_I(h/\widetilde{Z}(\mathfrak{g}))$ has index at most $d$. Since $h $ is not in the almost center, the centraliser $C_{\mathfrak{g}}(h)$ is soluble.\\
    Define $N=[\widetilde{Z}(\mathfrak{g}),\widetilde{C}_{\mathfrak{g}}(\widetilde{Z}(\mathfrak{g}))]$ a finite ideal in $\mathfrak{g}$ and $\mathfrak{g}_1=C_{\mathfrak{g}}(N)$. This is clearly an ideal in $\mathfrak{g}$ of finite index. We can assume that $I\leq \mathfrak{g}_1$ since $I/\mathfrak{g_1}\cap I$ is finite and so nilpotent.\\
   Define the map 
    $$[h,\_]: x\in C_I(h/N)\mapsto [h,x]\in N.$$
    The kernel is $C_I(h)$ and an ideal since $N$ is central. Since it is soluble and $N$ is nilpotent, $C_I(h/N)$ is soluble. Take the map
    $$[h,\_]+N:x\in C_I(h/\widetilde{C}_I(\mathfrak{g}))\mapsto [h,x]+N\in \widetilde{C}_I(\mathfrak{g})/N.$$
    The kernel is $C_I(h/N)$ that is soluble and an ideal since $\widetilde{C}_I(\mathfrak{g})/N$ is central by assumption. Therefore $\mathfrak{s}:=C_I(h/\widetilde{C}_I(\mathfrak{g}))$ is soluble and of finite index in $H$. Take $a_1,...,a_n$ a left transversal of $\mathfrak{s}$ in $F(\mathfrak{g})$ and let $J$ be the nilpotent ideal generated by $a_1,...,a_n$. Then, $J+\mathfrak{s}$ contains $H$ and is soluble. Therefore, also $H$ is soluble. By induction, $F(\mathfrak{g})$ is soluble. 
\end{proof}
\begin{lemma}\label{nilAct}
    Let $L$ and $A$ be quotients of definable Lie subrings of $\mathfrak{g}$ such that $A$ is abelian and $L$ acts on $A$ by the adjoint action. Suppose there exists $H$ abelian Lie subring of $L$ such that:
    \begin{itemize}
        \item There are elements $\{h_i\}_{i<l}$ in $H$ and natural numbers $(m_i:i<l)$ such that $ad_{h_i}^{m_i}(A)\text{ is finite}$;
        \item For any $h\in H$, the index of $C_A(h_1,...,h_{l-1},h)$ is finite in $C_A(h_1,...,h_{l-1})$.
    \end{itemize}
    Then, there is a definable Lie sub ring $K$ of $L$ which almost contains $H$ and a natural number $m$ such that $\widetilde{C}^m_A(K)$ has finite index in $A$.
\end{lemma}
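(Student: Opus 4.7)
The plan is to cut $A$ down to a finite-index subgroup on which $h_0,\dots,h_{l-1}$ generate a commuting nilpotent family of endomorphisms, exploit the induced filtration, and then assemble $K$ as the almost centralizer of every successive quotient. Concretely, set
$$A^{\circ} := \bigcap_{i<l} \ker\bigl(\ad_{h_i}^{m_i}\bigr),$$
a definable subgroup of finite index in $A$ by hypothesis~(i). Since $H$ is abelian the endomorphisms $\ad_{h_i}$ pairwise commute and preserve $A^{\circ}$, so the commutative subring $T\subseteq\mathrm{End}(A^{\circ})$ they generate satisfies $T^{N}|_{A^{\circ}}=0$ for $N:=\sum_{i<l} m_i$ (by pigeonhole, every length-$N$ monomial in the $\ad_{h_i}$ contains some $\ad_{h_i}^{m_i}$, which vanishes on $A^{\circ}$). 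This produces a finite ascending chain of $H$-invariant definable subgroups
$$0 = A^{\circ}_0 \subseteq A^{\circ}_1 \subseteq \cdots \subseteq A^{\circ}_N = A^{\circ},\qquad A^{\circ}_k := \ker\bigl(T^{k}|_{A^{\circ}}\bigr),$$
satisfying $T(A^{\circ}_k)\subseteq A^{\circ}_{k-1}$, so each $\ad_{h_i}$ induces the zero map on every quotient $A^{\circ}_k/A^{\circ}_{k-1}$.

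I would next verify that every $h\in H$ almost centralizes each quotient $A^{\circ}_k/A^{\circ}_{k-1}$, by induction on $k$. For $k=1$ one has $A^{\circ}_1 \subseteq C_A(h_1,\dots,h_{l-1})$, which $H$ almost centralizes by hypothesis~(ii). For the inductive step, the map $a+A^{\circ}_k \mapsto (\ad_{h_i}(a)+A^{\circ}_{k-1})_{i<l}$ yields an $H$-equivariant injection
$$A^{\circ}_{k+1}/A^{\circ}_k \;\hookrightarrow\; \bigoplus_{i<l} A^{\circ}_k/A^{\circ}_{k-1}$$
(well-defined and injective by the definition of the filtration, and $H$-equivariant because $H$ is abelian), so almost centralization propagates upward. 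Setting $L_0 := \bigcap_{k} N_L(A^{\circ}_k)$, a definable Lie subring of $L$ containing $H$, I then define
$$K := \bigcap_{k=1}^{N} \widetilde{C}_{L_0}\bigl(A^{\circ}_k/A^{\circ}_{k-1}\bigr).$$
Each factor is a definable Lie subring by Lemma~\ref{DefZ} applied to the action of $L_0$ on each quotient, hence so is $K$; and $H\subseteq K$ by the preceding paragraph.

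Finally, I would show by induction on $k$ that $A^{\circ}_k \apprle \widetilde{C}^{k}_A(K)$. For the inductive step, by construction every element of $K$ almost centralizes $A^{\circ}_k/A^{\circ}_{k-1}$, so Lemma~\ref{sym} gives $A^{\circ}_k/A^{\circ}_{k-1} \apprle \widetilde{C}_{A^{\circ}_k/A^{\circ}_{k-1}}(K)$; combining with the inductive hypothesis $A^{\circ}_{k-1} \apprle \widetilde{C}^{k-1}_A(K)$, the image of $A^{\circ}_k$ in $A/\widetilde{C}^{k-1}_A(K)$ lies up to finite index inside $\widetilde{C}_{A/\widetilde{C}^{k-1}_A(K)}(K)$, so $A^{\circ}_k \apprle \widetilde{C}^{k}_A(K)$. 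Taking $k=N$ yields $A^{\circ} \apprle \widetilde{C}^{N}_A(K)$, and since $A^{\circ}$ has finite index in $A$, so does $\widetilde{C}^{N}_A(K)$, with $m=N$. The main technical point is bookkeeping the transfer of almost-centralization through the filtration at both stages, which reduces ultimately to the commutativity of $H$ (making the embedding in the second paragraph $H$-equivariant) and to the symmetry of the almost centralizer in the hereditarily-$\widetilde{\mathfrak{M}}_c$ setting.
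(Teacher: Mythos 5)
Your proof is correct, but it follows a genuinely different route from the paper's. You replace the paper's direct manipulation of iterated brackets with an explicit $T$-adic filtration $0 = A^{\circ}_0 \subseteq A^{\circ}_1 \subseteq \cdots \subseteq A^{\circ}_N = A^{\circ}$ of a finite-index subgroup $A^{\circ} = \bigcap_i \ker(\ad_{h_i}^{m_i})$, where $T$ is the commuting (non-unital) ring generated by the $\ad_{h_i}$, and you then define $K$ as the intersection over $k$ of the almost centralizers $\widetilde{C}_{L_0}(A^{\circ}_k/A^{\circ}_{k-1})$. The paper instead defines $K$ in one shot as $\{g\in C_L(\overline{h}): |C_A(\overline{h}):C_A(g,\overline{h})| <\infty\}$ and argues by iteratively replacing a bracketing $h_{n_j}$ by an element $k\in K$ in a length-$m$ bracket. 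Both proofs hinge on the same two ingredients — the pigeonhole fact that every length-$m$ monomial in the commuting $\ad_{h_i}$ contains some $\ad_{h_i}^{m_i}$, and the symmetry of the almost centraliser — but yours makes the module-theoretic bookkeeping entirely explicit via the $H$-equivariant embedding $A^{\circ}_{k+1}/A^{\circ}_k \hookrightarrow \bigoplus_{i<l} A^{\circ}_k/A^{\circ}_{k-1}$, which cleanly transports almost-centralisation up the filtration, whereas the paper's iteration is terser and less transparent. Your bound $m=\sum_i m_i$ is slightly coarser than the paper's $1+\sum_i (m_i-1)$, which is immaterial. One small caveat worth flagging: Lemma~\ref{sym} as stated applies to definable \emph{subgroups} of $\mathfrak{g}$, while here $A$ and the $A^{\circ}_k$ are only quotients of definable Lie subrings, so strictly speaking you need the module version of the symmetry statement; the underlying Neumann-plus-compactness argument adapts with no real change, and the paper's own appeal to ``symmetry'' in this proof faces the same gap, so this is a citation issue rather than a mathematical one.
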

\begin{proof}
    Let $\overline{h}$ be the tuple given by the statement. As $\mathfrak{g}$ is a hereditarily $\widetilde{\mathfrak{M}}_c$-Lie ring, there is a natural number $d$ such that every descending chain of centralisers of index greater than $d$ has length at most $n$. Hence the group
    $$K:=\{g\in C_L(\overline{h}):\ |C_A(\overline{h}):C_A(g,\overline{h})| \text{ is finite}\}$$
 is a definable Lie ring containing $H$ (by abelianity). Let $m$ be equal to $1+\sum_{i=1}^l (m_i-1)$. Fix an arbitrary tuple $\overline{n}=(n_0,,...,n_{m-1})\in l^{\times m}$. By the pigeon-hole principle, there exists at least one $i\in l$ such that there are $m$ repetitions in the tuple. By the assumption on $H$, 
    $$[h_{n_0},[h_{n_1},...[h_{n_{m-1}},A]...]$$
    is finite. This implies that $C_A(h_{n_0})\apprge [h_{n_1},...[h_{n_{m-1}},A]...]$. This holds for every $h_i$ and so $[h_{n_1},...[h_{n_{m-1}},A]...]\apprle C_A(\overline{h})$. Since for any $k\in K$, $C_{\mathfrak{g}}(k)\apprge C_{\mathfrak{g}}(\overline{h})$, 
    $$[h_{n_1},...[h_{n_{m-1}},A]...]\apprle C_A(k)$$
    and so $[k,[h_{n_1},...[h_{n_{m-1}},A]...]$ is finite. By commutation, and repeating the previous process, we have that, for any $k_1,...,k_{n_{m-1}}$, $[k_{n_1},...[k_{n_{m-1}},A]...]$ is finite. Therefore $K\leq \widetilde{C}_{\mathfrak{g}}([k_{n_2},...[k_{n_{m-1}},A]...])$ and so, by symmetry, $\widetilde{Z}(K)\apprge k_{n_2},...[k_{n_{m-1}},A]...]$. Iterating again the proof, we obtain that $A\leq\widetilde{C}_A^m(K)$.
\end{proof}
Finally, we can verify that the Fitting ideal is nilpotent also in hereditarily $\widetilde{\mathfrak{M}}_c$-Lie rings.
\begin{theorem}
    Let $\mathfrak{g}$ be an hereditarily $\widetilde{\mathfrak{M}_c}$-Lie ring. Then, $F(\mathfrak{g})$ is nilpotent. 
\end{theorem}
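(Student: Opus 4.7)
The plan is to combine Lemma \ref{sol}, which ensures that $F := F(\mathfrak{g})$ is soluble, with Lemma \ref{nilAct}, and to proceed by induction on the derived length $n$ of $F$. To make the induction loop, one proves the slightly strengthened statement: \emph{any locally nilpotent ideal of a hereditarily $\widetilde{\mathfrak{M}}_c$-Lie ring whose soluble derived length is at most $n$ is nilpotent}. The base case $n \leq 1$ is immediate since the ideal is then abelian. For the inductive step, set $A := F^{(n-1)}$, the last nonzero term of the derived series of $F$; this $A$ is an abelian ideal of $F$. The quotient $F/A$ is locally nilpotent (as a quotient of a locally nilpotent Lie ring), has derived length $n-1$, and is a locally nilpotent ideal of $\mathfrak{g}/A$, so by the inductive hypothesis $F/A$ is nilpotent of some class $c$. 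It therefore suffices to show that the adjoint action of $F$ on $A$ is nilpotent, i.e., that there exists $m$ with $[F,[F,\ldots,[F,A]\ldots]] = 0$ ($m$ brackets); combined with nilpotency of $F/A$, this yields nilpotency of $F$.

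To establish this nilpotent action on $A$, the plan is to invoke Lemma \ref{nilAct} with $L = F$. The key is to produce an abelian subring $H \leq F$ and a finite tuple $(h_i)_{i<l}$ in $H$ satisfying the hypotheses. Local nilpotency of $F$ means every finitely generated subring is nilpotent, so one can extract a nontrivial abelian $H$ from the lower central series of such a subring; the iterated adjoint $\operatorname{ad}_{h_i}^{m_i}$ annihilates $A$ up to a finite piece because, together with generators of a finitely generated subgroup $A_0 \leq A$, each $h_i$ lies in a finitely generated nilpotent subring of $F$, and a compactness/uniformity argument using the $\widetilde{\mathfrak{M}}_c$ chain condition promotes this to finiteness of $\operatorname{ad}_{h_i}^{m_i}(A)$. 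For the centralizer chain condition, the hereditarily $\widetilde{\mathfrak{M}}_c$ property supplies a tuple $h_1, \ldots, h_{l-1} \in H$ such that $|C_A(h_1,\ldots,h_{l-1}) : C_A(h_1,\ldots,h_{l-1},h)|$ is uniformly bounded for every $h \in H$. Lemma \ref{nilAct} then furnishes a definable Lie subring $K$ almost containing $H$ and an integer $m$ with $\widetilde{C}_A^m(K)$ of finite index in $A$.

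The main obstacle is bridging from this almost-nilpotent action of a single $K$ on $A$ to a genuinely nilpotent action of all of $F$ on $A$. I would cover $F$, modulo a suitable ideal, by finitely many Lie subrings produced by Lemma \ref{nilAct} applied to different choices of $H$ and tuple, bounding the number of such subrings via the hereditarily $\widetilde{\mathfrak{M}}_c$ property, and iterate so as to obtain an integer $m'$ with $\widetilde{C}_A^{m'}(F)$ of finite index in $A$. Lemma \ref{fin[]} (the finiteness of the commutator between an almost centralizer and its inner almost centralizer) then upgrades this \emph{almost}-nilpotent statement into the desired honest identity $[F,[F,\ldots,[F,A]\ldots]] = 0$ for some (larger) number of brackets: one absorbs the finite residue into an additional layer of the iterated commutator, and concludes that $F$ acts nilpotently on $A$, completing the induction.
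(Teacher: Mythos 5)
Your overall shape (solubility from Lemma \ref{sol}, induction along the derived series, Lemma \ref{nilAct} as the engine) matches the paper, but your choice of decomposition creates a genuine gap at the central step. You place the abelian piece at the \emph{bottom} ($A=F^{(n-1)}$) and the nilpotent piece on top ($F/A$), so the object that must act nilpotently on $A$ is all of $F$, which is not abelian; Lemma \ref{nilAct} only applies to an abelian $H$. You are therefore forced to extract a small abelian $H$ from a finitely generated subring and then ``cover $F$ by finitely many such $K$'s'' --- but nothing in the hereditarily $\widetilde{\mathfrak{M}}_c$ hypothesis bounds the number of such subrings needed, and even a finite cover of $F$ by subrings each acting almost-nilpotently on $A$ does not yield an almost-nilpotent action of $F$ (iterated brackets mix elements from different subrings). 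The paper avoids this entirely by inverting the decomposition: it assumes $F^{(n+1)}$ nilpotent, envelopes it in a \emph{definable} nilpotent ideal $N$ (Lemma \ref{EnvNil}) with central series $\{N_i\}$, and applies Lemma \ref{nilAct} to the action of $H=F^{(n)}/N$ on each $N_i/N_{i+1}$ --- and $F^{(n)}/N$ \emph{is} abelian precisely because $N\supseteq F^{(n+1)}=[F^{(n)},F^{(n)}]$, so the lemma applies in one shot to the whole piece and no covering is needed.

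Two further points. First, your justification of the hypothesis $\operatorname{ad}_{h_i}^{m_i}(A)$ finite via ``finitely generated nilpotent subrings plus compactness'' does not work: a finitely generated nilpotent subring containing $h_i$ and finitely many elements of $A$ gives a bound only on those elements, and no chain condition in the paper controls the ascending chain of kernels $\ker(\operatorname{ad}_{h_i}^k)\cap A$. The correct mechanism (used implicitly by the paper) is that $h_i$ lies in a nilpotent \emph{ideal} $H_{h_i}$ by definition of the Fitting ideal, and the sum of two nilpotent ideals is nilpotent, so $\operatorname{ad}_{h_i}^{m_i}(N)=0$ with $m_i$ the class of $H_{h_i}+N$. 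Second, your final upgrade is aimed at the wrong target: Lemma \ref{fin[]} only produces \emph{finite} (not zero) iterated commutators, so you cannot get the honest identity $[F,\ldots,[F,A]\ldots]=0$ this way. The paper does not need it: it settles for almost-nilpotency of $F^{(n)}$ and then upgrades to genuine nilpotency via Lemma \ref{NilFromAlmNil} together with the local-nilpotency argument of Theorem \ref{FitFinDim}. If you restructure your induction as the paper does, both of these difficulties disappear.
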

\begin{proof}
    The Fitting ideal is soluble by Lemma \ref{sol}. So there exists $m<\omega$ such that $F(\mathfrak{g})^{(m)}=\{0\}$. We will prove by induction on $n$ that $F(\mathfrak{g})^{(n)}$ is nilpotent. Observe that, proceeding as in Theorem \ref{FitFinDim}, is sufficient to verify that $F(\mathfrak{g})^{(n)}$ is almost nilpotent.\\
    Assume, by induction, that $F(\mathfrak{g})^{(n+1)}$ is nilpotent. By Lemma \ref{EnvNil}, there exists an ideal $N$ that is nilpotent and contains $F(\mathfrak{g})^{n+1}$. Since it is nilpotent, there exists a chain $\{N_i\}_{i\leq n}$ of ideals in $\mathfrak{g}$ such that $[N,N_i]\leq N_{i+1}$. Take the action of $\mathfrak{g}/N$ on $N_i/N_{i+1}$ by $[\_,\_]$ (this is well-defined since $N$ acts trivially on $N_i/N_{i+1}$). For any $g\in F(\mathfrak{g})^{n}$, there exists a nilpotent ideal $H_g$ that contains $g$. Since $H_g+N$ is again nilpotent, there exists $m_g$ such that $\operatorname{ad}^{m_g}_g(N)=0$. 
    By Lemma \ref{nilAct}, there exists a definable Lie subring $K_i$ containing $F(\mathfrak{g})^n$ and such that $\widetilde{C}_{\mathfrak{g}}(K_i/N_{i+1})\apprge N_i$. Defined $K=\bigcap_{i=1}^n K_i$, $K$ still almost contains $F(\mathfrak{g})^n$ and $N_i\apprle \widetilde{C}_{\mathfrak{g}}(K/N_{i+1})$ for every $i$. Finally, $F(\mathfrak{g})^n/N$ is an abelian ideal in $\mathfrak{g}/N$ and so it is contained in an almost abelian ideal $H$. Taking $\mathfrak{b}=K\cap H$, this is a definable Lie ring that almost contains $F(\mathfrak{g})^n$. It is almost nilpotent since $\widetilde{Z}_{\sum_{i=1}^n m_i}(\mathfrak{b})$ almost contains $N\cap \mathfrak{b}$ by previous proof and $\mathfrak{b}/\widetilde{Z}_{\sum_{i=1}^n m_i}(\mathfrak{b})$ is almost abelian by definition of $H$. This implies that $\mathfrak{b}$ is almost nilpotent, and the same holds for $F(\mathfrak{g})^n$. This completes the proof.
\end{proof}
\subsection{Almost radical and almost Fitting}
We can also define the almost version of the Fitting.
\begin{defn}
    Let $\mathfrak{g}$ be an hereditarily $\widetilde{\mathfrak{M}}_c$-Lie ring. The \emph{almost Fitting} of $\mathfrak{g}$, denoted by $\widetilde{F}(\mathfrak{g})$, is the sum of all the almost nilpotent ideals.
\end{defn}
This is clearly well-defined. Observe that it is locally almost nilpotent \hbox{i.e.} any Lie ring generated by finitely many elements is almost nilpotent. Indeed, the sum of two almost nilpotent ideals in a hereditarily $\widetilde{\mathfrak{M}}_c$-Lie ring is still almost nilpotent. 
\begin{lemma}
    Let $\mathfrak{g}$ be an hereditarily $\widetilde{\mathfrak{M}}_c$-Lie ring and $\mathfrak{h},\mathfrak{h}_1$ two almost nilpotent ideals. Then, $\mathfrak{h}+\mathfrak{h}_1$ is almost nilpotent. 
\end{lemma}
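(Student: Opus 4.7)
The plan is to reduce the statement to the nilpotency of $F(\mathfrak{g})$ established in the preceding theorem, by taking definable nilpotent envelopes of the two almost nilpotent ideals and controlling the finite remainder.

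First, I would apply the corollary of Lemma~\ref{AlmNilEmb} to embed $\mathfrak{h}$ and $\mathfrak{h}_1$ in definable almost nilpotent ideals $\mathfrak{b}$ and $\mathfrak{b}_1$ of $\mathfrak{g}$. Since both envelopes are ideals, $N_{\mathfrak{g}}(\mathfrak{b}) = N_{\mathfrak{g}}(\mathfrak{b}_1) = \mathfrak{g}$, so Lemma~\ref{NilFromAlmNil} yields definable nilpotent ideals $\mathfrak{n} \subseteq \mathfrak{b}$ and $\mathfrak{n}_1 \subseteq \mathfrak{b}_1$ of finite index in $\mathfrak{b}$ and $\mathfrak{b}_1$ respectively. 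Both $\mathfrak{n}$ and $\mathfrak{n}_1$ sit inside $F(\mathfrak{g})$, which is nilpotent by the preceding theorem, and hence $\mathfrak{n}+\mathfrak{n}_1$, as a Lie subring of a nilpotent Lie ring, is itself nilpotent of some class $c$.

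Next, I would verify that $\mathfrak{b}+\mathfrak{b}_1$ is almost nilpotent. It is a finite extension of the nilpotent ideal $\mathfrak{n}+\mathfrak{n}_1$: indeed, the natural surjection $\mathfrak{b}/\mathfrak{n} \times \mathfrak{b}_1/\mathfrak{n}_1 \twoheadrightarrow (\mathfrak{b}+\mathfrak{b}_1)/(\mathfrak{n}+\mathfrak{n}_1)$ has finite domain. Fix a finite transversal $S$ of $\mathfrak{n}+\mathfrak{n}_1$ in $\mathfrak{b}+\mathfrak{b}_1$ and set $A_k = \gamma_k(\mathfrak{n}+\mathfrak{n}_1)$, which is an ideal of $\mathfrak{b}+\mathfrak{b}_1$ since $\mathfrak{n}+\mathfrak{n}_1$ is. I would prove by downward induction on $k$ that $A_k \subseteq \widetilde{Z}^{\,c-k+1}(\mathfrak{b}+\mathfrak{b}_1)$: for $x \in A_k$, the bracket $[x, \mathfrak{b}+\mathfrak{b}_1]$ lies in $A_{k+1} + \{[x,s] : s \in S\}$, a union of finitely many cosets of $A_{k+1}$, so its image modulo $A_{k+1}$ is finite, giving $x+A_{k+1}\in \widetilde{Z}((\mathfrak{b}+\mathfrak{b}_1)/A_{k+1})$. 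One further almost-central step absorbs the finite quotient $(\mathfrak{b}+\mathfrak{b}_1)/(\mathfrak{n}+\mathfrak{n}_1)$. Since clearly $\widetilde{Z}^n(\mathfrak{b}+\mathfrak{b}_1) \cap (\mathfrak{h}+\mathfrak{h}_1) \subseteq \widetilde{Z}^n(\mathfrak{h}+\mathfrak{h}_1)$, almost nilpotency passes to the Lie subring $\mathfrak{h}+\mathfrak{h}_1$.

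The main obstacle I anticipate is the last step, i.e.\ the clean passage from ``finite extension of a nilpotent ideal'' to ``almost nilpotent'' in the hereditarily-$\widetilde{\mathfrak{M}}_c$ setting; everything else is a direct combination of the envelope theorems with the already-established nilpotency of $F(\mathfrak{g})$.
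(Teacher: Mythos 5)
Your proof is correct and follows essentially the same route as the paper's: embed both ideals in definable almost nilpotent ideals via Lemma~\ref{AlmNilEmb}, pass to nilpotent ideals of finite index via Lemma~\ref{NilFromAlmNil}, and conclude that the sum is a finite extension of a nilpotent ideal, hence almost nilpotent. The only cosmetic differences are that you justify the nilpotency of $\mathfrak{n}+\mathfrak{n}_1$ by viewing it inside the nilpotent $F(\mathfrak{g})$ (legitimate, since that theorem precedes this lemma and does not rely on it) where the paper implicitly invokes the classical fact that a sum of two nilpotent ideals is nilpotent, and that your downward induction for the ``finite extension of a nilpotent ideal is almost nilpotent'' step spells out a detail the paper leaves implicit.
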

\begin{proof}
    By Lemma \ref{AlmNilEmb}, both $\mathfrak{h}$ and $\mathfrak{h}_1$ are respectively contained in $\mathfrak{b}$ and $\mathfrak{b}_1$ almost nilpotent definable ideals. These are virtually nilpotent by Lemma \ref{NilFromAlmNil}. Let $\mathfrak{c},\mathfrak{c}_1$ be the two nilpotent ideals of finite index in $\mathfrak{b}$ and $\mathfrak{b}_1$ respectively. Then, $\mathfrak{c}+\mathfrak{c}_1$ is of finite index in $\mathfrak{b}+\mathfrak{b}_1$ and nilpotent. This implies that $\mathfrak{h}+\mathfrak{h}_1$ is virtually nilpotent and so almost nilpotent.
\end{proof}
We also define the almost radical ideal of $\mathfrak{g}$.
\begin{defn}
    Let $\mathfrak{g}$ be a Lie ring. The \emph{almost Radical} ideal, denoted by $\widetilde{R}(\mathfrak{g})$, is the sum of all almost soluble ideals with ideal almost abelian series.
\end{defn}
We have the following result.
\begin{lemma}\label{AlmRadFinRad}
    Let $\mathfrak{g}$ be a finite-dimensional definable Lie ring. Then, $\widetilde{R}(\mathfrak{g})$ is a finite extension of $R(\mathfrak{g)}$ and so definable and virtually soluble.
\end{lemma}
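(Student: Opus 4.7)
The plan is to imitate the proof strategy used for Theorem \ref{RadIdeFinDim}. First I would note the trivial inclusion $R(\mathfrak{g}) \leq \widetilde{R}(\mathfrak{g})$, since every soluble ideal is in particular an almost soluble ideal whose derived series is an ideal almost abelian series. The goal is then to bound $\widetilde{R}(\mathfrak{g})$ from above, up to finite index, by something commensurable with $R(\mathfrak{g})$.

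Pick a definable almost soluble ideal $\mathfrak{b}$ of $\mathfrak{g}$ with ideal almost abelian series and of maximal dimension; such a $\mathfrak{b}$ exists because every almost soluble ideal with ideal almost abelian series is contained in one which is moreover definable (the corollary to Lemma \ref{EnvAlmSol}), and the finite-dimensionality forces the dimension to stabilise. For any other almost soluble ideal $\mathfrak{a}$ of $\mathfrak{g}$ with ideal almost abelian series, the sum $\mathfrak{a}+\mathfrak{b}$ is again an ideal; modulo $\mathfrak{b}$ it is isomorphic to a quotient of $\mathfrak{a}$, so concatenating an almost abelian series of $\mathfrak{b}$ (consisting of ideals in $\mathfrak{g}$) with the image of an almost abelian series of $\mathfrak{a}$ yields an ideal almost abelian series for $\mathfrak{a}+\mathfrak{b}$. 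Applying the corollary of Lemma \ref{EnvAlmSol} to $\mathfrak{a}+\mathfrak{b}$, we embed it into a definable almost soluble ideal with ideal almost abelian series, whose dimension is bounded by $\dim(\mathfrak{b})$ by maximality. Hence $\mathfrak{a}\apprle \mathfrak{b}$, and summing over all such $\mathfrak{a}$, $\widetilde{R}(\mathfrak{g})\apprle \mathfrak{b}$.

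Next I would apply Lemma \ref{SolFromAlmSol} to $\mathfrak{b}$, taking $H=\mathfrak{g}$: the ideal almost abelian series of $\mathfrak{b}$ is normalised by $\mathfrak{g}$, so there exists a definable \emph{soluble} ideal $\mathfrak{c}\leq \mathfrak{b}$ of finite index in $\mathfrak{b}$. By definition of $R(\mathfrak{g})$, $\mathfrak{c}\leq R(\mathfrak{g})\leq \widetilde{R}(\mathfrak{g})$. Combining with the previous step gives the chain of commensurability relations
$$\mathfrak{c}\ \sim\ \mathfrak{b}\ \apprge\ \widetilde{R}(\mathfrak{g})\ \geq\ R(\mathfrak{g})\ \geq\ \mathfrak{c},$$
so $R(\mathfrak{g})$ and $\widetilde{R}(\mathfrak{g})$ are commensurable, and since the inclusion $R(\mathfrak{g})\leq \widetilde{R}(\mathfrak{g})$ is set-theoretic, the index $|\widetilde{R}(\mathfrak{g}):R(\mathfrak{g})|$ is finite. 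This finite extension of the definable soluble ideal $R(\mathfrak{g})$ (Theorem \ref{RadIdeFinDim}) is in turn definable and virtually soluble.

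The main point to check carefully is the closure argument in the middle paragraph, namely that the sum of two almost soluble ideals with ideal almost abelian series is again almost soluble with ideal almost abelian series; this is where the hypothesis that the series consists of ideals in $\mathfrak{g}$ (rather than merely in the subring) is essential, since only then can one concatenate series without losing normality. Once this is in place, everything else reduces to the maximal-dimension trick already used in Theorem \ref{RadIdeFinDim} combined with the envelope result Lemma \ref{SolFromAlmSol}.
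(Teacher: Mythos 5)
Your overall route is the same as the paper's: reduce everything to the envelope results (the corollary of Lemma \ref{EnvAlmSol} together with Lemma \ref{SolFromAlmSol}) plus a maximality-of-dimension argument, and conclude that $\widetilde{R}(\mathfrak{g})$ is squeezed into a finite extension of $R(\mathfrak{g})$. The individual ingredients you use are correct: the closure of the class of almost soluble ideals with \emph{ideal} almost abelian series under sums, the finiteness of $|\mathfrak{d}:\mathfrak{b}|$ for definable ideals $\mathfrak{b}\leq\mathfrak{d}$ of equal dimension (by fibration), and the extraction of a definable soluble ideal $\mathfrak{c}\leq R(\mathfrak{g})$ of finite index in $\mathfrak{b}$.

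However, there is one genuine gap, precisely at the sentence ``summing over all such $\mathfrak{a}$, $\widetilde{R}(\mathfrak{g})\apprle\mathfrak{b}$.'' From the fact that each individual $\mathfrak{a}$ satisfies $\mathfrak{a}\apprle\mathfrak{b}$ you cannot conclude that the (possibly infinite) sum $\sum_{\mathfrak{a}}\mathfrak{a}$ is almost contained in $\mathfrak{b}$: each image $(\mathfrak{a}+\mathfrak{b})/\mathfrak{b}$ is a finite ideal of $\mathfrak{g}/\mathfrak{b}$, but a sum of infinitely many finite ideals can be infinite, and nothing in your argument gives a uniform bound. This is exactly the point that the paper's proofs of Theorem \ref{RadIdeFinDim} and of this lemma handle by passing through the almost centre: every finite ideal of $\mathfrak{g}/\mathfrak{b}$ is contained in $\widetilde{Z}(\mathfrak{g}/\mathfrak{b})$ (an element of a finite ideal has finite adjoint orbit, hence centraliser of finite index), so $\widetilde{R}(\mathfrak{g})+\mathfrak{b}/\mathfrak{b}\leq \widetilde{Z}(\mathfrak{g}/\mathfrak{b})$; the preimage of $\widetilde{Z}(\mathfrak{g}/\mathfrak{b})$ is again a definable almost soluble ideal with ideal almost abelian series (almost abelian on top of the series of $\mathfrak{b}$), so by maximality of $\dim(\mathfrak{b})$ and fibration it is a finite extension of $\mathfrak{b}$. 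Inserting this step repairs your chain of commensurabilities; without it, the middle inequality $\mathfrak{b}\apprge\widetilde{R}(\mathfrak{g})$ is unjustified. (The paper's own version runs the same argument directly modulo $R(\mathfrak{g})$ rather than modulo an auxiliary $\mathfrak{b}$, which is slightly shorter, but that difference is cosmetic.)
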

\begin{proof}
By Lemma \ref{EnvAlmSol} and \ref{SolFromAlmSol}, any almost soluble ideal $\mathfrak{h}$ with ideal almost soluble series is almost contained in $R(\mathfrak{g})$. Therefore $\widetilde{R}(\mathfrak{g})/R(\mathfrak{g})\leq \widetilde{Z}(\mathfrak{g}/R(\mathfrak{g}))$. The latter is an almost soluble ideal with ideal almost soluble series. This implies that $\widetilde{R}(\mathfrak{g})=\widetilde{Z}(\mathfrak{g}/R(\mathfrak{g}))$ that is almost contained in $R(\mathfrak{g})$. 
\end{proof}
Therefore, $\mathfrak{g}/\widetilde{R}(\mathfrak{g})$ is an absolutely semi-simple Lie ring in the following sense.
\begin{defn}
    A Lie ring $\mathfrak{g}$ is \emph{absolutely semi-simple} if it does not contain any almost abelian ideal (and therefore also almost soluble ones).
\end{defn}
We introduce definably minimal almost ideals.
\begin{defn}
    Let $\mathfrak{g}$ be a definable Lie ring and $\mathfrak{h}$ be a definable almost ideal in $\mathfrak{g}$. Then, $\mathfrak{h}$ is \emph{definably minimal} if it contains no definable infinite almost ideals of infinite index.
\end{defn}
We have the following property for finite-dimensional absolutely semi-simple Lie rings.
\begin{lemma}
    Let $\mathfrak{g}$ be a definable finite-dimensional absolutely semi-simple Lie ring. Then, $\mathfrak{g}$ contains finitely many, up to commensurability, definably minimal almost ideals.
\end{lemma}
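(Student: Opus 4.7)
The plan is an induction on the number of pairwise non-commensurable definably minimal almost ideals, with $\dim(\mathfrak{g})$ providing the bound. Suppose $\mathfrak{h}_1,\dots,\mathfrak{h}_k$ are pairwise non-commensurable infinite definably minimal almost ideals of $\mathfrak{g}$ (the finite ones form a single commensurability class and can be ignored). The goal is to prove $k\leq \dim(\mathfrak{g})$. The two ingredients are Corollary \ref{IntAlmIde}, which makes two almost ideals of finite intersection almost centralise each other, and a strengthened form of absolute semi-simplicity that rules out infinite almost abelian \emph{almost} ideals, not merely ideals.

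For this strengthened form, let $\mathfrak{h}$ be an infinite almost abelian almost ideal. By Corollary \ref{C(H/K)AlmId} with $K=\{0\}$, the Lie subring $I:=\widetilde{C}_{\mathfrak{g}}(\mathfrak{h})$ is actually an ideal of $\mathfrak{g}$, and so is $\widetilde{C}_{\mathfrak{g}}(I)$; hence $\widetilde{Z}(I)=I\cap \widetilde{C}_{\mathfrak{g}}(I)$ is an ideal as the intersection of two ideals. Almost abelianity gives $\mathfrak{h}\apprle I$, and Lemma \ref{sym} applied to the tautology $I\apprle \widetilde{C}_{\mathfrak{g}}(\mathfrak{h})=I$ yields $\mathfrak{h}\apprle \widetilde{C}_{\mathfrak{g}}(I)$, so $\mathfrak{h}\apprle \widetilde{Z}(I)$. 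Thus $\widetilde{Z}(I)$ is an infinite almost abelian ideal, contradicting absolute semi-simplicity.

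For the main induction, set $S_i=\mathfrak{h}_1+\cdots+\mathfrak{h}_i$. Using Lemma \ref{ChaAlmIde} one checks directly that sums and intersections of finitely many definable almost ideals are again definable almost ideals, so each $S_i$ and each $\mathfrak{h}_{i+1}\cap S_i$ is a definable almost ideal. For $i\neq j$ the definable almost ideal $\mathfrak{h}_i\cap \mathfrak{h}_j$ lies in both $\mathfrak{h}_i$ and $\mathfrak{h}_j$; by definable minimality it is either finite or of finite index, and the latter case forces $\mathfrak{h}_i\sim\mathfrak{h}_j$, contrary to hypothesis. Therefore $\mathfrak{h}_i\cap\mathfrak{h}_j$ is finite, and Corollary \ref{IntAlmIde} gives $\mathfrak{h}_i\apprle \widetilde{C}_{\mathfrak{g}}(\mathfrak{h}_j)$ for every $i\neq j$; intersecting these almost centralisers over $j\leq i$ (a finite intersection of finite-index subgroups of $\mathfrak{h}_{i+1}$) yields $\mathfrak{h}_{i+1}\apprle \widetilde{C}_{\mathfrak{g}}(S_i)$.

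Now consider $\mathfrak{h}_{i+1}\cap S_i$. If it were infinite, definable minimality of $\mathfrak{h}_{i+1}$ would force it to have finite index, giving $\mathfrak{h}_{i+1}\apprle S_i$; combined with $\mathfrak{h}_{i+1}\apprle \widetilde{C}_{\mathfrak{g}}(S_i)$ this yields $\mathfrak{h}_{i+1}\apprle S_i\cap \widetilde{C}_{\mathfrak{g}}(S_i)=\widetilde{Z}(S_i)$. Since $[x,S_i]$ is finite for every $x\in\widetilde{Z}(S_i)$, coset decomposition against the finite-index subgroup $\mathfrak{h}_{i+1}\cap S_i\leq \mathfrak{h}_{i+1}$ shows that $[x,\mathfrak{h}_{i+1}]$ is finite for $x$ in a finite-index subgroup of $\mathfrak{h}_{i+1}$, so $\mathfrak{h}_{i+1}$ would be almost abelian, contradicting the previous paragraph. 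Hence $\mathfrak{h}_{i+1}\cap S_i$ is finite, the standard fibration argument gives $\dim(S_{i+1})=\dim(S_i)+\dim(\mathfrak{h}_{i+1})$, and iterating we obtain $k\leq \sum_i\dim(\mathfrak{h}_i)=\dim(S_k)\leq \dim(\mathfrak{g})$. The main technical obstacle is the bookkeeping that almost ideals form a lattice under sums and intersections, together with producing an almost abelian \emph{ideal} from an almost abelian almost ideal via $\widetilde{Z}(\widetilde{C}_{\mathfrak{g}}(\mathfrak{h}))$; both are handled using Lemma \ref{ChaAlmIde}, Lemma \ref{sym} and Corollary \ref{C(H/K)AlmId}.
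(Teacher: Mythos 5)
Your proof is correct and takes a genuinely different route from the paper's. The paper fixes a maximal-dimension almost direct sum $\widetilde{\oplus}_i H_i$ of definably minimal almost ideals and derives a contradiction from adjoining one more non-commensurable almost ideal $H$: it shows $H\apprle C:=\widetilde{C}_{\mathfrak{g}}(\widetilde{\oplus}_iH_i)$ and $H\apprle\widetilde{\oplus}_iH_i\apprle\widetilde{C}_{\mathfrak{g}}(C)$, so $\widetilde{Z}(C)=C\cap\widetilde{C}_{\mathfrak{g}}(C)$ is an infinite almost abelian \emph{ideal} (intersection of two ideals, courtesy of Corollary~\ref{C(H/K)AlmId}), contradicting absolute semi-simplicity directly, without any strengthening. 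You instead run a direct dimension count, showing $S_{i+1}=S_i\widetilde{\oplus}\mathfrak{h}_{i+1}$ at each step, which yields the explicit bound $k\leq\dim(\mathfrak{g})$ that the paper does not state. To close your induction step you need to exclude an infinite almost abelian \emph{almost} ideal, not just an ideal, so you first prove the strengthened form of absolute semi-simplicity via $\widetilde{Z}(\widetilde{C}_{\mathfrak{g}}(\mathfrak{h}))$. That strengthening is correct and is essentially the same idea the paper uses in place, but you have isolated it as a lemma, which is arguably cleaner. Both arguments rest on exactly the same toolkit (Corollaries~\ref{IntAlmIde} and~\ref{C(H/K)AlmId}, Lemma~\ref{sym}).

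One small inaccuracy in your write-up: you justify almost abelianity of $\mathfrak{h}_{i+1}$ by claiming that $[x,S_i]$ is finite for every $x\in\widetilde{Z}(S_i)$. Under the intended (Hempel-style) definition, $x\in\widetilde{Z}(S_i)$ only means $C_{S_i}(x)$ has finite index in $S_i$, which is strictly weaker than $[x,S_i]$ being finite. The conclusion you want still holds by the right argument: $\mathfrak{h}_{i+1}\apprle S_i\cap\widetilde{C}_{\mathfrak{g}}(S_i)=\widetilde{Z}(S_i)$ implies that for $x$ in a finite-index subgroup of $\mathfrak{h}_{i+1}$ one has $C_{S_i}(x)$ of finite index, hence $C_{\mathfrak{h}_{i+1}}(x)$ of finite index (using $\mathfrak{h}_{i+1}\apprle S_i$), so $\widetilde{Z}(\mathfrak{h}_{i+1})$ is an infinite almost abelian definable almost ideal, which is what your strengthened semi-simplicity forbids. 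The rest of the argument is fine, including the routine verifications that finite sums and intersections of definable almost ideals are definable almost ideals.
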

\begin{proof}
    Let $\widetilde{\oplus}_{i=1}^n H_i$ be a sum of maximal dimension of definably minimal almost ideals. This sum is almost direct since the intersection of two definable almost ideals is a definable almost ideal. Therefore, given $H_i$ and $\widetilde{\oplus}_{j=1}^{i-1} H_i$, either the intersection is finite or almost contains $H_i$ that can be erased without changing the dimension. Let $H$ be another definably minimal ideal and assume it is not commensurable to any $H_i$ for $i\leq n$. Then, $H\apprle \widetilde{\oplus}_{i=1}^n H_i$ by maximality of the dimension. On the other hand, $ H\cap H_i$ is finite for every $i$ (if not, the two almost invariant subgroups would be equivalent). By Lemma \ref{IntAlmIde}, $H\apprle \widetilde{C}_{\mathfrak{g}}(H_i)$ for every $i$. This implies that $H\apprle \bigcap_{i=1}^n \widetilde{C}_{\mathfrak{g}}(H_i)$. Clearly $\bigcap_{i=1}^n\widetilde{C}_{\mathfrak{g}}(H_i)=\widetilde{C}_{\mathfrak{g}}(\widetilde{\oplus}_{i=1}^n H_i)$. Therefore, $H\apprle \widetilde{C}_{\mathfrak{g}}(\widetilde{\oplus}_{i=1}^n H_i):=C$. By Lemma \ref{C(H/K)AlmId}, $C$ is an ideal in $\mathfrak{g}$. The same holds for $\widetilde{Z}(C)$. If we prove that this is infinite, we have a contradiction. $\widetilde{Z}(C)=\widetilde{C}_{\mathfrak{g}}\widetilde{C}_{\mathfrak{g}}(\widetilde{\oplus}_{i=1}^n H_i)\cap C$. By Lemma \ref{sym}, $\widetilde{C}_{\mathfrak{g}}\widetilde{C}_{\mathfrak{g}}(\widetilde{\oplus}_{i=1}^n H_i)\apprge \widetilde{\oplus}_{i=1}^n H_i\apprge H$. Since $H$ is also contained in $C$, and $H$ is infinite. This completes the proof.
\end{proof}
Given any definably minimal subgroup $A$ in $\mathfrak{g}$, any finite sum of maximal dimension of the form $\sum_{\overline{g}\in \mathfrak{g}^{<\omega}}[\overline{g},A]$ is clearly an almost ideal. Consequently, there exists at most $n$ definably minimal almost ideals, up to equivalence. Applying Goursat's Lemma, this implies that, up to isogeny, there are at most finitely many definably minimal subgroups.
\section{Questions and possible extensions}
The questions that arise are probably more than the answers given by this paper. Clearly, lots of questions from \cite{deloro2023simple} still make sense in the finite-dimensional context, even if lots of tools of groups of finite Morley rank cannot be applied (or at least it is unclear if they hold). An example is the characterization of abelian groups of finite Morley rank (due to the non-existence, in general, of a connected component). On the other hand, we have seen, for example in Lemma \ref{DefBrack}, that Lie rings of finite dimension behave better than groups of finite dimension.\\
The first topic that needs more attention is the structure of non virtually connected Lie rings of finite dimension (in an IP theory). Already, it is not clear if a definable Lie ring of dimension $1$ should be virtually abelian-by-finite or not. Then, another problem will be to extend the known results in dimension $2,3$ and $4$. The linearisation theorem for non-virtually connected Lie rings and Lemma \ref{LinAlmAbe} will be fundamental, but it is far to be sufficient. Indeed, the pseudofinite Lie rings of dimension $1$ are finite-by-abelian-by-finite, but the dimension $2$ case is still unclear.
\begin{theorem}
    Let $\mathfrak{g}$ be a definable pseudofinite Lie ring of dimension $1$ and characteristic $>3$. Then, $\mathfrak{g}$ is finite-by-abelian-by-finite.
\end{theorem}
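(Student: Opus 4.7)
The plan is to mirror the strategy used for NIP Lie rings in Theorem \ref{NIPdim1}, splitting on whether $\widetilde{Z}(\mathfrak{g})$ is finite or of finite index, but replacing the NIP ingredients (Baldwin--Saxl, $\mathfrak{g}^{00}$, Lemma \ref{C(h)}) with pseudofinite approximation feeding into the combinatorial bound of Lemma \ref{FinSubAlg}. By Lemma \ref{DefZ}, $\widetilde{Z}(\mathfrak{g})$ is a definable ideal, hence by definable minimality (immediate in dimension $1$) it is either finite or of finite index in $\mathfrak{g}$. If it is of finite index, applying Lemma \ref{fin[]} to the adjoint module $(\mathfrak{g},\mathfrak{g})$ yields that $[\widetilde{Z}(\mathfrak{g}),\widetilde{Z}(\mathfrak{g})]$ is finite, so $\widetilde{Z}(\mathfrak{g})$ is finite-by-abelian and $\mathfrak{g}$ is finite-by-abelian-by-finite. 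Note that characteristic $0$ is automatically absorbed into this first case, since torsion-freeness makes $\langle g\rangle$ infinite inside $C_{\mathfrak{g}}(g)$ for every nonzero $g$, forcing $C_{\mathfrak{g}}(g)$ to have finite index and hence $\widetilde{Z}(\mathfrak{g})=\mathfrak{g}$.

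Suppose then, for a contradiction, that $\widetilde{Z}(\mathfrak{g})$ is finite and $\operatorname{char}(\mathfrak{g})=p>3$. I pass to the quotient $\mathfrak{g}':=\mathfrak{g}/\widetilde{Z}(\mathfrak{g})$: this is still pseudofinite, still of dimension $1$ and thus definably minimal, it has $\widetilde{Z}(\mathfrak{g}')=0$ by Corollary \ref{g/Z(g)}, and it is an $\mathbb{F}_p$-Lie algebra. Pseudofiniteness of the quotient is preserved because the finite ideal $\widetilde{Z}(\mathfrak{g})$ lifts via \L o\'s to ideals $\widetilde{Z}_i$ of bounded size in the finite approximants $\mathfrak{g}_i$ of $\mathfrak{g}$, so $\mathfrak{g}'=\prod_{\mathcal{U}}(\mathfrak{g}_i/\widetilde{Z}_i)$. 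In $\mathfrak{g}'$ every nonzero element has finite centraliser by minimality, and I then extract a uniform bound $|C_{\mathfrak{g}'}(g)|\leq d$ for all $g\neq 0$ by a compactness-and-saturation argument: an unbounded family of finite centralisers would, in a saturated elementary extension, produce a nonzero element whose centraliser is infinite and hence of finite index by minimality, contradicting $\widetilde{Z}(\mathfrak{g}')=0$.

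Applying \L o\'s to the first-order statement ``for every $h\neq 0$, $|C(h)|\leq d$'' transfers the uniform centraliser bound to $\mathcal{U}$-almost every $\mathfrak{g}_i/\widetilde{Z}_i$. For such $i$, either $Z(\mathfrak{g}_i/\widetilde{Z}_i)$ contains some $z\neq 0$, in which case $\mathfrak{g}_i/\widetilde{Z}_i\subseteq C(z)$ forces $|\mathfrak{g}_i/\widetilde{Z}_i|\leq d$, or $Z(\mathfrak{g}_i/\widetilde{Z}_i)=\{0\}$ and Lemma \ref{FinSubAlg} forces $\dim_{\mathbb{F}_p}(\mathfrak{g}_i/\widetilde{Z}_i)<p^{d+1}d+d$; either way $|\mathfrak{g}_i/\widetilde{Z}_i|$ is uniformly bounded, making $\mathfrak{g}'$ finite and contradicting $\dim(\mathfrak{g}')=1$. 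The delicate step of the whole proof will be securing the uniform centraliser bound in $\mathfrak{g}'$: once this is in place, the pseudofinite reduction followed by Rosengarten's combinatorial bound (Lemma \ref{FinSubAlg}) closes the argument routinely, but without this uniformity Lemma \ref{FinSubAlg} cannot be fed to each finite approximant and the whole strategy collapses.
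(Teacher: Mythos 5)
Your proposal is correct and follows essentially the same route as the paper: reduce to the case where the almost centre is finite, quotient by it, obtain a uniform bound on centraliser sizes by compactness, transfer this bound to the finite approximants by \L o\'s, and invoke Lemma \ref{FinSubAlg} to bound the approximants and reach a contradiction. The only cosmetic difference is that you handle the positive case via Lemma \ref{fin[]} applied to the adjoint module, where the paper cites Lemma \ref{NilFromAlmNil}; both yield the finite-by-abelian-by-finite conclusion.
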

\begin{proof}
    Let $\mathfrak{g}=\prod \mathfrak{g}_i/\mathcal{U}$ for $\mathcal{U}$ an ultrafilter. It is sufficient to prove, by Lemma \ref{NilFromAlmNil}, that the almost center is infinite. Assume not. Up to work in $\mathfrak{g}/\widetilde{Z}(\mathfrak{g})$ that is still pseudofinite, we may assume $\widetilde{Z}(\mathfrak{g})=0$. This implies that the cardinality of the centraliser of any element in $\mathfrak{g}$ is bounded by $N\in \mathbb{N}$ \hbox{i.e.} the formula
    $$\forall x\in \mathfrak{g}\ \nexists x_1,...,x_{N+1} \bigwedge_{i\not=j} x_i\not=x_j \wedge [x_i,x]=0.$$
    If the characteristic is $0$, this is impossible. Consequently, we may assume that almost all the $\mathfrak{g}_i$ are finite Lie algebras over $\mathbb{F}_p$, by Los's Theorem. Again by Los's theorem, we may assume that for almost all the finite Lie algebras, the centraliser of any element has cardinality bounded by $N$. By Lemma \ref{FinSubAlg}, the cardinality of almost all the Lie algebras $\mathfrak{g}_i$ is bounded by $N'\in \mathbb{N}$. This implies that $\mathfrak{g}$ is finite, a contradiction. 
\end{proof}
An interesting result for pseudofinite absolutely simple Lie rings of finite dimension is the following characterization.
\begin{theorem}
    Let $\mathfrak{g}$ be a pseudofinite definable absolutely simple Lie ring of finite dimension and of characteristic $p>3$. Then, $\mathfrak{g}/\widetilde{Z}(\mathfrak{g})$ has a definable ideal of finite index that is simple and elementary equivalent to an ultraproduct of finite simple Lie algebras. 
\end{theorem}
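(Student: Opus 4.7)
The plan is to reduce $\mathfrak{g}$ to a centerless absolutely simple form, extract an abstractly simple definable ideal of finite index, and then exploit the fact that simplicity becomes a first-order sentence in bounded dimension so that it transfers to a pseudofinite representation by Łoś's theorem.

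First I would replace $\mathfrak{g}$ by $\mathfrak{g}/\widetilde{Z}(\mathfrak{g})$, which is definable (since $\widetilde{Z}(\mathfrak{g})$ is finite by absolute simplicity and Lemma \ref{DefZ}), still pseudofinite, absolutely simple, finite-dimensional, with trivial almost center, and absolutely semi-simple by the corollary at the end of Section~7.2. Note $R(\mathfrak{g}) = F(\mathfrak{g}) = 0$: by Lemma \ref{RadIdeFinDim}, the radical ideal is a definable soluble ideal; if infinite, absolute simplicity forces it to have finite index in $\mathfrak{g}$, contradicting Corollary \ref{g/Z(g)} applied to the almost-abelian quotient, so it must be finite and hence contained in $\widetilde{Z}(\mathfrak{g}) = 0$.

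Next, using finite dimensionality together with the $\omega$-DCC, I would produce a uniform integer $D$ such that, for every $x\in \mathfrak{g}$, the ideal $\operatorname{Id}(x)$ coincides with its $D$-step approximation $\operatorname{Id}_D(x) := \sum_{k\le D} [\mathfrak{g},[\mathfrak{g},\ldots,[\mathfrak{g},x]\ldots]]$ (with $k$ nested brackets). This makes the statement ``$\mathfrak{g}$ is simple'' expressible as the first-order sentence $\forall x \neq 0\ (\operatorname{Id}_D(x) = \mathfrak{g})$.

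The main obstacle is the third step: constructing a definable ideal $\mathfrak{h}$ of finite index in $\mathfrak{g}$ that is abstractly simple. Natural candidate: $\mathfrak{h} = \mathfrak{g}^{(D)}$, definable and perfect by Corollary \ref{DerivedSeries}, and of finite index by absolute simplicity. Every Lie subring of finite index in $\mathfrak{g}$ is absolutely simple (Lemma \ref{AlmSim}) and has trivial almost center (as $\widetilde{Z}(\mathfrak{h}) \subseteq \widetilde{Z}(\mathfrak{g}) = 0$). Hence by Lemma \ref{IdeFromNDIdeal}, any proper nonzero ideal $J$ of $\mathfrak{h}$ is automatically a definable ideal of finite index in $\mathfrak{h}$. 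To exclude such $J$, one passes to its $\mathfrak{g}$-closure $\widetilde{J} = \sum_{g\in \mathfrak{g}}\operatorname{ad}_g(J) \subseteq \mathfrak{h}$ (definable by a maximality-of-dimension argument as in Lemma \ref{DefBrack}, giving a $\mathfrak{g}$-ideal of $\mathfrak{h}$ of finite index) and then chooses $\mathfrak{h}$ minimal among definable $\mathfrak{g}$-ideals of finite index, forcing $\widetilde{J} = \mathfrak{h}$. The delicate point, and the real crux of the argument, is to avoid that $\widetilde{J}$ strictly enlarges $J$: this amounts to analyzing the action of the finite group $\mathfrak{g}/\mathfrak{h}$ on the finite quotient $\mathfrak{h}/J$ and refining $\mathfrak{h}$ (by intersecting with translates, using Schlichting's theorem) so that every ideal of $\mathfrak{h}$ of finite index is automatically $\mathfrak{g}$-invariant.

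Once $\mathfrak{h}$ is abstractly simple, pseudofiniteness of $\mathfrak{h}$ yields $\mathfrak{h} \equiv \prod_i \mathfrak{h}_i / \mathcal{V}$ with $\mathfrak{h}_i$ finite Lie rings. By Łoś, almost every $\mathfrak{h}_i$ satisfies the first-order sentence $\forall x \neq 0\ (\operatorname{Id}_D^{\mathfrak{h}_i}(x) = \mathfrak{h}_i)$; since $\operatorname{Id}_D^{\mathfrak{h}_i}(x) \subseteq \operatorname{Id}^{\mathfrak{h}_i}(x) \subseteq \mathfrak{h}_i$, this forces the true ideal generated by any nonzero element of $\mathfrak{h}_i$ to be the whole algebra, so $\mathfrak{h}_i$ is simple. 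The characteristic $p>3$ of the $\mathfrak{h}_i$ follows from Łoś applied to the sentence $p\cdot x = 0$, completing the proof.
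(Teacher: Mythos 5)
Your proposal takes a genuinely different route from the paper, but it contains two gaps, the second of which you acknowledge but do not close, and which is essentially where the whole content of the theorem lives.

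The first gap is the claimed uniform $D$ such that $\operatorname{Id}_D(x)=\operatorname{Id}(x)$ for all $x$. You invoke ``finite dimensionality together with the $\omega$-DCC,'' but neither axiom controls strictly ascending chains of definable subgroups, which is exactly what you need. Even granting that each $\operatorname{Id}(x)$ is definable of finite index (which does follow from Lemma~\ref{IdeFromNDIdeal} and absolute simplicity), the passage from ``for each $x$ there is a $D_x$'' to a uniform $D$ requires a compactness argument, and the compactness argument in turn needs $\operatorname{Id}_D(x)$ to be \emph{uniformly} definable in $x$; Lemma~\ref{DefBrack} gives definability of each $[\mathfrak{g},H]$ by a maximal-dimension argument whose defining formula may depend on $H$, so uniformity is not automatic. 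None of this is spelled out. It is plausible that this step can be repaired, but as written it is an assertion, not a proof.

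The second gap is more serious: you concede that the construction of a definable, abstractly simple ideal $\mathfrak{h}$ of finite index is ``the real crux,'' and your sketch — refine $\mathfrak{h}$ via Schlichting's theorem so that every finite-index ideal of $\mathfrak{h}$ becomes $\mathfrak{g}$-invariant — does not obviously work, because the finite-index ideals of $\mathfrak{h}$ need not form a uniformly commensurable family (a pseudofinite $\mathfrak{h}$ can have descending chains of ideals with unbounded index). This is precisely the point where the paper does something different: it never tries to produce an abstractly simple $\mathfrak{h}$ first and then transfer. Instead it works directly at the component level of the ultraproduct $\mathfrak{g}=\prod\mathfrak{g}_i/\mathcal{U}$: if almost all $\mathfrak{g}_i$ had a nontrivial soluble (hence abelian) ideal, the ultraproduct of those would be a nontrivial abelian ideal of $\mathfrak{g}$, impossible since $\widetilde{Z}(\mathfrak{g})=0$; then, using Seligman's structure theory for characteristic ideals of finite Lie algebras, a minimal ideal $\mathfrak{h}_i$ of the semi-simple $\mathfrak{g}_i$ is characteristically simple with all proper ideals nilpotent, and $\prod\mathfrak{h}_i/\mathcal{U}$ is the desired finite-index ideal, whose components are then shown to be simple by the same nilpotent-ideal argument. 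In short, the paper derives simplicity of the components from structure theory of finite Lie algebras plus the fact that nontrivial ideals in $\mathfrak{g}$ correspond to ideal-filters on the index set, rather than encoding simplicity as a first-order sentence and pushing it through \L o\'s. Your approach would be attractive if it worked because it avoids Seligman's theorem, but both of your missing steps need to be filled in before the argument is complete.
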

\begin{proof}
    By Lemma \ref{NoInfIde}, there are no ideals of infinite index in $\mathfrak{g}/\widetilde{Z}(\mathfrak{g})$, and every ideal of finite index is definable. Being the almost center $0$, we may assume that $\mathfrak{g}$ has only definable index ideals of finite index. \\
    Let $\mathfrak{g}=\prod\mathfrak{g}_i/\mathcal{U} $ for finite Lie algebras $\mathfrak{g}_i$. We verify that almost all the Lie algebras $\mathfrak{g}_i$ are semi-simple. Assume not and let $\mathfrak{h}_i$ be a soluble proper ideal in $\mathfrak{g}_i$, that exists for almost all $i$. Define $\mathfrak{h}=\prod\mathfrak{h}_i/\mathcal{U}$. $\mathfrak{h}$ is an abelian ideal in $\mathfrak{g}$ that must be $0$, a contradiction.\\
Assume that almost all the $\mathfrak{g}_i$ are not simple. A minimal ideal $\mathfrak{h}_i$ in $\mathfrak{g}_i$ semi-simple is characteristically simple, and all proper ideals are nilpotent, by \cite{seligman1957characteristic}. $\mathfrak{h}=\prod \mathfrak{h}_i/\mathcal{U}$ is an ideal in $\mathfrak{g}$ and therefore definable of finite index. Up to work here, we may assume that every $\mathfrak{g}_i$ has only nilpotent ideals. Proceeding as in the first part, we may conclude that $\mathfrak{g}_i$ is semi-simple. This implies that the $\mathfrak{g}_i$ must be simple for almost every $i\in I$. Consequently, $\mathfrak{g}$ is simple: given a definable ideal $I$ in $\mathfrak{g}$, $I=\prod I_i/\mathcal{U}$ and any $I_i$ is an ideal in $\mathfrak{g}_i$. Then, by simplicity of $\mathfrak{g}_i$, it is either $0$ or $\mathfrak{g}_i$. Therefore, either $I=0$ or $I=\mathfrak{g}$.
\end{proof}
At present, a result similar to Wilson's theorem \cite{wilson1995simple} for simple pseudofinite groups is unaccessible. An instrument that seems necessary is a classification of finite simple Lie algebras, something we are far from having.\\
Another fundamental question is the classification, if it exists, of bad Lie algebras of dimension $3$ and $4$ in finite characteristic, in particular, show that bad Lie ring is a Lie algebra of finite dimension over a definable field, proving the "logarithmic CZ-conjecture", in dimensions $3$ and $4$.\\
Moving to hereditarily $\widetilde{\mathfrak{M}}_c$-Lie ring, it is still open if the Radical ideal is soluble. This question is also open for groups, and no known techniques seem to be able to attack this problem.


\end{document}